\theoremstyle{plain}
\newtheorem{thm}{\protect\theoremname}
\theoremstyle{definition}
\newtheorem{defn}[thm]{\protect\definitionname}
\theoremstyle{remark}
\newtheorem{rem}[thm]{\protect\remarkname}
\theoremstyle{plain}
\newtheorem{prop}[thm]{\protect\propositionname}
\theoremstyle{plain}
\newtheorem{lem}[thm]{\protect\lemmaname}
\date{}
\theoremstyle{definition}
\newtheorem{notation}[thm]{Notation}
\newtheorem{algo}[thm]{Algorithm}
\providecommand{\definitionname}{Definition}
\providecommand{\lemmaname}{Lemma}
\providecommand{\propositionname}{Proposition}
\providecommand{\remarkname}{Remark}
\providecommand{\theoremname}{Theorem}
\begin{document}
\global\long\def\IN{\mathbb{N}}%
\global\long\def\II{\mathbbm{1}}%
\global\long\def\IZ{\mathbb{Z}}%
\global\long\def\IQ{\mathbb{Q}}%
\global\long\def\IR{\mathbb{R}}%
\global\long\def\IC{\mathbb{C}}%
\global\long\def\IP{\mathbb{P}}%
\global\long\def\IE{\mathbb{E}}%
\global\long\def\IV{\mathbb{V}}%

\title{Approximation Techniques for the Reconstruction of the Probability
Measure and the Coupling Parameters in a Curie-Weiss Model for Large
Populations}
\author{Miguel Ballesteros\thanks{IIMAS-UNAM, Mexico City, Mexico}, Ivan
Naumkin\footnotemark[1], and Gabor Toth\footnotemark[1]
\thanks{Corresponding author, e-mail: gabor.toth@iimas.unam.mx}}
\maketitle
\begin{abstract}
\noindent The Curie-Weiss model, originally used to study phase transitions
in statistical mechanics, has been adapted to model phenomena in social
sciences where many agents interact with each other. Reconstructing
the probability measure of a Curie-Weiss model via the maximum likelihood
method runs into the problem of computing the partition function which
scales exponentially with the population. We study the estimation
of the coupling parameters of a multi-group Curie-Weiss model using
large population asymptotic approximations for the relevant moments
of the probability distribution in the case that there are no interactions
between groups. As a result, we obtain an estimator which can be calculated
at a low and constant computational cost for any size of the population.
The estimator is consistent (under the added assumption that the population
is large enough), asymptotically normal, and satisfies large deviation
principles. The estimator is potentially useful in political science,
sociology, automated voting, and in any application where the degree
of social cohesion in a population has to be identified. The Curie-Weiss
model's coupling parameters provide a natural measure of social cohesion.
We discuss the problem of estimating the optimal weights in two-tier
voting systems.
\end{abstract}
\textbf{MSC 2020}: 62F10, 82B20, 60F05, 91B12

\textbf{Keywords}: Curie-Weiss model, Mathematical physics, Statistical
mechanics, Gibbs measures, Large population approximation, Mathematical
analysis, Maximum likelihood estimator, Two-tier voting systems, Optimal
weights

\section{Introduction}

Models of ferromagnetism are a mathematical representation of a lump
of material which consists of small magnets (also called `spins')
taking values in $\left\{ -1,1\right\} $ with a tendency to align
with each other or as a response to an external magnetic field. These
models are employed in statistical mechanics to understand the collective
behaviour of systems comprised of a large number interacting components.
Chief among these models are the Ising and the Curie-Weiss model.
The Ising model features spins located on the lattice $\mathbb{Z}^{d}$
for some fixed $d\in\IN$, where each spin interacts with its $2d$
nearest neighbours. It was first defined and studied by Ising in his
PhD thesis \cite{Ising1925}. What seems like a fairly simple setup
turns out to make for a difficult to analyse model in higher dimensions.
While the Ising model does not exhibit phase transitions for $d=1$,
Onsager \cite{Onsager1944} showed a phase transition for $d=2$.

The Curie-Weiss model arose before the Ising model in the early twentieth
century as a simplified model of ferromagnetism to study the phenomenon
of phase transitions and is named after Pierre Curie and Pierre Weiss.
It employs a mean-field approximation to the local interaction structure
of the spins, meaning that each spin interacts with the average value
of all other spins, independently of their location. This simplification
yields a much more tractable model and allows for rigorous mathematical
exploration of phase transitions. These models have long been studied
by both physicists and mathematical physicists, the latter group being
mainly interested in the rigorous analysis of the models' behaviour.
See \cite{FrieVele2017} for a modern mathematical discussion of both
models. 

While initially conceived to describe ferromagnets, the Curie-Weiss
model\textquoteright s insights extend far beyond physical systems.
Applications have surfaced in a variety of social science contexts,
such as economics \cite{OEA2018}, political science \cite{Ki2007},
and sociology \cite{CGh2007}, demonstrating how collective decision-making
or consensus can emerge in groups of interacting agents. Here, the
spins stand for opinions, votes, attitudes, etc. One direction of
generalisation of the Curie-Weiss model was motivated by social science
applications: instead of supposing that all spins interact equally
with each other, the assumption of having several distinguishable
groups of spins was introduced. Spins in different groups interact
in different ways, allowing even for negative interactions modelling
competition between groups of agents motivated by distinct cultures,
ideologies, or attitudes. Thus the Curie-Weiss model has become a
powerful framework for exploring social structures.

Prior work has been done on the estimation problem of parameters of
spin models such as those mentioned. A classical reference is the
book \cite{Kastelei1956}. The estimation of the interaction parameters
from data in the social sciences was studied in \cite{GalBarCo2009}
and \cite{FedVerCo2013}. Some work has also been done on the estimation
of parameters in more complicated models with random interaction structure
referred to as spin glass models (see \cite{Chatterj2007,CheSenWu2024}).
Another statistical problem is the detection of groups in models of
voting, explored in \cite{BRS2019,LoweSchu2020,BaMePeTo2023}.

A previous article \cite{BaMeSiTo2025} studied the reconstruction
of the coupling parameters of a Curie-Weiss model, such as the one
which is also the subject of this article. It found that the maximum
likelihood estimator has many desirable properties: it is well defined
for all values of the coupling parameters, including values close
to the critical regime, it is consistent (i.e.\! it converges in
probability to the true parameter value as the number of observations
increases), it is asymptotically normal, which permits the construction
of confidence intervals and hypothesis tests, and the estimator satisfies
large deviation principles, providing robust upper bounds on the probability
of large estimation errors from finite samples. The main weakness
of the maximum likelihood estimator in practice lies in the necessity
to calculate the partition function, which grows exponentially with
the population size. Since this may present a barrier to the practical
application of the estimator, we propose using an asymptotic approximation
of the relevant moments (see the left hand side of equation (\ref{eq:opt})).
This technique allows us to calculate an approximation of the maximum
likelihood estimator at a constant and low computational cost independent
of the population size. The main trade-off consists in no longer being
able to calculate a value of the estimator for all possible sample
values. Specifically, for sample realisations indicative of parameter
values very close to the critical regime, it is not possible to distinguish
between different values of the estimator lying all close to the critical
regime. However, in this case it is still possible to conclude that
with high probability the true parameter value lies close to the critical
value. Also, for any value of the parameters lying outside the critical
regime, the probability of such an atypical sample occurring decays
exponentially with sample size. For a fixed population size, as the
sample size grows, the estimator converges in probability to a value
in close proximity to the true parameter provided the population is
large. Other properties of the estimator include asymptotic normality
and large deviation principles, and are shared with the maximum likelihood
estimator.

For a voting population split into $M$ non-interacting groups, the
Curie-Weiss model is determined by a set of non-negative coupling
parameters $\beta_{\lambda}$, $\lambda\in\IN_{M}$\footnote{We will write $\IN_{m}\coloneq\left\{ 1,\ldots,m\right\} $ for any
$m\in\IN$.}. These parameters quantify the strength of interactions among voters
in each group. The voters are independent if the parameter value equals
0, and the larger the parameter, the stronger the interaction becomes.
If we can estimate $\beta_{\lambda}$ accurately from empirical voting
data, we gain insight into the extent of ideological cohesion or social
influence within groups. This, in turn, opens the way to applications
such as how voting weights should be assigned in decision-making bodies
to ensure fairness and reflect the collective will of the population.
A real-world example is the European Union, composed of 27 member
states, each of which delegates a representative to the Council of
the European Union. We explore weighted voting systems in such a council.
Each country\textquoteright s representative casts a vote that is
scaled by a predefined voting weight. Generally speaking, the determination
of fair weights demands some assumption on the underlying voting behaviour
of the population. A multi-group Curie-Weiss model serves as a probabilistic
voting model to that end. By estimating the interaction strengths
that govern intra-group voting dynamics, we can derive statistically
grounded proposals for optimal voting weights which can be calculated
directly from observations of voting outcomes. Section \ref{sec:Optimal-Weights}
delves into this topic under the simplifying assumption that interactions
occur only within, and not between, the $M$ groups. This sets the
stage for applying theoretical models to real-world governance scenarios
and decision-making systems in business and technology. Through this
lens, reconstructing the voting model from empirical data enables
not only a better understanding of group alignment but also the estimation
of representative weights that reflect these dynamics based on a fairness
criterion which demands that the council votes should reflect the
popular will.

The Curie-Weiss model is located at the nexus of statistical physics,
statistics, and social science. Our work defines and analyses an estimator
of the coupling constants based on the maximum likelihood paradigm,
but which is also easy to calculate and can be applied in practice,
providing a quantitative bridge from physical models to the challenges
of fair decision-making in councils, algorithms, and social systems.
We aim for a thorough exposition, with detailed proofs, that hopefully
resonates with an interdisciplinary audience of mathematicians, physicists,
statisticians, economists, and policy analysts.

The main results of the article are Proposition \ref{prop:error}
and Theorem \ref{thm:properties_bML_inf}. Proposition \ref{prop:error}
gives exponentially decaying upper bounds for the probabilities of
misidentifying the regime of the interactions between voters. Theorem
\ref{thm:properties_bML_inf} collects the statistical properties
of the estimator.

We present the Curie-Weiss model next in Section \ref{sec:Curie-Weiss-Model}.
Section \ref{sec:estimator_results} introduces the estimator of the
coupling parameters and the main results of the article. Section \ref{sec:Proofs}
contains the proofs of the main results. We explore the estimation
of the optimal weights in two-tier voting systems in Section \ref{sec:Optimal-Weights}.
Finally, the Appendix collects auxiliary results we need for the proofs.

\section{\label{sec:Curie-Weiss-Model}The Curie-Weiss Model}

We represent the voters of the overall population by the sets $\IN_{N_{\lambda}}$,
$\lambda\in\IN_{M}$, where $\lambda$ is the index of the group in
question. The votes of the overall population of size $N_{1}+\cdots+N_{M}$
take values in the space
\[
\Omega_{N_{1}+\cdots+N_{M}}\coloneq\left\{ -1,1\right\} ^{N_{1}+\cdots+N_{M}}.
\]
Each individual is represented by a double index $\lambda i$, where
$\lambda\in\IN_{M}$ refers to the group and $i\in\IN_{N_{\lambda}}$to
the individual. Each vote takes a value $x_{\lambda i}\in\Omega_{1}$.
The elements $\left(x_{11},\ldots,x_{1N_{1}},\ldots,x_{M1},\ldots,x_{MN_{M}}\right)\in\Omega_{N_{1}+\cdots+N_{M}}$
will be called voting configurations, with each voting configuration
being a complete record of the votes cast by the population on a specific
issue. The voting behaviour is described by the following voting model:
\begin{defn}
\label{def:CWM}Let $N_{\lambda}\in\IN$ and $\beta_{\lambda}\in\IR$,
$\lambda\in\IN_{M}$. We set $\boldsymbol{N}\coloneq\left(N_{1},\ldots,N_{M}\right)$
and $\boldsymbol{\beta}\coloneq\left(\beta_{1},\ldots,\beta_{M}\right)$.
The\emph{ Curie-Weiss model} (CWM) is defined for all voting configurations
$\left(x_{11},\ldots,x_{1N_{1}},\ldots,x_{M1},\ldots,x_{MN_{M}}\right)\in\Omega_{N_{1}+\cdots+N_{M}}$
by
\begin{equation}
\IP_{\boldsymbol{\beta},\boldsymbol{N}}\left(X_{11}=x_{11},\ldots,X_{MN_{M}}=x_{MN_{M}}\right)\coloneq Z_{\boldsymbol{\beta},\boldsymbol{N}}^{-1}\,\exp\left(\frac{1}{2}\sum_{\lambda=1}^{M}\frac{\beta_{\lambda}}{N_{\lambda}}\left(\sum_{i=1}^{N_{\lambda}}x_{\lambda i}\right)^{2}\right),\label{eq:CWM}
\end{equation}
where $Z_{\boldsymbol{\beta},\boldsymbol{N}}$ is a normalisation
constant called the partition function which depends on both $\boldsymbol{\beta}$
and $\boldsymbol{N}$. The constants $\beta_{\lambda}$ are the coupling
parameters.
\end{defn}

Originally, the CWM was conceived as a model of ferromagnetism, where
$M=1$, and hence there is a single coupling parameter $\beta$ which
is the inverse temperature of the system. The range of values for
$\beta$ is therefore usually $\left[0,\infty\right)$. We allow values
of $\beta$ outside this range for technical reasons related to the
range of the sample statistic $\boldsymbol{T}$ (see Definition \ref{def:T_stat}
below).

For our application as a model of voting, the CWM has non-negative
coupling parameters to properly reflect social cohesion within each
group. Each coupling parameter $\beta_{\lambda}$ measures the degree
of influence the voters in group $\lambda$ exert over each other.
This influence is stronger for larger $\beta_{\lambda}$. Inspecting
(\ref{eq:CWM}), we note that the most probable voting configurations
are those featuring unanimous votes either in favour of or against
the proposal. There are only two of these extreme configurations,
contrary to the large number of configurations with roughly equal
numbers of votes for and against. While the latter configurations
are individually of low probability, there are far more of them. This
discrepancy gives rise to the `conflict between energy and entropy.'
Which one of these pseudo forces dominates depends on the the regime
the model is in. Since the groups are non-interacting by assumption,
each group can be in any of the three regimes determined by the value
of the parameter $\beta_{\lambda}\geq0$. The three regimes are $\beta_{\lambda}<1$,
$\beta_{\lambda}=1$, and $\beta_{\lambda}>1$, called the high temperature,
critical, and low temperature regime, respectively.

The partition function $Z_{\boldsymbol{\beta},\boldsymbol{N}}$ can
be calculated as a sum 
\begin{equation}
Z_{\boldsymbol{\beta},\boldsymbol{N}}=\sum_{x\in\Omega_{N_{1}+\cdots+N_{M}}}\exp\left(\frac{1}{2}\sum_{\lambda=1}^{M}\frac{\beta_{\lambda}}{N_{\lambda}}\left(\sum_{i=1}^{N_{\lambda}}x_{\lambda i}\right)^{2}\right).\label{eq:part_fn}
\end{equation}
We define the group voting margins (i.e.\! the difference between
the numbers of yes and no votes) for each $\lambda\in\IN_{M}$ by
\begin{equation}
S_{\lambda}\coloneq\sum_{i=1}^{N_{\lambda}}X_{\lambda i},\quad\lambda\in\IN_{M}.\label{eq:S_lambda}
\end{equation}
Whenever we have a function of $f:\Omega_{N_{\lambda}}\rightarrow\IR$
that depends only on the votes belonging to group $\lambda$, such
as $S_{\lambda}$, we will write the expectation $\IE_{\boldsymbol{\beta},\boldsymbol{N}}f$
as $\IE_{\beta_{\lambda},N_{\lambda}}f$, making the dependence on
only the marginal distribution explicit.

The behaviour of the CWM can be understood by studying the random
vector
\[
\left(S_{1},\ldots,S_{M}\right)
\]
consisting of the group voting margins.

\begin{notation}We will use the symbol $\IE X$ for the expectation
and $\IV X$ for the variance of some random variable $X$. Capital
letters such as $X$ will denote random variables, while lower case
letters such as $x$ will stand for realisations of the corresponding
random variable.\end{notation}

\section{\label{sec:estimator_results}Estimation of $\boldsymbol{\beta}$
Using a Large Population Approximation}

\subsection{The Estimator $\hat{\boldsymbol{\beta}}_{\boldsymbol{N}}^{\infty}$}

Let $n\in\IN$ be the number of observations in a sample. Then each
sample takes values in the space
\[
\Omega_{N_{1}+\cdots+N_{M}}^{n}\coloneq\prod_{i=1}^{n}\Omega_{N_{1}+\cdots+N_{M}}.
\]
For our estimation, we assume that we have access to a set of voting
configurations $\left(x^{(1)},\ldots,x^{(n)}\right)\in\Omega_{N_{1}+\cdots+N_{M}}^{n}$
composed of $n\in\IN$ i.i.d.\! realisations of $\left(X_{11},\ldots,X_{MN_{\lambda}}\right)$
from the CWM. The density function for a random sample is the $n$-fold
product of (\ref{eq:CWM}):
\begin{equation}
f\left(x^{(1)},\ldots,x^{(n)};\boldsymbol{\beta}\right)\coloneq Z_{\boldsymbol{\beta},\boldsymbol{N}}^{-n}\,\prod_{t=1}^{n}\exp\left(\frac{1}{2}\sum_{\lambda=1}^{M}\frac{\beta_{\lambda}}{N_{\lambda}}\left(\sum_{i=1}^{N_{\lambda}}x_{\lambda i}^{\left(t\right)}\right)^{2}\right).\label{eq:density_fn}
\end{equation}
For a fixed sample $\left(x^{(1)},\ldots,x^{(n)}\right)$, $\boldsymbol{\beta}\in\IR^{M}\mapsto f\left(x^{(1)},\ldots,x^{(n)};\boldsymbol{\beta}\right)\in\IR$
is the likelihood function, and
\begin{equation}
\ln f\left(x^{(1)},\ldots,x^{(n)};\boldsymbol{\beta}\right)=-n\ln Z_{\boldsymbol{\beta},\boldsymbol{N}}+\frac{1}{2}\sum_{t=1}^{n}\sum_{\lambda=1}^{M}\frac{\beta_{\lambda}}{N_{\lambda}}\left(\sum_{i=1}^{N_{\lambda}}x_{\lambda i}^{\left(t\right)}\right)^{2}\label{eq:_log_like}
\end{equation}
is the log-likelihood function.

The maximum likelihood estimator $\hat{\boldsymbol{\beta}}_{ML}$
of $\boldsymbol{\beta}$ is defined for each sample $\left(x^{(1)},\ldots,x^{(n)}\right)$
to be the value which maximises the likelihood function:
\[
\hat{\boldsymbol{\beta}}_{ML}\coloneq\underset{\beta'}{\arg\max}\;f\left(x^{(1)},\ldots,x^{(n)};\boldsymbol{\beta}'\right).
\]
Since $x\mapsto\ln x$ is strictly increasing, we can also find $\hat{\boldsymbol{\beta}}_{ML}$
as the value which maximises the log-likelihood function
\[
\hat{\boldsymbol{\beta}}_{ML}=\underset{\beta'}{\arg\max}\;\ln f\left(x^{(1)},\ldots,x^{(n)};\boldsymbol{\beta}'\right).
\]
To calculate the maximum of the log-likelihood function, we derive
with respect to each $\beta_{\lambda}$ and equate each derivative
to 0:
\begin{align}
\frac{\textup{d}\ln f\left(x^{(1)},\ldots,x^{(n)};\boldsymbol{\beta}\right)}{\textup{d}\beta_{\lambda}} & =-\frac{n}{Z_{\boldsymbol{\beta},\boldsymbol{N}}}\frac{\textup{d}Z_{\boldsymbol{\beta},\boldsymbol{N}}}{\textup{d}\beta_{\lambda}}+\frac{1}{2N_{\lambda}}\sum_{t=1}^{n}\left(\sum_{i=1}^{N_{\lambda}}x_{\lambda i}^{\left(t\right)}\right)^{2}\overset{!}{=}0.\label{eq:FOC}
\end{align}
The squared sums $S_{\lambda}^{2}$ defined in (\ref{eq:S_lambda})
have expectation
\begin{align}
\IE_{\beta_{\lambda},N_{\lambda}}S_{\lambda}^{2} & =\frac{\textup{d}Z_{\boldsymbol{\beta},\boldsymbol{N}}}{\textup{d}\beta_{\lambda}}\cdot2N_{\lambda}\,Z_{\boldsymbol{\beta},\boldsymbol{N}}^{-1}\label{eq:S2_Z}
\end{align}
by Lemma \ref{lem:Z_deriv}. Now we substitute (\ref{eq:S2_Z}) into
(\ref{eq:FOC}): 
\[
\frac{\textup{d}\ln f\left(x^{(1)},\ldots,x^{(n)};\boldsymbol{\beta}\right)}{\textup{d}\beta_{\lambda}}=-\frac{n}{Z_{\boldsymbol{\beta},\boldsymbol{N}}}\frac{1}{2N_{\lambda}}Z_{\boldsymbol{\beta},\boldsymbol{N}}\,\IE_{\beta_{\lambda},N_{\lambda}}S_{\lambda}^{2}+\frac{1}{2N_{\lambda}}\sum_{t=1}^{n}\left(\sum_{i=1}^{N}x_{\lambda i}^{\left(t\right)}\right)^{2}.
\]
Then the optimality condition (\ref{eq:FOC}) is equivalent to
\begin{equation}
\IE_{\gamma,N}S_{\lambda}^{2}=\frac{1}{n}\sum_{t=1}^{n}\left(\sum_{i=1}^{N}x_{\lambda i}^{\left(t\right)}\right)^{2},\label{eq:opt}
\end{equation}
where $\gamma$ is the maximum likelihood estimate for the parameters
$\boldsymbol{\beta}$ given the sample $\left(x^{(1)},\ldots,x^{(n)}\right)$.
\begin{defn}
\label{def:T_stat}We define the statistic $\,\boldsymbol{T}:\Omega_{N_{1}+\cdots+N_{M}}^{n}\rightarrow\IR$
for any realisation of the sample $x=\left(x^{(1)},\ldots,x^{(n)}\right)\in\Omega_{N_{1}+\cdots+N_{M}}^{n}$
by
\[
\boldsymbol{T}\left(x\right)\coloneq\frac{1}{n}\sum_{t=1}^{n}\left(\left(\sum_{i=1}^{N_{1}}x_{1i}^{(t)}\right)^{2},\ldots,\left(\sum_{i=1}^{N_{M}}x_{Mi}^{(t)}\right)^{2}\right).
\]
\end{defn}

\begin{rem}
$\boldsymbol{T}$ is a random vector on the probability space $\Omega_{N_{1}+\cdots+N_{M}}^{n}$
with the power set of $\Omega_{N_{1}+\cdots+N_{M}}^{n}$ as the $\sigma$-algebra
and the probability measure defined by the density function (\ref{eq:density_fn}).
We will write $\boldsymbol{T}$ for this random variable and $\boldsymbol{T}\left(x\right)$
for its realisation given a sample $x\in\Omega_{N_{1}+\cdots+N_{M}}^{n}$.
\end{rem}

\begin{prop}
\label{prop:suff}$\boldsymbol{T}$ is a sufficient statistic for
$\boldsymbol{\beta}$.
\end{prop}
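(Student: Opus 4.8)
The plan is to apply the Fisher--Neyman factorisation theorem: a statistic is sufficient for a parameter precisely when the joint density factorises as a product of one factor depending on the data only through that statistic (and on the parameter) and a second factor depending on the data alone. Since $\Omega_{N_{1}+\cdots+N_{M}}^{n}$ is a finite set equipped with the power-set $\sigma$-algebra, there are no measure-theoretic subtleties and the factorisation criterion can be used verbatim.

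First I would rewrite the density (\ref{eq:density_fn}). For each group index $\lambda\in\IN_{M}$, the terms appearing in the exponent are $\left(\sum_{i=1}^{N_{\lambda}}x_{\lambda i}^{(t)}\right)^{2}$ summed over all observations $t\in\IN_{n}$, and by Definition \ref{def:T_stat} this sum equals $n\,T_{\lambda}(x)$, where $T_{\lambda}$ is the $\lambda$-th component of $\boldsymbol{T}$. Consequently
\begin{equation}
f\left(x^{(1)},\ldots,x^{(n)};\boldsymbol{\beta}\right)=Z_{\boldsymbol{\beta},\boldsymbol{N}}^{-n}\,\exp\left(\frac{n}{2}\sum_{\lambda=1}^{M}\frac{\beta_{\lambda}}{N_{\lambda}}\,T_{\lambda}(x)\right).\label{eq:suff_fact}
\end{equation}
Now set $h(x)\coloneq 1$ for every $x\in\Omega_{N_{1}+\cdots+N_{M}}^{n}$ and define $g(\cdot\,;\boldsymbol{\beta})$ on the range of $\boldsymbol{T}$ by $g(\boldsymbol{t};\boldsymbol{\beta})\coloneq Z_{\boldsymbol{\beta},\boldsymbol{N}}^{-n}\exp\bigl(\tfrac{n}{2}\sum_{\lambda=1}^{M}\tfrac{\beta_{\lambda}}{N_{\lambda}}t_{\lambda}\bigr)$. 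Then (\ref{eq:suff_fact}) reads $f(x;\boldsymbol{\beta})=g(\boldsymbol{T}(x);\boldsymbol{\beta})\,h(x)$, which is exactly the factorisation demanded by the theorem, so $\boldsymbol{T}$ is sufficient for $\boldsymbol{\beta}$.

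I do not expect any real obstacle here; the only point requiring a moment's care is the bookkeeping that identifies $\sum_{t=1}^{n}\left(\sum_{i=1}^{N_{\lambda}}x_{\lambda i}^{(t)}\right)^{2}$ with $n\,T_{\lambda}(x)$, together with the fact that the partition function $Z_{\boldsymbol{\beta},\boldsymbol{N}}$ carries no dependence on the sample. Equivalently, one may observe that the CWM is an exponential family whose natural sufficient statistic is the vector of squared group margins; for $n$ i.i.d.\ observations the sum of these statistics is sufficient for the product family, and $\boldsymbol{T}$ differs from that sum only by the harmless constant factor $1/n$.
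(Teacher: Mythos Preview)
Your proof is correct. The paper itself does not give an argument but merely cites the companion article \cite{BaMeSiTo2025}; your Fisher--Neyman factorisation is the standard route and is exactly what one expects that reference to contain, so there is nothing to compare.
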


\begin{proof}
This is Proposition 5 in \cite{BaMeSiTo2025}.
\end{proof}
A function of central importance for the estimation when the coupling
parameters are in the low temperature regime is found in
\begin{defn}
\label{def:m_beta}Let $\beta\geq0$. The equation
\begin{equation}
\tanh\left(\beta x\right)=x,\quad x\in\IR,\label{eq:CW}
\end{equation}
is called the Curie-Weiss equation. We define $m\left(\beta\right)$
to be the largest solution to (\ref{eq:CW}). In order to obtain a
function $m:\left[0,\infty\right]\rightarrow\left[0,1\right]$, we
set $m\left(\infty\right)\coloneq1$.
\end{defn}

Section \ref{sec:Proofs} has some lemmas concerning properties of
the function $m$.

Before we can define the estimator $\hat{\boldsymbol{\beta}}_{\boldsymbol{N}}^{\infty}$
which uses an asymptotic approximation for the left hand side of condition
(\ref{eq:opt}), we need to establish the intervals for the coupling
parameters which can be estimated by $\hat{\boldsymbol{\beta}}_{\boldsymbol{N}}^{\infty}$.
\begin{defn}
\label{def:intervals}Let $N\in\IN$. Let $\boldsymbol{D}_{\textup{high}}$
and $\boldsymbol{D}_{\textup{low}}$ be positive constants such as
in Proposition \ref{prop:appr_moments}. Given $0<b_{1}<1<b_{2}$
such that 
\begin{equation}
\frac{N}{1-b_{1}}+\boldsymbol{D}_{\textup{high}}\sqrt{N}<m\left(b_{2}\right)^{2}N^{2}-\boldsymbol{D}_{\textup{low}}\left(\ln N\right)^{\frac{3}{2}}N^{\frac{3}{2}},\label{eq:separation}
\end{equation}
define the intervals
\begin{align*}
I_{h} & \coloneq\left[0,b_{1}\right],\quad I_{c}\coloneq\left(b_{1},b_{2}\right),\quad I_{l}\coloneq\left[b_{2},\infty\right),\\
J_{h} & \coloneq\left[\min\textup{Range}\left(S^{2}\right),\frac{N}{1-b_{1}}+\boldsymbol{D}_{\textup{high}}\sqrt{N}\right],\quad J_{c}\coloneq\left(\frac{N}{1-b_{1}}+\boldsymbol{D}_{\textup{high}}\sqrt{N},m\left(b_{2}\right)^{2}N^{2}-\boldsymbol{D}_{\textup{low}}\left(\ln N\right)^{\frac{3}{2}}N^{\frac{3}{2}}\right),\\
J_{l} & \coloneq\left[m\left(b_{2}\right)^{2}N^{2}-\boldsymbol{D}_{\textup{low}}\left(\ln N\right)^{\frac{3}{2}}N^{\frac{3}{2}},\infty\right),
\end{align*}
where $S$ is the sum defined in (\ref{eq:S_lambda}) for a group
of size $N$.
\end{defn}

\begin{rem}
\label{rem:intervals}Inequality (\ref{eq:separation}) holds for
all $N$ large enough since $m\left(\beta\right)>0$ for all $\beta>1$
by Lemma \ref{lem:m_beta_increasing}. By Proposition \ref{prop:appr_moments},
we have $\IE_{\beta_{\lambda},N_{\lambda}}S_{\lambda}^{2}\in J_{k}$
for all $\beta_{\lambda}\in I_{k}$, $k\in\left\{ h,l\right\} $.
In fact, Proposition \ref{prop:appr_moments} states the stronger
condition that for all $\beta_{\lambda}\in I_{k}$, $\IE_{\beta_{\lambda},N_{\lambda}}S_{\lambda}^{2}$
lies in the interior of $J_{k}$ for $k\in\left\{ h,l\right\} $.
We will define $\hat{\boldsymbol{\beta}}_{\boldsymbol{N}}^{\infty}$
supposing true parameter values $\beta_{\lambda}$ in $I_{h}\cup I_{l}$
for each group, where we will assume that for each $\lambda$ condition
(\ref{eq:separation}) holds for $N_{\lambda}$ instead of the generic
$N$.
\end{rem}

We next formally define the estimator for $\boldsymbol{\beta}$ which
employs the asymptotic expressions for $\IE_{\beta_{\lambda},N_{\lambda}}S_{\lambda}^{2}$
given in Proposition \ref{prop:appr_moments}. Note that contrary
to the maximum likelihood estimator (see Definition 7 in Article 1)
we do not define a numerical value for $\hat{\boldsymbol{\beta}}_{\boldsymbol{N}}^{\infty}$
for all realisations of the sample $\left(x^{(1)},\ldots,x^{(n)}\right)\in\Omega_{N_{1}+\cdots+N_{M}}^{n}$
for the reasons discussed in Remark \ref{rem:overlap}. We will assign
the symbol `u' below to those samples which do not permit the calculation
of an estimate for $\beta_{\lambda}$ for some group $\lambda$. The
introduction of this notation saves us from having to define a subset
of the sample space on which to define the estimator, while allowing
us to estimate the coupling parameter of each group independently
of the estimates for the parameters of other groups.

\begin{notation}\label{notation:infty}We will write $\left[-\infty,\infty\right]$
for the compactification $\IR\cup\left\{ -\infty,\infty\right\} $
and $\left[0,\infty\right]$ for $\left[0,\infty\right)\cup\left\{ \infty\right\} $.\end{notation}
\begin{defn}
\label{def:estimator_large_N}Let $0<b_{1}<1<b_{2}$ be constants
satisfying condition (\ref{eq:separation}), and consider the intervals
from Definition \ref{def:intervals}. Assume $\boldsymbol{\beta}\in\left(I_{h}\cup I_{l}\right)^{M}$.
The\emph{ estimator employing asymptotic approximations for the moments}
$\IE_{\beta_{\lambda},N_{\lambda}}S_{\lambda}^{2}$ is given by the
statistic $\hat{\boldsymbol{\beta}}_{\boldsymbol{N}}^{\infty}:\Omega_{N_{1}+\cdots N_{M}}^{n}\rightarrow\left(\left[-\infty,\infty\right]\cup\left\{ \textup{u}\right\} \right)^{M}$
defined coordinate-wise for each $\lambda\in\IN_{M}$ by
\begin{enumerate}
\item If $\left(\boldsymbol{T}\left(x^{(1)},\ldots,x^{(n)}\right)\right)_{\lambda}\in J_{h}$,
then $\hat{\beta}_{N_{\lambda}}^{\infty}\left(\lambda\right)\left(x^{(1)},\ldots,x^{(n)}\right)\coloneq1-\frac{N_{\lambda}}{\left(\boldsymbol{T}\left(x^{(1)},\ldots,x^{(n)}\right)\right)_{\lambda}}.$
\item If $\left(\boldsymbol{T}\left(x^{(1)},\ldots,x^{(n)}\right)\right)_{\lambda}\in J_{l}$,
then $\hat{\beta}_{N_{\lambda}}^{\infty}\left(\lambda\right)\left(x^{(1)},\ldots,x^{(n)}\right)>1$
is given by the unique value for which $\left(\boldsymbol{T}\left(x^{(1)},\ldots,x^{(n)}\right)\right)_{\lambda}=m\left(\hat{\beta}_{N_{\lambda}}^{\infty}\left(\lambda\right)\left(x^{(1)},\ldots,x^{(n)}\right)\right)^{2}N_{\lambda}^{2}$
is satisfied.
\item If $\left(\boldsymbol{T}\left(x^{(1)},\ldots,x^{(n)}\right)\right)_{\lambda}\in J_{c}$,
then we say there is insufficient evidence in the sample to conclude
that $\beta_{\lambda}$ is significantly different from 1, and $\hat{\beta}_{N_{\lambda}}^{\infty}\left(\lambda\right)\left(x^{(1)},\ldots,x^{(n)}\right)\coloneq\textup{u}$
is undefined.
\end{enumerate}
Similarly to the maximum likelihood estimator, the estimator $\hat{\boldsymbol{\beta}}_{\boldsymbol{N}}^{\infty}$
can also take the values $\pm\infty$ for extremely atypical sample
realisations. This is addressed in Proposition \ref{prop:error}.
\end{defn}

\subsection{Main Results of the Article}
\begin{prop}
\label{prop:error}Fix $\boldsymbol{N}\in\IN^{M}$. Let $0<b_{1}<1<b_{2}$
and $I_{h},I_{l},J_{h},J_{l}$ be in accordance with Definition \ref{def:intervals}.
\begin{enumerate}
\item For all $\boldsymbol{\beta}\in\left(I_{h}\cup I_{l}\right)^{M}$ with
$\boldsymbol{\beta}>0$ \footnote{For all $x\in\IR^{M}$, $x>0$ is to be interpreted coordinate-wise,
i.e.\! $x_{i}>0,i\in\IN_{M}$.}, there is a constant $\eta_{1}>0$ such that
\[
\IP\left\{ \hat{\beta}_{N_{\lambda}}^{\infty}\left(\lambda\right)\in\left(-\infty,0\right)\cup\left\{ \pm\infty\right\} \textup{ for some }\lambda\in\IN_{M}\right\} \leq2\exp\left(-\eta_{1}n\right),\quad n\in\IN.
\]
\item There are constants $\eta_{2},\eta_{3}>0$ such that:
\begin{align*}
\sup_{\boldsymbol{\beta}\in I_{h}^{M}}\left\{ \IP\left\{ \hat{\beta}_{N_{\lambda}}^{\infty}\left(\lambda\right)\in I_{l}\textup{ for some }\lambda\in\IN_{M}\right\} \right\}  & \leq\exp\left(-\eta_{2}n\right)\quad\textup{and}\\
\sup_{\boldsymbol{\beta}\in I_{l}^{M}}\left\{ \IP\left\{ \hat{\beta}_{N_{\lambda}}^{\infty}\left(\lambda\right)\in I_{h}\textup{ for some }\lambda\in\IN_{M}\right\} \right\}  & \leq\exp\left(-\eta_{3}n\right),\quad n\in\IN.
\end{align*}
\end{enumerate}
\end{prop}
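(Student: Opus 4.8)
The plan is to reduce every probability bound to a large-deviation estimate for the empirical average $\left(\boldsymbol{T}\right)_{\lambda}=\frac{1}{n}\sum_{t=1}^{n}S_{\lambda}^{(t),2}$, where the $S_{\lambda}^{(t),2}$ are i.i.d.\ copies of $S_{\lambda}^{2}$ under $\IP_{\beta_{\lambda},N_{\lambda}}$. Since $\boldsymbol{N}$ is fixed, $S_{\lambda}^{2}$ takes finitely many values in $\left\{0,1,4,\ldots,N_{\lambda}^{2}\right\}$, so it is a bounded random variable and Cramér's theorem (or simply Hoeffding's inequality) applies: for any closed set $F\subset\IR$ not containing $\IE_{\beta_{\lambda},N_{\lambda}}S_{\lambda}^{2}$, we get $\IP\left\{\left(\boldsymbol{T}\right)_{\lambda}\in F\right\}\leq C\exp\left(-cn\right)$ with $c>0$ depending on the gap between $F$ and the mean. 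The whole proposition is then a matter of identifying, for each of the three events, a closed set bounded away from the true mean, then upgrading to the stated uniformity in $\boldsymbol{\beta}$ and taking a union bound over the $M$ groups.

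For part (1): by Definition \ref{def:estimator_large_N}, $\hat{\beta}_{N_{\lambda}}^{\infty}(\lambda)<0$ can only occur in the high-temperature branch, where it equals $1-N_{\lambda}/\left(\boldsymbol{T}\right)_{\lambda}$, which is negative exactly when $\left(\boldsymbol{T}\right)_{\lambda}<N_{\lambda}$; the values $\pm\infty$ arise at the boundary $\left(\boldsymbol{T}\right)_{\lambda}=0$ or (in the low-temperature branch) when $\left(\boldsymbol{T}\right)_{\lambda}=N_{\lambda}^{2}$, i.e.\ the sample margin is $\pm N_{\lambda}$ in every observation. Each of these is the event that $\left(\boldsymbol{T}\right)_{\lambda}$ lies in a closed set whose distance to $\IE_{\beta_{\lambda},N_{\lambda}}S_{\lambda}^{2}$ is strictly positive: for $\beta_{\lambda}\in I_{h}$ with $\beta_{\lambda}>0$, Proposition \ref{prop:appr_moments} puts the mean in the interior of $J_{h}$, in particular strictly above $N_{\lambda}$ (since $N_{\lambda}/(1-b_{1})>N_{\lambda}$) and strictly below $N_{\lambda}^{2}$; for $\beta_{\lambda}\in I_{l}$, the mean is in the interior of $J_{l}$, hence strictly positive and, because $m(b_{2})<1$, strictly below $N_{\lambda}^{2}$. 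Applying the large-deviation bound to each group and summing gives the factor $2\exp(-\eta_{1}n)$ after absorbing constants and the finite number of groups into $\eta_{1}$ (shrinking it slightly); here $\eta_{1}$ depends on $\boldsymbol{\beta}$ through the gaps, which is allowed since the bound in (1) is stated for fixed $\boldsymbol{\beta}$.

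For part (2): $\hat{\beta}_{N_{\lambda}}^{\infty}(\lambda)\in I_{l}$ requires in particular $\left(\boldsymbol{T}\right)_{\lambda}\in J_{l}$, and $\hat{\beta}_{N_{\lambda}}^{\infty}(\lambda)\in I_{h}$ requires $\left(\boldsymbol{T}\right)_{\lambda}\in J_{h}$. So the two suprema are controlled by $\sup_{\beta_{\lambda}\in I_{h}}\IP\left\{\left(\boldsymbol{T}\right)_{\lambda}\in J_{l}\right\}$ and $\sup_{\beta_{\lambda}\in I_{l}}\IP\left\{\left(\boldsymbol{T}\right)_{\lambda}\in J_{h}\right\}$ respectively, plus a union bound over $\lambda$. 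The key point is that by Proposition \ref{prop:appr_moments} the mean $\IE_{\beta_{\lambda},N_{\lambda}}S_{\lambda}^{2}$ stays in the interior of $J_{h}$ uniformly over all $\beta_{\lambda}\in I_{h}$, hence its distance to the closed set $J_{l}$ is bounded below by a positive constant uniform in $\beta_{\lambda}\in I_{h}$ — and the same holds with $h$ and $l$ interchanged, using the separation inequality (\ref{eq:separation}) which guarantees $J_{h}$ and $J_{l}$ are disjoint closed sets. Because the moment generating function of $S_{\lambda}^{2}$ depends continuously on $\beta_{\lambda}$ over the compact sets $I_{h}$ and $I_{l}$, the Cramér rate function is bounded below uniformly, yielding $\eta_{2},\eta_{3}>0$ independent of $\boldsymbol{\beta}$.

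I expect the main obstacle to be the \emph{uniformity} claims: getting a single exponential rate valid for all $\boldsymbol{\beta}\in I_{h}^{M}$ (resp.\ $I_{l}^{M}$). This needs either (a) a uniform-in-$\beta$ version of Cramér's upper bound — which follows from the fact that $I_{h}$ (and $I_{l}$, after compactifying at $\infty$ via $m(\infty)=1$) is compact, the map $\beta_{\lambda}\mapsto\log\IE_{\beta_{\lambda},N_{\lambda}}e^{sS_{\lambda}^{2}}$ is jointly continuous, and the relevant rate functions are therefore jointly continuous and strictly positive on the complement of the target interval — or (b) a direct Hoeffding/Chernoff computation in which one checks that the Chernoff exponent, as an infimum over $s$ of an explicit expression, is continuous and positive uniformly. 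Option (b) is cleaner for a finite-state variable: since $S_{\lambda}^{2}\in[0,N_{\lambda}^{2}]$, Hoeffding gives $\IP\left\{\left(\boldsymbol{T}\right)_{\lambda}\in J_{l}\right\}\leq\exp\left(-2n\,\delta^{2}/N_{\lambda}^{4}\right)$ whenever the mean is at distance $\geq\delta$ from $J_{l}$, and the uniform lower bound on $\delta$ is exactly what Proposition \ref{prop:appr_moments} supplies (the mean sits in the interior of $J_{h}$, whose closure is disjoint from $J_{l}$). The $I_{l}$ case needs a little extra care at $\beta_{\lambda}=\infty$: there $S_{\lambda}^{2}=N_{\lambda}^{2}$ deterministically so $\left(\boldsymbol{T}\right)_{\lambda}=N_{\lambda}^{2}\in J_{l}$ with probability one and the event $\left(\boldsymbol{T}\right)_{\lambda}\in J_{h}$ has probability zero, consistent with the bound; for $\beta_{\lambda}$ large but finite one uses that $\IE_{\beta_{\lambda},N_{\lambda}}S_{\lambda}^{2}\to N_{\lambda}^{2}$ and in particular stays above $\sup J_{h}$ by (\ref{eq:separation}). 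Once the uniform gap $\delta$ is in hand, the rest is the routine union bound over $M$ groups with $\eta_{2},\eta_{3}$ replaced by slightly smaller positive constants.
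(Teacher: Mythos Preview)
Your approach is essentially the same as the paper's: translate each event into $\left(\boldsymbol{T}\right)_{\lambda}$ landing in a closed set disjoint from $\IE_{\beta_{\lambda},N_{\lambda}}S_{\lambda}^{2}$, then apply a Chernoff-type bound. The paper carries this out explicitly via the Legendre transform $\Lambda_{S^{2}}^{*}$ and Markov's inequality, obtaining $\IP\{T\in[a,\infty)\}\le\exp(-n\Lambda_{S^{2}}^{*}(a))$.

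You are in fact more careful than the paper on the uniformity issue in part~2. The paper writes ``Since this upper bound is independent of the value $\beta\in I_{h}$, the claim follows,'' but $\Lambda_{S^{2}}^{*}$ depends on $\beta$, so this is at best implicit; your suggestion to use Hoeffding with the uniform gap $\delta=\mathrm{dist}(J_{h},J_{l})>0$ coming from~(\ref{eq:separation}) makes the uniformity completely transparent and is arguably the cleaner argument.

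One small correction for part~1: the inequality $\IE_{\beta_{\lambda},N_{\lambda}}S_{\lambda}^{2}>N_{\lambda}$ for $\beta_{\lambda}>0$ does not follow from Proposition~\ref{prop:appr_moments} (membership in the interior of $J_{h}$ only says the mean exceeds $\min\textup{Range}(S^{2})$, not $N_{\lambda}$). The paper invokes Proposition~\ref{prop:ES2_fin}, which gives $N_{\lambda}<\IE_{\beta_{\lambda},N_{\lambda}}S_{\lambda}^{2}<N_{\lambda}^{2}$ directly from the strict monotonicity of $\beta\mapsto\IE_{\beta,N}S^{2}$ together with its value $N$ at $\beta=0$. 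With that substitution your argument for part~1 is complete.
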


The first statement above assures us that for any positive parameter
value in either the high or the low temperature regime, the probability
of a negative or infinite realisation of the estimator for any of
the coupling parameters decays exponentially with the sample size.
The second statement says that the probability of misidentifying any
of the regimes for the group parameter also decays exponentially.

The next theorem contains the main statistical properties of the estimator
$\hat{\boldsymbol{\beta}}_{\boldsymbol{N}}^{\infty}$. In preparation,
we will need the following item:
\begin{defn}
\label{def:beta_tilde}Let the intervals $I_{h}$ and $I_{l}$ be
as in Definition \ref{def:intervals}. Let, for all $\lambda\in\IN_{M}$,
$N_{\lambda}\in\IN$ and all $\beta_{\lambda}\in I_{h}\cup I_{l}$,
$\tilde{\beta}_{N_{\lambda}}\left(\lambda\right)\geq0$ be the value
which satisfies
\[
\IE_{\beta_{\lambda},N_{\lambda}}S_{\lambda}^{2}=\begin{cases}
\frac{N_{\lambda}}{1-\tilde{\beta}_{N_{\lambda}}\left(\lambda\right)} & \textup{if }\beta_{\lambda}\in I_{h},\\
m\left(\tilde{\beta}_{N_{\lambda}}\left(\lambda\right)\right)^{2}N_{\lambda}^{2} & \textup{if }\beta_{\lambda}\in I_{l}.
\end{cases}
\]
The vector $\tilde{\boldsymbol{\beta}}_{\boldsymbol{N}}$ is defined
by
\[
\tilde{\boldsymbol{\beta}}_{\boldsymbol{N}}\coloneq\left(\tilde{\beta}_{N_{1}}\left(1\right),\ldots,\tilde{\beta}_{N_{M}}\left(M\right)\right).
\]
\end{defn}

\begin{rem}
Note that, as $x\in\left(0,1\right)\mapsto\frac{1}{1-x}\in\left(0,\infty\right)$
and $x\in\left(1,\infty\right)\mapsto m\left(x\right)^{2}\in\left[0,1\right]$
are strictly increasing (see Lemma \ref{lem:m_beta_increasing}),
$\tilde{\boldsymbol{\beta}}_{\boldsymbol{N}}$ is well defined.

By using the asymptotic approximations for $\IE_{\beta_{\lambda},N_{\lambda}}S_{\lambda}^{2}$
provided in Proposition \ref{prop:appr_moments}, we incur a bias
which is reflected in the value of $\hat{\beta}_{N_{\lambda}}^{\infty}\left(\lambda\right)$.
Ideally, we would like $\hat{\beta}_{N_{\lambda}}^{\infty}\left(\lambda\right)\xrightarrow[n\rightarrow\infty]{\textup{p}}\beta_{\lambda}$,
which holds for the maximum likelihood estimator by Theorem 10 in
\cite{BaMeSiTo2025}. However, for $\hat{\beta}_{N_{\lambda}}^{\infty}\left(\lambda\right)$
we instead have $\hat{\beta}_{N_{\lambda}}^{\infty}\left(\lambda\right)\xrightarrow[n\rightarrow\infty]{\textup{p}}\tilde{\beta}_{N_{\lambda}}\left(\lambda\right)$
for fixed $N_{\lambda}$, and $\tilde{\beta}_{N_{\lambda}}\left(\lambda\right)$
is not equal to $\beta_{\lambda}$ in general.
\end{rem}

\begin{thm}
\label{thm:properties_bML_inf}The following statements hold:
\begin{enumerate}
\item $\hat{\boldsymbol{\beta}}_{\boldsymbol{N}}^{\infty}\xrightarrow[n\rightarrow\infty]{\textup{p}}\tilde{\boldsymbol{\beta}}_{\boldsymbol{N}}$.
\item $\tilde{\boldsymbol{\beta}}_{\boldsymbol{N}}\xrightarrow[N\rightarrow\infty]{}\boldsymbol{\beta}$.
\item $\sqrt{n}\left(\hat{\boldsymbol{\beta}}_{\boldsymbol{N}}^{\infty}-\tilde{\boldsymbol{\beta}}_{\boldsymbol{N}}\right)\xrightarrow[n\rightarrow\infty]{\textup{d}}\mathcal{N}\left(0,\Sigma_{\boldsymbol{N}}\right)$.
The covariance matrix $\Sigma_{\boldsymbol{N}}$ is diagonal, and
its entries $\left(\Sigma_{\boldsymbol{N}}\right)_{\lambda\lambda}$
are given by
\begin{enumerate}
\item If $\beta_{\lambda}\in I_{h}$, then $\left(\Sigma_{\boldsymbol{N}}\right)_{\lambda\lambda}=\left(1-\tilde{\boldsymbol{\beta}}_{N}\left(\lambda\right)\right)^{4}\IV_{\beta_{\lambda},N_{\lambda}}\frac{S_{\lambda}^{2}}{N_{\lambda}}\xrightarrow[N_{\lambda}\rightarrow\infty]{}2\left(1-\beta_{\lambda}\right)^{2}$.
\item If $\beta_{\lambda}\in I_{l}$, then $\left(\Sigma_{\boldsymbol{N}}\right)_{\lambda\lambda}=\frac{1}{\left[2m\left(\tilde{\boldsymbol{\beta}}_{N}\left(\lambda\right)\right)m'\left(\tilde{\boldsymbol{\beta}}_{N}\left(\lambda\right)\right)\right]^{2}}\IV_{\beta_{\lambda},N_{\lambda}}\left(\frac{S_{\lambda}}{N_{\lambda}}\right)^{2}\xrightarrow[N_{\lambda}\rightarrow\infty]{}0$.
\end{enumerate}
\item $\hat{\boldsymbol{\beta}}_{\boldsymbol{N}}^{\infty}$ satisfies a
large deviation principle with rate $n$ and rate function $\boldsymbol{J}$
defined in (\ref{eq:bold_J}). Furthermore, for any $\varepsilon>0$
there exist an $\boldsymbol{N}_{\varepsilon}\in\IN^{M}$ and $B_{\varepsilon},C_{\varepsilon}>0$
such that for all $\boldsymbol{N}\geq\boldsymbol{N}_{\varepsilon}$,
\[
\IP\left\{ \left|\hat{\boldsymbol{\beta}}_{\boldsymbol{N}}^{\infty}-\boldsymbol{\beta}\right|\geq\varepsilon\right\} \leq C_{\varepsilon}\exp\left(-B_{\varepsilon}n\right),\quad n\in\IN.
\]
\end{enumerate}
\end{thm}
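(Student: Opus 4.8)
The core observation is that, on the event where every coordinate of $\boldsymbol{T}$ lands in $J_{h}\cup J_{l}$, the estimator is a fixed, $\boldsymbol{\beta}$-independent map $\Phi=\left(\phi_{1},\ldots,\phi_{M}\right)$ applied to $\boldsymbol{T}$ coordinate-wise, with $\phi_{\lambda}\left(x\right)=1-N_{\lambda}/x$ on $J_{h}$ and $\phi_{\lambda}$ the inverse of $\beta\mapsto m\left(\beta\right)^{2}N_{\lambda}^{2}$ on $J_{l}$ (extended by $\pm\infty$ as in Definition \ref{def:estimator_large_N}), and that, since the measure (\ref{eq:CWM}) factorises over the $M$ groups, $S_{1},\ldots,S_{M}$ are independent, so $\boldsymbol{T}$ is an empirical mean of $n$ i.i.d.\! bounded vectors with independent coordinates. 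The plan is to extract statements 1 and 3 from the law of large numbers and the central limit theorem combined with the continuous mapping theorem and the delta method, statement 2 from the deterministic moment expansions of Proposition \ref{prop:appr_moments}, and statement 4 from Cram\'er's theorem, the contraction principle, and a concentration estimate uniform in $\boldsymbol{N}$.

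Write $\mu_{\lambda}\coloneq\IE_{\beta_{\lambda},N_{\lambda}}S_{\lambda}^{2}$. By Remark \ref{rem:intervals}, $\mu_{\lambda}$ lies in the interior of $J_{h}$ or $J_{l}$ according as $\beta_{\lambda}\in I_{h}$ or $\beta_{\lambda}\in I_{l}$, and on that interval $\phi_{\lambda}$ is continuous and strictly monotone (for the low-temperature branch this uses that $\beta\mapsto m\left(\beta\right)^{2}$ is continuous and strictly increasing on $\left(1,\infty\right)$ by Lemma \ref{lem:m_beta_increasing}). Since $T_{\lambda}\xrightarrow[n\to\infty]{\textup{p}}\mu_{\lambda}$ by the law of large numbers, with probability tending to $1$ the coordinate $T_{\lambda}$ lies in the same open interval and $\hat{\beta}_{N_{\lambda}}^{\infty}\left(\lambda\right)=\phi_{\lambda}\left(T_{\lambda}\right)$, so the continuous mapping theorem gives $\hat{\beta}_{N_{\lambda}}^{\infty}\left(\lambda\right)\xrightarrow[n\to\infty]{\textup{p}}\phi_{\lambda}\left(\mu_{\lambda}\right)=\tilde{\beta}_{N_{\lambda}}\left(\lambda\right)$ by Definition \ref{def:beta_tilde}; this is statement 1. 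For statement 2 I would substitute the large-$N_{\lambda}$ asymptotics of $\mu_{\lambda}$ from Proposition \ref{prop:appr_moments} into Definition \ref{def:beta_tilde}: subcritically $N_{\lambda}/\left(1-\tilde{\beta}_{N_{\lambda}}\left(\lambda\right)\right)=\mu_{\lambda}=N_{\lambda}/\left(1-\beta_{\lambda}\right)+O\left(\sqrt{N_{\lambda}}\right)$ forces $\tilde{\beta}_{N_{\lambda}}\left(\lambda\right)\to\beta_{\lambda}$, and supercritically $m\left(\tilde{\beta}_{N_{\lambda}}\left(\lambda\right)\right)^{2}=\mu_{\lambda}/N_{\lambda}^{2}=m\left(\beta_{\lambda}\right)^{2}\left(1+o\left(1\right)\right)$ forces $\tilde{\beta}_{N_{\lambda}}\left(\lambda\right)\to\beta_{\lambda}$ by continuity of the inverse of $\beta\mapsto m\left(\beta\right)^{2}$.

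Statement 3 is the delta method. The central limit theorem gives $\sqrt{n}\left(\boldsymbol{T}-\boldsymbol{\mu}\right)\xrightarrow[n\to\infty]{\textup{d}}\mathcal{N}\left(0,D\right)$ with $D=\mathrm{diag}\left(\IV_{\beta_{\lambda},N_{\lambda}}S_{\lambda}^{2}\right)_{\lambda\in\IN_{M}}$ diagonal by group independence; $\phi_{\lambda}$ is differentiable at $\mu_{\lambda}$ with $\phi_{\lambda}'\left(\mu_{\lambda}\right)=N_{\lambda}/\mu_{\lambda}^{2}=\left(1-\tilde{\beta}_{N_{\lambda}}\left(\lambda\right)\right)^{2}/N_{\lambda}$ on the high-temperature branch and $\phi_{\lambda}'\left(\mu_{\lambda}\right)=1/\left(2m\left(\tilde{\beta}_{N_{\lambda}}\left(\lambda\right)\right)m'\left(\tilde{\beta}_{N_{\lambda}}\left(\lambda\right)\right)N_{\lambda}^{2}\right)$ on the low-temperature branch (using differentiability of $m$ with $m,m'>0$ on $\left(1,\infty\right)$), so the delta method yields $\sqrt{n}\left(\hat{\boldsymbol{\beta}}_{\boldsymbol{N}}^{\infty}-\tilde{\boldsymbol{\beta}}_{\boldsymbol{N}}\right)\xrightarrow[n\to\infty]{\textup{d}}\mathcal{N}\left(0,\Sigma_{\boldsymbol{N}}\right)$ with $\left(\Sigma_{\boldsymbol{N}}\right)_{\lambda\lambda}=\phi_{\lambda}'\left(\mu_{\lambda}\right)^{2}\IV_{\beta_{\lambda},N_{\lambda}}S_{\lambda}^{2}$, which a short computation rewrites in the two displayed forms. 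The claimed $N_{\lambda}\to\infty$ limits follow from the classical Curie-Weiss fluctuation theory together with statement 2: subcritically $S_{\lambda}/\sqrt{N_{\lambda}}$ converges in distribution to $\mathcal{N}\left(0,\left(1-\beta_{\lambda}\right)^{-1}\right)$ with convergence of the relevant moments, so $\IV_{\beta_{\lambda},N_{\lambda}}\left(S_{\lambda}^{2}/N_{\lambda}\right)\to 2\left(1-\beta_{\lambda}\right)^{-2}$ and the entry tends to $2\left(1-\beta_{\lambda}\right)^{2}$; supercritically $S_{\lambda}/N_{\lambda}$ concentrates at $\pm m\left(\beta_{\lambda}\right)$ with fluctuations of order $N_{\lambda}^{-1/2}$, so $\IV_{\beta_{\lambda},N_{\lambda}}\left(S_{\lambda}/N_{\lambda}\right)^{2}\to 0$ while the prefactor stays bounded, and the entry tends to $0$.

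For statement 4, Cram\'er's theorem applied to the i.i.d.\! bounded vectors whose mean is $\boldsymbol{T}$ gives an LDP with rate $n$ and a good convex rate function $I_{\boldsymbol{T}}$; on a neighbourhood of $\boldsymbol{\mu}$ inside $\prod_{\lambda}\mathrm{int}\left(J_{k_{\lambda}}\right)$, where $\Phi$ is continuous, the contraction principle transfers this to an LDP for $\hat{\boldsymbol{\beta}}_{\boldsymbol{N}}^{\infty}=\Phi\left(\boldsymbol{T}\right)$ with rate function $\boldsymbol{J}\left(\boldsymbol{\beta}'\right)=\inf\left\{I_{\boldsymbol{T}}\left(\boldsymbol{t}\right):\Phi\left(\boldsymbol{t}\right)=\boldsymbol{\beta}'\right\}$, which is (\ref{eq:bold_J}) (the contribution of $\boldsymbol{T}$ outside the neighbourhood has exponentially small probability and is irrelevant for the LDP). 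For the explicit finite-$n$ bound, use statement 2 to pick $\boldsymbol{N}_{\varepsilon}$ with $\left|\tilde{\boldsymbol{\beta}}_{\boldsymbol{N}}-\boldsymbol{\beta}\right|<\varepsilon/2$ for $\boldsymbol{N}\ge\boldsymbol{N}_{\varepsilon}$; then
\[
\IP\left\{\left|\hat{\boldsymbol{\beta}}_{\boldsymbol{N}}^{\infty}-\boldsymbol{\beta}\right|\ge\varepsilon\right\}\le\sum_{\lambda=1}^{M}\IP\left\{\left|\hat{\beta}_{N_{\lambda}}^{\infty}\left(\lambda\right)-\tilde{\beta}_{N_{\lambda}}\left(\lambda\right)\right|\ge\frac{\varepsilon}{2M}\right\}\le\sum_{\lambda=1}^{M}\IP\left\{\left|T_{\lambda}-\mu_{\lambda}\right|\ge\delta_{\lambda}\right\},
\]
where $\delta_{\lambda}>0$ comes from the local Lipschitz continuity of $\phi_{\lambda}$ at $\mu_{\lambda}$, chosen small enough that the $\delta_{\lambda}$-ball around $\mu_{\lambda}$ remains in $\mathrm{int}\left(J_{k_{\lambda}}\right)$. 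In the low-temperature regime $T_{\lambda}/N_{\lambda}^{2}$ is an empirical mean of the $\left[0,1\right]$-valued variables $\left(S_{\lambda}/N_{\lambda}\right)^{2}$ and the required relative deviation $\delta_{\lambda}/N_{\lambda}^{2}$ stays bounded away from $0$ as $N_{\lambda}\to\infty$, so Hoeffding's inequality gives an exponential bound with rate uniform in $N_{\lambda}$; in the high-temperature regime one works with $T_{\lambda}/N_{\lambda}$, using that $\IE_{\beta_{\lambda},N_{\lambda}}\exp\left(\theta S_{\lambda}^{2}/N_{\lambda}\right)=Z_{\beta_{\lambda}+2\theta,N_{\lambda}}/Z_{\beta_{\lambda},N_{\lambda}}$ is finite and bounded uniformly in $N_{\lambda}$ for small $\left|\theta\right|$ (Hubbard--Stratonovich plus Laplace asymptotics of the subcritical partition function), whence the Chernoff bound again yields an exponential bound with rate uniform in $N_{\lambda}$; taking $C_{\varepsilon}=2M$ and $B_{\varepsilon}$ the minimum of the $M$ rates finishes the argument. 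I expect this last uniformity to be the main obstacle: to obtain $B_{\varepsilon},C_{\varepsilon}$ independent of $\boldsymbol{N}$ one cannot bound the deviations of the $\left[0,N_{\lambda}^{2}\right]$-valued variable $S_{\lambda}^{2}$ crudely, but must control the exponential moments of $S_{\lambda}^{2}/N_{\lambda}$ uniformly in the population size in the high-temperature regime; the rest is bookkeeping on top of standard limit theorems.
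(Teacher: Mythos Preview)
Your approach is essentially the same as the paper's: law of large numbers plus continuous mapping for statement 1, the moment asymptotics of Proposition \ref{prop:appr_moments} for statement 2, the CLT plus the delta method for statement 3, and Cram\'er plus the contraction principle for statement 4. A few differences are worth noting.

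In statement 3 you apply the delta method uniformly in both regimes; the paper instead treats the high-temperature case by a direct algebraic manipulation (writing $T-\IE\,T=N(1-\hat\beta_N^\infty)^{-1}-N(1-\tilde\beta_N)^{-1}$ and using Slutsky to peel off the factor $(1-\hat\beta_N^\infty)/(1-\tilde\beta_N)\xrightarrow{\textup{p}}1$) and reserves the delta method for the low-temperature case. Your version is more unified. One technical point you pass over: $\hat\beta_{N_\lambda}^\infty(\lambda)$ equals $\phi_\lambda(T_\lambda)$ only on the event $T_\lambda\in J_h\cup J_l$, so the delta method is not literally applicable to the full sequence. The paper handles this via an auxiliary truncation lemma (Lemma \ref{lem:conv_restr_sequence}) that excises an event of exponentially small probability without affecting the limiting distribution. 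This is the only missing ingredient in your argument for statement 3, and it is a routine one.

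In statement 4 you actually go further than the paper. The paper's proof produces, for each $N\ge N_\varepsilon$, a bound $\IP\{|\hat\beta_N^\infty-\beta|\ge\varepsilon\}\le 2\exp(-n c_N)$ with $c_N$ built from the entropy function $\Lambda^*_{(S/N)^2}$ evaluated at $m(\tilde\beta_N)^2\pm\delta/2$, and then simply declares the proof complete; it does not verify that $\inf_{N\ge N_\varepsilon}c_N>0$, which the quantifier structure of the statement requires. Your proposal to secure this uniformity---Hoeffding on the $[0,1]$-valued variables $(S_\lambda/N_\lambda)^2$ in the low-temperature regime (where the relative deviation threshold $\delta_\lambda/N_\lambda^2$ stays bounded away from zero), and a Chernoff bound driven by the identity $\IE_{\beta,N}\exp(\theta S^2/N)=Z_{\beta+2\theta,N}/Z_{\beta,N}$ together with subcritical Laplace asymptotics in the high-temperature regime---is a genuine addition and appears to close a gap the paper leaves open.
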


\begin{rem}
The large deviations principle for $\hat{\boldsymbol{\beta}}_{\boldsymbol{N}}^{\infty}$
in statement 4 is for fixed $\boldsymbol{N}\in\IN^{M}$, and the unique
minimum of rate function $\boldsymbol{J}$ is located at $\tilde{\boldsymbol{\beta}}_{\boldsymbol{N}}$
rather than $\boldsymbol{\beta}$. As such, the significance of the
second part of statement 4 above lies in providing an exponentially
decaying upper bound for the probability of a large deviation of the
estimator $\hat{\boldsymbol{\beta}}_{\boldsymbol{N}}^{\infty}$ from
the true parameter value $\boldsymbol{\beta}$.
\end{rem}

\section{\label{sec:Proofs}Proofs of Proposition \ref{prop:error} and Theorem
\ref{thm:properties_bML_inf}}

\subsection{Asymptotic Approximation of $\protect\IE_{\beta_{\lambda},N_{\lambda}}$}
\begin{lem}
\label{lem:m_beta_increasing}The mapping $m:\left[1,\infty\right)\rightarrow\left[0,1\right]$
from Definition \ref{def:m_beta} is strictly increasing and $\lim_{\beta\rightarrow\infty}m\left(\beta\right)=1$.
\end{lem}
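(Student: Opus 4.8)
The plan is to study the auxiliary function $g_{\beta}(x)\coloneq\tanh(\beta x)-x$ on $[0,\infty)$ and to exploit the strict concavity of $x\mapsto\tanh(\beta x)$ there. First I would record two elementary facts: that $g_{\beta}''(x)=-2\beta^{2}\tanh(\beta x)/\cosh^{2}(\beta x)<0$ for $x>0$, so $g_{\beta}$ is strictly concave on $(0,\infty)$, and that $g_{\beta}(0)=0$ with $g_{\beta}'(0)=\beta-1$. For $\beta>1$ we then have $g_{\beta}'(0)>0$, hence $g_{\beta}>0$ on a right neighbourhood of $0$; since $g_{\beta}(x)\to-\infty$ as $x\to\infty$ and $g_{\beta}$ is strictly concave, $g_{\beta}$ has exactly one zero $m(\beta)$ in $(0,\infty)$. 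Together with the zeros at $0$ and at $-m(\beta)$ (by oddness of $g_\beta$), this makes $m(\beta)$ the largest solution of (\ref{eq:CW}), and $m(\beta)<1$ because $\tanh<1$. For $\beta=1$ the same concavity argument gives $g_{1}'(x)<g_{1}'(0)=0$ for $x>0$, so $g_{1}<0$ on $(0,\infty)$ and $m(1)=0$.

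For strict monotonicity on $(1,\infty)$ I would argue by direct comparison. Fix $1<\beta_{1}<\beta_{2}$. Since $m(\beta_{1})>0$ and $\tanh$ is strictly increasing, $g_{\beta_{2}}(m(\beta_{1}))=\tanh(\beta_{2}m(\beta_{1}))-m(\beta_{1})>\tanh(\beta_{1}m(\beta_{1}))-m(\beta_{1})=0$. By the structure established above, $g_{\beta_{2}}>0$ precisely on $(0,m(\beta_{2}))$, so $m(\beta_{1})<m(\beta_{2})$. Since moreover $m(1)=0<m(\beta)$ for every $\beta>1$, the map $m$ is strictly increasing on the whole of $[1,\infty)$. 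As an alternative, and because $m'$ is referenced in Theorem \ref{thm:properties_bML_inf}, I would also note that the implicit function theorem applies to the identity $\tanh(\beta m(\beta))=m(\beta)$ --- strict concavity gives $g_{\beta}'(m(\beta))=\beta(1-m(\beta)^{2})-1<0$, so the relevant partial derivative does not vanish --- and differentiating yields $m'(\beta)=\frac{(1-m(\beta)^{2})\,m(\beta)}{1-\beta(1-m(\beta)^{2})}>0$, the numerator being positive because $0<m(\beta)<1$ and the denominator because $g_{\beta}'(m(\beta))<0$.

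For the limit, $m$ is increasing and bounded above by $1$ on $(1,\infty)$, so $L\coloneq\lim_{\beta\to\infty}m(\beta)$ exists in $(0,1]$. Picking any $\beta_{0}>1$, for $\beta\geq\beta_{0}$ we have $\beta m(\beta)\geq\beta m(\beta_{0})\to\infty$, hence $m(\beta)=\tanh(\beta m(\beta))\to1$, giving $L=1$.

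I do not expect a genuine obstacle here: the only slightly delicate points are the claim that $g_{\beta}$ has exactly one positive zero and the sign $g_{\beta}'(m(\beta))<0$, and both follow uniformly from the strict concavity of $x\mapsto\tanh(\beta x)$ on $(0,\infty)$; everything else is bookkeeping.
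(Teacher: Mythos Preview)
Your proof is correct and follows essentially the same route as the paper: the monotonicity step is identical (evaluate the Curie--Weiss function with the larger parameter at $m(\beta_{1})$ and use the resulting sign to conclude $m(\beta_{1})<m(\beta_{2})$). The limit argument is a minor variant---the paper shows directly that every fixed $x\in(0,1)$ eventually satisfies $x<m(\beta)$, whereas you first invoke the monotone limit $L$ and then pass to the limit in the defining equation---and your implicit-function computation of $m'(\beta)>0$ anticipates what the paper does separately in Lemma~\ref{lem:m_beta_diff}.
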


\begin{rem}
The statement $\lim_{\beta\rightarrow\infty}m\left(\beta\right)=1$
implies that $m:\left[0,\infty\right]\rightarrow\left[0,1\right]$
is continuous.
\end{rem}

\begin{proof}
The first derivative of the function $x\in\left[0,\infty\right)\mapsto\tanh x\in\left[0,1\right)$
is $1-\tanh^{2}x\leq1$, and in particular it is equal to 1 for $x=0$.
The second derivative $-2\tanh x/\cosh^{2}x$ is strictly negative
on $\left(0,\infty\right)$. It follows that for all $\beta\geq0$
and all $x\in\left(0,\infty\right)$,
\begin{equation}
\tanh\left(\beta x\right)>x\quad\textup{implies}\quad x<m\left(\beta\right).\label{eq:m_beta_ineq}
\end{equation}

Let $0\leq\beta_{1}<\beta_{2}$. As the function $x\in\left[0,\infty\right)\mapsto\tanh x\in\left[0,1\right)$
is strictly increasing, we have
\begin{equation}
\tanh\left(\beta_{1}x\right)<\tanh\left(\beta_{2}x\right),\quad x\in\left(0,\infty\right).\label{eq:tanh_increasing}
\end{equation}
Therefore,
\[
m\left(\beta_{1}\right)=\tanh\left(\beta_{1}m\left(\beta_{1}\right)\right)<\tanh\left(\beta_{2}m\left(\beta_{1}\right)\right),
\]
where the equality is by definition of $m\left(\beta_{1}\right)$
and the inequality due to (\ref{eq:tanh_increasing}). By (\ref{eq:m_beta_ineq}),
$m\left(\beta_{1}\right)<m\left(\beta_{2}\right)$ holds.

For the limit statement $\lim_{\beta\rightarrow\infty}m\left(\beta\right)=1$,
fix $x\in\left(0,1\right)$ and consider $\lim_{\beta\rightarrow\infty}\tanh\left(\beta x\right)=1$.
Therefore, for any $\beta$ large enough, we have $\tanh\left(\beta x\right)>x$,
and by (\ref{eq:m_beta_ineq}), $x<m\left(\beta\right)$. This proves
$\lim_{\beta\rightarrow\infty}m\left(\beta\right)=1$.
\end{proof}
\begin{lem}
\label{lem:m_beta_diff}The mapping $m:\left(1,\infty\right)\rightarrow\left[0,1\right]$
is continuously differentiable.
\end{lem}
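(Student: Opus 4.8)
The plan is to realise $m$ locally as the solution of an implicit equation and then invoke the implicit function theorem. Define $F:\left(1,\infty\right)\times\IR\rightarrow\IR$ by $F\left(\beta,x\right)\coloneq\tanh\left(\beta x\right)-x$; this is $C^{\infty}$, and by the definition of $m$ we have $F\left(\beta,m\left(\beta\right)\right)=0$ for every $\beta>1$. To apply the implicit function theorem at a fixed $\beta_{0}>1$ I need $\partial F/\partial x\neq0$ at the point $\left(\beta_{0},m\left(\beta_{0}\right)\right)$, where
\[
\frac{\partial F}{\partial x}\left(\beta,x\right)=\beta\bigl(1-\tanh^{2}\left(\beta x\right)\bigr)-1.
\]

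First I would establish this non-degeneracy. Fix $\beta_{0}>1$ and set $g\left(x\right)\coloneq\tanh\left(\beta_{0}x\right)-x$ on $\left[0,\infty\right)$. A direct computation gives $g'\left(0\right)=\beta_{0}-1>0$ and $g''\left(x\right)=-2\beta_{0}^{2}\tanh\left(\beta_{0}x\right)\operatorname{sech}^{2}\left(\beta_{0}x\right)<0$ for $x>0$, so $g'$ is strictly decreasing on $\left(0,\infty\right)$; combined with $g\left(0\right)=0$ and $g\left(x\right)\rightarrow-\infty$ as $x\rightarrow\infty$, this shows that $g$ increases to a unique positive maximum and then decreases through exactly one positive zero, which is $m\left(\beta_{0}\right)$, at which $g$ is strictly decreasing, i.e.\ $g'\left(m\left(\beta_{0}\right)\right)<0$. (Equivalently, writing $u\coloneq\beta_{0}m\left(\beta_{0}\right)>0$ and using $m\left(\beta_{0}\right)=\tanh u$ and $\beta_{0}=u/\tanh u$, one finds $\partial F/\partial x\left(\beta_{0},m\left(\beta_{0}\right)\right)=2u/\sinh\left(2u\right)-1<0$, since $\sinh t>t$ for $t>0$.) In either case $\partial F/\partial x\left(\beta_{0},m\left(\beta_{0}\right)\right)<0\neq0$.

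Next, the implicit function theorem yields an open interval around $\beta_{0}$ and a $C^{1}$ function $\varphi$ on it with $\varphi\left(\beta_{0}\right)=m\left(\beta_{0}\right)$ and $F\left(\beta,\varphi\left(\beta\right)\right)=0$ there. It remains to identify $\varphi$ with $m$. Since $\varphi$ is continuous and $\varphi\left(\beta_{0}\right)=m\left(\beta_{0}\right)>0$, we have $\varphi\left(\beta\right)>0$ for $\beta$ in a possibly smaller neighbourhood of $\beta_{0}$; but the concavity argument above applies verbatim to every such $\beta>1$ and shows that $\tanh\left(\beta\,\cdot\,\right)-\,\cdot\,$ has a unique positive zero, namely $m\left(\beta\right)$. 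Hence $\varphi\left(\beta\right)=m\left(\beta\right)$ near $\beta_{0}$, so $m$ is $C^{1}$ in a neighbourhood of $\beta_{0}$; as $\beta_{0}>1$ was arbitrary, $m\in C^{1}\left(\left(1,\infty\right)\right)$.

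The only genuine obstacle is the non-degeneracy $\partial F/\partial x\neq0$ at $\left(\beta,m\left(\beta\right)\right)$; the rest is a routine application of the implicit function theorem together with the uniqueness of the positive root. That obstacle is resolved by the strict concavity of $x\mapsto\tanh\left(\beta x\right)-x$ on $\left(0,\infty\right)$, which forces the largest zero to be a transversal crossing rather than a tangency.
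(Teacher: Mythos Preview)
Your proof is correct and follows essentially the same route as the paper: apply the implicit function theorem to $F(\beta,x)=\tanh(\beta x)-x$ and read off $C^{1}$-regularity of $m$. Your treatment is in fact more careful than the paper's, since you verify the non-degeneracy $\partial F/\partial x\neq 0$ specifically at the point $(\beta,m(\beta))$ (the paper's displayed derivative is miscomputed, though its conclusion is right) and you add the identification step $\varphi=m$ via uniqueness of the positive root.
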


\begin{proof}
Let $h:\left(1,\infty\right)\times\left(0,\infty\right)\rightarrow\IR$
be defined by $h\left(\beta,x\right)\coloneq\tanh\left(\beta x\right)-x$
for all $\left(\beta,x\right)\in\left(1,\infty\right)\times\left(0,\infty\right)$.
By definition of $m\left(\beta\right)$, we have, for all $\beta\in\left(1,\infty\right)$,
$h\left(\beta,m\left(\beta\right)\right)=0$. The function $h$ is
continuously differentiable, and
\[
\frac{\textup{d}h\left(\beta,x\right)}{\textup{d}x}=-\tanh^{2}\left(\beta x\right)\neq0,\quad\left(\beta,x\right)\in\left(1,\infty\right)\times\left(0,\infty\right).
\]
Therefore, the implicit function theorem applies, and the function
$\beta\in\left(1,\infty\right)\mapsto m\left(\beta\right)\in\left[0,1\right]$
is continuously differentiable.
\end{proof}
In this section, we will be working with a single group $\lambda$
at a time and the corresponding marginal distribution of $\IP_{\beta_{\lambda},N_{\lambda}}$
of $\IP_{\boldsymbol{\beta},\boldsymbol{N}}$, and therefore there
will be no confusion if we omit the subindex $\lambda$ from all our
expressions to improve readability. As such, we will be writing $\IP_{\beta,N}$
instead of $\IP_{\beta_{\lambda},N_{\lambda}}$, $T$ instead of $\left(\boldsymbol{T}\right)_{\lambda}$,
$\hat{\beta}_{N}^{\infty}$ for $\hat{\beta}_{N_{\lambda}}^{\infty}\left(\lambda\right)$,
$X_{i}$ and not $X_{\lambda i}$, etc. We will return to the multi-group
setting in Section \ref{sec:Optimal-Weights} about the optimal weights
in two-tier voting systems.

Propositions \ref{prop:appr_correlations} and \ref{prop:appr_moments}
will allow us to define the estimator $\hat{\beta}_{N}^{\infty}$
based on the approximations for $\IE_{\beta,N}S^{2}$ provided in
Proposition \ref{prop:appr_moments}. These results are known for
$\beta\geq0$. We supply a proof for all $\beta\in\IR$ for the readers'
convenience. $k!!$ for any $k\in\IN$ stands for the double factorial
\[
k!!=\prod_{1\leq\ell\leq k,\,k-\ell\hspace*{-0.22cm}\mod 2\,=\,0}\ell.
\]

\begin{defn}
Real-valued sequences $\left(f_{n}\right)_{n\in\IN},\left(g_{n}\right)_{n\in\IN}$
with $g_{n}\neq0$, $n\in\IN$, are called \emph{asymptotically equal}
(as $n\to\infty$), in short $f_{n}\approx g_{n}$, if
\[
\lim_{n\rightarrow\infty}\frac{f_{n}}{g_{n}}=1.
\]
\end{defn}

\begin{prop}
\label{prop:appr_correlations}For all $\beta\in\IR,\beta\neq1$,
and all $k\in\IN$, the correlation $\IE_{\beta,N}X_{1}\cdots X_{k}$
is equal to $0$ for all $k$ odd and all $N\in\IN$. For $k$ even,
$\IE_{\beta,N}X_{1}\cdots X_{k}$ is asymptotically equal to
\[
\IE_{\beta,N}X_{1}\cdots X_{k}\approx\begin{cases}
\left(k-1\right)!!\left(\frac{\beta}{1-\beta}\right)^{\frac{k}{2}}\,\frac{1}{N^{\frac{k}{2}}} & \text{if }\,\beta<1,\\
m\left(\beta\right)^{k} & \text{if }\,\beta>1,
\end{cases}
\]
where $0<m\left(\beta\right)<1$ for $\beta>1$ is the constant from
Definition \ref{def:m_beta}.

Let $k$ be even. There are constants $\boldsymbol{C}_{\textup{high}},\boldsymbol{C}_{\textup{low}}>0$
with the following property. For all $0<\beta<1$, the bound 
\begin{equation}
\left|\IE_{\beta,N}X_{1}\cdots X_{k}-\left(k-1\right)!!\left(\frac{\beta}{1-\beta}\right)^{\frac{k}{2}}\,\frac{1}{N^{\frac{k}{2}}}\right|<\boldsymbol{C}_{\textup{high}}\left(\frac{\ln N}{N}\right)^{\frac{k+2}{2}}\label{eq:corr_UB_h}
\end{equation}
holds for all $N\in\IN$. For all $\beta>1$, the bound 
\begin{equation}
\left|\IE_{\beta,N}X_{1}\cdots X_{k}-m\left(\beta\right)^{k}\right|<\boldsymbol{C}_{\textup{low}}\frac{\left(\ln N\right)^{\frac{3}{2}}}{\sqrt{N}}\label{eq:corr_UB_l}
\end{equation}
holds for all $N\in\IN$.
\end{prop}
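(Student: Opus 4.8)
I would combine the Hubbard--Stratonovich (Gaussian) transformation with Laplace's method, keeping careful track of the $\beta$-dependence in order to obtain the quantitative estimates. For $k$ odd the claim is immediate: $\IP_{\beta,N}$ is invariant under the global flip $\left(x_1,\dots,x_N\right)\mapsto\left(-x_1,\dots,-x_N\right)$, under which $X_1\cdots X_k$ changes sign, so $\IE_{\beta,N}X_1\cdots X_k=0$. Assume henceforth that $k$ is even. For $\beta\ge 0$ I would insert the identity $e^{\beta S^2/(2N)}=\left(2\pi\right)^{-1/2}\int_{\IR}\exp\left(-t^2/2+\sqrt{\beta/N}\,tS\right)\,dt$ into both the numerator $Z_{\beta,N}^{-1}\sum_{x}x_1\cdots x_k\,e^{\beta S^2/(2N)}$ and the partition function $Z_{\beta,N}$; the sum over spin configurations then factorizes, with the $k$ distinguished spins each contributing a factor $2\sinh\left(\sqrt{\beta/N}\,t\right)$ and the remaining $N-k$ a factor $2\cosh\left(\sqrt{\beta/N}\,t\right)$. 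After substituting $t=\sqrt{N}\,s$ and cancelling common prefactors I obtain the representation
\[
\IE_{\beta,N}X_1\cdots X_k=\frac{\int_{\IR}e^{N\phi(s)}\tanh\left(\sqrt\beta\,s\right)^k\,ds}{\int_{\IR}e^{N\phi(s)}\,ds},\qquad \phi(s):=-\frac{s^2}{2}+\ln\cosh\left(\sqrt\beta\,s\right),
\]
which for $\beta<0$ is read with $\sqrt\beta=i\sqrt{|\beta|}$, so that $\cosh,\tanh$ become $\cos, i\tan$; the right-hand side remains real since $k$ is even, and away from $s=0$ the integrand is controlled by $e^{-Ns^2/2}$, the oscillating sign of $\left(2\cos\right)^{N-k}$ being harmless.

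The second step is the asymptotic analysis of this ratio by Laplace's method. One computes $\phi'(s)=-s+\sqrt\beta\tanh\left(\sqrt\beta\,s\right)$, $\phi''(0)=\beta-1$, and $\phi(s)=-\frac{1-\beta}{2}s^2-\frac{\beta^2}{12}s^4+O\left(s^6\right)$, so critical points of $\phi$ correspond via $m=s/\sqrt\beta$ to solutions of the Curie--Weiss equation. For $\beta<1$, $s=0$ is the unique maximum; localizing to $|s|\le\delta$, substituting $s=u/\sqrt{N}$, and using $\tanh\left(\sqrt\beta\,s\right)^k=\beta^{k/2}s^k\left(1+O\left(s^2\right)\right)$ together with the fact that the $k$-th moment of $\mathcal{N}\left(0,\left(1-\beta\right)^{-1}\right)$ equals $\left(k-1\right)!!\left(1-\beta\right)^{-k/2}$ yields the leading term $\left(k-1\right)!!\left(\beta/(1-\beta)\right)^{k/2}N^{-k/2}$. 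For $\beta>1$, $\phi$ has two symmetric nondegenerate maxima at $\pm s^\ast$ with $\tanh\left(\sqrt\beta\,s^\ast\right)=m(\beta)$, so $\tanh\left(\pm\sqrt\beta\,s^\ast\right)^k=m(\beta)^k$ (as $k$ is even) and $\phi(s^\ast)=\phi(-s^\ast)$; Laplace's method localized at the two peaks, which contribute equally, gives $\IE_{\beta,N}X_1\cdots X_k\to m(\beta)^k$, with $0<m(\beta)<1$ by Lemma \ref{lem:m_beta_increasing}.

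The third and most delicate step is the quantitative bounds (\ref{eq:corr_UB_h}) and (\ref{eq:corr_UB_l}). I would take the localization radius to be $\delta_N:=C_0\sqrt{(\ln N)/N}$ in the variable $s$. Outside the window, for $\beta$ ranging over a compact subinterval avoiding $1$ the curvature at the relevant maximum is bounded below, so $\phi(s)-\phi(s_{\max})\le-c\,\delta_N^2=-cC_0^2(\ln N)/N$ there, whence $\int_{|s-s_{\max}|>\delta_N}e^{N\phi}\le e^{N\phi(s_{\max})}N^{-p}$ with $p$ as large as we wish (taking $C_0$ large), which is negligible next to the principal peak(s) of width $\sim N^{-1/2}$. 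Inside the window, Taylor-expanding $\phi$ (its negative $s^4$-term bounding the remainder) and $\tanh\left(\sqrt\beta\,s\right)^k$, bounding the resulting remainders crudely by their suprema over the window of radius $\delta_N$ — of size $\left(\ln N/N\right)^{(k+2)/2}$ for $\beta<1$ and $\left(\ln N\right)^{3/2}N^{-1/2}$ for $\beta>1$, the powers of $\ln N$ being inherited from $\delta_N$ — and evaluating the leftover Gaussian integrals produces the stated error terms. The coefficients entering these remainders are bounded uniformly precisely when $\beta$ is confined to a compact set not containing $1$, which is the regime in which (\ref{eq:corr_UB_h}) and (\ref{eq:corr_UB_l}) are subsequently applied (the intervals $I_h=\left[0,b_1\right]$ and $I_l=\left[b_2,\infty\right)$ of Definition \ref{def:intervals}).

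The principal difficulty is exactly this uniformity in $\beta$, and it sits entirely at the critical value $\beta=1$: there $\phi''$ degenerates and, for $\beta>1$, the two maxima $\pm s^\ast$ coalesce with the saddle at the origin, so the naive Laplace coefficients blow up. Controlling this crossover — and hence pinning down how $\boldsymbol{C}_{\textup{high}}$ and $\boldsymbol{C}_{\textup{low}}$ must depend on the distance of $\beta$ from $1$, equivalently restricting to $\beta$ bounded away from the critical regime — is where the real work lies; away from $1$ the estimates are uniform and essentially routine. A secondary technical point is to make the $\beta<0$ (oscillatory Gaussian) representation rigorous, i.e. to verify that its integrand is still dominated by a neighbourhood of $s=0$ despite the sign oscillations of $\left(2\cos\right)^{N-k}$.
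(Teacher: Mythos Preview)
Your approach is essentially the same as the paper's: Hubbard--Stratonovich transformation followed by Laplace's method with a localization window of radius $\sim\sqrt{(\ln N)/N}$. The paper works with $F(z)=z^2/(2\beta)-\ln\cosh z$ (via the substitution $z=s\sqrt{\beta/N}$) rather than your $\phi(s)=-s^2/2+\ln\cosh(\sqrt\beta\,s)$, but these are the same function up to a change of variables. Two minor differences: for odd $k$ the paper observes that the numerator integrand is odd (rather than invoking the global spin-flip symmetry directly), and for $\beta<0$ the paper writes out the Fourier identity $e^{-a^2/2}=(2\pi)^{-1/2}\int e^{ias-s^2/2}\,ds$ separately (yielding $\cos,\sin$ in place of $\cosh,\sinh$) rather than framing it as analytic continuation; in the high-temperature case the paper then Taylor-expands $\tan^k$ and $\cos^N$ pointwise and applies dominated convergence with majorant $e^{-s^2/2}$, which handles the oscillation you flagged. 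Your observation about uniformity in $\beta$ is well taken: the paper's proof explicitly allows its constants $C$ to depend on $\beta$ (and $k$), so despite the quantifier order in the statement, $\boldsymbol{C}_{\textup{high}}$ and $\boldsymbol{C}_{\textup{low}}$ should be read as $\beta$-dependent --- exactly the issue you isolated as the main difficulty near $\beta=1$.
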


\begin{proof}
Let first $\beta<0$. We set $\gamma\coloneq-\beta>0$. We first find
a de Finetti representation for the expression $\IE_{\beta,N}X_{1}\cdots X_{k}$.
For any $\left(x_{1},\ldots,x_{N}\right)\in\Omega_{N}$, we express
the probabilities $\IP_{\beta,N}\left(X_{1}=x_{1},\ldots,X_{N}=x_{N}\right)$
in integral form (referred to as a Hubbard-Stratonovich transformation).
Let $a\in\IR$. Then
\[
\exp\left(-\frac{a^{2}}{2}\right)=\frac{1}{\sqrt{2\pi}}\int_{\IR}\exp\left(ias\right)\exp\left(-\frac{s^{2}}{2}\right)\textup{d}s
\]
holds due to the definition of the characteristic function of a standard
normal distribution. In particular, if $a=\sqrt{\frac{\gamma}{N}}\sum_{j=1}^{N}x_{j}$,
then
\[
\exp\left(-\frac{\gamma}{2N}\left(\sum_{j=1}^{N}x_{j}\right)^{2}\right)=\frac{1}{\sqrt{2\pi}}\int_{\IR}\exp\left(is\sqrt{\frac{\gamma}{N}}\sum_{j=1}^{N}x_{j}\right)\exp\left(-\frac{s^{2}}{2}\right)\textup{d}s.
\]
We use the previous expression for $\exp\left(-\frac{\gamma}{2N}\left(\sum_{j=1}^{N}x_{j}\right)^{2}\right)$
to calculate the expectation
\begin{equation}
\IE_{\beta,N}X_{1}\cdots X_{k}=\frac{\sum_{x\in\Omega_{N}}\exp\left(-\frac{\gamma}{2N}\left(\sum_{j=1}^{N}x_{j}\right)^{2}\right)x_{1}\cdots x_{k}}{\sum_{y\in\Omega_{N}}\exp\left(-\frac{\gamma}{2N}\left(\sum_{j=1}^{N}y_{j}\right)^{2}\right)}.\label{eq:corr_sum_formula}
\end{equation}
We first calculate the denominator
\begin{align*}
\sum_{y\in\Omega_{N}}\exp\left(-\frac{\gamma}{2N}\left(\sum_{j=1}^{N}y_{j}\right)^{2}\right) & =\sum_{y\in\Omega_{N}}\frac{1}{\sqrt{2\pi}}\int_{\IR}\exp\left(is\sqrt{\frac{\gamma}{N}}\sum_{j=1}^{N}y_{j}\right)\exp\left(-\frac{s^{2}}{2}\right)\textup{d}s\\
 & =\frac{1}{\sqrt{2\pi}}\int_{\IR}\exp\left(-\frac{s^{2}}{2}\right)\sum_{y\in\Omega_{N}}\prod_{j=1}^{N}\exp\left(is\sqrt{\frac{\gamma}{N}}y_{j}\right)\textup{d}s\\
 & =\frac{1}{\sqrt{2\pi}}\int_{\IR}\exp\left(-\frac{s^{2}}{2}\right)\prod_{j=1}^{N}\sum_{y_{j}\in\Omega_{1}}\exp\left(is\sqrt{\frac{\gamma}{N}}y_{j}\right)\textup{d}s\\
 & =\frac{1}{\sqrt{2\pi}}\int_{\IR}\exp\left(-\frac{s^{2}}{2}\right)\prod_{j=1}^{N}\left(\exp\left(is\sqrt{\frac{\gamma}{N}}\right)+\exp\left(-is\sqrt{\frac{\gamma}{N}}\right)\right)\textup{d}s\\
 & =\frac{1}{\sqrt{2\pi}}\int_{\IR}\exp\left(-\frac{s^{2}}{2}\right)\prod_{j=1}^{N}2\cos\left(s\sqrt{\frac{\gamma}{N}}\right)\;\textup{d}s\\
 & =2^{N}\frac{1}{\sqrt{2\pi}}\int_{\IR}\exp\left(-\frac{s^{2}}{2}\right)\cos^{N}\left(s\sqrt{\frac{\gamma}{N}}\right)\;\textup{d}s,
\end{align*}
 and next the numerator
\begin{align*}
 & \quad\sum_{x\in\Omega_{N}}\exp\left(-\frac{\gamma}{2N}\left(\sum_{j=1}^{N}x_{j}\right)^{2}\right)x_{1}\cdots x_{k}\\
 & =\sum_{x\in\Omega_{N}}\frac{1}{\sqrt{2\pi}}\int_{\IR}\exp\left(-\frac{s^{2}}{2}\right)\prod_{j=1}^{N}\exp\left(is\sqrt{\frac{\gamma}{N}}x_{j}\right)x_{1}\cdots x_{k}\,\textup{d}s\\
 & =\frac{1}{\sqrt{2\pi}}\int_{\IR}\exp\left(-\frac{s^{2}}{2}\right)\sum_{x\in\Omega_{N}}x_{1}\cdots x_{k}\prod_{j=1}^{N}\exp\left(is\sqrt{\frac{\gamma}{N}}x_{j}\right)\textup{d}s\\
 & =\frac{1}{\sqrt{2\pi}}\int_{\IR}\exp\left(-\frac{s^{2}}{2}\right)\prod_{j=1}^{k}\left[\sum_{x_{j}\in\Omega_{1}}x_{j}\exp\left(is\sqrt{\frac{\gamma}{N}}x_{j}\right)\right]\\
 & \quad\cdot\prod_{j=k+1}^{N}\left[\sum_{x_{j}\in\Omega_{1}}\exp\left(is\sqrt{\frac{\gamma}{N}}x_{j}\right)\right]\textup{d}s\\
 & =\frac{1}{\sqrt{2\pi}}\int_{\IR}\exp\left(-\frac{s^{2}}{2}\right)\prod_{j=1}^{k}\left(\exp\left(is\sqrt{\frac{\gamma}{N}}\right)-\exp\left(-is\sqrt{\frac{\gamma}{N}}\right)\right)\\
 & \quad\cdot\prod_{j=k+1}^{N}\left(\exp\left(is\sqrt{\frac{\gamma}{N}}\right)+\exp\left(-is\sqrt{\frac{\gamma}{N}}\right)\right)\textup{d}s\\
 & =\frac{1}{\sqrt{2\pi}}\int_{\IR}\exp\left(-\frac{s^{2}}{2}\right)\prod_{j=1}^{k}2i\sin\left(s\sqrt{\frac{\gamma}{N}}\right)\prod_{j=k+1}^{N}2\cos\left(s\sqrt{\frac{\gamma}{N}}\right)\;\textup{d}s\\
 & =2^{N}i^{k}\frac{1}{\sqrt{2\pi}}\int_{\IR}\exp\left(-\frac{s^{2}}{2}\right)\sin^{k}\left(s\sqrt{\frac{\gamma}{N}}\right)\;\cos^{N-k}\left(s\sqrt{\frac{\gamma}{N}}\right)\;\textup{d}s.
\end{align*}
Putting together the last two displays, we obtain
\begin{align}
\IE_{\beta,N}X_{1}\cdots X_{k} & =\frac{i^{k}\int_{\IR}\exp\left(-\frac{s^{2}}{2}\right)\sin^{k}\left(s\sqrt{\frac{\gamma}{N}}\right)\;\cos^{N-k}\left(s\sqrt{\frac{\gamma}{N}}\right)\;\textup{d}s}{\int_{\IR}\exp\left(-\frac{s^{2}}{2}\right)\cos^{N}\left(s\sqrt{\frac{\gamma}{N}}\right)\;\textup{d}s}\nonumber \\
 & =\frac{i^{k}\int_{\IR}\exp\left(-\frac{s^{2}}{2}\right)\tan^{k}\left(\sqrt{\frac{\gamma}{N}}s\right)\;\cos^{N}\left(\sqrt{\frac{\gamma}{N}}s\right)\;\textup{d}s}{\int_{\IR}\exp\left(-\frac{s^{2}}{2}\right)\cos^{N}\left(\sqrt{\frac{\gamma}{N}}s\right)\;\textup{d}s}\label{eq:corr_neg_beta}
\end{align}
At this point we note that the numerator is 0 if and only if $k$
is odd, which gives the claim $\IE_{\beta,N}X_{1}\cdots X_{k}=0$
for all $\beta<0$ and all $k$ odd. Assume from now on that $k$
is even.

We expand the trigonometric functions in Taylor series. Fix $s\in\IR$.
We have
\begin{align*}
\tan^{k}\left(\sqrt{\frac{\gamma}{N}}s\right) & =\left(\sqrt{\frac{\gamma}{N}}s+\frac{1}{3}\left(\sqrt{\frac{\gamma}{N}}s\right)^{3}+\frac{2}{15}\left(\sqrt{\frac{\gamma}{N}}s\right)^{5}+\cdots\right)^{k}\\
 & =\frac{\gamma^{\frac{k}{2}}s^{k}}{N^{\frac{k}{2}}}+O\left(\frac{1}{N^{\frac{k}{2}+1}}\right),
\end{align*}
so
\[
\tan^{k}\left(\sqrt{\frac{\gamma}{N}}s\right)-\frac{\gamma^{\frac{k}{2}}s^{k}}{N^{\frac{k}{2}}}=O\left(\frac{1}{N^{\frac{k}{2}+1}}\right).
\]
Also,
\[
\cos^{N}z=\left(1-\frac{1}{2}\left(\sqrt{\frac{\gamma}{N}}s\right)^{2}+\frac{1}{4!}\left(\sqrt{\frac{\gamma}{N}}s\right)^{4}-+\cdots\right)^{N}=\left(1-\frac{1}{2}\frac{\gamma s^{2}}{N}+O\left(\frac{1}{N^{2}}\right)\right)^{N}\xrightarrow[N\rightarrow\infty]{}\exp\left(-\frac{1}{2}\gamma s^{2}\right)
\]
holds.

As
\begin{align*}
\exp\left(-\frac{s^{2}}{2}\right)\sin^{k}\left(s\sqrt{\frac{\gamma}{N}}\right)\;\cos^{N-k}\left(s\sqrt{\frac{\gamma}{N}}\right) & \leq\exp\left(-\frac{s^{2}}{2}\right),\\
\exp\left(-\frac{s^{2}}{2}\right)\cos^{N}\left(\sqrt{\frac{\gamma}{N}}s\right) & \leq\exp\left(-\frac{s^{2}}{2}\right)
\end{align*}
for all $N\in\IN$, $s\mapsto\exp\left(-\frac{s^{2}}{2}\right)$ is
a majorant for the integrand in both the numerator and the denominator
of (\ref{eq:corr_neg_beta}) for each $N\in\IN$. An application of
the theorem of dominated convergence yields
\begin{align*}
\IE_{\beta,N}X_{1}\cdots X_{k} & \approx\frac{\frac{1}{N^{\frac{k}{2}}}i^{k}\gamma^{\frac{k}{2}}\int_{\IR}\exp\left(-\frac{s^{2}}{2}\right)s^{k}\exp\left(-\frac{1}{2}\gamma s^{2}\right)\textup{d}s}{\int_{\IR}\exp\left(-\frac{s^{2}}{2}\right)\exp\left(-\frac{1}{2}\gamma s^{2}\right)\textup{d}s}\\
 & \approx\frac{i^{k}\gamma^{\frac{k}{2}}\int_{\IR}\exp\left(-\frac{1+\gamma}{2}s^{2}\right)s^{k}\textup{d}s}{\int_{\IR}\exp\left(-\frac{1+\gamma}{2}s^{2}\right)\textup{d}s}\frac{1}{N^{\frac{k}{2}}}\\
 & =i^{k}\gamma^{\frac{k}{2}}\frac{\sqrt{\frac{1+\gamma}{2\pi}}\int_{\IR}\exp\left(-\frac{1+\gamma}{2}s^{2}\right)s^{k}\textup{d}s}{\sqrt{\frac{1+\gamma}{2\pi}}\int_{\IR}\exp\left(-\frac{1+\gamma}{2}s^{2}\right)\textup{d}s}\frac{1}{N^{\frac{k}{2}}}\\
 & =i^{k}\gamma^{\frac{k}{2}}\frac{1}{N^{\frac{k}{2}}}\left(k-1\right)!!\left(\frac{1}{1+\gamma}\right)^{\frac{k}{2}}.
\end{align*}
Taking into account $i^{k}=\left(-1\right)^{\frac{k}{2}}$ and $\gamma=-\beta$,
we obtain the claim
\[
\IE_{\beta,N}X_{1}\cdots X_{k}\approx\left(k-1\right)!!\left(\frac{\beta}{1-\beta}\right)^{\frac{k}{2}}
\]
for all $\beta<0$ and all $k$ even.

If $\beta=0$, the random variables $X_{1},\ldots,X_{N}$ are independent.
Therefore,
\[
\IE_{0,N}X_{1}\cdots X_{k}=\IE_{\beta,N}X_{1}\cdots\IE_{0,N}X_{k}=0
\]
holds for all $k$.

Now let $\beta>0$. Let $a\in\IR$. The calculation
\begin{align*}
1 & =\frac{1}{\sqrt{2\pi}}\int_{\IR}\exp\left(-\frac{\left(s-a\right)^{2}}{2}\right)\textup{d}s\\
 & =\frac{1}{\sqrt{2\pi}}\int_{\IR}\exp\left(-\frac{s^{2}}{2}\right)\exp\left(as\right)\exp\left(-\frac{a^{2}}{2}\right)\textup{d}s,
\end{align*}
gives
\[
\exp\left(\frac{a^{2}}{2}\right)=\frac{1}{\sqrt{2\pi}}\int_{\IR}\exp\left(-\frac{s^{2}}{2}\right)\exp\left(as\right)\textup{d}s.
\]
We apply this formula for $a=\sqrt{\frac{\beta}{N}}\sum_{j=1}^{N}x_{j}$:
\[
\exp\left(\frac{\beta}{2N}\left(\sum_{j=1}^{N}x_{j}\right)^{2}\right)=\frac{1}{\sqrt{2\pi}}\int_{\IR}\exp\left(-\frac{s^{2}}{2}\right)\exp\left(s\sqrt{\frac{\beta}{N}}\sum_{j=1}^{N}x_{j}\right)\textup{d}s.
\]
As above, we express the correlation $\IE_{\beta,N}X_{1}\cdots X_{k}$
as in (\ref{eq:corr_sum_formula}), and calculate first the denominator
\begin{align*}
\sum_{y\in\Omega_{N}}\exp\left(\frac{\beta}{2N}\left(\sum_{j=1}^{N}y_{j}\right)^{2}\right) & =\sum_{y\in\Omega_{N}}\frac{1}{\sqrt{2\pi}}\int_{\IR}\exp\left(-\frac{s^{2}}{2}\right)\exp\left(s\sqrt{\frac{\beta}{N}}\sum_{j=1}^{N}y_{j}\right)\textup{d}s\\
 & =\frac{1}{\sqrt{2\pi}}\int_{\IR}\exp\left(-\frac{s^{2}}{2}\right)\sum_{y\in\Omega_{N}}\prod_{j=1}^{N}\exp\left(s\sqrt{\frac{\beta}{N}}y_{j}\right)\textup{d}s\\
 & =\frac{1}{\sqrt{2\pi}}\int_{\IR}\exp\left(-\frac{s^{2}}{2}\right)\prod_{j=1}^{N}\sum_{y_{j}\in\Omega_{1}}\exp\left(s\sqrt{\frac{\beta}{N}}y_{j}\right)\textup{d}s\\
 & =\frac{1}{\sqrt{2\pi}}\int_{\IR}\exp\left(-\frac{s^{2}}{2}\right)\prod_{j=1}^{N}\left(\exp\left(s\sqrt{\frac{\beta}{N}}\right)+\exp\left(-s\sqrt{\frac{\beta}{N}}\right)\right)\textup{d}s\\
 & =\frac{1}{\sqrt{2\pi}}\int_{\IR}\exp\left(-\frac{s^{2}}{2}\right)\prod_{j=1}^{N}2\cosh\left(s\sqrt{\frac{\beta}{N}}\right)\;\textup{d}s\\
 & =2^{N}\frac{1}{\sqrt{2\pi}}\int_{\IR}\exp\left(-\frac{s^{2}}{2}\right)\cosh^{N}\left(s\sqrt{\frac{\beta}{N}}\right)\;\textup{d}s\\
 & =2^{N}\sqrt{\frac{N}{2\pi\beta}}\int_{\IR}\exp\left(-\frac{N}{\beta}\frac{z^{2}}{2}\right)\cosh^{N}z\;\textup{d}z\\
 & =2^{N}\sqrt{\frac{N}{2\pi\beta}}\int_{\IR}\exp\left(-N\left(\frac{z^{2}}{2\beta}-\ln\cosh z\right)\right)\;\textup{d}z
\end{align*}
where in the step before last we substituted $z\coloneq s\sqrt{\frac{\beta}{N}}$.
Next we calculate the numerator
\begin{align*}
\sum_{x\in\Omega_{N}}\exp\left(-\frac{\gamma}{2N}\left(\sum_{j=1}^{N}x_{j}\right)^{2}\right)x_{1}\cdots x_{k} & =\sum_{x\in\Omega_{N}}\frac{1}{\sqrt{2\pi}}\int_{\IR}\exp\left(-\frac{s^{2}}{2}\right)\exp\left(s\sqrt{\frac{\beta}{N}}\sum_{j=1}^{N}x_{j}\right)x_{1}\cdots x_{k}\,\textup{d}s\\
 & =\frac{1}{\sqrt{2\pi}}\int_{\IR}\exp\left(-\frac{s^{2}}{2}\right)\sum_{x\in\Omega_{N}}x_{1}\cdots x_{k}\prod_{j=1}^{N}\exp\left(s\sqrt{\frac{\beta}{N}}x_{j}\right)\textup{d}s\\
 & =\frac{1}{\sqrt{2\pi}}\int_{\IR}\exp\left(-\frac{s^{2}}{2}\right)\prod_{j=1}^{k}\left[\sum_{x_{j}\in\Omega_{1}}x_{j}\exp\left(s\sqrt{\frac{\beta}{N}}x_{j}\right)\right]\\
 & \quad\cdot\prod_{j=k+1}^{N}\left[\sum_{x_{j}\in\Omega_{1}}\exp\left(s\sqrt{\frac{\beta}{N}}x_{j}\right)\right]\textup{d}s\\
 & =\frac{1}{\sqrt{2\pi}}\int_{\IR}\exp\left(-\frac{s^{2}}{2}\right)\prod_{j=1}^{k}\left(\exp\left(s\sqrt{\frac{\beta}{N}}x_{j}\right)-\exp\left(-s\sqrt{\frac{\beta}{N}}x_{j}\right)\right)\\
 & \quad\cdot\prod_{j=k+1}^{N}\left(\exp\left(s\sqrt{\frac{\beta}{N}}x_{j}\right)+\exp\left(-s\sqrt{\frac{\beta}{N}}x_{j}\right)\right)\textup{d}s\\
 & =\frac{1}{\sqrt{2\pi}}\int_{\IR}\exp\left(-\frac{s^{2}}{2}\right)\prod_{j=1}^{k}2\sinh\left(s\sqrt{\frac{\gamma}{N}}\right)\prod_{j=k+1}^{N}2\cosh\left(s\sqrt{\frac{\gamma}{N}}\right)\;\textup{d}s\\
 & =2^{N}\frac{1}{\sqrt{2\pi}}\int_{\IR}\exp\left(-\frac{s^{2}}{2}\right)\sinh^{k}\left(s\sqrt{\frac{\gamma}{N}}\right)\;\cosh^{N-k}\left(s\sqrt{\frac{\gamma}{N}}\right)\;\textup{d}s\\
 & =2^{N}\frac{1}{\sqrt{2\pi}}\int_{\IR}\exp\left(-\frac{s^{2}}{2}\right)\tanh^{k}\left(s\sqrt{\frac{\gamma}{N}}\right)\;\cosh^{N}\left(s\sqrt{\frac{\gamma}{N}}\right)\;\textup{d}s\\
 & =2^{N}\sqrt{\frac{N}{2\pi\beta}}\int_{\IR}\exp\left(-\frac{N}{\beta}\frac{z^{2}}{2}\right)\tanh^{k}z\;\cosh^{N}z\;\textup{d}z\\
 & =2^{N}\sqrt{\frac{N}{2\pi\beta}}\int_{\IR}\exp\left(-N\left(\frac{z^{2}}{2\beta}-\ln\cosh z\right)\right)\tanh^{k}z\;\textup{d}z.
\end{align*}
The numerator in
\begin{equation}
\IE_{\beta,N}X_{1}\cdots X_{k}=\frac{\int_{\IR}\exp\left(-N\left(\frac{z^{2}}{2\beta}-\ln\cosh z\right)\right)\tanh^{k}z\;\textup{d}z}{\int_{\IR}\exp\left(-N\left(\frac{z^{2}}{2\beta}-\ln\cosh z\right)\right)\;\textup{d}z}\label{eq:corr_int}
\end{equation}
is 0 if and only if $k$ is odd, thus we have $\IE_{\beta,N}X_{1}\cdots X_{k}=0$
for all $\beta>0$ and all $k$ odd. From now on assume $k$ is even.
We determine the minima of the function
\[
F\left(z\right)\coloneq\frac{z^{2}}{2\beta}-\ln\cosh z,\quad z\in\IR.
\]
The first derivative of $F$ is
\[
F'\left(z\right)=\frac{z}{\beta}-\tanh z.
\]
Equating $F'$ to 0 leads to
\[
z=\beta\tanh z.
\]
For $x=\tanh z$, we get
\[
x=\tanh\left(\beta x\right),
\]
which we recognise as the Curie-Weiss equation (\ref{eq:CW}). Hence,
for all $0<\beta<1$, there is unique solution $x=0$, and
\[
F'\left(z\right)=0\quad\iff\quad z=0.
\]
The second derivative of $F$ is
\begin{equation}
F''\left(z\right)=\frac{1}{\beta}-\frac{1}{\cosh^{2}z}.\label{eq:F^(2)}
\end{equation}
We have
\[
F''\left(0\right)=\frac{1}{\beta}-1>0
\]
due to $\beta<1$, and $0$ is the unique minimum of $F$. Also, $F$
is strictly convex. For $\beta>1$, there are three critical points
of $F$: $-\textup{artanh }m\left(\beta\right),0$, and $l\left(\beta\right)\coloneq\textup{artanh }m\left(\beta\right)$.
Since we have
\[
F''\left(0\right)=\frac{1}{\beta}-1<0,
\]
the origin is a maximum of $F$ for $\beta>1$, and as $F''$ is a
strictly increasing function with $\lim_{z\rightarrow\infty}F''\left(z\right)=\frac{1}{\beta}>0$,
there is a unique critical point of $F$ on $\left(0,\infty\right)$,
which is $l\left(\beta\right)$. $l\left(\beta\right)$ must therefore
be a minimum.

We calculate derivatives of third of fourth order which we will use
in the following. The third derivative of $F$ is
\[
F'''\left(z\right)=-\frac{\tanh z}{\cosh^{2}z},
\]
and we note that, for all $z\in\IR$, $\left|F'''\left(z\right)\right|\leq1$
holds. The fourth derivative is
\[
F^{(4)}\left(z\right)=\frac{2-6\tanh^{2}z}{\cosh^{2}z},
\]
and for all $z\in\IR$, we have $\left|F^{(4)}\left(z\right)\right|\leq4$.

Assume $\beta<1$ holds. Set $r_{N}\coloneq\sqrt{\frac{\left(k+2\right)\ln N}{2b\left(N-1\right)}}$,
$N\in\IN$. Let $B_{r_{N}}\left(0\right)$ be the open ball around
the origin of radius $r_{N}$ and $B_{r_{N}}\left(0\right)^{c}$ its
complement. By the discussion above, $F$ has a unique minimum at
0 and is strictly convex, and its Taylor expansion with the remainder
in Lagrange form is
\[
F\left(z\right)=\frac{F''\left(z\right)}{2}z^{2}+\frac{F^{(4)}\left(\zeta\right)}{4!}z^{4},\quad z\in B_{r_{N}}\left(0\right),
\]
where $\zeta$ lies between 0 and $z$. Also we note that the fourth
derivative of $F$, $F^{(4)}$, is uniformly bounded on $\IR$. Hence,
there is a constant $b>0$ such that for all $N\in\IN$ and all $z\in B_{r_{N}}\left(0\right)^{c}$,
\[
F\left(z\right)\geq bz^{2}.
\]

In this proof, we will use the symbol $C$ for positive constants
which are independent of $N$ but may depend

on $\beta$ and $k$. The instances of $C$ may not all have the same
value.

As a first step, we separate the integrals in (\ref{eq:corr_int})
into two integrals over $B_{r_{N}}\left(0\right)$ and $B_{r_{N}}\left(0\right)^{c}$:
\begin{align*}
\int_{\IR}\exp\left(-NF\left(z\right)\right)\tanh^{k}z\;\textup{d}z & =\int_{B_{r_{N}}\left(0\right)}\exp\left(-NF\left(z\right)\right)\tanh^{k}z\;\textup{d}z+\int_{B_{r_{N}}\left(0\right)^{c}}\exp\left(-NF\left(z\right)\right)\tanh^{k}z\;\textup{d}z.
\end{align*}
We obtain an upper bound for the tail integral:
\begin{align*}
\int_{B_{r_{N}}\left(0\right)^{c}}\exp\left(-NF\left(z\right)\right)\tanh^{k}z\;\textup{d}z & \leq\exp\left(-\left(N-1\right)br_{N}^{2}\right)\int_{B_{r_{N}}\left(0\right)^{c}}\exp\left(-F\left(z\right)\right)\tanh^{k}z\;\textup{d}z\\
 & =C\left(\frac{\ln N}{N}\right)^{\frac{k+2}{2}}.
\end{align*}
Analogously,
\[
\int_{B_{r_{N}}\left(0\right)^{c}}\exp\left(-NF\left(z\right)\right)\;\textup{d}z\leq C\left(\frac{\ln N}{N}\right)^{\frac{k+2}{2}}.
\]
In the next step, using a Taylor expansion of the exponential function
and the boundedness of $F^{(4)}$, we have the bound
\[
\left|\exp\left(\frac{F^{(4)}\left(\zeta\right)}{4!}z^{4}\right)-1\right|\leq Cr_{N}^{4}=C\left(\frac{\ln N}{N}\right)^{2},\quad z\in B_{r_{N}}\left(0\right),
\]
and therefore
\begin{align*}
 & \quad\left|\int_{B_{r_{N}}\left(0\right)}\exp\left(-NF\left(z\right)\right)\tanh^{k}z\;\textup{d}z-\int_{B_{r_{N}}\left(0\right)}\exp\left(-N\frac{F''\left(z\right)}{2}z^{2}\right)\tanh^{k}z\;\textup{d}z\right|\\
 & \leq\int_{B_{r_{N}}\left(0\right)}CN\left(\frac{\ln N}{N}\right)^{2}\tanh^{k}z\;\textup{d}z\\
 & \leq CN\left(\frac{\ln N}{N}\right)^{2}\int_{B_{r_{N}}\left(0\right)}z^{k}\;\textup{d}z\\
 & \leq CN\left(\frac{\ln N}{N}\right)^{2}r_{N}^{k}\\
 & =CN\left(\frac{\ln N}{N}\right)^{\frac{k}{2}+2}\\
 & \leq C\frac{\left(\ln N\right)^{\frac{k}{2}+2}}{N^{\frac{k}{2}+1}}.
\end{align*}
Similarly,
\begin{align*}
\left|\int_{B_{r_{N}}\left(0\right)}\exp\left(-NF\left(z\right)\right)\;\textup{d}z-\int_{B_{r_{N}}\left(0\right)}\exp\left(-N\frac{F''\left(z\right)}{2}z^{2}\right)\;\textup{d}z\right| & \leq C\frac{\left(\ln N\right)^{2}}{N}.
\end{align*}
By the same procedure as above, it can be seen that
\[
\left|\int_{B_{r_{N}}\left(0\right)}\exp\left(-N\frac{F''\left(z\right)}{2}z^{2}\right)z^{k}\;\textup{d}z-\int_{\IR}\exp\left(-N\frac{F''\left(z\right)}{2}z^{2}\right)z^{k}\;\textup{d}z\right|\leq C\left(\frac{\ln N}{N}\right)^{\frac{k+2}{2}}
\]
and
\[
\left|\int_{B_{r_{N}}\left(0\right)}\exp\left(-N\frac{F''\left(z\right)}{2}z^{2}\right)\;\textup{d}z-\int_{\IR}\exp\left(-N\frac{F''\left(z\right)}{2}z^{2}\right)\;\textup{d}z\right|\leq C\left(\frac{\ln N}{N}\right)^{\frac{k+2}{2}}
\]
hold.

Putting together the bounds obtained so far, we have
\[
\left|\int_{\IR}\exp\left(-NF\left(z\right)\right)\tanh^{k}z\;\textup{d}z-\int_{\IR}\exp\left(-N\frac{F''\left(z\right)}{2}z^{2}\right)z^{k}\;\textup{d}z\right|\leq C\left(\frac{\ln N}{N}\right)^{\frac{k+2}{2}}
\]
and
\[
\left|\int_{\IR}\exp\left(-NF\left(z\right)\right)\;\textup{d}z-\int_{\IR}\exp\left(-N\frac{F''\left(z\right)}{2}z^{2}\right)\;\textup{d}z\right|\leq C\frac{\left(\ln N\right)^{2}}{N}.
\]

Using the last two bounds, we calculate
\begin{align*}
\left|\IE_{\beta,N}X_{1}\cdots X_{k}-\frac{\int_{\IR}\exp\left(-N\frac{F''\left(z\right)}{2}z^{2}\right)z^{k}\;\textup{d}z}{\int_{\IR}\exp\left(-N\frac{F''\left(z\right)}{2}z^{2}\right)\;\textup{d}z}\right|\leq C\left(\frac{\ln N}{N}\right)^{\frac{k+2}{2}} & .
\end{align*}
This supplies the claimed upper bound (\ref{eq:corr_UB_h}). Next
we calculate the value
\begin{align*}
\frac{\int_{\IR}\exp\left(-N\frac{F''\left(0\right)}{2}z^{2}\right)z^{k}\;\textup{d}z}{\int_{\IR}\exp\left(-N\frac{F''\left(z\right)}{2}z^{2}\right)\;\textup{d}z} & =\frac{\frac{1}{\left(F''\left(0\right)N\right)^{\frac{k+1}{2}}}\int_{\IR}\exp\left(-\frac{y^{2}}{2}\right)y^{k}\;\textup{d}y}{\frac{1}{\left(F''\left(0\right)N\right)^{\frac{1}{2}}}\int_{\IR}\exp\left(-\frac{y^{2}}{2}\right)\;\textup{d}y}\\
 & =\frac{1}{\left(F''\left(0\right)N\right)^{\frac{k}{2}}}\left(k-1\right)!!\\
 & =\left(k-1\right)!!\left(\frac{\beta}{1-\beta}\right)^{\frac{k}{2}}\frac{1}{N^{\frac{k}{2}}}.
\end{align*}
Now let $\beta>1$. We use a Taylor expansion of $F$:
\begin{align*}
F\left(z\right) & =F\left(l\left(\beta\right)\right)+F'\left(l\left(\beta\right)\right)\left(z-l\left(\beta\right)\right)+\frac{F''\left(l\left(\beta\right)\right)}{2}\left(z-l\left(\beta\right)\right)^{2}+\frac{F'''\left(\zeta_{1}\right)}{3!}\left(z-l\left(\beta\right)\right)^{3},
\end{align*}
and so
\[
F\left(z\right)-F\left(l\left(\beta\right)\right)=\frac{F''\left(l\left(\beta\right)\right)}{2}\left(z-l\left(\beta\right)\right)^{2}+\frac{F'''\left(\zeta_{1}\right)}{3!}\left(z-l\left(\beta\right)\right)^{3},
\]
where $\zeta_{1}$ lies between $l\left(\beta\right)$ and $z$. Next
we do the same for $\tanh^{k}$:
\[
\tanh^{k}z=\left(m\left(\beta\right)+\left(1-m\left(\beta\right)^{2}\right)\left(z-l\left(\beta\right)\right)-\tanh\zeta_{2}\left(1-\tanh^{2}\zeta_{2}\right)\left(z-l\left(\beta\right)\right)^{2}\right)^{k},
\]
where $\zeta_{2}$ lies between $l\left(\beta\right)$ and $z$. Note
that the derivatives of all orders of the $\tanh$ function are bounded
on $\IR$.

Using (\ref{eq:corr_int}), we obtain
\begin{align*}
\IE_{\beta,N}X_{1}\cdots X_{k} & =\frac{\int_{\IR}\exp\left(-N\left(F\left(z\right)-F\left(l\left(\beta\right)\right)\right)\right)\tanh^{k}z\;\textup{d}z}{\int_{\IR}\exp\left(-N\left(F\left(z\right)-F\left(l\left(\beta\right)\right)\right)\right)\;\textup{d}z}\\
 & =\frac{\int_{\IR}\exp\left(-N\left(\frac{F''\left(l\left(\beta\right)\right)}{2}\left(z-l\left(\beta\right)\right)^{2}+\frac{F'''\left(\zeta_{1}\right)}{3!}\left(z-l\left(\beta\right)\right)^{3}\right)\right)\tanh^{k}z\;\textup{d}z}{\int_{\IR}\exp\left(-N\left(\frac{F''\left(l\left(\beta\right)\right)}{2}\left(z-l\left(\beta\right)\right)^{2}+\frac{F'''\left(\zeta_{1}\right)}{3!}\left(z-l\left(\beta\right)\right)^{3}\right)\right)\;\textup{d}z}.
\end{align*}
Set $r_{N}\coloneq\sqrt{\frac{\left(k+2\right)\ln N}{2b\left(N-1\right)}}$,
$N\in\IN$. We proceed similarly to the $\beta<1$ case, and separate
the integrals above into integrals over open balls around the two
minima of $F$:
\begin{align*}
 & \quad\int_{\IR}\exp\left(-N\left(\frac{F''\left(l\left(\beta\right)\right)}{2}\left(z-l\left(\beta\right)\right)^{2}+\frac{F'''\left(\zeta_{1}\right)}{3!}\left(z-l\left(\beta\right)\right)^{3}\right)\right)\tanh^{k}z\;\textup{d}z\\
 & =\int_{B_{r_{N}}\left(-l\left(\beta\right)\right)}\exp\left(-N\left(\frac{F''\left(l\left(\beta\right)\right)}{2}\left(z-l\left(\beta\right)\right)^{2}+\frac{F'''\left(\zeta_{1}\right)}{3!}\left(z-l\left(\beta\right)\right)^{3}\right)\right)\tanh^{k}z\;\textup{d}z\\
 & \quad+\int_{B_{r_{N}}\left(l\left(\beta\right)\right)}\exp\left(-N\left(\frac{F''\left(l\left(\beta\right)\right)}{2}\left(z-l\left(\beta\right)\right)^{2}+\frac{F'''\left(\zeta_{1}\right)}{3!}\left(z-l\left(\beta\right)\right)^{3}\right)\right)\tanh^{k}z\;\textup{d}z\\
 & \quad+\int_{\left(B_{r_{N}}\left(-l\left(\beta\right)\right)\cup B_{r_{N}}\left(l\left(\beta\right)\right)\right)^{c}}\exp\left(-N\left(\frac{F''\left(l\left(\beta\right)\right)}{2}\left(z-l\left(\beta\right)\right)^{2}+\frac{F'''\left(\zeta_{1}\right)}{3!}\left(z-l\left(\beta\right)\right)^{3}\right)\right)\tanh^{k}z\;\textup{d}z.
\end{align*}
We treat the last summand first. There is a constant $b>0$ such that
for all $z\in\left(B_{r_{N}}\left(-l\left(\beta\right)\right)\cup B_{r_{N}}\left(l\left(\beta\right)\right)\right)^{c}$,
$\frac{F''\left(l\left(\beta\right)\right)}{2}\left(z-l\left(\beta\right)\right)^{2}+\frac{F'''\left(\zeta_{1}\right)}{3!}\left(z-l\left(\beta\right)\right)^{3}\geq b\left(z-l\left(\beta\right)\right)^{2}$
holds.
\begin{align*}
 & \quad\int_{\left(B_{r_{N}}\left(-l\left(\beta\right)\right)\cup B_{r_{N}}\left(l\left(\beta\right)\right)\right)^{c}}\exp\left(-N\left(\frac{F''\left(l\left(\beta\right)\right)}{2}\left(z-l\left(\beta\right)\right)^{2}+\frac{F'''\left(\zeta_{1}\right)}{3!}\left(z-l\left(\beta\right)\right)^{3}\right)\right)\tanh^{k}z\;\textup{d}z\\
 & \leq\text{\ensuremath{\exp\left(-\left(N-1\right)br_{N}^{2}\right)}}\\
 & \quad\cdot\int_{\left(B_{r_{N}}\left(-l\left(\beta\right)\right)\cup B_{r_{N}}\left(l\left(\beta\right)\right)\right)^{c}}\exp\left(-\left(\frac{F''\left(l\left(\beta\right)\right)}{2}\left(z-l\left(\beta\right)\right)^{2}+\frac{F'''\left(\zeta_{1}\right)}{3!}\left(z-l\left(\beta\right)\right)^{3}\right)\right)\tanh^{k}z\;\textup{d}z\\
 & =C\left(\frac{\ln N}{N}\right)^{\frac{k+2}{2}}.
\end{align*}
Similarly,
\[
\int_{\left(B_{r_{N}}\left(-l\left(\beta\right)\right)\cup B_{r_{N}}\left(l\left(\beta\right)\right)\right)^{c}}\exp\left(-N\frac{F''\left(l\left(\beta\right)\right)}{2}\left(z-l\left(\beta\right)\right)^{2}\right)z^{k}\;\textup{d}z\leq C\left(\frac{\ln N}{N}\right)^{\frac{k+2}{2}}
\]
and
\[
\int_{\left(B_{r_{N}}\left(-l\left(\beta\right)\right)\cup B_{r_{N}}\left(l\left(\beta\right)\right)\right)^{c}}\exp\left(-N\left(\frac{F''\left(l\left(\beta\right)\right)}{2}\left(z-l\left(\beta\right)\right)^{2}+\frac{F'''\left(\zeta_{1}\right)}{3!}\left(z-l\left(\beta\right)\right)^{3}\right)\right)\;\textup{d}z\leq C\left(\frac{\ln N}{N}\right)^{\frac{k+2}{2}}.
\]
Next we note that
\[
\left|\frac{F'''\left(\zeta_{1}\right)}{3!}\left(z-l\left(\beta\right)\right)^{3}\right|\leq Cr_{N}^{3}=C\left(\frac{\ln N}{N}\right)^{\frac{3}{2}},\quad z\in B_{r_{N}}\left(-l\left(\beta\right)\right)\cup B_{r_{N}}\left(l\left(\beta\right)\right),
\]
and the bounds
\begin{align*}
 & \left|\int_{B_{r_{N}}\left(-l\left(\beta\right)\right)\cup B_{r_{N}}\left(l\left(\beta\right)\right)}\exp\left(-N\left(\frac{F''\left(l\left(\beta\right)\right)}{2}\left(z-l\left(\beta\right)\right)^{2}+\frac{F'''\left(\zeta_{1}\right)}{3!}\left(z-l\left(\beta\right)\right)^{3}\right)\right)\tanh^{k}z\;\textup{d}z\right.\\
 & \quad\left.-\int_{B_{r_{N}}\left(-l\left(\beta\right)\right)\cup B_{r_{N}}\left(l\left(\beta\right)\right)}\exp\left(-N\frac{F''\left(l\left(\beta\right)\right)}{2}\left(z-l\left(\beta\right)\right)^{2}\right)\tanh^{k}z\;\textup{d}z\right|\\
 & \leq CN\left(\frac{\ln N}{N}\right)^{\frac{3}{2}}\int_{B_{r_{N}}\left(-l\left(\beta\right)\right)\cup B_{r_{N}}\left(l\left(\beta\right)\right)}\left(z-l\left(\beta\right)\right)^{3}\tanh^{k}z\;\textup{d}z\\
 & \leq CN\left(\frac{\ln N}{N}\right)^{\frac{3}{2}}\\
 & =C\frac{\left(\ln N\right)^{\frac{3}{2}}}{\sqrt{N}}
\end{align*}
and
\begin{align*}
 & \left|\int_{B_{r_{N}}\left(-l\left(\beta\right)\right)\cup B_{r_{N}}\left(l\left(\beta\right)\right)}\exp\left(-N\left(\frac{F''\left(l\left(\beta\right)\right)}{2}\left(z-l\left(\beta\right)\right)^{2}+\frac{F'''\left(\zeta_{1}\right)}{3!}\left(z-l\left(\beta\right)\right)^{3}\right)\right)\;\textup{d}z\right.\\
 & \quad\left.-\int_{B_{r_{N}}\left(-l\left(\beta\right)\right)\cup B_{r_{N}}\left(l\left(\beta\right)\right)}\exp\left(-N\frac{F''\left(l\left(\beta\right)\right)}{2}\left(z-l\left(\beta\right)\right)^{2}\right)\;\textup{d}z\right|\\
 & \leq C\frac{\left(\ln N\right)^{\frac{3}{2}}}{\sqrt{N}}
\end{align*}
 hold.

Now we employ the Taylor expansion for $\tanh^{k}$:
\begin{align*}
 & \int_{B_{r_{N}}\left(-l\left(\beta\right)\right)\cup B_{r_{N}}\left(l\left(\beta\right)\right)}\exp\left(-N\frac{F''\left(l\left(\beta\right)\right)}{2}\left(z-l\left(\beta\right)\right)^{2}\right)\tanh^{k}z\;\textup{d}z\\
 & \quad-\int_{B_{r_{N}}\left(-l\left(\beta\right)\right)\cup B_{r_{N}}\left(l\left(\beta\right)\right)}\exp\left(-N\frac{F''\left(l\left(\beta\right)\right)}{2}\left(z-l\left(\beta\right)\right)^{2}\right)m\left(\beta\right)^{k}\;\textup{d}z\\
 & \leq C\int_{B_{r_{N}}\left(-l\left(\beta\right)\right)\cup B_{r_{N}}\left(l\left(\beta\right)\right)}\exp\left(-N\frac{F''\left(l\left(\beta\right)\right)}{2}\left(z-l\left(\beta\right)\right)^{2}\right)\left(z-l\left(\beta\right)\right)^{3}\;\textup{d}z\\
 & \leq Cr_{N}^{3}\int_{B_{r_{N}}\left(-l\left(\beta\right)\right)\cup B_{r_{N}}\left(l\left(\beta\right)\right)}\exp\left(-N\frac{F''\left(l\left(\beta\right)\right)}{2}\left(z-l\left(\beta\right)\right)^{2}\right)\;\textup{d}z\\
 & \leq C\left(\frac{\ln N}{N}\right)^{\frac{3}{2}}.
\end{align*}
Putting together the bounds calculated above, we obtain
\begin{align*}
 & \left|\int_{\IR}\exp\left(-N\left(F\left(z\right)-F\left(l\left(\beta\right)\right)\right)\right)\tanh^{k}z\;\textup{d}z\right.\\
 & \quad\left.-m\left(\beta\right)^{k}\int_{B_{r_{N}}\left(-l\left(\beta\right)\right)\cup B_{r_{N}}\left(l\left(\beta\right)\right)}\exp\left(-N\frac{F''\left(l\left(\beta\right)\right)}{2}\left(z-l\left(\beta\right)\right)^{2}\right)\;\textup{d}z\right|\\
 & \leq C\frac{\left(\ln N\right)^{\frac{3}{2}}}{\sqrt{N}}
\end{align*}
and
\[
\left|\int_{\IR}\exp\left(-N\left(F\left(z\right)-F\left(l\left(\beta\right)\right)\right)\right)\;\textup{d}z-\int_{B_{r_{N}}\left(-l\left(\beta\right)\right)\cup B_{r_{N}}\left(l\left(\beta\right)\right)}\exp\left(-N\frac{F''\left(l\left(\beta\right)\right)}{2}\left(z-l\left(\beta\right)\right)^{2}\right)\;\textup{d}z\right|\leq C\frac{\left(\ln N\right)^{\frac{3}{2}}}{\sqrt{N}},
\]
and hence
\begin{align*}
 & \left|\IE_{\beta,N}X_{1}\cdots X_{k}-m\left(\beta\right)^{k}\right|\\
 & =\left|\IE_{\beta,N}X_{1}\cdots X_{k}-m\left(\beta\right)^{k}\frac{\int_{B_{r_{N}}\left(-l\left(\beta\right)\right)\cup B_{r_{N}}\left(l\left(\beta\right)\right)}\exp\left(-N\frac{F''\left(l\left(\beta\right)\right)}{2}\left(z-l\left(\beta\right)\right)^{2}\right)\;\textup{d}z}{\int_{B_{r_{N}}\left(-l\left(\beta\right)\right)\cup B_{r_{N}}\left(l\left(\beta\right)\right)}\exp\left(-N\frac{F''\left(l\left(\beta\right)\right)}{2}\left(z-l\left(\beta\right)\right)^{2}\right)\;\textup{d}z}\right|\leq C\frac{\left(\ln N\right)^{\frac{3}{2}}}{\sqrt{N}}.
\end{align*}
\end{proof}
We will use the fact that the variables $X_{1},X_{2},\ldots,X_{N}$
are exchangeable (see Definition \ref{def:exchange} and Lemma \ref{lem:exchange}).
In order to calculate expectations such as $\IE_{\beta,N}S^{2k}$,
we introduce the concept of profile vectors.
\begin{defn}
\label{def:ind_prof}Let $k,n\in\IN$ with $k\leq n$. We will call
all $\underbar{\ensuremath{\boldsymbol{i}}}\in\IN_{n}^{k}$ \emph{index
vectors} and set
\[
\Pi\coloneq\left\{ \left(r_{1},\ldots,r_{k}\right)\in\left(\IN_{k}\right)^{k}\,\left|\,\sum_{\ell=1}^{k}\ell r_{\ell}=k\right.\right\} ,
\]
and we will refer to $\Pi$ as the set of profile vectors and to the
elements $\underbar{\ensuremath{\boldsymbol{r}}}\in\Pi$ as \emph{profile
vectors}. For any index vector $\underbar{\ensuremath{\boldsymbol{i}}}\in\IN_{n}^{k}$,
the expression $\underbar{\ensuremath{\boldsymbol{r}}}\coloneq\left(r_{1},\ldots,r_{k}\right)\coloneq\underbar{\ensuremath{\boldsymbol{\rho}}}\left(\underbar{\ensuremath{\boldsymbol{i}}}\right)$
is defined as follows: for each $\ell\in\IN_{k}$, $r_{\ell}$ is
the number of indices in $\underbar{\ensuremath{\boldsymbol{i}}}$
that appear exactly $\ell$ times. We will call $\underbar{\ensuremath{\boldsymbol{r}}}$
the profile vector of $\underbar{\ensuremath{\boldsymbol{i}}}$.
\end{defn}

\begin{prop}
\label{prop:appr_moments}For all $\beta\in\IR,\beta\neq1$, and all
$k\in\IN$ , the moment $\IE_{\beta,N}S^{2k}$ is asymptotically equal
to
\[
\IE_{\beta,N}S^{2k}\approx\begin{cases}
\left(\frac{1}{1-\beta}\right)^{k}\,N^{k} & \text{if }\,\beta<1,\\
m\left(\beta\right)^{2k}\,N^{2k} & \text{if }\,\beta>1,
\end{cases}
\]
where $0<m\left(\beta\right)<1$ for $\beta>1$ is the constant from
Definition \ref{def:m_beta}.

Let the model be in the high temperature regime, i.e.\! $\beta<1$.
Then there is a positive constant $\boldsymbol{D}_{\textup{high}}$
such that for all $N\in\IN$
\[
\left|\IE_{\beta,N}\frac{S^{2k}}{N^{k}}-\left(\frac{1}{1-\beta}\right)^{k}\right|<\boldsymbol{D}_{\textup{high}}\frac{1}{\sqrt{N}}.
\]
Now let the model be in the low temperature regime, i.e.\! $\beta>1$.
Then there is a positive constant $\boldsymbol{D}_{\textup{low}}$
such that for all $N\in\IN$
\[
\left|\IE_{\beta,N}\frac{S^{2k}}{N^{2k}}-m\left(\beta\right)^{2k}\right|<\boldsymbol{D}_{\textup{low}}\frac{\left(\ln N\right)^{\frac{3}{2}}}{\sqrt{N}}.
\]
\end{prop}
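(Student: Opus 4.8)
The plan is to expand the power,
\[
S^{2k}=\Bigl(\sum_{i=1}^{N}X_{i}\Bigr)^{2k}=\sum_{\underline{\boldsymbol{i}}\in\IN_{N}^{2k}}X_{i_{1}}\cdots X_{i_{2k}},
\]
take expectations term by term, and organise the sum by the profile vectors of Definition \ref{def:ind_prof} (applied with length $2k$ in place of $k$). Because $X_{i}^{2}=1$, a monomial $X_{i_{1}}\cdots X_{i_{2k}}$ collapses to $\prod_{j:\,m_{j}\text{ odd}}X_{j}$, where $m_{j}$ is the multiplicity of the value $j$ in $\underline{\boldsymbol{i}}$; by exchangeability of $X_{1},\dots,X_{N}$ (Lemma \ref{lem:exchange}) its expectation equals $\IE_{\beta,N}X_{1}\cdots X_{2q}$, where $2q=\sum_{\ell\text{ odd}}r_{\ell}$ depends only on the profile vector $\underline{\boldsymbol{r}}=\underline{\boldsymbol{\rho}}(\underline{\boldsymbol{i}})$ and is automatically even (as $\sum_{\ell}\ell r_{\ell}=2k$). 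The number of index vectors with a prescribed profile $\underline{\boldsymbol{r}}$ is $\frac{(2k)!}{\prod_{\ell}(\ell!)^{r_{\ell}}r_{\ell}!}\cdot\frac{N!}{(N-a)!}$, where $a=\sum_{\ell}r_{\ell}$ is the number of distinct indices, so
\[
\IE_{\beta,N}S^{2k}=\sum_{\underline{\boldsymbol{r}}}\frac{(2k)!}{\prod_{\ell}(\ell!)^{r_{\ell}}r_{\ell}!}\,\frac{N!}{(N-a)!}\,\IE_{\beta,N}X_{1}\cdots X_{2q}.
\]

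Into this identity I would substitute the asymptotics and bounds of Proposition \ref{prop:appr_correlations}. For $\beta<1$ one has $\IE_{\beta,N}X_{1}\cdots X_{2q}=(2q-1)!!\,(\tfrac{\beta}{1-\beta})^{q}N^{-q}+O\bigl((\tfrac{\ln N}{N})^{q+1}\bigr)$ (the $q=0$ term being exactly $1$), so the profile $\underline{\boldsymbol{r}}$ contributes a term of order $N^{a-q}$. The elementary estimate $2k=\sum_{\ell}\ell r_{\ell}\ge 2q+2(a-2q)=2a-2q$ (the number of even-multiplicity indices being $a-2q$) gives $a-q\le k$, with equality only if every odd multiplicity equals $1$ and every even multiplicity equals $2$; and every non-extremal profile satisfies $a-q\le k-1$ by a parity argument. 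Hence the leading order is $N^{k}$, fed only by the profiles $\underline{\boldsymbol{r}}_{q}$ with $r_{1}=2q,\ r_{2}=k-q$ (so $a=k+q$), $q=0,\dots,k$; collecting them and using $(2q-1)!!=(2q)!/(2^{q}q!)$ turns the $N^{k}$-coefficient into
\[
\sum_{q=0}^{k}\frac{(2k)!}{2^{k}(k-q)!\,q!}\Bigl(\frac{\beta}{1-\beta}\Bigr)^{q}=(2k-1)!!\sum_{q=0}^{k}\binom{k}{q}\Bigl(\frac{\beta}{1-\beta}\Bigr)^{q}=(2k-1)!!\Bigl(\frac{1}{1-\beta}\Bigr)^{k}
\]
by the binomial theorem. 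For $\beta>1$ the correlations do not decay, $\IE_{\beta,N}X_{1}\cdots X_{2q}=m(\beta)^{2q}+O((\ln N)^{3/2}/\sqrt{N})$, so profile $\underline{\boldsymbol{r}}$ contributes order $N^{a}$; since $a\le 2k$ with equality only for $r_{1}=2k$, the leading term is $\frac{N!}{(N-2k)!}m(\beta)^{2k}=m(\beta)^{2k}N^{2k}(1+O(1/N))$.

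For the error estimates I would collect the three sources of error in the displayed identity: (i) non-extremal profiles contribute $O(N^{k-1})$ for $\beta<1$ and $O(N^{2k-1})$ for $\beta>1$; (ii) $\frac{N!}{(N-a)!}=N^{a}(1+O(1/N))$ yields a relative $O(1/N)$ correction on the extremal terms; and (iii) the correlation errors from Proposition \ref{prop:appr_correlations} contribute at most $\sum_{\underline{\boldsymbol{r}}}\tfrac{(2k)!}{\prod_{\ell}(\ell!)^{r_{\ell}}r_{\ell}!}N^{a}\boldsymbol{C}_{\textup{high}}(\tfrac{\ln N}{N})^{q+1}=O\bigl(N^{k-1}(\ln N)^{k+1}\bigr)$ for $\beta<1$, respectively $O\bigl(N^{2k}(\ln N)^{3/2}/\sqrt{N}\bigr)$ for $\beta>1$. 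Dividing by $N^{k}$, respectively $N^{2k}$, this gives errors of size $O((\ln N)^{k+1}/N)=o(1/\sqrt{N})$ and $O((\ln N)^{3/2}/\sqrt{N})$ for large $N$. To promote these to bounds valid for \emph{every} $N\in\IN$ I would observe that $\IE_{\beta,N}S^{2k}/N^{k}$ is finite for each $N$ and convergent, and $\IE_{\beta,N}S^{2k}/N^{2k}\le 1$ since $|S|\le N$, so only finitely many small $N$ need be absorbed into the constants $\boldsymbol{D}_{\textup{high}},\boldsymbol{D}_{\textup{low}}$; uniformity over $I_{h}=[0,b_{1}]$ and $I_{l}=[b_{2},\infty)$ (as needed in Definition \ref{def:intervals}) holds because $\beta/(1-\beta)$ is bounded on $[0,b_{1}]$, $m(\beta)\le 1$, and the constants in Proposition \ref{prop:appr_correlations} are $\beta$-independent. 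The two asymptotic-equality statements then follow at once from the corresponding error bounds.

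I expect the heart of the argument, and its only genuine difficulty, to be the combinatorial middle step: carrying out the exchangeability and $X_{i}^{2}=1$ reduction cleanly, counting index vectors by profile, and then correctly singling out the extremal profiles and performing the resummation — verifying in particular that no non-extremal profile contributes at leading order and that the factorial factors recombine as claimed. Once that is settled, the error analysis is routine given the explicit bounds supplied by Proposition \ref{prop:appr_correlations}.
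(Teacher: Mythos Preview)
Your approach is essentially identical to the paper's: both expand $S^{2k}$ over index vectors, group by profile using exchangeability and $X_i^2=1$, feed in the correlation asymptotics of Proposition~\ref{prop:appr_correlations}, isolate the extremal profiles ($r_1=2q,\,r_2=k-q$ for $\beta<1$; $r_1=2k$ for $\beta>1$), and resum via the binomial identity to obtain $(2k-1)!!\,(1-\beta)^{-k}$ and $m(\beta)^{2k}$ respectively. Your bound $a-q\le k-1$ for non-extremal profiles is actually sharper than the paper's $k-\tfrac12$ (since $a-q$ is an integer), but the organisation and all substantive steps coincide.
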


\begin{proof}
Let $\beta<1$. We express the expectation $\IE_{\beta,N}S^{2k}$
as a sum of correlations $\IE_{\beta,N}X_{1}\cdots X_{\ell}$ of the
type treated in Proposition \ref{prop:appr_correlations}:
\[
\IE_{\beta,N}\frac{S^{2k}}{N^{k}}=\frac{1}{N^{k}}\sum_{i_{1},\ldots,i_{2k}=1}^{N}\IE_{\beta,N}X_{i_{1}}\cdots X_{i_{2k}}.
\]
Due to Lemma \ref{lem:exchange}, the specific values of the indices
$i_{1},\ldots,i_{2k}\in\IN_{N}$ only matter in as far as the number
of repeated indices and the multiplicity of each one is concerned.
Lemma \ref{lem:exchange} implies $\IE_{\beta,N}X_{i_{1}}\cdots X_{i_{2k}}=\IE_{\beta,N}X_{j_{1}}\cdots X_{j_{2k}}$
for all index vectors $\underbar{\ensuremath{\boldsymbol{i}}},\underbar{\ensuremath{\boldsymbol{j}}}$
with $\underbar{\ensuremath{\boldsymbol{\rho}}}\left(\underbar{\ensuremath{\boldsymbol{i}}}\right)=\underbar{\ensuremath{\boldsymbol{\rho}}}\left(\underbar{\ensuremath{\boldsymbol{j}}}\right)$.
Therefore, we can write for any profile $\underbar{\ensuremath{\boldsymbol{r}}}=\underbar{\ensuremath{\boldsymbol{\rho}}}\left(\underbar{\ensuremath{\boldsymbol{i}}}\right)$,
\[
\IE_{\beta,N}X\left(\underbar{\ensuremath{\boldsymbol{r}}}\right)\coloneq\IE_{\beta,N}X_{i_{1}}\cdots X_{i_{2k}}.
\]
For all index vectors $\underbar{\ensuremath{\boldsymbol{i}}}$, the
corresponding profile vector $\underbar{\ensuremath{\boldsymbol{\rho}}}\left(\underbar{\ensuremath{\boldsymbol{i}}}\right)=\left(r_{1},\ldots,r_{2k}\right)\in\left(\IN_{2k}\right)^{2k}$
satisfies the equality $\sum_{\ell=1}^{2k}\ell r_{\ell}=2k$. Thus,
we have $\underbar{\ensuremath{\boldsymbol{\rho}}}\left(\underbar{\ensuremath{\boldsymbol{i}}}\right)\in\Pi$
for all $\underbar{\ensuremath{\boldsymbol{i}}}\in\left(\IN_{N}\right)^{2k}$.

Recall Lemma \ref{lem:multiplicity}, which gives the number of index
vectors $\underbar{\ensuremath{\boldsymbol{i}}}$ that correspond
to a certain profile $\underbar{\ensuremath{\boldsymbol{r}}}$. We
use Lemma \ref{lem:multiplicity} to rewrite the sum above as
\[
\IE_{\beta,N}\frac{S^{2k}}{N^{k}}=\frac{1}{N^{k}}\sum_{\underbar{\ensuremath{\boldsymbol{r}}}\in\Pi}\frac{N!}{r_{1}!\cdots r_{2k}!\left(N-\sum_{\ell=1}^{2k}r_{\ell}\right)!}\frac{\left(2k\right)!}{1!^{r_{1}}\cdots\left(2k\right)!^{r_{2k}}}\IE_{\beta,N}X\left(\underbar{\ensuremath{\boldsymbol{r}}}\right).
\]
Now we will partition the set of all profile vectors $\Pi$ into two
subsets
\[
\Pi_{1}\coloneq\left\{ \left.\left(r_{1},\ldots,r_{2k}\right)\in\Pi\,\right|\,\exists\ell>2:\,r_{\ell}>0\right\} \quad\textup{and}\quad\Pi_{2}\coloneq\left\{ \left.\left(r_{1},\ldots,r_{2k}\right)\in\Pi\,\right|\,\forall\ell>2:\,r_{\ell}=0\right\} .
\]
We show that the profile vectors belonging to $\Pi_{1}$ do not contribute
asymptotically to the expectation $\IE_{\beta,N}\frac{S^{2k}}{N^{k}}$.
Let $\underbar{\ensuremath{\boldsymbol{r}}}\in\Pi_{1}$, and set $o\coloneq\sum_{\ell=1}^{k}r_{2\ell-1}$.
We note that, due to $X_{i}^{2\ell}=1$ and $X_{i}^{2\ell+1}=X_{i}$
for all $\ell\in\IN$ and all $i\in\IN_{N}$,
\[
\IE_{\beta,N}X\left(\underbar{\ensuremath{\boldsymbol{r}}}\right)=\IE_{\beta,N}X_{1}\cdots X_{o}
\]
holds. We will use Proposition \ref{prop:appr_correlations}, and
the upper bound for the correlation
\[
\left|\IE_{\beta,N}X_{1}\cdots X_{o}\right|\leq\left(o-1\right)!!\left(\frac{\left|\beta\right|}{1-\beta}\right)^{\frac{o}{2}}\,\frac{1}{N^{\frac{o}{2}}}+\boldsymbol{C}_{\textup{high}}\left(\frac{\ln N}{N}\right)^{\frac{o+2}{2}},
\]
which by said proposition holds no matter the parity of $o$. Therefore,
\begin{align*}
 & \left|\frac{1}{N^{k}}\frac{N!}{r_{1}!\cdots r_{2k}!\left(N-\sum_{\ell=1}^{2k}r_{\ell}\right)!}\frac{\left(2k\right)!}{1!^{r_{1}}\cdots\left(2k\right)!^{r_{2k}}}\IE_{\beta,N}X\left(\underbar{\ensuremath{\boldsymbol{r}}}\right)\right|\\
 & \leq C\frac{1}{N^{k}}\frac{N!}{\left(N-\sum_{\ell=1}^{2k}r_{\ell}\right)!}\left|\IE_{\beta,N}X_{1}\cdots X_{o}\right|\\
 & \leq C\frac{1}{N^{k}}N^{\sum_{\ell=1}^{2k}r_{\ell}}\left[\left(o-1\right)!!\left(\frac{\left|\beta\right|}{1-\beta}\right)^{\frac{o}{2}}\,\frac{1}{N^{\frac{o}{2}}}+\boldsymbol{C}_{\textup{high}}\left(\frac{\ln N}{N}\right)^{\frac{o+2}{2}}\right]
\end{align*}
is satisfied. The assumption $\underbar{\ensuremath{\boldsymbol{r}}}\in\Pi_{1}$
and the implied existence of some $\ell^{*}>2$ with $r_{\ell^{*}}>0$
allow us to find an upper bound for
\begin{align*}
\sum_{\ell=1}^{2k}r_{\ell}-\frac{o}{2} & =\frac{o}{2}+\sum_{\ell=1}^{k}r_{2\ell}\\
 & =\frac{1}{2}\left(\sum_{\ell=1}^{k}r_{2\ell-1}+\sum_{\ell=1}^{k}2r_{2\ell}\right).
\end{align*}

Suppose first that $\ell^{*}=2j^{*}-1$ is odd and therefore $j^{*}\geq2$.
Then
\begin{align*}
\frac{1}{2}\left(\sum_{\ell=1}^{k}r_{2\ell-1}+\sum_{\ell=1}^{k}2r_{2\ell}\right) & =\frac{1}{2}\left(r_{2j^{*}-1}+\sum_{1\leq j\leq k,j\neq j^{*}}r_{2j-1}+\sum_{\ell=1}^{k}2r_{2\ell}\right)\\
 & <\frac{1}{2}\left(\left(2j^{*}-2\right)r_{2j^{*}-1}+\sum_{1\leq j\leq k,j\neq j^{*}}r_{2j-1}+\sum_{\ell=1}^{k}2r_{2\ell}\right)\\
 & =\frac{1}{2}\left(\left(2j^{*}-1\right)r_{2j^{*}-1}+\sum_{1\leq j\leq k,j\neq j^{*}}r_{2j-1}+\sum_{\ell=1}^{k}2r_{2\ell}\right)-\frac{1}{2}r_{2j^{*}-1}\\
 & \leq\frac{1}{2}\left(\sum_{\ell=1}^{k}\left(2j-1\right)r_{2j-1}+\sum_{\ell=1}^{k}2\ell r_{2\ell}\right)-\frac{1}{2}\\
 & =\frac{1}{2}\sum_{\ell=1}^{2k}\ell r_{\ell}-\frac{1}{2}\\
 & =k-\frac{1}{2}.
\end{align*}
Next suppose $\ell^{*}=2j^{*}$ is even and therefore $j^{*}\geq1$.
Then
\begin{align*}
\frac{1}{2}\left(\sum_{\ell=1}^{k}r_{2\ell-1}+\sum_{\ell=1}^{k}2r_{2\ell}\right) & =\frac{1}{2}\left(r_{2j^{*}}+\sum_{\ell=1}^{k}r_{2\ell-1}+\sum_{1\leq j\leq k,j\neq j^{*}}2r_{2j}\right)\\
 & \leq\frac{1}{2}\left(\left(2j^{*}-1\right)r_{2j^{*}}+\sum_{\ell=1}^{k}r_{2\ell-1}+\sum_{1\leq j\leq k,j\neq j^{*}}2r_{2j}\right)\\
 & =\frac{1}{2}\left(2j^{*}r_{2j^{*}}+\sum_{\ell=1}^{k}r_{2\ell-1}+\sum_{1\leq j\leq k,j\neq j^{*}}2r_{2j}\right)-\frac{1}{2}r_{2j^{*}}\\
 & \leq\frac{1}{2}\left(\sum_{\ell=1}^{k}\left(2\ell-1\right)r_{2\ell-1}+\sum_{\ell=1}^{k}2\ell r_{2\ell}\right)-\frac{1}{2}\\
 & =\frac{1}{2}\sum_{\ell=1}^{2k}\ell r_{\ell}-\frac{1}{2}\\
 & =k-\frac{1}{2}.
\end{align*}
So we have the same upper bound $k-\frac{1}{2}$ for $\sum_{\ell=1}^{2k}r_{\ell}-\frac{o}{2}$
independently of the parity of $\ell^{*}$. We continue with the process
of finding an upper bound for
\begin{align}
 & \left|\frac{1}{N^{k}}\frac{N!}{r_{1}!\cdots r_{2k}!\left(N-\sum_{\ell=1}^{2k}r_{\ell}\right)!}\frac{\left(2k\right)!}{1!^{r_{1}}\cdots\left(2k\right)!^{r_{2k}}}\IE_{\beta,N}X\left(\underbar{\ensuremath{\boldsymbol{r}}}\right)\right|\nonumber \\
 & \leq C\frac{1}{N^{k}}N^{\sum_{\ell=1}^{2k}r_{\ell}}\left[\left(o-1\right)!!\left(\frac{\left|\beta\right|}{1-\beta}\right)^{\frac{o}{2}}\,\frac{1}{N^{\frac{o}{2}}}+\boldsymbol{C}_{\textup{high}}\left(\frac{\ln N}{N}\right)^{\frac{o+2}{2}}\right]\nonumber \\
 & \leq C\frac{1}{N^{k}}N^{\sum_{\ell=1}^{2k}r_{\ell}}\frac{1}{N^{\frac{o}{2}}}\nonumber \\
 & =CN^{\sum_{\ell=1}^{2k}r_{\ell}-k-\frac{o}{2}}\nonumber \\
 & \leq C\frac{1}{\sqrt{N}}.\label{eq:UB_Pi_1}
\end{align}
The previous upper bound, which shows that any summand with $\underbar{\ensuremath{\boldsymbol{r}}}\in\Pi_{1}$
decays to 0 as $N\rightarrow\infty$, is complemented by the next
calculation which will show that for any $\underbar{\ensuremath{\boldsymbol{r}}}\in\Pi_{2}$,
the corresponding summand converges to a positive constant, for which
we will find bounds under the assumption that $0<\beta<1$. Let $\underbar{\ensuremath{\boldsymbol{r}}}\in\Pi_{2}$.
We first show that $o=\sum_{\ell=1}^{k}r_{2\ell-1}$ is even:
\[
2k=\sum_{\ell=1}^{2k}\ell r_{\ell}=\sum_{\ell=1}^{k}\left(2\ell-1\right)r_{2\ell-1}+\sum_{\ell=1}^{k}2\ell r_{2\ell}.
\]
Since $\sum_{\ell=1}^{k}2\ell r_{2\ell}$ and $2k$ are even, so is
their difference $\sum_{\ell=1}^{k}\left(2\ell-1\right)r_{2\ell-1}$.
We also have
\[
\sum_{\ell=1}^{k}\left(2\ell-1\right)r_{2\ell-1}=\sum_{\ell=1}^{k}2\ell r_{2\ell-1}-\sum_{\ell=1}^{k}r_{2\ell-1},
\]
where once again the left hand side is even and so is $\sum_{\ell=1}^{k}2\ell r_{2\ell-1}$,
hence $o=\sum_{\ell=1}^{k}r_{2\ell-1}$ is even. Also, we have $o=r_{1}$,
since $\underbar{\ensuremath{\boldsymbol{r}}}\in\Pi_{2}$ is assumed.
Therefore, $r_{2}=\frac{2k-r_{1}}{2}=k-\frac{r_{1}}{2}$ holds.

Then
\begin{align}
 & \frac{1}{N^{k}}\frac{N!}{r_{1}!\cdots r_{2k}!\left(N-\sum_{\ell=1}^{2k}r_{\ell}\right)!}\frac{\left(2k\right)!}{1!^{r_{1}}\cdots\left(2k\right)!^{r_{2k}}}\IE_{\beta,N}X\left(\underbar{\ensuremath{\boldsymbol{r}}}\right)\nonumber \\
 & =\frac{1}{N^{k}}\frac{N!}{r_{1}!\left(k-\frac{r_{1}}{2}\right)!\left(N-\sum_{\ell=1}^{2k}r_{\ell}\right)!}\frac{\left(2k\right)!}{2^{2k-r_{1}}}\IE_{\beta,N}X_{1}\cdots X_{r_{1}}\nonumber \\
 & \approx\frac{1}{r_{1}!\left(k-\frac{r_{1}}{2}\right)!}\frac{\left(2k\right)!}{2^{k-\frac{r_{1}}{2}}}\left(r_{1}-1\right)!!\left(\frac{\beta}{1-\beta}\right)^{\frac{r_{1}}{2}}\frac{N^{k+\frac{r_{1}}{2}}}{N^{k}}\,\frac{1}{N^{\frac{r_{1}}{2}}}\nonumber \\
 & =\frac{1}{r_{1}!\left(k-\frac{r_{1}}{2}\right)!}\frac{\left(2k\right)!}{2^{k-\frac{r_{1}}{2}}}\left(r_{1}-1\right)!!\left(\frac{\beta}{1-\beta}\right)^{\frac{r_{1}}{2}}.\label{eq:asymp_Pi_2}
\end{align}
For the next calculation, assume $0<\beta<1$. Then, due to Proposition
\ref{prop:appr_correlations}, an upper bound for the summand is given
by
\begin{align}
 & \frac{1}{N^{k}}\frac{N!}{r_{1}!\cdots r_{2k}!\left(N-\sum_{\ell=1}^{2k}r_{\ell}\right)!}\frac{\left(2k\right)!}{1!^{r_{1}}\cdots\left(2k\right)!^{r_{2k}}}\IE_{\beta,N}X\left(\underbar{\ensuremath{\boldsymbol{r}}}\right)\nonumber \\
 & =\frac{1}{N^{k}}\frac{N!}{r_{1}!\left(k-\frac{r_{1}}{2}\right)!\left(N-\sum_{\ell=1}^{2k}r_{\ell}\right)!}\frac{\left(2k\right)!}{2^{k-\frac{r_{1}}{2}}}\IE_{\beta,N}X_{1}\cdots X_{r_{1}}\nonumber \\
 & \leq\frac{1}{N^{k}}N^{k+\frac{r_{1}}{2}}\frac{1}{r_{1}!\left(k-\frac{r_{1}}{2}\right)!}\frac{\left(2k\right)!}{2^{k-\frac{r_{1}}{2}}}\left[\left(r_{1}-1\right)!!\left(\frac{\beta}{1-\beta}\right)^{\frac{r_{1}}{2}}\,\frac{1}{N^{\frac{r_{1}}{2}}}+\boldsymbol{C}_{\textup{high}}\left(\frac{\ln N}{N}\right)^{\frac{r_{1}+2}{2}}\right]\nonumber \\
 & =\frac{1}{r_{1}!\left(k-\frac{r_{1}}{2}\right)!}\frac{\left(2k\right)!}{2^{k-\frac{r_{1}}{2}}}\left(r_{1}-1\right)!!\left(\frac{\beta}{1-\beta}\right)^{\frac{r_{1}}{2}}+\boldsymbol{C}_{\textup{high}}\frac{\left(\ln N\right)^{\frac{r_{1}+2}{2}}}{N},\label{eq:UB_Pi_2}
\end{align}
and a lower bound is
\begin{align}
 & \frac{1}{N^{k}}\frac{N!}{r_{1}!\cdots r_{2k}!\left(N-\sum_{\ell=1}^{2k}r_{\ell}\right)!}\frac{\left(2k\right)!}{1!^{r_{1}}\cdots\left(2k\right)!^{r_{2k}}}\IE_{\beta,N}X\left(\underbar{\ensuremath{\boldsymbol{r}}}\right)\nonumber \\
 & \geq\frac{1}{N^{k}}\left(N-\left(k+\frac{r_{1}}{2}\right)\right)^{k+\frac{r_{1}}{2}}\frac{1}{r_{1}!\left(k-\frac{r_{1}}{2}\right)!}\frac{\left(2k\right)!}{2^{k-\frac{r_{1}}{2}}}\left[\left(r_{1}-1\right)!!\left(\frac{\beta}{1-\beta}\right)^{\frac{r_{1}}{2}}\,\frac{1}{N^{\frac{r_{1}}{2}}}-\boldsymbol{C}_{\textup{high}}\left(\frac{\ln N}{N}\right)^{\frac{r_{1}+2}{2}}\right]\nonumber \\
 & =\left(1-\frac{k+\frac{r_{1}}{2}}{N}\right)^{k+\frac{r_{1}}{2}}\left[\frac{1}{r_{1}!\left(k-\frac{r_{1}}{2}\right)!}\frac{\left(2k\right)!}{2^{k-\frac{r_{1}}{2}}}\left(r_{1}-1\right)!!\left(\frac{\beta}{1-\beta}\right)^{\frac{r_{1}}{2}}-\boldsymbol{C}_{\textup{high}}\frac{\left(\ln N\right)^{\frac{r_{1}+2}{2}}}{N}\right].\label{eq:LB_Pi_2}
\end{align}
Since the number of different profile vectors is finite and independent
of $N$, the cardinalities $\left|\Pi_{1}\right|,\left|\Pi_{2}\right|$
are constant. Therefore, using (\ref{eq:UB_Pi_1}) and (\ref{eq:asymp_Pi_2}),
we obtain
\begin{align*}
\IE_{\beta,N}\frac{S^{2k}}{N^{k}} & =\frac{1}{N^{k}}\sum_{\underbar{\ensuremath{\boldsymbol{r}}}\in\Pi}\frac{N!}{r_{1}!\cdots r_{2k}!\left(N-\sum_{\ell=1}^{2k}r_{\ell}\right)!}\frac{\left(2k\right)!}{1!^{r_{1}}\cdots\left(2k\right)!^{r_{2k}}}\IE_{\beta,N}X\left(\underbar{\ensuremath{\boldsymbol{r}}}\right)\\
 & =\frac{1}{N^{k}}\sum_{\underbar{\ensuremath{\boldsymbol{r}}}\in\Pi_{1}}\frac{N!}{r_{1}!\cdots r_{2k}!\left(N-\sum_{\ell=1}^{2k}r_{\ell}\right)!}\frac{\left(2k\right)!}{1!^{r_{1}}\cdots\left(2k\right)!^{r_{2k}}}\IE_{\beta,N}X\left(\underbar{\ensuremath{\boldsymbol{r}}}\right)\\
 & \quad+\frac{1}{N^{k}}\sum_{\underbar{\ensuremath{\boldsymbol{r}}}\in\Pi_{2}}\frac{N!}{r_{1}!\cdots r_{2k}!\left(N-\sum_{\ell=1}^{2k}r_{\ell}\right)!}\frac{\left(2k\right)!}{1!^{r_{1}}\cdots\left(2k\right)!^{r_{2k}}}\IE_{\beta,N}X\left(\underbar{\ensuremath{\boldsymbol{r}}}\right)\\
 & \leq C_{1}\frac{1}{\sqrt{N}}+\sum_{r_{1}=0}^{k}\left[\frac{1}{r_{1}!\left(k-\frac{r_{1}}{2}\right)!}\frac{\left(2k\right)!}{2^{k-\frac{r_{1}}{2}}}\left(r_{1}-1\right)!!\left(\frac{\beta}{1-\beta}\right)^{\frac{r_{1}}{2}}+\boldsymbol{C}_{\textup{high}}\frac{\left(\ln N\right)^{\frac{r_{1}+2}{2}}}{N}\right]\\
 & =C_{1}\frac{1}{\sqrt{N}}+C_{2}\frac{\left(\ln N\right)^{\frac{r_{1}+2}{2}}}{N}+\sum_{r_{1}=0}^{k}\frac{1}{r_{1}!\left(k-\frac{r_{1}}{2}\right)!}\frac{\left(2k\right)!}{2^{k-\frac{r_{1}}{2}}}\left(r_{1}-1\right)!!\left(\frac{\beta}{1-\beta}\right)^{\frac{r_{1}}{2}}\\
 & \leq\sum_{r_{1}=0}^{k}\frac{1}{r_{1}!\left(k-\frac{r_{1}}{2}\right)!}\frac{\left(2k\right)!}{2^{k-\frac{r_{1}}{2}}}\left(r_{1}-1\right)!!\left(\frac{\beta}{1-\beta}\right)^{\frac{r_{1}}{2}}+C\frac{1}{\sqrt{N}}.
\end{align*}
A lower bound can be calculated in similar fashion:
\begin{align*}
\IE_{\beta,N}\frac{S^{2k}}{N^{k}} & =\frac{1}{N^{k}}\sum_{\underbar{\ensuremath{\boldsymbol{r}}}\in\Pi}\frac{N!}{r_{1}!\cdots r_{2k}!\left(N-\sum_{\ell=1}^{2k}r_{\ell}\right)!}\frac{\left(2k\right)!}{1!^{r_{1}}\cdots\left(2k\right)!^{r_{2k}}}\IE_{\beta,N}X\left(\underbar{\ensuremath{\boldsymbol{r}}}\right)\\
 & =\frac{1}{N^{k}}\sum_{\underbar{\ensuremath{\boldsymbol{r}}}\in\Pi_{1}}\frac{N!}{r_{1}!\cdots r_{2k}!\left(N-\sum_{\ell=1}^{2k}r_{\ell}\right)!}\frac{\left(2k\right)!}{1!^{r_{1}}\cdots\left(2k\right)!^{r_{2k}}}\IE_{\beta,N}X\left(\underbar{\ensuremath{\boldsymbol{r}}}\right)\\
 & \quad+\frac{1}{N^{k}}\sum_{\underbar{\ensuremath{\boldsymbol{r}}}\in\Pi_{2}}\frac{N!}{r_{1}!\cdots r_{2k}!\left(N-\sum_{\ell=1}^{2k}r_{\ell}\right)!}\frac{\left(2k\right)!}{1!^{r_{1}}\cdots\left(2k\right)!^{r_{2k}}}\IE_{\beta,N}X\left(\underbar{\ensuremath{\boldsymbol{r}}}\right)\\
 & \geq-C_{1}\frac{1}{\sqrt{N}}+\sum_{r_{1}=0}^{k}\left(1-\frac{k+\frac{r_{1}}{2}-1}{N}\right)^{k+\frac{r_{1}}{2}}\\
 & \quad\cdot\left[\frac{1}{r_{1}!\left(k-\frac{r_{1}}{2}\right)!}\frac{\left(2k\right)!}{2^{k-\frac{r_{1}}{2}}}\left(r_{1}-1\right)!!\left(\frac{\beta}{1-\beta}\right)^{\frac{r_{1}}{2}}-\boldsymbol{C}_{\textup{high}}\frac{\left(\ln N\right)^{\frac{r_{1}+2}{2}}}{N}\right]\\
 & =-C_{1}\frac{1}{\sqrt{N}}-C_{2}\frac{\left(\ln N\right)^{\frac{r_{1}+2}{2}}}{N}+\left(1-\frac{3k-1}{2N}\right)^{\frac{3k}{2}}\sum_{r_{1}=0}^{k}\frac{1}{r_{1}!\left(k-\frac{r_{1}}{2}\right)!}\frac{\left(2k\right)!}{2^{k-\frac{r_{1}}{2}}}\left(r_{1}-1\right)!!\left(\frac{\beta}{1-\beta}\right)^{\frac{r_{1}}{2}}\\
 & \geq-C_{1}\frac{1}{\sqrt{N}}-C_{2}\frac{\left(\ln N\right)^{\frac{r_{1}+2}{2}}}{N}+\sum_{r_{1}=0}^{k}\frac{1}{r_{1}!\left(k-\frac{r_{1}}{2}\right)!}\frac{\left(2k\right)!}{2^{k-\frac{r_{1}}{2}}}\left(r_{1}-1\right)!!\left(\frac{\beta}{1-\beta}\right)^{\frac{r_{1}}{2}}-C_{3}\frac{1}{N^{\frac{3k}{2}}}\\
 & \geq\sum_{r_{1}=0}^{k}\frac{1}{r_{1}!\left(k-\frac{r_{1}}{2}\right)!}\frac{\left(2k\right)!}{2^{k-\frac{r_{1}}{2}}}\left(r_{1}-1\right)!!\left(\frac{\beta}{1-\beta}\right)^{\frac{r_{1}}{2}}-C\frac{1}{\sqrt{N}}.
\end{align*}
Putting together the two bounds, we obtain the limit
\[
\IE_{\beta,N}\frac{S^{2k}}{N^{k}}\xrightarrow[N\rightarrow\infty]{}\sum_{r_{1}=0}^{k}\frac{1}{r_{1}!\left(k-\frac{r_{1}}{2}\right)!}\frac{\left(2k\right)!}{2^{k-\frac{r_{1}}{2}}}\left(r_{1}-1\right)!!\left(\frac{\beta}{1-\beta}\right)^{\frac{r_{1}}{2}},
\]
which we proceed to simplify.
\begin{align*}
 & \sum_{r_{1}=1}^{k}\frac{1}{r_{1}!\left(k-\frac{r_{1}}{2}\right)!}\frac{\left(2k\right)!}{2^{k-\frac{r_{1}}{2}}}\left(r_{1}-1\right)!!\left(\frac{\beta}{1-\beta}\right)^{\frac{r_{1}}{2}}\\
 & =\frac{\left(2k\right)!}{k!\,2^{k}}\sum_{r_{1}=0}^{k}\frac{k!}{r_{1}!\left(k-\frac{r_{1}}{2}\right)!}\frac{1}{2^{-\frac{r_{1}}{2}}}\frac{r_{1}!}{\left(\frac{r_{1}}{2}\right)!2^{\frac{r_{1}}{2}}}\left(\frac{\beta}{1-\beta}\right)^{\frac{r_{1}}{2}}\\
 & =\left(2k-1\right)!!\sum_{r_{1}=0}^{k}\frac{k!}{\left(\frac{r_{1}}{2}\right)!\left(k-\frac{r_{1}}{2}\right)!}\left(\frac{\beta}{1-\beta}\right)^{\frac{r_{1}}{2}}\\
 & =\left(2k-1\right)!!\sum_{r_{1}=0}^{k}\left(\begin{array}{c}
k\\
\frac{r_{1}}{2}
\end{array}\right)\left(\frac{\beta}{1-\beta}\right)^{\frac{r_{1}}{2}}\\
 & =\left(2k-1\right)!!\left(1+\frac{\beta}{1-\beta}\right)^{k}\\
 & =\left(2k-1\right)!!\left(\frac{1}{1-\beta}\right)^{k},
\end{align*}
where we used the equality
\[
m!!=\frac{m!}{\left(\frac{m}{2}\right)!\,2^{m}},
\]
which holds for all even $m\in\IN$. This concludes the proof of the
statements concerning $\beta<1$.

Assume now $\beta>1$. We once again use the expression of the expectation
$\IE_{\beta,N}\frac{S^{2k}}{N^{2k}}$ as a sum over all possible profile
vectors of the correlation $\IE_{\beta,N}X_{i_{1}}\cdots X_{i_{2k}}$:
\[
\IE_{\beta,N}\frac{S^{2k}}{N^{2k}}=\frac{1}{N^{2k}}\sum_{\underbar{\ensuremath{\boldsymbol{r}}}\in\Pi}\frac{N!}{r_{1}!\cdots r_{2k}!\left(N-\sum_{\ell=1}^{2k}r_{\ell}\right)!}\frac{\left(2k\right)!}{1!^{r_{1}}\cdots\left(2k\right)!^{r_{2k}}}\IE_{\beta,N}X\left(\underbar{\ensuremath{\boldsymbol{r}}}\right).
\]
We define the partition of $\Pi$
\[
\Pi_{3}\coloneq\left\{ \left.\left(r_{1},\ldots,r_{2k}\right)\in\Pi\,\right|\,\exists\ell>1:\,r_{\ell}>0\right\} \quad\textup{and}\quad\Pi_{4}\coloneq\left\{ \left.\left(r_{1},\ldots,r_{2k}\right)\in\Pi\,\right|\,\forall\ell>1:\,r_{\ell}=0\right\} .
\]
Let $\underbar{\ensuremath{\boldsymbol{r}}}\in\Pi_{3}$, and assume
$r_{\ell^{*}}>0$ holds for $\ell^{*}>1$. We first note
\begin{align*}
\sum_{\ell=1}^{2k}r_{\ell} & =r_{\ell^{*}}+\sum_{1\leq\ell\leq2k,\ell\neq\ell^{*}}r_{\ell}\\
 & \leq\left(\ell^{*}-1\right)r_{\ell^{*}}+\sum_{1\leq j\leq k,j\neq j^{*}}r_{\ell}\\
 & \leq\left(\ell^{*}-1\right)r_{\ell^{*}}+\sum_{1\leq j\leq k,j\neq j^{*}}\ell r_{\ell}\\
 & =-r_{\ell^{*}}+\sum_{\ell=1}^{2k}\ell r_{\ell}\\
 & \leq-1+\sum_{\ell=1}^{2k}\ell r_{\ell}\\
 & =2k-1.
\end{align*}
Using the bound given above and Proposition \ref{prop:appr_correlations},
we obtain the upper bound
\begin{align}
 & \frac{1}{N^{2k}}\frac{N!}{r_{1}!\cdots r_{2k}!\left(N-\sum_{\ell=1}^{2k}r_{\ell}\right)!}\frac{\left(2k\right)!}{1!^{r_{1}}\cdots\left(2k\right)!^{r_{2k}}}\IE_{\beta,N}X\left(\underbar{\ensuremath{\boldsymbol{r}}}\right)\nonumber \\
 & \leq\frac{1}{N^{2k}}\frac{N^{\sum_{\ell=1}^{2k}r_{\ell}}}{r_{1}!\cdots r_{2k}!}\frac{\left(2k\right)!}{1!^{r_{1}}\cdots\left(2k\right)!^{r_{2k}}}\left[m\left(\beta\right)^{\sum_{\ell=1}^{k}\left(2\ell-1\right)r_{2\ell-1}}+\boldsymbol{C}_{\textup{low}}\frac{\left(\ln N\right)^{\frac{3}{2}}}{\sqrt{N}}\right]\nonumber \\
 & \leq\frac{1}{N}C\left[m\left(\beta\right)^{\sum_{\ell=1}^{k}\left(2\ell-1\right)r_{2\ell-1}}+\boldsymbol{C}_{\textup{low}}\frac{\left(\ln N\right)^{\frac{3}{2}}}{\sqrt{N}}\right]\nonumber \\
 & \leq\frac{1}{N}C.\label{eq:UB_low_temp_Pi_3}
\end{align}
Let $\underbar{\ensuremath{\boldsymbol{r}}}\in\Pi_{4}$. Then, by
definition of $\Pi_{4}$, $\underbar{\ensuremath{\boldsymbol{r}}}=\left(2k,0,\ldots,0\right)$,
and
\begin{align}
\frac{1}{N^{2k}}\frac{N!}{r_{1}!\cdots r_{2k}!\left(N-\sum_{\ell=1}^{2k}r_{\ell}\right)!}\frac{\left(2k\right)!}{1!^{r_{1}}\cdots\left(2k\right)!^{r_{2k}}}\IE_{\beta,N}X\left(\underbar{\ensuremath{\boldsymbol{r}}}\right) & \approx\frac{1}{N^{2k}}\frac{N^{2k}}{\left(2k\right)!}\left(2k\right)!\,m\left(\beta\right)^{2k}\nonumber \\
 & =m\left(\beta\right)^{2k}.\label{eq:asymp_Pi_2_low}
\end{align}
So, asymptotically, only the single summand with $\underbar{\ensuremath{\boldsymbol{r}}}=\left(2k,0,\ldots,0\right)$
contributes to the expectation $\IE_{\beta,N}\frac{S^{2k}}{N^{2k}}$.
We calculate the bounds
\begin{align*}
\IE_{\beta,N}\frac{S^{2k}}{N^{2k}} & =\frac{1}{N^{2k}}\sum_{\underbar{\ensuremath{\boldsymbol{r}}}\in\Pi}\frac{N!}{r_{1}!\cdots r_{2k}!\left(N-\sum_{\ell=1}^{2k}r_{\ell}\right)!}\frac{\left(2k\right)!}{1!^{r_{1}}\cdots\left(2k\right)!^{r_{2k}}}\IE_{\beta,N}X\left(\underbar{\ensuremath{\boldsymbol{r}}}\right)\\
 & \leq C\frac{1}{N}+\frac{1}{N^{2k}}\frac{N!}{r_{1}!\cdots r_{2k}!\left(N-\sum_{\ell=1}^{2k}r_{\ell}\right)!}\frac{\left(2k\right)!}{1!^{r_{1}}\cdots\left(2k\right)!^{r_{2k}}}\IE_{\beta,N}X\left(\left(2k,0,\ldots,0\right)\right)\\
 & \leq C\frac{1}{N}+\frac{1}{N^{2k}}\frac{N^{2k}}{\left(2k\right)!}\left(2k\right)!\left[m\left(\beta\right)^{2k}+\boldsymbol{C}_{\textup{low}}\frac{\left(\ln N\right)^{\frac{3}{2}}}{\sqrt{N}}\right]\\
 & \leq C_{1}\frac{1}{N}+m\left(\beta\right)^{2k}+C_{2}\frac{\left(\ln N\right)^{\frac{3}{2}}}{\sqrt{N}}\\
 & \leq m\left(\beta\right)^{2k}+C\frac{\left(\ln N\right)^{\frac{3}{2}}}{\sqrt{N}}
\end{align*}
and
\begin{align*}
\IE_{\beta,N}\frac{S^{2k}}{N^{2k}} & =\frac{1}{N^{2k}}\sum_{\underbar{\ensuremath{\boldsymbol{r}}}\in\Pi}\frac{N!}{r_{1}!\cdots r_{2k}!\left(N-\sum_{\ell=1}^{2k}r_{\ell}\right)!}\frac{\left(2k\right)!}{1!^{r_{1}}\cdots\left(2k\right)!^{r_{2k}}}\IE_{\beta,N}X\left(\underbar{\ensuremath{\boldsymbol{r}}}\right)\\
 & \geq-C\frac{1}{N}+\frac{1}{N^{2k}}\frac{N!}{\left(N-2k\right)!}\left[m\left(\beta\right)^{2k}-\boldsymbol{C}_{\textup{low}}\frac{\left(\ln N\right)^{\frac{3}{2}}}{\sqrt{N}}\right]\\
 & \geq-C\frac{1}{N}+\left(1-\frac{2k-1}{N}\right)^{2k}\left[m\left(\beta\right)^{2k}-\boldsymbol{C}_{\textup{low}}\frac{\left(\ln N\right)^{\frac{3}{2}}}{\sqrt{N}}\right]\\
 & \geq-C_{1}\frac{1}{N}+m\left(\beta\right)^{2k}-C_{2}\frac{\left(\ln N\right)^{\frac{3}{2}}}{\sqrt{N}}-C_{3}\frac{1}{N^{2k}}\\
 & \geq m\left(\beta\right)^{2k}-C\frac{\left(\ln N\right)^{\frac{3}{2}}}{\sqrt{N}}.
\end{align*}
This concludes the proof of the statements concerning $\beta>1$.
\end{proof}
\begin{rem}
\label{rem:overlap}Proposition \ref{prop:appr_moments} provides
an asymptotic expression for $\IE_{\beta,N}S^{2}$ which is not costly
to calculate. However, there is a trade-off involved in using these
approximations: whereas, by Proposition \ref{prop:ES2_fin}, $\beta\mapsto\IE_{\beta,N}S^{2}$
is strictly increasing, thus allowing us to uniquely identify the
maximum likelihood estimator (Definition 7 in \cite{BaMeSiTo2025})
using the optimality equation (\ref{eq:opt}), the approximations
in Proposition \ref{prop:appr_moments} do not have this monotonicity
property. In fact, the approximation for $\IE_{\beta,N}S^{2}$ becomes
arbitrarily large as $\beta\nearrow1$, making it so that the large
$N$ expression for $\IE_{\beta,N}S^{2}$ in the regime $\beta<1$
overlaps with the expression for $\beta>1$. Therefore, we cannot
properly estimate the parameter $\beta$ if its value lies very close
to 1, or else if the realisation of the sample is atypical.
\end{rem}

\subsection{Proof of Proposition \ref{prop:error}}
\begin{proof}[Proof of Proposition \ref{prop:error}]
 Recall the definitions of the statistic $T$ in Definition \ref{def:T_stat}
and of $S$ in (\ref{eq:S_lambda}). Also consult Definitions \ref{def:Legendre}-\ref{def:ess}
and Lemma \ref{lem:cumulant_entropy} for facts regarding entropy
functions of distributions, which we will employ in this proof.

Note that $\hat{\beta}_{N}^{\infty}\in I_{k}$ is equivalent to $T\in J_{k}$
for $k\in\left\{ h,l\right\} $ by Definition \ref{def:estimator_large_N}.
We show 
\[
\sup_{\beta\in I_{h}}\left\{ \IP\left\{ T\in J_{l}\right\} \right\} \leq2\exp\left(-\eta_{2}n\right),\quad n\in\IN.
\]
The random variable $S^{2}$ is bounded and not almost surely constant,
so Lemma \ref{lem:cumulant_entropy} applies to its entropy function
$\Lambda_{S^{2}}^{*}$. For all $\beta\in I_{h}$, $\IE\,T=\IE_{\beta,N}S^{2}\in J_{h}$
is satisfied. Also, the distance between $\IE\,T$ and the set $J_{l}$,
$\textup{dist}\left(\IE\,T,J_{l}\right)\coloneq\inf\left\{ \left|\IE\,T-x\right|\,|\,x\in J_{l}\right\} $,
is strictly positive since $J_{h}$ and $J_{l}$ are closed sets and
disjoint. $\Lambda_{S^{2}}^{*}$ is the Legendre transform (see Definition
\ref{def:Legendre}) of
\[
\Lambda_{S^{2}}\left(x\right)\coloneq\ln\,\IE\exp\left(xS^{2}\right),\quad x\in\IR.
\]
Recall that $\Lambda_{S^{2}}^{*}$ is strictly increasing on the interval
$\left(\IE_{\beta,N}S^{2},\infty\right)$ by Lemma \ref{lem:cumulant_entropy}.
Lemma \ref{lem:cumulant_entropy} yields $\Lambda_{S^{2}}^{*}\left(\IE_{\beta,N}S^{2}\right)=0$,
and because of $\inf J_{l}>\IE_{\beta,N}S^{2}$,
\begin{equation}
\eta\coloneq\inf_{x\in J_{l}}\Lambda^{*}\left(x\right)>0\label{eq:eta_2}
\end{equation}
holds. By Definition \ref{def:intervals}, there is some $a\in\IR$
such that $J_{l}=\left[a,\infty\right)$. We write
\[
\IP\left\{ T\in J_{l}\right\} =\IP\left\{ T\in\left[a,\infty\right)\right\} .
\]
By Lemma \ref{lem:cumulant_entropy}, $\Lambda_{S^{2}}$ is convex,
and Jensen's inequality yields
\begin{equation}
\Lambda_{S^{2}}\left(t\right)=\ln\,\IE\exp\left(tS^{2}\right)\geq\IE\left(\ln\exp\left(tS^{2}\right)\right)=t\,\IE\,S^{2}.\label{eq:Lambda}
\end{equation}
We rearrange terms in (\ref{eq:Lambda}) to obtain for all $t<0$
and $x\geq\IE_{\beta,N}S^{2}$,
\[
xt-\Lambda_{S^{2}}\left(t\right)\leq t\,\IE\,S^{2}-\Lambda_{S^{2}}\left(t\right)\leq0.
\]
Since $\Lambda_{S^{2}}0)=0$, $\Lambda_{S^{2}}^{*}(x)\geq0$ for all
$x\in\IR$ is a consequence of Definition \ref{def:Legendre}. This
and the last display yield
\begin{equation}
\Lambda_{S^{2}}^{*}(x)=\sup_{t\geq0}\left\{ xt-\Lambda_{S^{2}}(t)\right\} \label{eq:Lam_star_sup}
\end{equation}
for all $x\geq\IE_{\beta,N}S^{2}$. 

An application of Markov's inequality yields for all $t\geq0$,
\begin{align*}
\IP\left\{ T\in\left[a,\infty\right)\right\}  & =\IP\left\{ T-a\geq0\right\} \leq\IP\left\{ \exp\left(nt\left(T-a\right)\right)\geq1\right\} \leq\IE\exp\left(nt\left(T-a\right)\right)\\
 & =\exp\left(-nta\right)\prod_{s=1}^{n}\IE\exp\left(t\left(\sum_{i=1}^{N}X_{i}^{(s)}\right)^{2}\right)=\exp\left(-nta\right)\left[\IE\exp\left(tS^{2}\right)\right]^{n}\\
 & =\exp\left(-nta\right)\exp\left(n\Lambda_{S^{2}}\left(t\right)\right)=\exp\left(-n\left(ta-\Lambda_{S^{2}}\left(t\right)\right)\right).
\end{align*}

As this holds for all $t\geq0$, we use $\IE_{\beta,N}S^{2}<a$ and
(\ref{eq:Lam_star_sup}) to arrive at
\begin{equation}
\IP\left\{ T\in\left[a,\infty\right)\right\} \leq\exp\left(-n\Lambda_{S^{2}}^{*}\left(a\right)\right)=\exp\left(-\eta n\right).\label{eq:left_UB}
\end{equation}
Since this upper bound is independent of the value $\beta\in I_{h}$,
the claim follows. The proof of\\
$\sup_{\beta\in I_{l}}\left\{ \IP\left\{ T\in J_{h}\right\} \right\} \xrightarrow[n\rightarrow\infty]{}0$
is analogous. The constant $\eta_{2}$ from the statement of Proposition
\ref{prop:error} can be chosen as the minimum over all groups $\lambda$
of expressions as defined in (\ref{eq:eta_2}). Analogously, $\eta_{3}$
can be obtained.

Now we show the statement
\[
\IP\left\{ \hat{\beta}_{N}^{\infty}\in\left(-\infty,0\right)\cup\left\{ \pm\infty\right\} \right\} \leq2\exp\left(-\eta_{1}n\right),\quad n\in\IN.
\]
First of all we note that by Definition \ref{def:estimator_large_N},
$\hat{\beta}_{N}^{\infty}<0$ holds if and only if $T<N$, and, taking
also into account Definition \ref{def:m_beta} and Lemma \ref{lem:m_beta_increasing},
$\hat{\beta}_{N}^{\infty}=\infty$ if and only if $T=N^{2}$. We are
assuming $\beta>0$, so by Proposition \ref{prop:ES2_fin}, $N<\IE_{\beta,N}S^{2}<N^{2}$
holds. $\Lambda_{S^{2}}^{*}$ is strictly decreasing on $\left(\min\textup{Range}\left(S^{2}\right),\IE_{\beta,N}S^{2}\right)$
and strictly increasing on $\left(\IE_{\beta,N}S^{2},N^{2}\right)$
by Lemma \ref{lem:cumulant_entropy}, so we conclude that $\Lambda_{S^{2}}^{*}\left(N\right)>0$
and $\Lambda_{S^{2}}^{*}\left(N^{2}\right)>0$, and hence
\begin{equation}
\theta\coloneq\min\left\{ \Lambda_{S^{2}}^{*}\left(N\right),\Lambda_{S^{2}}^{*}\left(N^{2}\right)\right\} >0\label{eq:eta_1}
\end{equation}
holds. We express the probability $\IP\left\{ \hat{\beta}_{N}^{\infty}\in\left(-\infty,0\right)\cup\left\{ \pm\infty\right\} \right\} $
as
\[
\IP\left\{ T\notin\left[N,N^{2}\right)\right\} =\IP\left\{ T\in\left(-\infty,N\right)\right\} +\IP\left\{ T\in\left[N^{2},\infty\right)\right\} .
\]

For all $x\leq0$,
\begin{align*}
\IP\left\{ T\in\left(-\infty,N\right)\right\}  & =\IP\left\{ T-N<0\right\} \leq\IP\left\{ \exp\left(nx\left(T-N\right)\right)\geq1\right\} \leq\IE\exp\left(nx\left(T-N\right)\right)\\
 & =\exp\left(-nxN\right)\prod_{s=1}^{n}\IE\exp\left(x\left(\sum_{i=1}^{N}X_{i}^{(s)}\right)^{2}\right)=\exp\left(-nxN\right)\left[\IE\exp\left(xS^{2}\right)\right]^{n}\\
 & =\exp\left(-nxN\right)\exp\left(n\Lambda_{S^{2}}\left(x\right)\right)=\exp\left(-n\left(xN-\Lambda_{S^{2}}\left(x\right)\right)\right)
\end{align*}
holds, where we used Markov's inequality in step 3 above. Now we use
$N<\IE_{\beta,N}S^{2}$ and (\ref{eq:Lam_star_sup}) to arrive at
\begin{equation}
\IP\left\{ T\in\left(-\infty,N\right)\right\} \leq\exp\left(-n\Lambda_{S^{2}}^{*}\left(N\right)\right).\label{eq:left_UB-1}
\end{equation}
Similarly, we calculate the upper bound
\begin{equation}
\IP\left\{ T\in\left[N^{2},\infty\right)\right\} \leq\exp\left(-n\Lambda_{S^{2}}^{*}\left(N^{2}\right)\right).\label{eq:right_UB}
\end{equation}
Hence,
\[
\IP\left\{ T\notin\left[N,N^{2}\right)\right\} \leq2\exp\left(-\theta n\right)
\]
holds. We can choose $\eta_{1}$ to be the minimum over all groups
$\lambda$ of such expressions as in (\ref{eq:eta_1}).
\end{proof}
To apply the estimator $\hat{\beta}_{N}^{\infty}$, we propose two
algorithms. The choice of algorithm depends on whether the group size
$N$ is fixed or not.

\begin{algo}Let $N\in\IN$.
\begin{enumerate}
\item Fix an arbitrary upper bound $\varepsilon>0$ for the probability
of committing an error of obtaining $\hat{\beta}_{N}^{\infty}>1$
when $\beta<1$ or $\hat{\beta}_{N}^{\infty}<1$ when $\beta>1$.
\item Choose constants $b_{1}<1<b_{2}$ such that (\ref{eq:separation})
holds given $N$.
\item Choose the sample size $n$ large enough that\footnote{Note that the left hand side of this inequality converges to 0 as
$n\rightarrow\infty$ by Proposition \ref{prop:error}.}
\[
\max\left\{ \sup_{\beta\in I_{h}}\left\{ \IP\left\{ T\in J_{l}\right\} \right\} ,\sup_{\beta\in I_{l}}\left\{ \IP\left\{ T\in J_{h}\right\} \right\} \right\} <\varepsilon.
\]
\item Calculate the statistic $T$ and the estimator $\hat{\beta}_{N}^{\infty}$
from a sample $\left(x^{(1)},\ldots,x^{(n)}\right)\in\Omega_{N}^{n}$
of size $n$.
\end{enumerate}
\end{algo}

\vspace{.5\baselineskip}

\begin{algo}Let the constants $b_{1}<1<b_{2}$ and the intervals
$I_{h}$ and $I_{l}$ be as in Definition \ref{def:intervals}.
\begin{enumerate}
\item Fix an arbitrary upper bound $\varepsilon>0$ for the probability
of committing an error of obtaining $\hat{\beta}_{N}^{\infty}>1$
when $\beta<1$ or $\hat{\beta}_{N}^{\infty}<1$ when $\beta>1$.
\item Choose $N$ large enough that (\ref{eq:separation}) holds given constants
$b_{1}<1<b_{2}$.
\item Choose the sample size $n$ large enough that
\[
\max\left\{ \sup_{\beta\in I_{h}}\left\{ \IP\left\{ T\in J_{l}\right\} \right\} ,\sup_{\beta\in I_{l}}\left\{ \IP\left\{ T\in J_{h}\right\} \right\} \right\} <\varepsilon.
\]
\item Calculate the statistic $T$ and the estimator $\hat{\beta}_{N}^{\infty}$
from a sample $\left(x^{(1)},\ldots,x^{(n)}\right)\in\Omega_{N}^{n}$
of size $n$.
\end{enumerate}
\end{algo}
\begin{rem}
While the maximum likelihood estimator (see Definition 7 in \cite{BaMeSiTo2025})
using the exact value $\IE_{\beta,N}S^{2}$ provides an estimate for
$\beta$ given any sample of observations, including values in any
interval containing the value 1, the same does not hold for the estimator
$\hat{\beta}_{N}^{\infty}$ using asymptotic approximations for $\IE_{\beta,N}S^{2}$
from Definition \ref{def:estimator_large_N}. Due to the overlap of
these approximations, there is no way to avoid having an interval
around 1 in which no precise estimation of $\beta$ is possible. However,
as discussed above, this interval can be made arbitrarily small if
the number of voters is large enough. Figure \ref{fig:approx_ES2}
illustrates the asymptotic approximation of $\IE_{\beta,N}S^{2}$
given in Proposition \ref{prop:appr_moments} for $\beta\in I_{h}\cup I_{l}$.

\begin{figure}
\begin{centering}
\includegraphics{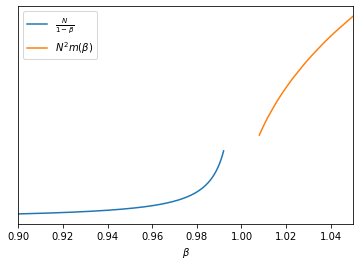}
\par\end{centering}
\caption{\label{fig:approx_ES2}Approximation of $\protect\IE_{\beta,N}S^{2}$
on $I_{h}\cup I_{l}$}
\end{figure}
\end{rem}

\subsection{Large Deviations Principle for $\left(\frac{S}{N}\right)^{2}$}

As a preliminary to the proof of Theorem \ref{thm:properties_bML_inf},
we show that the sequences $\left(\frac{S}{N}\right)_{N\in\IN}$ and
$\left(\left(\frac{S}{N}\right)^{2}\right)_{N\in\IN}$ satisfy large
deviation principles.
\begin{defn}
\label{def:Rademacher}The \emph{Rademacher distribution} $\textup{Rad}_{t}$
with parameter $t\in\left[-1,1\right]$ is a probability measure $P_{t}$
on $\left\{ -1,1\right\} $ given by $P_{t}\left\{ 1\right\} \coloneqq\frac{1+t}{2}$.
\end{defn}

We will need its entropy function $\Lambda_{P_{0}}^{*}$.
\begin{lem}
\label{lem:entropy_Rad}The entropy function $\Lambda_{P_{0}}^{*}:\IR\rightarrow\left[0,\infty\right]$
is given by
\[
\Lambda_{P_{0}}^{*}\left(x\right)=\begin{cases}
\frac{1-x}{2}\ln\left(1-x\right)+\frac{1+x}{2}\ln\left(1+x\right), & \textup{if }x\in\left[-1,1\right],\\
\infty, & \textup{if }x\notin\left[-1,1\right].
\end{cases}
\]
\end{lem}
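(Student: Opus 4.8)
The plan is to compute $\Lambda_{P_0}$ in closed form and then perform the Legendre transform by elementary one-variable calculus. If $Y$ has distribution $P_0$, then $\IE\,e^{\lambda Y}=\tfrac12 e^{\lambda}+\tfrac12 e^{-\lambda}=\cosh\lambda$, so $\Lambda_{P_0}(\lambda)=\ln\cosh\lambda$ for all $\lambda\in\IR$, and by Definition \ref{def:Legendre},
\[
\Lambda_{P_0}^{*}(x)=\sup_{\lambda\in\IR}\left(\lambda x-\ln\cosh\lambda\right).
\]
Since $\tfrac{\textup{d}^{2}}{\textup{d}\lambda^{2}}\ln\cosh\lambda=1-\tanh^{2}\lambda>0$, the map $\lambda\mapsto\lambda x-\ln\cosh\lambda$ is strictly concave, so whenever a critical point exists it is the unique global maximiser.

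For $|x|<1$, the first-order condition $x-\tanh\lambda=0$ has the unique solution $\lambda^{*}=\operatorname{artanh}x$. Substituting it and using $\cosh(\operatorname{artanh}x)=(1-x^{2})^{-1/2}$ together with $\operatorname{artanh}x=\tfrac12\ln\tfrac{1+x}{1-x}$ gives
\[
\Lambda_{P_0}^{*}(x)=x\operatorname{artanh}x+\tfrac12\ln(1-x^{2})=\tfrac{x}{2}\ln(1+x)-\tfrac{x}{2}\ln(1-x)+\tfrac12\ln(1+x)+\tfrac12\ln(1-x),
\]
and regrouping the logarithmic terms yields $\tfrac{1+x}{2}\ln(1+x)+\tfrac{1-x}{2}\ln(1-x)$, which is the claimed formula on $(-1,1)$.

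It remains to handle $|x|>1$ and the endpoints $x=\pm1$. Writing $\ln\cosh\lambda=|\lambda|-\ln2+\ln(1+e^{-2|\lambda|})$, for $x>1$ we let $\lambda\to+\infty$ (resp.\ $\lambda\to-\infty$ for $x<-1$) and get $\lambda x-\ln\cosh\lambda=(|x|-1)|\lambda|+\ln2-\ln(1+e^{-2|\lambda|})\to\infty$, so $\Lambda_{P_0}^{*}(x)=\infty$ there. For $x=1$ (the case $x=-1$ being symmetric), the derivative $1-\tanh\lambda$ is strictly positive, so $\lambda\mapsto\lambda-\ln\cosh\lambda$ is strictly increasing with supremum $\lim_{\lambda\to\infty}\left(\ln2-\ln(1+e^{-2\lambda})\right)=\ln2$, which matches the stated formula evaluated at $x=1$ under the convention $0\ln0=0$ (equivalently, one may extend from $(-1,1)$ by continuity of the claimed expression, which is elementary). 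The only point requiring any care is this boundary bookkeeping at $x=\pm1$; the rest is a direct computation.
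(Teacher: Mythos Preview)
Your proof is correct and follows essentially the same approach as the paper's: compute $\Lambda_{P_0}(\lambda)=\ln\cosh\lambda$, solve the first-order condition $x=\tanh\lambda$ for $|x|<1$, and treat the boundary $x=\pm1$ and the exterior $|x|>1$ separately. The only minor differences are presentational---you invoke strict concavity to justify that the critical point is the global maximiser and handle $|x|>1$ via the explicit asymptotic $\ln\cosh\lambda=|\lambda|-\ln2+\ln(1+e^{-2|\lambda|})$, whereas the paper shows $\lim_{t\to\pm\infty}f(t)=-\infty$ directly and cites Lemma~\ref{lem:cumulant_entropy} for the exterior case.
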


\begin{proof}
Since $X_{1}\sim\textup{Rad}_{0}$ takes values in $\left\{ -1,1\right\} $,
by statement 2 of Lemma \ref{lem:cumulant_entropy}, we have $\Lambda_{P_{0}}^{*}\left(x\right)=\infty$
for all $x\notin\left[-1,1\right]$. Now let $x\in\left(-1,1\right)$.
Let for all $t\in\IR$
\[
f\left(t\right)\coloneq xt-\Lambda_{P_{0}}\left(t\right)
\]
and
\[
g\left(t\right)\coloneq\exp f\left(t\right)=\frac{\exp\left(xt\right)}{\IE\exp\left(tX_{1}\right)}.
\]
We calculate
\begin{align}
\Lambda_{P_{0}}\left(t\right)=\ln\IE\exp\left(tX_{1}\right) & =\ln\left(\frac{1}{2}\exp\left(-t\right)+\frac{1}{2}\exp\left(t\right)\right)=\ln\cosh t\label{eq:mom_gen_Rad}
\end{align}
for all $t\in\IR$. Hence,
\[
g\left(t\right)=2\frac{\exp\left(xt\right)}{\exp\left(-t\right)+\exp\left(t\right)}=\frac{2}{\exp\left(-t\left(1+x\right)\right)+\exp\left(t\left(1-x\right)\right)}.
\]
Since $\lim_{t\rightarrow\infty}\exp\left(-t\left(1+x\right)\right)=0$
and $\lim_{t\rightarrow\infty}\exp\left(t\left(1-x\right)\right)=\infty$,
$\lim_{t\rightarrow\infty}g\left(t\right)=0$ and $\lim_{t\rightarrow\infty}f\left(t\right)=-\infty$
hold.

Next we note that $\lim_{t\rightarrow-\infty}\exp\left(-t\left(1+x\right)\right)=\infty$
and $\lim_{t\rightarrow-\infty}\exp\left(t\left(1-x\right)\right)=0$,
so that $\lim_{t\rightarrow-\infty}g\left(t\right)=0$ and $\lim_{t\rightarrow-\infty}f\left(t\right)=-\infty$.
Hence, the continuous function $f$ reaches its maximum at some point
$t_{0}\in\IR$. We calculate $t_{0}$ by equating the derivative $f'$
to 0:
\[
f'\left(t_{0}\right)=x-\frac{\IE\,X_{1}\exp\left(t_{0}X_{1}\right)}{\IE\exp\left(t_{0}X_{1}\right)}=0.
\]
We have
\[
\IE\,X_{1}\exp\left(tX_{1}\right)=-\frac{1}{2}\exp\left(-t\right)+\frac{1}{2}\exp\left(t\right)=\sinh t
\]
for all $t\in\IR$. So the maximum $t_{0}$ satisfies
\[
x=\frac{\sinh t_{0}}{\cosh t_{0}}=\tanh t_{0},
\]
or equivalently
\[
t_{0}=\textup{artanh }x.
\]
Next we use the formula
\[
\textup{artanh }x=\frac{1}{2}\ln\frac{1+x}{1-x}
\]
for $x\in\left(-1,1\right)$:
\[
t_{0}=\frac{1}{2}\ln\frac{1+x}{1-x}.
\]

By Definition \ref{def:cumul_entropy},
\begin{equation}
\Lambda_{P_{0}}^{*}\left(x\right)=\sup_{t\in\IR}\left\{ xt-\Lambda_{P_{0}}\left(t\right)\right\} =\sup_{t\in\IR}\left\{ xt-\ln\cosh t\right\} =\frac{x}{2}\ln\frac{1+x}{1-x}-\ln\cosh\left(\frac{1}{2}\ln\frac{1+x}{1-x}\right).\label{eq:entropy_fn_P_0}
\end{equation}
We calculate
\begin{align*}
\cosh\left(\frac{1}{2}\ln\frac{1+x}{1-x}\right) & =\frac{1}{2}\left[\exp\left(\frac{1}{2}\ln\frac{1+x}{1-x}\right)+\exp\left(-\frac{1}{2}\ln\frac{1+x}{1-x}\right)\right]\\
 & =\frac{1}{2}\left[\sqrt{\frac{1+x}{1-x}}+\sqrt{\frac{1-x}{1+x}}\right]\\
 & =\frac{1}{2}\frac{2}{\sqrt{1-x^{2}}}=\frac{1}{\sqrt{1-x^{2}}}.
\end{align*}
Therefore,
\[
\ln\cosh\left(\frac{1}{2}\ln\frac{1+x}{1-x}\right)=-\frac{1}{2}\ln\left(1-x^{2}\right)=-\frac{1}{2}\left[\ln\left(1-x\right)+\ln\left(1+x\right)\right].
\]
Substituting the expression above into (\ref{eq:entropy_fn_P_0})
yields
\[
\Lambda_{P_{0}}^{*}\left(x\right)=\frac{1-x}{2}\ln\left(1-x\right)+\frac{1+x}{2}\ln\left(1+x\right).
\]
Finally, let $x=1$. In this case, we have for all $t\in\IR$, 
\[
g\left(t\right)=2\frac{\exp\left(t\right)}{\exp\left(-t\right)+\exp\left(t\right)}=\frac{2}{\exp\left(-2t\right)+1}.
\]
We see that for all $t\in\IR$, $g\left(t\right)<2$, $\lim_{t\rightarrow\infty}g\left(t\right)=2$,
and $\lim_{t\rightarrow\infty}f\left(t\right)=\ln2$, while $\lim_{t\rightarrow-\infty}g\left(t\right)=0$
and $\lim_{t\rightarrow-\infty}f\left(t\right)=-\infty$. Hence, $\Lambda_{P_{0}}^{*}\left(1\right)=\ln2$.
Analogously, one can show $\Lambda_{P_{0}}^{*}\left(-1\right)=\ln2$.
\end{proof}
We will also need a basic fact about logarithmic rates of convergence
stated in the next lemma.
\begin{lem}
\label{lem:log_conv_rates}Let $\left(a_{n}\right)_{n\in\IN}$ and
$\left(b_{n}\right)_{n\in\IN}$ be two sequences of non-negative real
numbers. Then
\[
\limsup_{n\rightarrow\infty}\frac{1}{n}\ln\left(a_{n}+b_{n}\right)=\max\left\{ \limsup_{n\rightarrow\infty}\frac{1}{n}\ln a_{n},\,\limsup_{n\rightarrow\infty}\frac{1}{n}\ln b_{n}\right\} .
\]
\end{lem}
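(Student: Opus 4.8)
The plan is to sandwich $a_n+b_n$ between $\max\{a_n,b_n\}$ and $2\max\{a_n,b_n\}$, apply the map $x\mapsto\tfrac1n\ln x$, and then pass to the $\limsup$, invoking the elementary fact that a $\limsup$ commutes with a finite maximum.

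First I would record the trivial chain
\[
\max\{a_n,b_n\}\le a_n+b_n\le 2\max\{a_n,b_n\},
\]
valid for every $n$ because $a_n,b_n\ge 0$. Applying the increasing map $x\mapsto\tfrac1n\ln x$ --- working throughout in the extended reals $[-\infty,\infty]$ with the convention $\ln 0\coloneq-\infty$, so that the degenerate cases $a_n=0$, $b_n=0$, or $a_n=b_n=0$ are covered without extra comment --- yields
\[
\frac1n\ln\max\{a_n,b_n\}\le\frac1n\ln(a_n+b_n)\le\frac{\ln 2}{n}+\frac1n\ln\max\{a_n,b_n\},
\]
while monotonicity of $\ln$ also gives $\tfrac1n\ln\max\{a_n,b_n\}=\max\bigl\{\tfrac1n\ln a_n,\tfrac1n\ln b_n\bigr\}$.

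Next I would use the identity $\limsup_{n}\max\{x_n,y_n\}=\max\bigl\{\limsup_n x_n,\limsup_n y_n\bigr\}$ for arbitrary extended-real sequences: the inequality ``$\ge$'' is immediate from $\max\{x_n,y_n\}\ge x_n$ and $\max\{x_n,y_n\}\ge y_n$, whereas ``$\le$'' follows by fixing $\varepsilon>0$ and noting that eventually both $x_n<\limsup_n x_n+\varepsilon$ and $y_n<\limsup_n y_n+\varepsilon$. Taking $\limsup_{n\to\infty}$ across the last displayed sandwich, observing that the term $\tfrac{\ln 2}{n}$ vanishes, and applying this identity with $x_n=\tfrac1n\ln a_n$ and $y_n=\tfrac1n\ln b_n$, both the lower and the upper bound collapse to $\max\bigl\{\limsup_n\tfrac1n\ln a_n,\limsup_n\tfrac1n\ln b_n\bigr\}$, which is exactly the asserted equality.

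There is essentially no obstacle in this argument; the only point requiring a modicum of care is the bookkeeping for vanishing terms, which is handled uniformly by the extended-real convention above, so that every inequality and the $\limsup$-maximum identity remain valid verbatim.
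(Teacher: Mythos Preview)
Your proof is correct and follows essentially the same approach as the paper: both sandwich $a_n+b_n$ between $\max\{a_n,b_n\}$ and $2\max\{a_n,b_n\}$ and then reduce to the identity $\limsup_n\max\{x_n,y_n\}=\max\{\limsup_n x_n,\limsup_n y_n\}$. The only cosmetic difference is that you prove this identity via the ``eventually $x_n<\limsup x_n+\varepsilon$'' characterisation, while the paper uses a subsequence contradiction argument; both are standard and equivalent.
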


\begin{proof}
The inequality
\[
\limsup_{n\rightarrow\infty}\frac{1}{n}\ln\left(a_{n}+b_{n}\right)\geq\max\left\{ \limsup_{n\rightarrow\infty}\frac{1}{n}\ln a_{n},\,\limsup_{n\rightarrow\infty}\frac{1}{n}\ln b_{n}\right\} 
\]
follows from $a_{n},b_{n}\geq0$ for each $n\in\IN$ and the monotonicity
of the natural logarithm. Next we calculate the upper bound
\begin{align*}
\limsup_{n\rightarrow\infty}\frac{1}{n}\ln\left(a_{n}+b_{n}\right) & \leq\limsup_{n\rightarrow\infty}\frac{1}{n}\ln\left(2\max\left\{ a_{n},b_{n}\right\} \right)\\
 & =\lim_{n\rightarrow\infty}\frac{1}{n}\ln2+\limsup_{n\rightarrow\infty}\frac{1}{n}\ln\max\left\{ a_{n},b_{n}\right\} \\
 & =\limsup_{n\rightarrow\infty}\frac{1}{n}\ln\max\left\{ a_{n},b_{n}\right\} \\
 & \leq\max\left\{ \limsup_{n\rightarrow\infty}\frac{1}{n}\ln a_{n},\limsup_{n\rightarrow\infty}\frac{1}{n}\ln b_{n}\right\} .
\end{align*}
The last inequality is a consequence of the following consideration.
If
\begin{align*}
\limsup_{n\rightarrow\infty}\frac{1}{n}\ln\max\left\{ a_{n},b_{n}\right\}  & >\max\left\{ \limsup_{n\rightarrow\infty}\frac{1}{n}\ln a_{n},\limsup_{n\rightarrow\infty}\frac{1}{n}\ln b_{n}\right\} 
\end{align*}
holds, then there is a subsequence with
\[
\limsup_{n\rightarrow\infty}\frac{1}{n}\ln\max\left\{ a_{n},b_{n}\right\} =\lim_{k\rightarrow\infty}\frac{1}{n_{k}}\ln\max\left\{ a_{n_{k}},b_{n_{k}}\right\} =\lim_{k\rightarrow\infty}\frac{1}{n_{k}}\ln a_{n_{k}}
\]
or
\[
\limsup_{n\rightarrow\infty}\frac{1}{n}\ln\max\left\{ a_{n},b_{n}\right\} =\lim_{k\rightarrow\infty}\frac{1}{n_{k}}\ln b_{n_{k}},
\]
because at least one of the sequences has to be the maximum an infinite
number of times in the subsequence that converges to $\limsup_{n\rightarrow\infty}\frac{1}{n}\ln\max\left\{ a_{n},b_{n}\right\} $.
Assume without loss of generality the latter. Then
\begin{align*}
\lim_{k\rightarrow\infty}\frac{1}{n_{k}}\ln b_{n_{k}} & >\max\left\{ \limsup_{n\rightarrow\infty}\frac{1}{n}\ln a_{n},\limsup_{n\rightarrow\infty}\frac{1}{n}\ln b_{n}\right\} \\
 & \geq\limsup_{n\rightarrow\infty}\frac{1}{n}\ln b_{n}
\end{align*}
is a contradiction.

\end{proof}
The next proposition gives large deviation principles for the large
$N$ behaviour of $\frac{S}{N}$ and $\left(\frac{S}{N}\right)^{2}$.
\begin{prop}
\label{prop:LDP_S_N}Let $\beta\geq0$. Then the sequence of random
variables $\left(\frac{S}{N}\right)_{N\in\IN}$ satisfies a large
deviations principle with rate $N$ and rate function $I_{\beta}:\IR\rightarrow\left[0,\infty\right]$,
\[
I_{\beta}\left(x\right)\coloneq\begin{cases}
-\frac{1}{2}\beta x^{2}+\frac{1-x}{2}\ln\left(1-x\right)+\frac{1+x}{2}\ln\left(1+x\right)-\psi & \textup{if }x\in\left[-1,1\right],\\
\infty & \textup{if }x\notin\left[-1,1\right],
\end{cases}
\]
where $\psi\coloneq\inf_{x\in\left[-1,1\right]}\left\{ -\frac{1}{2}\beta x^{2}+\frac{1-x}{2}\ln\left(1-x\right)+\frac{1+x}{2}\ln\left(1+x\right)\right\} $.

Let $J_{\beta}:\IR\rightarrow\left[0,\infty\right]$ be defined by
\[
J_{\beta}\left(y\right)\coloneq\begin{cases}
I_{\beta}\left(\sqrt{y}\right), & \textup{if }y\in\left[0,1\right],\\
\infty, & \textup{if }y\notin\left[0,1\right].
\end{cases}
\]
Then $\left(\left(\frac{S}{N}\right)^{2}\right)_{N\in\IN}$ satisfies
a large deviations principle with rate $N$ and rate function $J_{\beta}:\IR\rightarrow\left[0,\infty\right]$.

$I_{\beta}$ has one minimum at $m\left(\beta\right)=0$ if $\beta\leq1$
and two minima $\pm m\left(\beta\right)$ with $-1<-m\left(\beta\right)<0<m\left(\beta\right)<1$
if $\beta>1$. $J_{\beta}$ has one minimum at $m\left(\beta\right)^{2}$
for all $\beta\geq0$. There is for each closed $K\subset\IR$ that
does not contain $m\left(\beta\right)^{2}$ a constant $C_{K}$ such
that
\[
\IP\left\{ \left(\frac{S}{N}\right)^{2}\in K\right\} \leq C_{K}\exp\left(-\frac{N}{2}\inf_{y\in K}J\left(y\right)\right),\quad N\in\IN,
\]
where $\inf_{y\in K}J\left(y\right)$ is strictly positive.
\end{prop}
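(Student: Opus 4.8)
The plan is to obtain the large deviations principle for $\left(\frac{S}{N}\right)^2$ in three moves: a tilting argument that transfers Cramér's theorem for i.i.d.\ spins to an LDP for $\frac{S}{N}$ under $\IP_{\beta,N}$; the contraction principle applied to $x\mapsto x^2$; and a minimisation of the resulting rate function to pin down its unique zero and extract the uniform tail bound. For the tilting step, observe that the Curie-Weiss weight is a tilt of the product Rademacher law $P_0^{\otimes N}$: writing $S=\sum_{i=1}^N X_i$ one has $\frac{d\IP_{\beta,N}}{dP_0^{\otimes N}}(x)\propto\exp\bigl(\tfrac{\beta}{2N}S^2\bigr)=\exp\bigl(N\cdot\tfrac{\beta}{2}(S/N)^2\bigr)$, with normalising factor $2^{-N}Z_{\beta,N}$. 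Cramér's theorem gives an LDP for $\left(\frac{S}{N}\right)_{N\in\IN}$ under $P_0^{\otimes N}$ with rate $N$ and good rate function $\Lambda_{P_0}^*$, which is precisely the explicit function of Lemma \ref{lem:entropy_Rad}. Since $x\mapsto\tfrac{\beta}{2}x^2$ is continuous and bounded on the effective domain $[-1,1]$ of $\Lambda_{P_0}^*$, the standard tilted-LDP corollary of Varadhan's lemma upgrades this to an LDP for $\left(\frac{S}{N}\right)_{N\in\IN}$ under $\IP_{\beta,N}$ with rate function $x\mapsto\Lambda_{P_0}^*(x)-\tfrac{\beta}{2}x^2-\inf_y\bigl(\Lambda_{P_0}^*(y)-\tfrac{\beta}{2}y^2\bigr)$, which by Lemma \ref{lem:entropy_Rad} is exactly $I_\beta$, with $\psi$ the normalising infimum.

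Next I would apply the contraction principle to the continuous map $x\mapsto x^2$: the pushforward of the LDP for $\frac{S}{N}$ is an LDP for $\left(\frac{S}{N}\right)^2$ with rate $N$ and rate function $y\mapsto\inf\{I_\beta(x):x^2=y\}$. The formula for $I_\beta$ is manifestly even in $x$ (the quadratic term is even, and the entropy term is invariant under $x\mapsto-x$), so this infimum equals $I_\beta(\sqrt y)$ for $y\in[0,1]$ and $+\infty$ otherwise, i.e.\ it is exactly $J_\beta$; and $J_\beta$ is again a good rate function, being the continuous image of a good rate function and supported on the compact set $[0,1]$.

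For the minima I would study $g:=I_\beta+\psi$ on $(-1,1)$. Its derivative is $g'(x)=-\beta x+\textup{artanh}\,x$, which vanishes exactly when $x=\tanh(\beta x)$, the Curie-Weiss equation (\ref{eq:CW}), and $g''(x)=\tfrac1{1-x^2}-\beta$. If $\beta\le1$ the only root is $x=0$; since $\textup{artanh}\,x-\beta x$ is strictly increasing (its derivative $\tfrac1{1-x^2}-\beta$ is $\ge1-\beta\ge0$ and increasing) while $g'(x)\to\pm\infty$ as $x\to\pm1$, $x=0$ is the unique global minimiser. If $\beta>1$ the roots are $0$ and $\pm m(\beta)$; then $g''(0)=1-\beta<0$ makes $0$ a local maximum, whereas $g''(\pm m(\beta))=\tfrac1{1-m(\beta)^2}-\beta>0$, because $m(\beta)$, being the largest fixed point of $x\mapsto\tanh(\beta x)$, satisfies $\beta\bigl(1-m(\beta)^2\bigr)=\beta\,\textup{sech}^2(\beta m(\beta))<1$ (the mechanism underlying Lemma \ref{lem:m_beta_increasing}); together with the boundary behaviour of $g'$ this forces $\pm m(\beta)$ to be the two global minimisers, with equal value by evenness. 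Hence the zero set of $I_\beta$ is $\{0\}$ for $\beta\le1$ and $\{\pm m(\beta)\}$ for $\beta>1$; recalling $m(\beta)=0$ for $\beta\le1$, it is $\{\pm m(\beta)\}$ in all cases, so $J_\beta$ has the unique minimiser $m(\beta)^2$.

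Finally, for the uniform tail bound: given a closed $K\subset\IR$ with $m(\beta)^2\notin K$, the restriction $J_\beta|_{K\cap[0,1]}$ is a positive lower semicontinuous function on a compact set, hence $a:=\inf_{y\in K}J_\beta(y)>0$ (Lemma \ref{lem:log_conv_rates} is convenient here if one splits $K$ according to whether $y<m(\beta)^2$ or $y>m(\beta)^2$). The LDP upper bound gives $\limsup_{N\to\infty}\tfrac1N\ln\IP\{(S/N)^2\in K\}\le-a$, so there is $N_0$ with $\IP\{(S/N)^2\in K\}\le e^{-Na/2}$ for all $N\ge N_0$; taking $C_K:=\max\bigl\{1,\,\max_{N<N_0}e^{Na/2}\bigr\}$ yields $\IP\{(S/N)^2\in K\}\le C_K\exp\bigl(-\tfrac N2\inf_{y\in K}J_\beta(y)\bigr)$ for every $N\in\IN$, as claimed. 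I expect the minimisation step — specifically, certifying that $\pm m(\beta)$ are \emph{global} (not merely local) minima of $g$ for $\beta>1$, which is exactly where the properties of the Curie-Weiss function $m$ from Lemma \ref{lem:m_beta_increasing} are needed — to be the main obstacle; the tilting and contraction steps are routine, as is the passage from the asymptotic LDP bound to the stated uniform-in-$N$ estimate.
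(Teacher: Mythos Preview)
Your proposal is correct and follows essentially the same route as the paper: tilt the product Rademacher law via Varadhan's lemma to get the LDP for $S/N$, contract through $x\mapsto x^2$, and locate the minima via the Curie-Weiss equation. The only difference is one of detail: the paper, whose appendix states only Varadhan's lemma (Theorem~\ref{thm:Varadhan}) and not the tilted-LDP corollary you invoke, proves the upper and lower large-deviations bounds for the tilted measure by hand, and then cites Lemma~\ref{lem:exponential_conv} for the final uniform-in-$N$ tail estimate rather than rederiving it inline as you do.
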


\begin{proof}
Let $P^{\otimes N}$ be the product measure with one-dimensional marginals
$P_{0}$ (recall Definition \ref{def:Rademacher} of the Rademacher
distribution). We write the CWM probabilities corresponding to a single
group (cf. Definition \ref{def:CWM}) as
\begin{align*}
\IP_{\beta,N}\left(X_{1}=x_{1},\ldots,X_{N}=x_{N}\right) & =Z_{\beta,N}^{-1}\,\exp\left(\frac{\beta}{2N}\left(\sum_{i=1}^{N}x_{i}\right)^{2}\right)\\
 & =\frac{\exp\left(N\frac{\beta}{2}\left(\frac{\sum_{i=1}^{N}x_{i}}{N}\right)^{2}\right)\frac{1}{2^{N}}}{\sum_{y\in\Omega_{N}}\exp\left(N\frac{\beta}{2}\left(\frac{\sum_{i=1}^{N}y_{i}}{N}\right)^{2}\right)\frac{1}{2^{N}}}\\
 & =\frac{\exp\left(N\frac{\beta}{2}\left(\frac{s\left(x\right)}{N}\right)^{2}\right)\frac{1}{2^{N}}}{\sum_{y\in\Omega_{N}}\exp\left(N\frac{\beta}{2}\left(\frac{s\left(y\right)}{N}\right)^{2}\right)\frac{1}{2^{N}}}\\
 & =\frac{\exp\left(N\frac{\beta}{2}\left(\frac{s\left(x\right)}{N}\right)^{2}\right)P^{\otimes N}\left(x\right)}{Z'_{\beta,N}}
\end{align*}
for all $x\in\Omega_{N}$, where we set $Z'_{\beta,N}\coloneq\sum_{y\in\Omega_{N}}\exp\left(N\frac{\beta}{2}\left(\frac{s\left(y\right)}{N}\right)^{2}\right)P^{\otimes N}\left(y\right)$.
The push-forward measure $\IP_{\beta,N}\circ\left(\frac{S}{N}\right)^{-1}$
is given by
\[
\IP_{\beta,N}\circ\left(\frac{S}{N}\right)^{-1}\left(w\right)=\frac{\exp\left(N\frac{\beta}{2}w^{2}\right)\left(P^{\otimes N}\circ\left(\frac{S}{N}\right)^{-1}\right)\left(w\right)}{Z'_{\beta,N}},\quad w\in\left\{ -1,\frac{-N+2}{N},\ldots,1\right\} .
\]
Since under the distribution $P^{\otimes N}$ the voting vector $X$
is composed of i.i.d.\! random variables $X_{1},\ldots,X_{N}$ with
$P_{0}$ distribution, the sequence $\left(P^{\otimes N}\circ\left(\frac{S}{N}\right)^{-1}\right)_{N\in\IN}$
satisfies a large deviations principle with rate $N$ and rate function
$I\coloneq\Lambda_{P_{0}}^{*}$ given in Lemma \ref{lem:entropy_Rad}.

We employ Theorem \ref{thm:Varadhan} to show that a large deviations
principle for $\left(\IP_{\beta,N}\circ\left(\frac{S}{N}\right)^{-1}\right)_{N\in\IN}$
with rate $N$ and rate function $I_{\beta}$ follows. We set $f:\IR\rightarrow\IR$,
\[
f\left(x\right)\coloneq\frac{\beta}{2}x^{2},\quad x\in\IR,
\]
$P_{N}\coloneq P^{\otimes N}\circ\left(\frac{S}{N}\right)^{-1}$ for
all $N\in\IN$, and verify the condition
\begin{equation}
\lim_{M\rightarrow\infty}\limsup_{N\rightarrow\infty}\frac{1}{N}\ln\int_{\left\{ f\left(x\right)\geq M\right\} }\exp\left(Nf\left(x\right)\right)\,P_{N}\left(\textup{d}x\right)=-\infty.\label{eq:Varadhan_cond}
\end{equation}
Since $\frac{S}{N}$ is a bounded random variable with $\left|\frac{S}{N}\right|\leq1$
for all $N\in\IN$, we have for all $M>\frac{\beta}{2}$
\[
\int_{\left\{ f\left(x\right)\geq M\right\} }\exp\left(Nf\left(x\right)\right)\,P_{N}\left(\textup{d}x\right)=0,
\]
and therefore
\[
\frac{1}{N}\ln\int_{\left\{ f\left(x\right)\geq M\right\} }\exp\left(Nf\left(x\right)\right)\,P_{N}\left(\textup{d}x\right)=-\infty
\]
holds for all $N\in\IN$, and the limit (\ref{eq:Varadhan_cond})
follows. Thus Theorem \ref{thm:Varadhan} supplies
\begin{equation}
\lim_{N\rightarrow\infty}\frac{1}{N}\ln\int_{\IR}\exp\left(Nf\left(x\right)\right)\,P_{N}\left(\textup{d}x\right)=\sup_{x\in\IR}\left\{ f\left(x\right)-I\left(x\right)\right\} .\label{eq:Varadhan}
\end{equation}
We show that $I_{\beta}$ is a rate function. First of all we note
that, as $I|\left[-1,1\right]^{c}=\infty$ and the functions $I|\left[-1,1\right]$
and $f$ are continuous, the supremum on the right hand side of (\ref{eq:Varadhan})
must be achieved on the compact set $\left[-1,1\right]$ and therefore
be finite. So
\[
-\psi=-\inf_{x\in\IR}\left\{ I\left(x\right)-f\left(x\right)\right\} =\sup_{x\in\IR}\left\{ f\left(x\right)-I\left(x\right)\right\} \in\IR.
\]
Next we note that $I-f-\psi$ is lower semi-continuous because $f$
is continuous and $I$ is lower semi-continuous. By definition of
$\psi$, $I_{\beta}=I-f-\psi$ takes values in $\left[0,\infty\right]$.
These facts together mean that $I_{\beta}$ is a rate function. Before
we show that the level sets of $I_{\beta}$ are compact, we demonstrate
the large deviations lower bound for open sets.

Let $G\subset\IR$ be open, $x_{0}\in G$, and $\varepsilon>0$. We
set $\tilde{G}\coloneq\left\{ x\in G\,|\,f\left(x\right)>f\left(x_{0}\right)-\varepsilon\right\} $,
which is a non-empty open subset of $G$ because $f$ is continuous.
Then
\begin{align*}
\liminf_{N\rightarrow\infty}\frac{1}{N}\ln\int_{G}\exp\left(Nf\left(x\right)\right)\,P_{N}\left(\textup{d}x\right) & \geq\liminf_{N\rightarrow\infty}\frac{1}{N}\ln\int_{\tilde{G}}\exp\left(Nf\left(x\right)\right)\,P_{N}\left(\textup{d}x\right)\\
 & \geq\liminf_{N\rightarrow\infty}\frac{1}{N}\ln\exp\left(N\left(f\left(x_{0}\right)-\varepsilon\right)\right)\int_{\tilde{G}}P_{N}\left(\textup{d}x\right)\\
 & =\liminf_{N\rightarrow\infty}\left[\left(f\left(x_{0}\right)-\varepsilon\right)+\frac{1}{N}\ln\int_{\tilde{G}}P_{N}\left(\textup{d}x\right)\right]\\
 & \geq f\left(x_{0}\right)-\varepsilon-\inf_{x\in\tilde{G}}I\left(x\right)\\
 & \geq f\left(x_{0}\right)-I\left(x_{0}\right)-\varepsilon.
\end{align*}
As $x_{0}\in G$, $\varepsilon>0$ were arbitrarily selected, we have
proved
\begin{equation}
\liminf_{N\rightarrow\infty}\frac{1}{N}\ln\int_{G}\exp\left(Nf\left(x\right)\right)\,P_{N}\left(\textup{d}x\right)\geq\sup_{x\in G}\left\{ f\left(x\right)-I\left(x\right)\right\} =-\inf_{x\in G}\left\{ I\left(x\right)-f\left(x\right)\right\} .\label{eq:LB_numerator}
\end{equation}
Using (\ref{eq:LB_numerator}) and taking into account (\ref{eq:Varadhan}),
we calculate the large deviations lower bound
\begin{align*}
\liminf_{N\rightarrow\infty}\frac{1}{N}\,\IP_{\beta,N}\circ\left(\frac{S}{N}\right)^{-1}\left(G\right) & =\liminf_{N\rightarrow\infty}\frac{1}{N}\ln\frac{\int_{G}\exp\left(Nf\left(x\right)\right)\,P_{N}\left(\textup{d}x\right)}{\int_{\IR}\exp\left(Nf\left(x\right)\right)\,P_{N}\left(\textup{d}x\right)}\\
 & =\liminf_{N\rightarrow\infty}\frac{1}{N}\ln\int_{G}\exp\left(Nf\left(x\right)\right)\,P_{N}\left(\textup{d}x\right)-\lim_{N\rightarrow\infty}\frac{1}{N}\ln\int_{\IR}\exp\left(Nf\left(x\right)\right)\,P_{N}\left(\textup{d}x\right)\\
 & \geq-\inf_{x\in G}\left\{ I\left(x\right)-f\left(x\right)\right\} -\sup_{x\in\IR}\left\{ f\left(x\right)-I\left(x\right)\right\} \\
 & =-\left[\inf_{x\in G}\left\{ I\left(x\right)-f\left(x\right)\right\} -\inf_{x\in\IR}\left\{ I\left(x\right)-f\left(x\right)\right\} \right]\\
 & =-\inf_{x\in G}I_{\beta}\left(x\right).
\end{align*}
This proves the lower bound $\liminf_{N\rightarrow\infty}\frac{1}{N}\,\IP_{\beta,N}\circ\left(\frac{S}{N}\right)^{-1}\left(G\right)\geq-\inf_{x\in G}I_{\beta}\left(x\right)$
and for any open set $G$.

Now let $\alpha\in\left[0,\infty\right)$. We show that the level
set
\[
\Gamma_{\alpha}\coloneq\left\{ x\in\IR\,|\,I_{\beta}\left(x\right)\leq\alpha\right\} 
\]
is sequentially compact. Let $\left(x_{n}\right)_{n\in\IN}$ be a
sequence in $\Gamma_{\alpha}$. We distinguish the cases $s\coloneq\sup_{n\in\IN}f\left(x_{n}\right)<\infty$
and $s=\infty$. First assume $s=\infty$. By the lower bound (\ref{eq:LB_numerator})
and (\ref{eq:Varadhan_cond}), we have
\begin{align*}
\lim_{M\rightarrow\infty}\inf\left\{ I\left(x\right)-f\left(x\right)\,|\,x\in\IR,f\left(x\right)>M\right\}  & =-\lim_{M\rightarrow\infty}\sup\left\{ f\left(x\right)-I\left(x\right)\,|\,x\in\IR,f\left(x\right)>M\right\} \\
 & \geq-\lim_{M\rightarrow\infty}\liminf_{N\rightarrow\infty}\frac{1}{N}\ln\int_{\left\{ f\left(x\right)>M\right\} }\exp\left(Nf\left(x\right)\right)\,P_{N}\left(\textup{d}x\right)\\
 & \geq-\lim_{M\rightarrow\infty}\limsup_{N\rightarrow\infty}\frac{1}{N}\ln\int_{\left\{ f\left(x\right)>M\right\} }\exp\left(Nf\left(x\right)\right)\,P_{N}\left(\textup{d}x\right)\\
 & =\infty.
\end{align*}
$s=\infty$ and $\lim_{M\rightarrow\infty}\inf\left\{ I\left(x\right)-f\left(x\right)\,|\,x\in\IR,f\left(x\right)>M\right\} =\infty$
together imply
\begin{align*}
\sup_{n\in\IN}I_{\beta}\left(x_{n}\right) & =\sup_{n\in\IN}\left\{ I\left(x_{n}\right)-f\left(x_{n}\right)\right\} -\psi=\infty,
\end{align*}
contradicting the upper bound $I_{\beta}\left(x_{n}\right)\leq\alpha<\infty$
which holds because of $x_{n}\in\Gamma_{\alpha}$ for all $n\in\IN$.

Now assume $s<\infty$. Then
\begin{align*}
\sup_{n\in\IN}I\left(x_{n}\right) & =\sup_{n\in\IN}\left\{ I_{\beta}\left(x_{n}\right)+f\left(x_{n}\right)\right\} +\psi\\
 & \leq\sup_{n\in\IN}I_{\beta}\left(x_{n}\right)+\sup_{n\in\IN}f\left(x_{n}\right)+\psi\\
 & \leq\alpha+s+\psi<\infty
\end{align*}
holds. Since $I$ is a good rate function, its level set $\left\{ x\in\IR\,|\,I\left(x\right)\leq\alpha+s+\psi\right\} $
is compact, so there is a subsequence $\left(x_{n_{k}}\right)_{k\in\IN}\subset\left(x_{n}\right)_{n\in\IN}$
that converges to some $x_{0}\in\IR$. $I_{\beta}$ is lower semi-continuous,
so
\[
\alpha\geq\liminf_{k\rightarrow\infty}I_{\beta}\left(x_{n_{k}}\right)\geq I_{\beta}\left(x_{0}\right),
\]
and $x_{0}\in\Gamma_{\alpha}$ holds. Above we first used that for
all $k\in\IN$, $x_{n_{k}}\in\Gamma_{\alpha}$, and then the convergence
$x_{n_{k}}\xrightarrow[k\rightarrow\infty]{}x_{0}$ and the lower
semi-continuity of $I_{\beta}$. This concludes the proof that $I_{\beta}$
is a good rate function.

It remains to show the large deviations upper bound for closed sets.
Let $L>0$ be a constant such that
\[
L\geq\sup_{x\in\IR}\left\{ f\left(x\right)-I\left(x\right)\right\} 
\]
holds. Define $f_{L}:\IR\rightarrow\IR$ by setting
\[
f_{L}\left(x\right)\coloneq\min\left\{ f\left(x\right),L\right\} ,\quad x\in\IR.
\]
Choose another constant $M$ such that
\[
M<\min\left\{ L,\sup_{x\in\IR}\left\{ f_{L}\left(x\right)-I\left(x\right)\right\} \right\} .
\]
Then $f_{L}$ is continuous and satisfies $f_{L}\leq L$ and $\sup_{x\in\IR}\left\{ f_{L}\left(x\right)-I\left(x\right)\right\} \leq L$
due to $I\ge0$. Our strategy is to demonstrate the upper bound for
$f_{L}$ first, and then use this as an auxiliary result for the upper
bound involving $f$.

Let $K\subset\IR$ be closed. Let $m\in\IN$, and we define the sets
\[
K_{j}\coloneq\left\{ x\in K\,\left|\,M+\frac{j-1}{m}\left(L-M\right)\leq f_{L}\left(x\right)\leq M+\frac{j}{m}\left(L-M\right)\right.\right\} ,\quad j\in\IN_{m}.
\]
Each $K_{j}$ is closed. Since $\left(P_{N}\right)_{N\in\IN}$ satisfies
a large deviations property with rate $N$ and rate function $I$,
we have the upper bounds
\begin{align*}
\limsup_{N\rightarrow\infty}\frac{1}{N}\ln\int_{K_{j}}\exp\left(Nf_{L}\left(x\right)\right)\,P_{N}\left(\textup{d}x\right) & \leq\limsup_{N\rightarrow\infty}\frac{1}{N}\ln\left[\exp\left(N\left(M+\frac{j}{m}\left(L-M\right)\right)\right)\int_{K_{j}}\,P_{N}\left(\textup{d}x\right)\right]\\
 & =\limsup_{N\rightarrow\infty}\left[M+\frac{j}{m}\left(L-M\right)+\frac{1}{N}\ln\int_{K_{j}}\,P_{N}\left(\textup{d}x\right)\right]\\
 & \leq M+\frac{j}{m}\left(L-M\right)-\inf_{x\in K_{j}}I\left(x\right)
\end{align*}
for each $j\in\IN_{m}$. Using Lemma \ref{lem:log_conv_rates}, we
obtain
\begin{align*}
\limsup_{N\rightarrow\infty}\frac{1}{N}\ln\int_{K}\exp\left(Nf_{L}\left(x\right)\right)\,P_{N}\left(\textup{d}x\right) & \leq\limsup_{N\rightarrow\infty}\frac{1}{N}\ln\sum_{j=1}^{m}\int_{K_{j}}\exp\left(Nf_{L}\left(x\right)\right)\,P_{N}\left(\textup{d}x\right)\\
 & =\max_{j\in\IN_{m}}\limsup_{N\rightarrow\infty}\frac{1}{N}\ln\int_{K_{j}}\exp\left(Nf_{L}\left(x\right)\right)\,P_{N}\left(\textup{d}x\right).
\end{align*}
The last two displays yield
\begin{align*}
\limsup_{N\rightarrow\infty}\frac{1}{N}\ln\int_{K}\exp\left(Nf_{L}\left(x\right)\right)\,P_{N}\left(\textup{d}x\right) & \leq\max_{j\in\IN_{m}}\left(M+\frac{j}{m}\left(L-M\right)-\inf_{x\in K_{j}}I\left(x\right)\right).
\end{align*}
We calculate for each $j\in\IN_{m}$,
\[
M+\frac{j}{m}\left(L-M\right)-\inf_{x\in K_{j}}I\left(x\right)=M+\frac{j-1}{m}\left(L-M\right)+\sup_{x\in K_{j}}\left\{ -I\left(x\right)\right\} +\frac{L-M}{m}.
\]
Since $f_{L}|K_{j}\geq M+\frac{j-1}{m}\left(L-M\right)$, we see that
\[
M+\frac{j-1}{m}\left(L-M\right)+\sup_{x\in K_{j}}\left\{ -I\left(x\right)\right\} \leq\sup_{x\in K_{j}}\left\{ f_{L}\left(x\right)-I\left(x\right)\right\} 
\]
holds. The last three displays lead to
\begin{align*}
\limsup_{N\rightarrow\infty}\frac{1}{N}\ln\int_{K}\exp\left(Nf_{L}\left(x\right)\right)\,P_{N}\left(\textup{d}x\right) & \leq\max_{j\in\IN_{m}}\sup_{x\in K_{j}}\left\{ f_{L}\left(x\right)-I\left(x\right)\right\} +\frac{L-M}{m}\\
 & =\sup_{x\in K}\left\{ f_{L}\left(x\right)-I\left(x\right)\right\} +\frac{L-M}{m}\\
 & =-\inf_{x\in K}\left\{ I\left(x\right)-f_{L}\left(x\right)\right\} +\frac{L-M}{m}.
\end{align*}
Letting $m\rightarrow\infty$ proves the upper bound for $f_{L}$
instead of $f$:
\begin{equation}
\limsup_{N\rightarrow\infty}\frac{1}{N}\ln\int_{K}\exp\left(Nf_{L}\left(x\right)\right)\,P_{N}\left(\textup{d}x\right)\leq-\inf_{x\in K}\left\{ I\left(x\right)-f_{L}\left(x\right)\right\} \label{eq:UB_f_L}
\end{equation}

Condition (\ref{eq:Varadhan_cond}) implies the existence of a constant
$L>0$ such that the inequality
\[
\limsup_{N\rightarrow\infty}\frac{1}{N}\ln\int_{K\cap\left\{ f\left(x\right)>L\right\} }\exp\left(Nf\left(x\right)\right)\,P_{N}\left(\textup{d}x\right)\leq\sup_{x\in K}\left\{ f\left(x\right)-I\left(x\right)\right\} 
\]
is satisfied. We have
\[
\int_{K}\exp\left(Nf\left(x\right)\right)\,P_{N}\left(\textup{d}x\right)=\int_{K\cap\left\{ f\left(x\right)\leq L\right\} }\exp\left(Nf\left(x\right)\right)\,P_{N}\left(\textup{d}x\right)+\int_{K\cap\left\{ f\left(x\right)>L\right\} }\exp\left(Nf\left(x\right)\right)\,P_{N}\left(\textup{d}x\right),
\]
and by definition of $f_{L}$,
\[
\int_{K\cap\left\{ f\left(x\right)\leq L\right\} }\exp\left(Nf\left(x\right)\right)\,P_{N}\left(\textup{d}x\right)\leq\int_{K}\exp\left(Nf_{L}\left(x\right)\right)\,P_{N}\left(\textup{d}x\right).
\]
Combining the last three displays, employing Lemma \ref{lem:log_conv_rates},
and the upper bound (\ref{eq:UB_f_L}) for $f_{L}$, we calculate
the upper bound
\begin{align*}
 & \quad\limsup_{N\rightarrow\infty}\frac{1}{N}\ln\int_{K}\exp\left(Nf\left(x\right)\right)\,P_{N}\left(\textup{d}x\right)\\
 & =\limsup_{N\rightarrow\infty}\frac{1}{N}\ln\left[\int_{K\cap\left\{ f\left(x\right)\leq L\right\} }\exp\left(Nf\left(x\right)\right)\,P_{N}\left(\textup{d}x\right)+\int_{K\cap\left\{ f\left(x\right)>L\right\} }\exp\left(Nf\left(x\right)\right)\,P_{N}\left(\textup{d}x\right)\right]\\
 & \leq\limsup_{N\rightarrow\infty}\frac{1}{N}\ln\left[\int_{K}\exp\left(Nf_{L}\left(x\right)\right)\,P_{N}\left(\textup{d}x\right)+\int_{\left\{ f\left(x\right)>L\right\} }\exp\left(Nf\left(x\right)\right)\,P_{N}\left(\textup{d}x\right)\right]\\
 & =\max\left\{ \limsup_{N\rightarrow\infty}\frac{1}{N}\ln\int_{K}\exp\left(Nf_{L}\left(x\right)\right)\,P_{N}\left(\textup{d}x\right),\limsup_{N\rightarrow\infty}\frac{1}{N}\ln\int_{\left\{ f\left(x\right)>L\right\} }\exp\left(Nf\left(x\right)\right)\,P_{N}\left(\textup{d}x\right)\right\} \\
 & \leq\max\left\{ -\inf_{x\in K}\left\{ I\left(x\right)-f_{L}\left(x\right)\right\} ,\sup_{x\in K}\left\{ f\left(x\right)-I\left(x\right)\right\} \right\} \\
 & =\max\left\{ \sup_{x\in K}\left\{ f_{L}\left(x\right)-I\left(x\right)\right\} ,\sup_{x\in K}\left\{ f\left(x\right)-I\left(x\right)\right\} \right\} \\
 & =\sup_{x\in K}\left\{ f\left(x\right)-I\left(x\right)\right\} \\
 & =-\inf_{x\in K}\left\{ I\left(x\right)-f\left(x\right)\right\} .
\end{align*}
Using (\ref{eq:Varadhan}), we calculate the upper bound
\begin{align*}
\limsup_{N\rightarrow\infty}\frac{1}{N}\,\IP_{\beta,N}\circ\left(\frac{S}{N}\right)^{-1}\left(K\right) & =\limsup_{N\rightarrow\infty}\frac{1}{N}\ln\frac{\int_{K}\exp\left(Nf\left(x\right)\right)\,P_{N}\left(\textup{d}x\right)}{\int_{\IR}\exp\left(Nf\left(x\right)\right)\,P_{N}\left(\textup{d}x\right)}\\
 & =\limsup_{N\rightarrow\infty}\frac{1}{N}\ln\int_{K}\exp\left(Nf\left(x\right)\right)\,P_{N}\left(\textup{d}x\right)-\lim_{N\rightarrow\infty}\frac{1}{N}\ln\int_{\IR}\exp\left(Nf\left(x\right)\right)\,P_{N}\left(\textup{d}x\right)\\
 & \leq-\inf_{x\in K}\left\{ I\left(x\right)-f\left(x\right)\right\} -\sup_{x\in\IR}\left\{ f\left(x\right)-I\left(x\right)\right\} \\
 & =-\left[\inf_{x\in K}\left\{ I\left(x\right)-f\left(x\right)\right\} -\inf_{x\in\IR}\left\{ I\left(x\right)-f\left(x\right)\right\} \right]\\
 & =-\inf_{x\in K}I_{\beta}\left(x\right).
\end{align*}
This proves the upper bound for closed sets and concludes the proof
that $\left(\IP_{\beta,N}\circ\left(\frac{S}{N}\right)^{-1}\right)_{N\in\IN}$
satisfies a large deviations property with rate $N$ and rate function
$I_{\beta}$.

To show the large deviations principle for the sequence $\left(\IP_{\beta,N}\circ\left(\left(\frac{S}{N}\right)^{2}\right)^{-1}\right)_{N\in\IN}$,
we use the large deviations principle for $\left(\IP_{\beta,N}\circ\left(\frac{S}{N}\right)^{-1}\right)_{N\in\IN}$
and Theorem \ref{thm:contr_princ}, which states that by defining
a good rate function $H:\IR\rightarrow\left[0,\infty\right]$ by
\[
H\left(y\right)\coloneq\inf\left\{ I_{\beta}\left(x\right)\,|\,x\in\IR,x^{2}=y\right\} ,\quad y\in\IR,
\]
we obtain a large deviations principle for $\left(\IP_{\beta,N}\circ\left(\left(\frac{S}{N}\right)^{2}\right)^{-1}\right)_{N\in\IN}$
with rate $N$ and rate function $H$. We now show that $H=J_{\beta}$.
Let $y<0$. Then the set $\left\{ x\in\IR\,|\,x^{2}=y\right\} $ is
empty, and therefore $H\left(y\right)=\infty=J_{\beta}\left(y\right)$.
For all $y>1$, we have $\sqrt{y}>1$ and $J_{\beta}\left(y\right)=\infty$.
On the other hand, $\left\{ x\in\IR\,|\,x^{2}=y\right\} =\left\{ -\sqrt{y},\sqrt{y}\right\} $
and $I_{\beta}\left(-\sqrt{y}\right)=I_{\beta}\left(\sqrt{y}\right)=\infty$.
Finally, let $y\in\left[0,1\right]$. Then, due to $I_{\beta}\left(x\right)=I_{\beta}\left(-x\right)$
for all $x\in\IR$, we have
\begin{align*}
H\left(y\right) & =\inf\left\{ I_{\beta}\left(x\right)\,|\,x\in\IR,x^{2}=y\right\} \\
 & =I_{\beta}\left(\sqrt{y}\right)=J_{\beta}\left(y\right).
\end{align*}
We turn to the calculation of the minima of $I_{\beta}$. As $I_{\beta}|\left[-1,1\right]^{c}=\infty$,
the minima must lie in $\left[-1,1\right]$. $I_{\beta}|\left(-1,1\right)$
is differentiable, and we calculate the derivative
\begin{align*}
I_{\beta}'\left(x\right) & =-\beta x+\frac{1}{2}\ln\frac{1+x}{1-x}.
\end{align*}
Equating the derivative to 0, we obtain
\[
\beta x=\frac{1}{2}\ln\frac{1+x}{1-x}=\textup{artanh }x,
\]
where we used the formula for the inverse hyperbolic function $\textup{artanh}$
which holds for all $x\in\left(-1,1\right)$. Taking the strictly
increasing tanh function of both sides of the equation above, we obtain
\[
\tanh\left(\beta x\right)=x,
\]
which we recognise as the Curie-Weiss equation (\ref{eq:CW}). By
Definition \ref{def:m_beta}, $m\left(\beta\right)$ is the largest
solution of (\ref{eq:CW}). It is clear that for $\beta\leq1$, $m\left(\beta\right)=0$.
Lemma \ref{lem:m_beta_increasing} states that for all $\beta>1$,
$m\left(\beta\right)>0$ holds. As both tanh and the identity function
are odd functions,
\[
\tanh\left(\pm m\left(\beta\right)\beta\right)=\pm m\left(\beta\right)
\]
holds. Hence, $I_{\beta}$ has the minima stated in the proposition.

The statement about the minima of $J_{\beta}$ follows from Lemma
\ref{lem:contr_princ_min}, and Lemma \ref{lem:exponential_conv}
supplies the exponential convergence statement for all closed sets
that do not contain $m\left(\beta\right)^{2}$.
\end{proof}

\subsection{Proof of Theorem \ref{thm:properties_bML_inf}}
\begin{defn}
\label{def:expec_S2_infty}Let for each $N\in\IN$ the function $\vartheta_{N}^{\infty}:\left[-\infty,\infty\right]\backslash I_{c}\rightarrow\IR$
be defined by
\[
\vartheta_{N}^{\infty}\left(\beta\right)\coloneq\frac{1}{1-\beta}\frac{1}{N},\;\beta\leq b_{1},\quad\vartheta_{N}^{\infty}\left(\beta\right)\coloneq m\left(\beta\right)^{2},\;\beta\geq b_{2},\quad\textup{and}\quad\vartheta_{N}^{\infty}\left(-\infty\right)\coloneq0.
\]
\end{defn}

\begin{lem}
\label{lem:expec_S2_infty}$\vartheta_{N}^{\infty}$ is strictly increasing
and continuously differentiable on $\IR$.\\
The inverse function $\left(\vartheta_{N}^{\infty}\right)^{-1}:\left[0,1\right]\backslash\left(\frac{1}{1-b_{1}}\frac{1}{N},m\left(b_{2}\right)^{2}\right)\rightarrow\left[-\infty,\infty\right]\backslash I_{c}$
exists and is strictly increasing and continuously differentiable
on $\left(0,\frac{1}{1-b_{1}}\frac{1}{N}\right]\cup\left[m\left(b_{2}\right)^{2},1\right)$.
The derivative has the value
\[
\left(\left(\vartheta_{N}^{\infty}\right)^{-1}\right)'\left(y\right)=\frac{1}{m'\left(m^{-1}\left(\sqrt{y}\right)\right)}\frac{1}{2\sqrt{y}},\quad y\in\left[m\left(b_{2}\right)^{2},1\right).
\]
\end{lem}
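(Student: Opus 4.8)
The plan is to treat the two branches of $\vartheta_N^\infty$ separately and glue them using the separation condition (\ref{eq:separation}). On the branch $(-\infty,b_1]$ we have $\vartheta_N^\infty(\beta)=\frac{1}{N(1-\beta)}$, which extends to a $C^1$ function on $(-\infty,1)$ with derivative $\frac{1}{N(1-\beta)^2}>0$. On the branch $[b_2,\infty)$ we have $\vartheta_N^\infty(\beta)=m(\beta)^2$, which by Lemma \ref{lem:m_beta_diff} is $C^1$ on $(1,\infty)$, and by Lemma \ref{lem:m_beta_increasing} strictly increasing with $m(b_2)>0$; its derivative $2m(\beta)m'(\beta)$ is therefore continuous, and it is strictly positive once we know $m'>0$ on $(1,\infty)$. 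Since moreover $\frac{1}{N(1-\beta)}\to0$ as $\beta\to-\infty$ and $m(\beta)^2\to1$ as $\beta\to\infty$, the boundary values $\vartheta_N^\infty(-\infty)=0$ and $\vartheta_N^\infty(\infty)=1$ render $\vartheta_N^\infty$ continuous on the whole compactified domain.

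The key point $m'>0$ I would obtain by differentiating the Curie-Weiss equation $\tanh(\beta m(\beta))=m(\beta)$ (equivalently, by the implicit function theorem as in the proof of Lemma \ref{lem:m_beta_diff}), which gives $m'(\beta)=\frac{m(\beta)(1-m(\beta)^2)}{1-\beta(1-m(\beta)^2)}$. For $\beta>1$ we have $0<m(\beta)<1$, so the numerator is positive; writing $l(\beta):=\textup{artanh}\,m(\beta)$ one has $1-m(\beta)^2=1-\tanh^2 l(\beta)$, hence $1-\beta(1-m(\beta)^2)=\beta F''(l(\beta))$ with $F(z)=\frac{z^2}{2\beta}-\ln\cosh z$ as in the proof of Proposition \ref{prop:appr_correlations}. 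There it is shown that $l(\beta)$ is the unique minimum of $F$ on $(0,\infty)$ and that $F''$ is strictly increasing with $F''(0)<0<\lim_{z\to\infty}F''(z)$; since $F'$ is negative just to the right of $0$ and strictly increasing past the unique zero of $F''$, that zero lies strictly below $l(\beta)$, so $F''(l(\beta))>0$. Thus $m'(\beta)>0$, and both branch derivatives are continuous and strictly positive.

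Gluing comes next: $\vartheta_N^\infty$ is strictly increasing on all of $[-\infty,\infty]\setminus I_c$ iff $\sup_{\beta\le b_1}\vartheta_N^\infty(\beta)=\frac{1}{N(1-b_1)}$ is strictly below $\inf_{\beta\ge b_2}\vartheta_N^\infty(\beta)=m(b_2)^2$. Dividing (\ref{eq:separation}) by $N^2$ yields $\frac{1}{N(1-b_1)}+\frac{\boldsymbol{D}_{\textup{high}}}{N^{3/2}}<m(b_2)^2-\frac{\boldsymbol{D}_{\textup{low}}(\ln N)^{3/2}}{\sqrt N}$, and since both $\boldsymbol{D}_{\textup{high}},\boldsymbol{D}_{\textup{low}}>0$ this forces $\frac{1}{N(1-b_1)}<m(b_2)^2$. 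Combined with $\vartheta_N^\infty(-\infty)=0<\vartheta_N^\infty(\beta)$ for $\beta\le b_1$, this makes $\vartheta_N^\infty$ strictly increasing, hence injective, with image $\left[0,\frac{1}{N(1-b_1)}\right]\cup\left[m(b_2)^2,1\right]=[0,1]\setminus\left(\frac{1}{N(1-b_1)},m(b_2)^2\right)$, the asserted domain of the inverse; strict monotonicity of $(\vartheta_N^\infty)^{-1}$ is inherited.

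For the differentiability of the inverse I would apply the inverse function theorem on each branch. On $\left(0,\frac{1}{N(1-b_1)}\right]$ one has explicitly $(\vartheta_N^\infty)^{-1}(y)=1-\frac{1}{Ny}$, which is $C^1$ there. On $[m(b_2)^2,1)$ one has $(\vartheta_N^\infty)^{-1}(y)=m^{-1}(\sqrt y)$ (the positive square root, since $m>0$); because $m$ restricts to a $C^1$ bijection $(1,\infty)\to(0,1)$ with $m'>0$, the inverse $m^{-1}$ is $C^1$ on $(0,1)$ with $(m^{-1})'(u)=\frac{1}{m'(m^{-1}(u))}$, and the chain rule gives $((\vartheta_N^\infty)^{-1})'(y)=\frac{1}{m'(m^{-1}(\sqrt y))}\cdot\frac{1}{2\sqrt y}$ for $y\in[m(b_2)^2,1)$, exactly the claimed formula. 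The endpoints $b_1,b_2$ pose no difficulty, since each branch formula is $C^1$ on an open interval containing them, so the inverse function theorem applies on genuine neighbourhoods rather than half-neighbourhoods. The one genuinely substantial step is the positivity of $m'$ in the second paragraph: strict monotonicity and differentiability of $m$ do not by themselves give $m'>0$, so one really needs the sign of $1-\beta(1-m(\beta)^2)$, i.e.\ the strict stability of the minimum $l(\beta)$ of $F$.
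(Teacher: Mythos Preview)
Your proof is correct and follows the same overall route as the paper: treat the two branches, invoke the inverse function theorem via the properties of $m$, and compute the derivative on $[m(b_2)^2,1)$ by the chain rule. The paper's own proof is considerably terser, simply citing Lemmas \ref{lem:m_beta_increasing} and \ref{lem:m_beta_diff} and writing down $(\vartheta_N^\infty)^{-1}(y)=m^{-1}(\sqrt{y})$ together with its derivative, without explicitly verifying the gluing or the nonvanishing of $m'$.

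Where you go beyond the paper is worth noting. First, you use the separation condition (\ref{eq:separation}) to show $\frac{1}{N(1-b_1)}<m(b_2)^2$, which is what makes the two monotone branches glue into a single strictly increasing map; the paper leaves this tacit. Second, and more substantively, you correctly point out that strict monotonicity plus continuous differentiability of $m$ (which is all Lemmas \ref{lem:m_beta_increasing} and \ref{lem:m_beta_diff} give) does not by itself yield $m'>0$, and hence does not by itself license the inverse function theorem. Your derivation of $m'(\beta)=\frac{m(\beta)(1-m(\beta)^2)}{1-\beta(1-m(\beta)^2)}$ and the identification of the denominator with $\beta F''(l(\beta))>0$ via the analysis of $F$ in the proof of Proposition \ref{prop:appr_correlations} is a genuine addition that closes a gap the paper's proof passes over in silence. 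Both arguments land in the same place, but yours is the more complete one.
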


\begin{proof}
The existence of $\left(\vartheta_{N}^{\infty}\right)^{-1}$ and its
strict monotonicity follow from the strict monotonicity of $\vartheta_{N}^{\infty}$.
The continuous differentiability follows by the inverse function theorem
given the properties of the function $m$ in Lemmas \ref{lem:m_beta_increasing}
and \ref{lem:m_beta_diff}. Since $m$ is strictly increasing and
differentiable, so is $m^{-1}$, and therefore so is $\left(\vartheta_{N}^{\infty}\right)^{-1}$.
The value of $\left(\vartheta_{N}^{\infty}\right)^{-1}$, for any
$y\in\left[m\left(b_{2}\right)^{2},1\right]$, is
\[
\left(\vartheta_{N}^{\infty}\right)^{-1}\left(y\right)=m^{-1}\left(\sqrt{y}\right).
\]
Employing the inverse function theorem we obtain that $\left(\vartheta_{N}^{\infty}\right)^{-1}$
is differentiable and its derivative is
\[
\left(\left(\vartheta_{N}^{\infty}\right)^{-1}\right)'\left(y\right)=\left(m^{-1}\right)'\left(\sqrt{y}\right)\frac{1}{2\sqrt{y}}=\frac{1}{m'\left(m^{-1}\left(\sqrt{y}\right)\right)}\frac{1}{2\sqrt{y}}
\]
for all $y\in\left[m\left(b_{2}\right)^{2},1\right)$.
\end{proof}
\begin{rem}
We can use the function $\vartheta_{N}^{\infty}$ to express the estimator
$\hat{\beta}_{N}^{\infty}$ as
\[
\hat{\beta}_{N}^{\infty}=\left(\vartheta_{N}^{\infty}\right)^{-1}\circ T/N^{2}.
\]
\end{rem}

\begin{proof}[Proof of Theorem \ref{thm:properties_bML_inf}]
Recall Definitions \ref{def:T_stat}, \ref{def:estimator_large_N},
and \ref{def:beta_tilde} of $T$, $\hat{\beta}_{N}^{\infty}$, and
$\tilde{\beta}_{N}$ .
\begin{enumerate}
\item Since $\IE\,T=\IE_{\beta,N}S^{2}$ and $S^{2}$ is a bounded random
variable, the weak law of large numbers says
\begin{equation}
T\xrightarrow[n\rightarrow\infty]{\textup{p}}\IE_{\beta,N}S^{2}=\begin{cases}
\frac{N}{1-\tilde{\beta}_{N}} & \textup{if }\beta\in I_{h},\\
m\left(\tilde{\beta}_{N}\right)^{2}N^{2} & \textup{if }\beta\in I_{l}.
\end{cases}\label{eq:T_WLLN}
\end{equation}
Recall the intervals $J_{h}$ and $J_{l}$ from Definition \ref{def:intervals}.
For a fixed sample $\left(x^{(1)},\ldots,x^{(n)}\right)\in\Omega_{N}^{n}$,
by Definition \ref{def:estimator_large_N}, $\hat{\beta}_{N}^{\infty}$
satisfies
\begin{equation}
T\left(x^{(1)},\ldots,x^{(n)}\right)=\begin{cases}
\frac{N}{1-\hat{\beta}_{N}^{\infty}\left(x^{(1)},\ldots,x^{(n)}\right)} & \textup{if }T\left(x^{(1)},\ldots,x^{(n)}\right)\in J_{h},\\
m\left(\hat{\beta}_{N}^{\infty}\left(x^{(1)},\ldots,x^{(n)}\right)\right)^{2}N^{2} & \textup{if }T\left(x^{(1)},\ldots,x^{(n)}\right)\in J_{l}.
\end{cases}\label{eq:T_beta_inf}
\end{equation}
Let $\beta\in I_{h}$. We show $\frac{N}{1-\hat{\beta}_{N}^{\infty}.}\xrightarrow[n\rightarrow\infty]{\textup{p}}\frac{N}{1-\tilde{\beta}_{N}}$.
The statement then follows by Theorem \ref{thm:cont_mapping}.\\
Let $\varepsilon>0$. Assume without loss of generality that $\varepsilon$
is small enough that $\left|T-\IE_{\beta,N}S^{2}\right|\leq\varepsilon$
implies $T\in J_{h}$ (cf. Remark \ref{rem:intervals}). We write
\begin{align*}
\IP\left\{ \left|\frac{N}{1-\hat{\beta}_{N}^{\infty}}-\frac{N}{1-\tilde{\beta}_{N}}\right|>\varepsilon\right\}  & =\IP\left\{ \left|\frac{N}{1-\hat{\beta}_{N}^{\infty}}-\frac{N}{1-\tilde{\beta}_{N}}\right|>\varepsilon,\left|T-\IE_{\beta,N}S^{2}\right|\leq\varepsilon\right\} \\
 & \quad+\IP\left\{ \left|\frac{N}{1-\hat{\beta}_{N}^{\infty}}-\frac{N}{1-\tilde{\beta}_{N}}\right|>\varepsilon,\left|T-\IE_{\beta,N}S^{2}\right|>\varepsilon\right\} .
\end{align*}
The latter summand is smaller than or equal to
\[
\IP\left\{ \left|T-\IE_{\beta,N}S^{2}\right|>\varepsilon\right\} 
\]
which converges to $0$ by (\ref{eq:T_WLLN}).\\
We turn to the first summand. By Definition \ref{def:beta_tilde},
\[
\frac{N}{1-\tilde{\beta}_{N}}=\IE_{\beta,N}S^{2}.
\]
By Definition \ref{def:estimator_large_N} and under the assumption
$T\in J_{h}$, we have
\[
\frac{N}{1-\hat{\beta}_{N}^{\infty}}=T.
\]
Combining the last two displays, we obtain
\[
\IP\left\{ \left|\frac{N}{1-\hat{\beta}_{N}^{\infty}}-\frac{N}{1-\tilde{\beta}_{N}}\right|>\varepsilon,\left|T-\IE_{\beta,N}S^{2}\right|\leq\varepsilon\right\} =\IP\left\{ \left|T-\IE_{\beta,N}S^{2}\right|>\varepsilon,\left|T-\IE_{\beta,N}S^{2}\right|\leq\varepsilon\right\} =\IP\,\emptyset=0.
\]
We have therefore proved
\[
\IP\left\{ \left|\frac{N}{1-\hat{\beta}_{N}^{\infty}}-\frac{N}{1-\tilde{\beta}_{N}}\right|>\varepsilon\right\} \xrightarrow[n\rightarrow\infty]{}0,
\]
and thus $\frac{N}{1-\hat{\beta}_{N}^{\infty}}\xrightarrow[n\rightarrow\infty]{\textup{p}}\frac{N}{1-\tilde{\beta}_{N}}$
holds.\\
The case $\beta\in I_{l}$ is treated analogously: we first show $m\left(\hat{\beta}_{N}^{\infty}\right)^{2}N^{2}\xrightarrow[n\rightarrow\infty]{\textup{p}}m\left(\tilde{\beta}_{N}\right)^{2}N^{2}$,
and then use that the function $\beta\in\left(1,\infty\right)\mapsto m\left(\beta\right)\in\left(0,1\right)$
is strictly increasing by Lemma \ref{lem:m_beta_increasing}. Thus,
$\hat{\beta}_{N}^{\infty}\xrightarrow[n\rightarrow\infty]{\textup{p}}\tilde{\beta}_{N}$
follows from Lemma \ref{lem:expec_S2_infty}, Theorem \ref{thm:cont_mapping},
and $m\left(\hat{\beta}_{N}^{\infty}\right)^{2}N^{2}\xrightarrow[n\rightarrow\infty]{\textup{p}}m\left(\tilde{\beta}_{N}\right)^{2}N^{2}$.
\item As a corollary to Proposition \ref{prop:appr_moments}, we have
\begin{align*}
\frac{\IE_{\beta,N}S^{2}}{N} & \xrightarrow[N\rightarrow\infty]{}\frac{1}{1-\beta}\quad\textup{if }\beta\in I_{h},\\
\frac{\IE_{\beta,N}S^{2}}{N^{2}} & \xrightarrow[N\rightarrow\infty]{}m\left(\beta\right)^{2}\quad\textup{if }\beta\in I_{l}.
\end{align*}
By Proposition \ref{prop:ES2_fin}, the mapping $\beta\mapsto\IE_{\beta,N}S^{2}$
is continuous. Hence, for $\beta\in I_{h}$,
\[
\frac{1}{1-\tilde{\beta}_{N}}=\frac{\IE_{\beta,N}S^{2}}{N}\xrightarrow[N\rightarrow\infty]{}\frac{1}{1-\beta},
\]
which is equivalent to $\tilde{\beta}_{N}\xrightarrow[N\rightarrow\infty]{}\beta$.\\
Recall Definition \ref{def:m_beta} of $\beta\mapsto m\left(\beta\right)$
and its continuity which follows from Lemma \ref{lem:m_beta_diff}.
For $\beta\in I_{l}$,
\[
m\left(\tilde{\beta}_{N}\right)^{2}=\frac{\IE_{\beta,N}S^{2}}{N^{2}}\xrightarrow[N\rightarrow\infty]{}m\left(\beta\right)^{2}.
\]
The function $\beta\in\left(1,\infty\right)\mapsto m\left(\beta\right)\in\left(0,1\right)$
is strictly increasing by Lemma \ref{lem:m_beta_increasing}. Thus,
$\tilde{\beta}_{N}\xrightarrow[N\rightarrow\infty]{}\beta$ follows
from Lemma \ref{lem:m_beta_diff} and $m\left(\tilde{\beta}_{N}\right)^{2}\xrightarrow[N\rightarrow\infty]{}m\left(\beta\right)^{2}$.
\item We first show the result for $\beta\in I_{h}$. We once again use
Definitions \ref{def:T_stat}, \ref{def:estimator_large_N}, and \ref{def:beta_tilde}
of $T$, $\hat{\beta}_{N}^{\infty}$, and $\tilde{\beta}_{N}$, and
the law of large numbers (\ref{eq:T_WLLN}). In addition to the law
of large numbers, we also note that
\[
T-\IE\,T=\frac{1}{n}\sum_{t=1}^{n}\left[\left(\sum_{i=1}^{N}X_{i}^{(t)}\right)^{2}-\IE\,T\right]
\]
is the sum of i.i.d.\! random variables with
\[
\IE\left[\left(\sum_{i=1}^{N}X_{i}^{(t)}\right)^{2}-\IE\,T\right]=0,\qquad\IV\left[\left(\sum_{i=1}^{N}X_{i}^{(t)}\right)^{2}-\IE\,T\right]=\IE_{\beta,N}S^{2}-\left(\IE_{\beta,N}S\right)^{2}=\IV_{\beta,N}S^{2}
\]
for all $t\in\IN_{n}$. The central limit theorem yields
\begin{equation}
\sqrt{n}\left(T-\IE\,T\right)\xrightarrow[n\rightarrow\infty]{\textup{d}}\mathcal{N}\left(0,\IV_{\beta,N}S^{2}\right).\label{eq:CLT_T}
\end{equation}
For a fixed sample $\left(x^{(1)},\ldots,x^{(n)}\right)\in\Omega_{N}^{n}$
with $T\left(x^{(1)},\ldots,x^{(n)}\right)\in J_{h}$, by Definition
\ref{def:estimator_large_N}, $\hat{\beta}_{N}^{\infty}$ satisfies
\[
T\left(x^{(1)},\ldots,x^{(n)}\right)=\frac{N}{1-\hat{\beta}_{N}^{\infty}\left(x^{(1)},\ldots,x^{(n)}\right)}
\]
and by Definition \ref{def:beta_tilde}
\[
\IE_{\beta,N}T=\IE_{\beta,N}S^{2}=\frac{N}{1-\tilde{\beta}_{N}}.
\]
Joining the last two display, we obtain
\[
T-\IE\,T=\frac{N}{1-\hat{\beta}_{N}^{\infty}}-\frac{N}{1-\tilde{\beta}_{N}}.
\]
Next, we calculate
\[
\frac{1}{1-\hat{\beta}_{N}^{\infty}}-\frac{1}{1-\tilde{\beta}_{N}}=\frac{\hat{\beta}_{N}^{\infty}-\tilde{\beta}_{N}}{\left(1-\hat{\beta}_{N}^{\infty}\right)\left(1-\tilde{\beta}_{N}\right)}.
\]
The last display together with (\ref{eq:CLT_T}) and Theorem \ref{thm:slutsky}
implies
\begin{equation}
\frac{\hat{\beta}_{N}^{\infty}-\tilde{\beta}_{N}}{\left(1-\hat{\beta}_{N}^{\infty}\right)\left(1-\tilde{\beta}_{N}\right)}=\frac{\sqrt{n}}{N}\left(T-\IE\,T\right)\xrightarrow[n\rightarrow\infty]{\textup{d}}\mathcal{N}\left(0,\IV_{\beta,N}\frac{S^{2}}{N}\right).\label{eq:CLT_2}
\end{equation}
We define the following sequences of random variables:
\[
U_{n}\coloneq\left(\hat{\beta}_{N}^{\infty}-\tilde{\beta}_{N}\right)\frac{1-\tilde{\beta}_{N}}{1-\hat{\beta}_{N}^{\infty}},\qquad V_{n}\coloneq\hat{\beta}_{N}^{\infty}-\tilde{\beta}_{N},\qquad n\in\IN.
\]
Then (\ref{eq:CLT_2}) is equivalent to
\[
\frac{\sqrt{n}}{\left(1-\tilde{\beta}_{N}\right)^{2}}U_{n}\xrightarrow[n\rightarrow\infty]{\textup{d}}\mathcal{N}\left(0,\IV_{\beta,N}\frac{S^{2}}{N}\right).
\]
It follows with Theorem \ref{thm:slutsky} that
\begin{equation}
\sqrt{n}\,U_{n}\xrightarrow[n\rightarrow\infty]{\textup{d}}\mathcal{N}\left(0,\left(1-\tilde{\beta}_{N}\right)^{4}\IV_{\beta,N}\frac{S^{2}}{N}\right).\label{eq:CLT_3}
\end{equation}
Next, we show
\begin{equation}
\frac{1-\hat{\beta}_{N}^{\infty}}{1-\tilde{\beta}_{N}}\xrightarrow[n\rightarrow\infty]{\textup{p}}1.\label{eq:frac_conv_p}
\end{equation}
We have
\[
\left|\frac{1-\hat{\beta}_{N}^{\infty}}{1-\tilde{\beta}_{N}}-1\right|=\frac{\left|\hat{\beta}_{N}^{\infty}-\tilde{\beta}_{N}\right|}{1-\tilde{\beta}_{N}}\xrightarrow[n\rightarrow\infty]{\textup{p}}0,
\]
where the convergence in probability follows from statement 1 of this
theorem and Theorem \ref{thm:slutsky}.\\
Since
\[
\frac{V_{n}}{U_{n}}=\frac{1-\hat{\beta}_{N}^{\infty}}{1-\tilde{\beta}_{N}},
\]
we arrive at
\[
\sqrt{n}\,V_{n}=\sqrt{n}\,U_{n}\,\frac{V_{n}}{U_{n}}\xrightarrow[n\rightarrow\infty]{\textup{d}}\mathcal{N}\left(0,\left(1-\tilde{\beta}_{N}\right)^{4}\IV_{\beta,N}\frac{S^{2}}{N}\right)
\]
using (\ref{eq:CLT_3}), (\ref{eq:frac_conv_p}), and Theorem \ref{thm:slutsky}.\\
We use Proposition \ref{prop:appr_moments}:
\begin{align*}
\IV_{\beta,N}\frac{S^{2}}{N} & =\IE\left(\frac{S^{2}}{N}\right)^{2}-\left(\IE\left(\frac{S^{2}}{N}\right)\right)^{2}=\IE\left(\frac{S^{2}}{N}\right)^{2}-\frac{1}{\left(1-\tilde{\beta}_{N}\right)^{2}}\\
 & \approx3\,\frac{1}{\left(1-\tilde{\beta}_{N}\right)^{2}}-\frac{1}{\left(1-\tilde{\beta}_{N}\right)^{2}}=\frac{2}{\left(1-\tilde{\beta}_{N}\right)^{2}}.
\end{align*}
Employing Definition \ref{def:beta_tilde}, we obtain
\begin{align*}
\left(1-\tilde{\beta}_{N}\right)^{4}\IV_{\beta,N}\frac{S^{2}}{N} & \approx\left(1-\tilde{\beta}_{N}\right)^{4}\frac{2}{\left(1-\tilde{\beta}_{N}\right)^{2}}\\
 & =2\left(1-\tilde{\beta}_{N}\right)^{2}\\
 & \xrightarrow[N\rightarrow\infty]{}2\left(1-\beta\right)^{2}.
\end{align*}
Now let $\beta\in I_{l}$. The law of large numbers (\ref{eq:T_WLLN})
and the central limit theorem (\ref{eq:CLT_T}) hold, and for a fixed
sample $\left(x^{(1)},\ldots,x^{(n)}\right)\in\Omega_{N}^{n}$ with
$T\left(x^{(1)},\ldots,x^{(n)}\right)\in J_{l}$, by Definition \ref{def:estimator_large_N},
$\hat{\beta}_{N}^{\infty}$ satisfies
\[
T\left(x^{(1)},\ldots,x^{(n)}\right)=N^{2}m\left(\hat{\beta}_{N}^{\infty}\left(x^{(1)},\ldots,x^{(n)}\right)\right)^{2}
\]
and by Definition \ref{def:beta_tilde}
\[
\IE_{\beta,N}T=\IE_{\beta,N}S^{2}=N^{2}m\left(\tilde{\beta}_{N}\right)^{2}.
\]
Joining the last two displays, we obtain
\[
T-\IE\,T=N^{2}\left(m\left(\hat{\beta}_{N}^{\infty}\left(x^{(1)},\ldots,x^{(n)}\right)\right)^{2}-m\left(\tilde{\beta}_{N}\right)^{2}\right).
\]
By (\ref{eq:CLT_T}), we have
\[
\sqrt{n}\left(m\left(\hat{\beta}_{N}^{\infty}\right)^{2}-m\left(\tilde{\beta}_{N}\right)^{2}\right)\xrightarrow[n\rightarrow\infty]{\textup{d}}\mathcal{N}\left(0,\IV_{\beta,N}\left(\frac{S}{N}\right)^{2}\right).
\]
We want to apply Theorem \ref{thm:delta_method} to the transformation
$f\coloneq\left(\vartheta_{N}^{\infty}\right)^{-1}$, but face the
difficulty that $\left(\vartheta_{N}^{\infty}\right)^{-1}$ is not
differentiable on the entire range of $T$. We define
\[
W_{n}\coloneq\left(m\left(\hat{\beta}_{N}^{\infty}\right)^{2}-m\left(\tilde{\beta}_{N}\right)^{2}\right)\,\II_{\left\{ m\left(b_{2}\right)^{2}<m\left(\hat{\beta}_{N}^{\infty}\right)^{2}<\frac{\IE_{\beta,N}S^{2}+1}{2}\right\} },\quad n\in\IN,
\]
and apply Lemma \ref{lem:conv_restr_sequence} to the sequence $Y_{n}\coloneq\sqrt{n}\left(m\left(\hat{\beta}_{N}^{\infty}\right)^{2}-m\left(\tilde{\beta}_{N}\right)^{2}\right)$
with $K\coloneq\left(m\left(b_{2}\right)^{2},\frac{\IE_{\beta,N}S^{2}+1}{2}\right)^{c}$,
$B_{n}$ the intersection of $K$ with the range of $Y_{n}$ for each
$n\in\IN$, and $\nu\coloneq\mathcal{N}\left(0,\IV_{\beta,N}\left(\frac{S}{N}\right)^{2}\right)$.
We set $M_{n}\coloneq\sqrt{n}$. By the large deviations principle
in statement 4, which we will show next, we have due to $N^{2}m\left(b_{2}\right)^{2}<\IE_{\beta,N}S^{2}<N^{2}$,
for the closed set $K$,
\[
\IP\left\{ Y_{n}\in B\right\} \leq2\exp\left(-n\inf_{x\in K}\Lambda_{S^{2}}^{*}\left(x\right)\right),\quad n\in\IN.
\]
Therefore, $\IP\left\{ Y_{n}\in B\right\} =o\left(\frac{1}{M_{n}}\right)$
holds, and we can apply Lemma \ref{lem:conv_restr_sequence} to conclude
\[
W_{n}\xrightarrow[n\rightarrow\infty]{\textup{d}}\mathcal{N}\left(0,\IV_{\beta,N}\left(\frac{S}{N}\right)^{2}\right).
\]
We apply Theorem \ref{thm:delta_method} to the sequence $W_{n}$.
Set $D\coloneq\left[m\left(b_{2}\right)^{2},\frac{\IE_{\beta,N}\left(\frac{S}{N}\right)^{2}+1}{2}\right]$
and $f:D\rightarrow\IR$
\[
f\left(y\right)\coloneq\left(\vartheta_{N}^{\infty}\right)^{-1}\left(y\right)=m^{-1}\left(\sqrt{y}\right),\quad y\in D.
\]
$f$ is continuously differentiable and strictly positive on the compact
set $D$. Therefore,
\begin{align*}
 & \quad\sqrt{n}\left(\hat{\beta}_{N}^{\infty}-\tilde{\beta}_{N}\right)\II_{\left\{ m\left(b_{2}\right)^{2}<m\left(\hat{\beta}_{N}^{\infty}\right)^{2}<\frac{\IE_{\beta,N}S^{2}+1}{2}\right\} }\\
 & =\sqrt{n}\left(f\left(W_{n}\right)-f\left(\IE_{\beta,N}\left(\frac{S}{N}\right)^{2}\right)\right)\xrightarrow[n\rightarrow\infty]{\textup{d}}\mathcal{N}\left(0,\left(f'\left(\mu\right)\right)^{2}\sigma^{2}\right)
\end{align*}
holds. We apply Lemma \ref{lem:conv_restr_sequence} once more to
conclude that
\[
\sqrt{n}\left(\hat{\beta}_{N}^{\infty}-\tilde{\beta}_{N}\right)\xrightarrow[n\rightarrow\infty]{\textup{d}}\mathcal{N}\left(0,\left(f'\left(\mu\right)\right)^{2}\sigma^{2}\right)
\]
is satisfied. The statement concerning the limiting variance as $n\rightarrow\infty$
then follows by Lemma \ref{lem:expec_S2_infty}. Finally, we note
that
\[
\frac{1}{\left[2m\left(\tilde{\beta}_{N}\right)m'\left(\tilde{\beta}_{N}\right)\right]^{2}}\xrightarrow[N_{\lambda}\rightarrow\infty]{}\frac{1}{\left[2m\left(\beta\right)m'\left(\beta\right)\right]^{2}}
\]
holds due to the continuity of $m$ and $m'$, and
\[
\IV_{\beta,N}\left(\frac{S}{N}\right)^{2}\xrightarrow[N\rightarrow\infty]{}0
\]
follows from the law of large numbers
\[
\left(\frac{S}{N}\right)^{2}\xrightarrow[N\rightarrow\infty]{\textup{p}}m\left(\beta\right)^{2},
\]
which itself is a corollary of Proposition \ref{prop:LDP_S_N}.
\item For each $N\in\IN$, $\left(\frac{T}{N^{2}}\right)_{N\in\IN}$ satisfies
a large deviations principle with rate $n$ and rate function $I=\Lambda_{\left(\frac{S}{N}\right)^{2}}^{*}$
by Proposition \ref{prop:conv_stat}. Theorem \ref{thm:contr_princ}
yields a large deviations principle for $\hat{\beta}_{N}^{\infty}=\left(\vartheta_{N}^{\infty}\right)^{-1}\circ T/N^{2}$
with rate $n$ and good rate function 
\begin{equation}
J:\left[-\infty,\infty\right]\rightarrow\left[0,\infty\right],\quad J\left(y\right)\coloneq\inf\left\{ \Lambda_{\left(\frac{S}{N}\right)^{2}}^{*}\left(x\right)\,|\,x\in\left[-\infty,\infty\right],x^{2}=y\right\} ,\quad y\in\IR.\label{eq:J}
\end{equation}
We prove the exponential convergence statement.\\
We assume $\beta\in I_{l}$. The proof for $\beta\in I_{h}$ is analogous.
Let $\varepsilon>0$. The function mapping $\beta$ to $m\left(\beta\right)^{2}$
is strictly increasing by Lemma \ref{lem:m_beta_increasing} and continuous
by Lemma \ref{lem:m_beta_diff}. Therefore, it maps the interval $\left(\beta-\varepsilon,\beta+\varepsilon\right)$
bijectively to the interval $\left(m\left(\beta-\varepsilon\right)^{2},m\left(\beta+\varepsilon\right)^{2}\right)$.
Set $r_{1}\coloneq m\left(\beta\right)^{2}-m\left(\beta-\varepsilon\right)^{2}$
and $r_{2}\coloneq m\left(\beta+\varepsilon\right)^{2}-m\left(\beta\right)^{2}$.
Then $r_{1},r_{2}>0$, and we define $\delta\coloneq\min\left\{ r_{1},r_{2}\right\} >0$.\\
Next, we define the sets\\
\begin{align*}
G & \coloneq\left\{ \left|\hat{\beta}_{N}^{\infty}-\beta\right|<\varepsilon\right\} ,\quad A\coloneq\left\{ \left|m\left(\hat{\beta}_{N}^{\infty}\right)^{2}-m\left(\beta\right)^{2}\right|<\delta\right\} ,\\
D & \coloneq\left\{ \left|m\left(\hat{\beta}_{N}^{\infty}\right)^{2}-m\left(\tilde{\beta}_{N}\right)^{2}\right|<\frac{\delta}{2}\right\} ,\quad E\coloneq\left\{ \left|m\left(\tilde{\beta}_{N}\right)^{2}-m\left(\beta\right)^{2}\right|<\frac{\delta}{2}\right\} .
\end{align*}
We have the inclusions
\begin{equation}
D\cap E\subset A\subset G.\label{eq:set_incl}
\end{equation}
We use Proposition \ref{prop:LDP_S_N} and the large deviations principle
it provides for the sequence $\left(\frac{S}{N}\right)^{2}$ to show
the existence of some $N_{\varepsilon}\in\IN$ such that $\left|\IE_{\beta,N}\left(\frac{S}{N}\right)^{2}-m\left(\beta\right)^{2}\right|<\frac{\delta}{2}$
holds for all $N\geq N_{\varepsilon}$. Then, by Definition \ref{def:beta_tilde},
$\left|m\left(\tilde{\beta}_{N}\right)^{2}-m\left(\beta\right)^{2}\right|<\frac{\delta}{2}$,
and hence $\IP\,E^{c}=0$, is satisfied for all $N\geq N_{\varepsilon}$.\\
Proposition \ref{prop:LDP_S_N} provides constants $B,C>0$ such that
\[
\IP\left\{ \left|\left(\frac{S}{N}\right)^{2}-m\left(\beta\right)^{2}\right|\geq\frac{\delta}{4}\right\} \leq C\exp\left(-NB\right),\quad N\in\IN.
\]
We have
\begin{align*}
\IE_{\beta,N}\left(\frac{S}{N}\right)^{2} & =\IE_{\beta,N}\left(\frac{S}{N}\right)^{2}\II_{\left\{ \left(\frac{S}{N}\right)^{2}\leq m\left(\beta\right)^{2}-\frac{\delta}{4}\right\} }+\IE_{\beta,N}\left(\frac{S}{N}\right)^{2}\II_{\left\{ \left(\frac{S}{N}\right)^{2}>m\left(\beta\right)^{2}-\frac{\delta}{4}\right\} }\\
 & \geq0+\left(m\left(\beta\right)^{2}-\frac{\delta}{4}\right)\IP\left\{ \left(\frac{S}{N}\right)^{2}>m\left(\beta\right)^{2}-\frac{\delta}{4}\right\} \\
 & \geq\left(m\left(\beta\right)^{2}-\frac{\delta}{4}\right)\left(1-C\exp\left(-NB\right)\right).
\end{align*}
We define $N_{\varepsilon}$ to be the smallest natural number such
that
\[
\left(m\left(\beta\right)^{2}-\frac{\delta}{4}\right)\left(1-C\exp\left(-N_{\varepsilon}B\right)\right)>m\left(\beta\right)^{2}-\frac{\delta}{2}
\]
holds, that is
\[
N_{\varepsilon}\coloneq\min\left\{ N\in\IN\,\left|\,N>-\frac{1}{B}\ln\frac{\delta}{4C\left(m\left(\beta\right)^{2}-\frac{\delta}{4}\right)}\right.\right\} .
\]
Then by the above calculation, we have for all $N\geq N_{\varepsilon}$,
\[
\IP\,E^{c}=0.
\]
From now on assume $N\geq N_{\varepsilon}$. Now we use the large
deviations principle for $T/N^{2}$ with rate $n$ and rate function
$\Lambda_{\left(\frac{S}{N}\right)^{2}}^{*}$. If we set
\[
\Phi_{n}\coloneq2\exp\left(-n\min\left\{ \Lambda_{\left(\frac{S}{N}\right)^{2}}^{*}\left(m\left(\tilde{\beta}_{N}\right)^{2}-\frac{\delta}{2}\right),\Lambda_{\left(\frac{S}{N}\right)^{2}}^{*}\left(m\left(\tilde{\beta}_{N}\right)^{2}+\frac{\delta}{2}\right)\right\} \right),\quad n\in\IN,
\]
then Proposition \ref{prop:conv_stat} implies that we have for the
closed set $D^{c}$ the upper bound
\[
\IP\,D^{c}\leq\Phi_{n},\quad n\in\IN.
\]
Then we calculate
\[
\IP\left(D^{c}\cup E^{c}\right)\leq\IP\,D^{c}+\IP\,E^{c}=\IP\,D^{c},
\]
and
\begin{align*}
\IP\left(D\cap E\right) & =\IP\left(D^{c}\cup E^{c}\right)^{c}=1-\IP\left(D^{c}\cup E^{c}\right)\\
 & \geq1-\IP\,D^{c}\geq1-\Phi_{n}
\end{align*}
for all $n\in\IN$. As $G\supset D\cap E$ by (\ref{eq:set_incl}),
\[
\IP\,G\geq\IP\left(D\cap E\right)\geq1-\Phi_{n}
\]
holds for all $N\geq N_{\varepsilon}$ and all $n\in\IN$. This concludes
the proof of the exponential convergence statement.
\end{enumerate}
\end{proof}
\begin{rem}
\label{rem:bold_J}We define the multivariate good rate function $\boldsymbol{J}:\left[-\infty,\infty\right]^{M}\rightarrow\left[0,\infty\right]$
from the statement of Theorem \ref{thm:properties_bML_inf} by setting
\begin{equation}
\boldsymbol{J}\left(x\right)\coloneq\sum_{\lambda=1}^{M}J_{\lambda}\left(x_{\lambda}\right),\quad x\in\left[-\infty,\infty\right]^{M},\label{eq:bold_J}
\end{equation}
with each of the univariate rate functions $J_{\lambda}$ being of
the form (\ref{eq:J}).
\end{rem}

\section{\label{sec:Optimal-Weights}Optimal Weights in a Two-Tier Voting
System}

A two-tier voting system refers to a framework in which the population
of a state or a union of states is divided into $M$ distinct groups
(such as member states). Each group selects a representative who participates
in a central council responsible for making decisions on behalf of
the union. These representatives vote \textquoteleft yes\textquoteright{}
or \textquoteleft no\textquoteright{} in alignment with the majority
opinion within their own group.

We recall that each group $\lambda\in\mathbb{N}_{M}$ consists of
$N_{\lambda}\in\mathbb{N}$ individuals. The vote of an individual
is denoted by the random variable $X_{\lambda i}$, where $\lambda$
identifies the group and $i\in N_{\lambda}$ specifies the individual
within it. The group voting margins $S_{\lambda}$, as defined in
(\ref{eq:S_lambda}), are used to quantify majority support in each
group. The total voting margin across all groups is denoted by
\[
\ensuremath{\bar{S}\coloneq\sum_{\lambda=1}^{M}S_{\lambda}}.
\]

The council vote for each group is determined based on its corresponding
group voting margin $S_{\lambda}$.
\begin{defn}
\label{def:council-vote}The council vote of each group $\lambda\in\IN_{M}$
is
\begin{align*}
\chi_{\lambda} & \coloneq\begin{cases}
1 & \textup{if }S_{\lambda}>0,\\
-1 & \textup{otherwise.}
\end{cases}
\end{align*}
Since the groups generally differ in size, it makes sense to assign
distinct voting weights $w_{\lambda}$ to each representative, dependent
on the size of their respective group. In certain cases, it may be
possible to form groups of roughly equal population. For instance,
in parliamentary elections such as those for the U.S. House of Representatives,
the country is divided into districts with nearly equal population
sizes, each assigned one representative with a voting weight of 1.
While this method works well within a single nation, it becomes impractical
in broader contexts. Redrawing the boundaries of countries or member
states (like those in the United Nations or European Union) to create
equally populated districts would conflict with national sovereignty,
making such restructuring politically unfeasible. Therefore, the challenge
of assigning fair voting weights to groups of unequal size must be
addressed.
\end{defn}

We follow a procedure much studied in the literature which consists
in minimising what the so called \emph{democracy deficit} (see Definition
\ref{def:democracy-deficit} below), that is, the difference between
the actual council vote and the outcome of an ideal direct referendum
of the entire population. This idea was first introduced in the context
of binary voting by Felsenthal and Machover \cite{FelsMach1999} and
has since been expanded upon in various studies (\cite{Ki2007,ZyczSlom2014,KirsLang2014,MaasNape2012,To2020phd,KirsToth2022,KT2021c}
to cite but a few). In informal terms, minimising the democracy deficit
means assigning voting weights so that the council\textquoteright s
decisions closely approximate the outcome of a full-population referendum,
on average.

Alternative approaches to assigning optimal weights focus on welfare
metrics or on ensuring equal influence among all voters in the population.
A landmark example of this approach is Penrose\textquoteright s work
\cite{Pen1946}, which established the square root law for weights
which equalises all voters' influence under the assumption of independent
voting. Additional studies \cite{BeisBove2007,KoMaTrLa2013,KurMaaNa2017}
have further developed these ideas, examining voting weights through
the lenses of welfare and individual voter influence.

A necessary ingredient for the definition of the democracy deficit
is a model of how the population votes. For this, we adopt the CWM
$\mathbb{P}_{\boldsymbol{\beta},\boldsymbol{N}}$ (see Definition
\ref{def:CWM}), where $\boldsymbol{\beta}=(\beta_{1},\ldots,\beta_{M})\geq0$
and $\boldsymbol{N}=(N_{1},\ldots,N_{M})\in\mathbb{N}^{M}$ are fixed.
Once this model is in place, we can formally define the democracy
deficit, which will guide how voting weights should be optimally assigned
to the council representatives.
\begin{defn}
\label{def:democracy-deficit}The democracy deficit for a set of voting
weights $w_{1},\ldots,w_{M}\in\IR$ is given by
\[
\IE_{\boldsymbol{\beta},\boldsymbol{N}}\left[\bar{S}-\sum_{\lambda=1}^{M}w_{\lambda}\chi_{\lambda}\right]^{2}.
\]
We will call any vector $\left(w_{1},\ldots,w_{M}\right)\in\IR$ of
weights which minimises the democracy deficit `optimal.'
\end{defn}

The following result characterises the optimal weights in terms of
certain moments of the CWM.
\begin{prop}
\label{prop:opt_weights}For all $\boldsymbol{\beta}=\left(\beta_{1},\ldots,\beta_{M}\right)\geq0$,
the \emph{optimal weights} are given by
\[
w_{\lambda}=\IE_{\beta_{\lambda},N_{\lambda}}\left|S_{\lambda}\right|,\quad\lambda\in\IN_{M}.
\]
\end{prop}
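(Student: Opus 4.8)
The plan is to view the democracy deficit $D(w):=\IE_{\boldsymbol{\beta},\boldsymbol{N}}\bigl[\bar{S}-\sum_{\lambda=1}^{M}w_{\lambda}\chi_{\lambda}\bigr]^{2}$ as a quadratic function of the weight vector $w=(w_{1},\ldots,w_{M})\in\IR^{M}$ and to minimise it by completing the square. Expanding the square,
\[
D(w)=\IE_{\boldsymbol{\beta},\boldsymbol{N}}\bar{S}^{2}-2\sum_{\lambda=1}^{M}w_{\lambda}\,\IE_{\boldsymbol{\beta},\boldsymbol{N}}\bigl[\bar{S}\,\chi_{\lambda}\bigr]+\sum_{\lambda=1}^{M}\sum_{\mu=1}^{M}w_{\lambda}w_{\mu}\,\IE_{\boldsymbol{\beta},\boldsymbol{N}}\bigl[\chi_{\lambda}\chi_{\mu}\bigr],
\]
so everything reduces to identifying the linear and quadratic coefficients. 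The structural input I would use is that the $M$ groups do not interact in the CWM, so the blocks $(X_{\lambda 1},\ldots,X_{\lambda N_{\lambda}})$, $\lambda\in\IN_{M}$, are independent under $\IP_{\boldsymbol{\beta},\boldsymbol{N}}$; hence $S_{1},\ldots,S_{M}$, and therefore $\chi_{1},\ldots,\chi_{M}$, are independent, and a function depending only on the $\lambda$-block has expectation $\IE_{\beta_{\lambda},N_{\lambda}}$ of the corresponding marginal.

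Next I would evaluate the two coefficient sums using this independence together with the spin-flip symmetry of the CWM: the map $x\mapsto-x$ leaves $(\sum_{i}x_{i})^{2}$, hence $\IP_{\beta_{\lambda},N_{\lambda}}$, invariant, so $S_{\lambda}$ has a symmetric distribution, giving $\IE_{\beta_{\lambda},N_{\lambda}}S_{\lambda}=0$ and $\IP(S_{\lambda}>0)=\IP(S_{\lambda}<0)$. For the quadratic part, $\IE[\chi_{\lambda}^{2}]=1$ since $\chi_{\lambda}\in\{-1,1\}$, and for $\lambda\neq\mu$ independence gives $\IE[\chi_{\lambda}\chi_{\mu}]=\IE\chi_{\lambda}\,\IE\chi_{\mu}$, which vanishes once $\IE\chi_{\lambda}=0$ (addressed in the last paragraph); thus the double sum collapses to $\sum_{\lambda}w_{\lambda}^{2}$. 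For the linear part, $\IE[\bar{S}\,\chi_{\lambda}]=\sum_{\mu}\IE[S_{\mu}\chi_{\lambda}]$; the terms with $\mu\neq\lambda$ vanish by independence and $\IE S_{\mu}=0$, while for $\mu=\lambda$ the pointwise identity $\chi_{\lambda}S_{\lambda}=|S_{\lambda}|$ (check $S_{\lambda}>0$, $S_{\lambda}<0$, $S_{\lambda}=0$ separately) yields $\IE[S_{\lambda}\chi_{\lambda}]=\IE_{\beta_{\lambda},N_{\lambda}}|S_{\lambda}|$. So $\IE[\bar{S}\,\chi_{\lambda}]=\IE_{\beta_{\lambda},N_{\lambda}}|S_{\lambda}|$. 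Substituting and completing the square group by group gives
\[
D(w)=\IE_{\boldsymbol{\beta},\boldsymbol{N}}\bar{S}^{2}-\sum_{\lambda=1}^{M}\bigl(\IE_{\beta_{\lambda},N_{\lambda}}|S_{\lambda}|\bigr)^{2}+\sum_{\lambda=1}^{M}\bigl(w_{\lambda}-\IE_{\beta_{\lambda},N_{\lambda}}|S_{\lambda}|\bigr)^{2},
\]
a $w$-independent constant plus a sum of squares that vanishes exactly at $w_{\lambda}=\IE_{\beta_{\lambda},N_{\lambda}}|S_{\lambda}|$, which is therefore the unique minimiser. This proves the proposition (and shows in passing that the minimal deficit is $\IE_{\boldsymbol{\beta},\boldsymbol{N}}\bar{S}^{2}-\sum_{\lambda}(\IE_{\beta_{\lambda},N_{\lambda}}|S_{\lambda}|)^{2}$).

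The one delicate point, and the step I expect to be the real obstacle, is the vanishing of $\IE_{\beta_{\lambda},N_{\lambda}}\chi_{\lambda}$, which is what kills the off-diagonal terms $\IE[\chi_{\lambda}\chi_{\mu}]$. Symmetry only yields $\IE\chi_{\lambda}=\IP(S_{\lambda}>0)-\IP(S_{\lambda}\le0)=-\IP(S_{\lambda}=0)$, which is $0$ when $N_{\lambda}$ is odd but strictly negative when $N_{\lambda}$ is even, in which case $S_{\lambda}=0$ has positive probability and Definition \ref{def:council-vote} breaks the tie deterministically. In the even case the clean formula $w_{\lambda}=\IE_{\beta_{\lambda},N_{\lambda}}|S_{\lambda}|$ is recovered by resolving a tie $S_{\lambda}=0$ with an independent fair coin, which restores $\IE\chi_{\lambda}=0$ without affecting $\chi_{\lambda}S_{\lambda}=|S_{\lambda}|$ or $\chi_{\lambda}^{2}=1$; I would either adopt this convention explicitly or restrict to odd $N_{\lambda}$, and the remainder is the standard $L^{2}$-projection computation above.
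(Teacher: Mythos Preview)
The paper does not actually prove this proposition; it simply cites Proposition~45 in \cite{BaMeSiTo2025}. Your $L^{2}$-projection argument is the standard route and is correct: expand the square, use the product structure of $\IP_{\boldsymbol{\beta},\boldsymbol{N}}$ across groups to factor cross moments, use the spin-flip symmetry to get $\IE S_{\mu}=0$, and use $\chi_{\lambda}S_{\lambda}=|S_{\lambda}|$ for the diagonal linear term. With $\IE\chi_{\lambda}=0$ the Gram matrix $\bigl(\IE[\chi_{\lambda}\chi_{\mu}]\bigr)$ is the identity and completing the square gives the stated minimiser uniquely.

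Your caveat about the tie-breaking rule is well taken and is a genuine gap in the statement as the paper writes it. With Definition~\ref{def:council-vote} as given, $\IE\chi_{\lambda}=-\IP(S_{\lambda}=0)$, which is strictly negative whenever $N_{\lambda}$ is even, so the off-diagonal entries $\IE[\chi_{\lambda}\chi_{\mu}]=\IE\chi_{\lambda}\,\IE\chi_{\mu}$ do not vanish and the exact minimiser is $A^{-1}b$ with $A=I+aa^{\top}-\mathrm{diag}(a_{\lambda}^{2})$, $a_{\lambda}=\IE\chi_{\lambda}$, $b_{\lambda}=\IE|S_{\lambda}|$, rather than $b$ itself. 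Your proposed fix (independent fair-coin tie-break, or odd $N_{\lambda}$) restores $\IE\chi_{\lambda}=0$ without touching $\chi_{\lambda}^{2}=1$ or $\chi_{\lambda}S_{\lambda}=|S_{\lambda}|$ and recovers the clean formula; this is almost certainly the convention intended in the cited reference, and in any case the discrepancy is of size $\IP(S_{\lambda}=0)$, which tends to $0$ as $N_{\lambda}\to\infty$ and is irrelevant for the asymptotic applications in Section~\ref{sec:Optimal-Weights}.
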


\begin{proof}
This is Proposition 45 in \cite{BaMeSiTo2025}.
\end{proof}
Using asymptotic approximations for the moments of the voting margins,
we can further develop the above condition and give a theorem about
the asymptotic behaviour of the optimal weights. To prove Theorem
\ref{thm:opt_weights} below, we will employ the method of moments
(see Theorem \ref{thm:mm}).
\begin{thm}
\label{thm:opt_weights}For all $\beta_{1},\ldots,\beta_{M}\in\left[0,\infty\right)\backslash\left\{ 1\right\} $,
the \emph{optimal weights} are asymptotically equal to
\[
w_{\lambda}\approx\begin{cases}
\sqrt{\frac{2}{\pi}}\sqrt{\frac{1}{1-\beta_{\lambda}}}\sqrt{N_{\lambda}} & \textup{if }\beta_{\lambda}<1,\\
m\left(\beta\right)N_{\lambda} & \textup{if }\beta_{\lambda}>1,
\end{cases}\quad\lambda\in\IN_{M}.
\]
\end{thm}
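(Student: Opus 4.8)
The plan is to treat the groups one at a time and, for a fixed group, to identify the limiting distribution of the suitably normalised voting margin by the method of moments, and then to upgrade that weak limit to convergence of the first absolute moment. By Proposition~\ref{prop:opt_weights} we have $w_\lambda=\IE_{\beta_\lambda,N_\lambda}\left|S_\lambda\right|$, so it suffices to determine the asymptotics of $\IE_{\beta,N}\left|S\right|$ as $N\to\infty$ (suppressing the index $\lambda$, as in Section~\ref{sec:Proofs}). The observation I would use in both regimes is that $S$ is symmetric: the spin-flip $x\mapsto-x$ leaves the CWM probability (\ref{eq:CWM}) invariant and sends $S$ to $-S$, so $\IE_{\beta,N}S^{2k+1}=0$ for all $k,N\in\IN$.

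\emph{High temperature regime $\beta_\lambda<1$.} Here I would normalise by $\sqrt{N}$. By the symmetry above and Proposition~\ref{prop:appr_moments} (and its proof), the moments of $S/\sqrt{N}$ satisfy $\IE_{\beta,N}\bigl(S/\sqrt{N}\bigr)^{2k+1}=0$ and $\IE_{\beta,N}\bigl(S/\sqrt{N}\bigr)^{2k}\to(2k-1)!!\left(1-\beta\right)^{-k}$, i.e.\ they converge to the moments of $\mathcal{N}\!\left(0,\tfrac{1}{1-\beta}\right)$. As the Gaussian law is determined by its moments, Theorem~\ref{thm:mm} gives $S/\sqrt{N}\xrightarrow[N\rightarrow\infty]{\textup{d}}\mathcal{N}\!\left(0,\tfrac{1}{1-\beta}\right)$, hence $\left|S\right|/\sqrt{N}\xrightarrow[N\rightarrow\infty]{\textup{d}}\bigl|\mathcal{N}\!\left(0,\tfrac{1}{1-\beta}\right)\bigr|$ by the continuous mapping theorem. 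The sequence $\bigl(\left|S\right|/\sqrt{N}\bigr)_{N}$ is bounded in $L^{2}$ because $\IE_{\beta,N}\bigl(S/\sqrt{N}\bigr)^{2}\to\tfrac{1}{1-\beta}$, hence uniformly integrable, so $\IE_{\beta,N}\left|S\right|/\sqrt{N}\to\IE\bigl|\mathcal{N}\!\left(0,\tfrac{1}{1-\beta}\right)\bigr|=\sqrt{\tfrac{2}{\pi}}\sqrt{\tfrac{1}{1-\beta}}$, the mean of a half-normal random variable. This gives the first case.

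\emph{Low temperature regime $\beta_\lambda>1$.} Now I would normalise by $N$. The odd moments of $S/N$ vanish and, by Proposition~\ref{prop:appr_moments}, $\IE_{\beta,N}\bigl(S/N\bigr)^{2k}\to m(\beta)^{2k}$; these are exactly the moments of the two-point law $\tfrac12\bigl(\delta_{m(\beta)}+\delta_{-m(\beta)}\bigr)$, which has compact support and is therefore moment-determinate. Theorem~\ref{thm:mm} then yields $S/N\xrightarrow[N\rightarrow\infty]{\textup{d}}\tfrac12\bigl(\delta_{m(\beta)}+\delta_{-m(\beta)}\bigr)$, so $\left|S\right|/N\xrightarrow[N\rightarrow\infty]{\textup{d}}\delta_{m(\beta)}$, i.e.\ $\left|S\right|/N\to m(\beta)$ in probability. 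Since $\left|S\right|/N\le1$, bounded convergence gives $\IE_{\beta,N}\left|S\right|/N\to m(\beta)$. Reinstating the index $\lambda$ in both regimes produces the claimed asymptotics for $w_\lambda$.

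The individual computations are routine; the two points that genuinely need care are the passage from the weak limit of the normalised margin to the limit of $\IE\left|\cdot\right|$ — handled by the $L^{2}$-boundedness/uniform-integrability argument, which is needed because $\left|\cdot\right|$ is unbounded in the high-temperature normalisation — and the verification that each candidate limiting moment sequence is the moment sequence of a probability measure uniquely determined by its moments (Carleman's condition for the Gaussian when $\beta<1$, compactness of the support when $\beta>1$), which is precisely what licenses the application of Theorem~\ref{thm:mm}.
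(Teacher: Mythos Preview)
Your proof is correct and follows essentially the same approach as the paper: reduce to computing the asymptotics of $\IE_{\beta,N}|S|$ via Proposition~\ref{prop:opt_weights}, identify the limiting distribution of the normalised margin by the method of moments using Proposition~\ref{prop:appr_moments}, and then upgrade weak convergence to convergence of the first absolute moment. The only cosmetic differences are that in the low-temperature case the paper applies the moment method to $(S/N)^2$ rather than $S/N$ and invokes Cauchy--Schwarz instead of bounded convergence, and that your justification of the passage from weak convergence to convergence of expectations via $L^2$-boundedness and uniform integrability is stated more precisely than the paper's version.
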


\begin{proof}
This result was first proved in \cite{Ki2007}. For the reader's convenience,
we present a short proof that employs Proposition \ref{prop:appr_moments}.

By Proposition \ref{prop:opt_weights}, we have the optimality condition
\[
w_{\lambda}=\IE_{\beta_{\lambda},N_{\lambda}}\left|S_{\lambda}\right|,\quad\lambda\in\IN_{M}.
\]
Hence, we need to calculate the moment $\IE_{\beta_{\lambda},N_{\lambda}}\left|S_{\lambda}\right|$.
Instead of doing this lengthy calculation which follows the pattern
presented in Propositions \ref{prop:appr_correlations} and \ref{prop:appr_moments},
we use the method of moments to derive the limit of $\IE_{\beta_{\lambda},N_{\lambda}}\left|S_{\lambda}\right|$
as $N\rightarrow\infty$.

Let $\beta_{\lambda}\in I_{h}$. We can apply the method of moments
(Theorem \ref{thm:mm}), provided there are constants $A,B\in\IR$
and a sequence $\left(m_{k}\right)_{k\in\IN}$ of real numbers with
the property that
\[
\lim_{N\rightarrow\infty}\IE_{\beta_{\lambda},N_{\lambda}}\left(\frac{S_{\lambda}}{\sqrt{N_{\lambda}}}\right)^{k}=m_{k}
\]
 holds for all $k\in\IN$, and the sequence $\left(m_{k}\right)_{k\in\IN}$
satisfies
\[
m_{k}\leq AB^{k}k!,\quad k\in\IN.
\]
By Proposition \ref{prop:appr_moments}, this condition is satisfied
for
\[
m_{k}=\begin{cases}
k!!\,\left(\frac{1}{1-\beta_{\lambda}}\right)^{k} & \textup{if }k\textup{ is even,}\\
0 & \textup{if }k\textup{ is odd.}
\end{cases}
\]
These are the moments of the normal distribution $\mathcal{N}\left(0,\frac{1}{1-\beta_{\lambda}}\right)$.
By Theorem \ref{thm:mm}, the sequence $\left(\frac{S_{\lambda}}{\sqrt{N_{\lambda}}}\right)_{N_{\lambda}\in\IN}$
converges in distribution to $\mathcal{N}\left(0,\frac{1}{1-\beta_{\lambda}}\right)$.
By Theorem \ref{thm:cont_mapping}, the sequence $\left(\frac{\left|S_{\lambda}\right|}{\sqrt{N_{\lambda}}}\right)_{N_{\lambda}\in\IN}$
converges to a half-normal distribution. Let $\sigma^{2}>0$ and $Z\sim\mathcal{N}\left(0,\sigma^{2}\right)$.
Then the half-normal distribution with scale parameter $\sigma$ is
the distribution of the random variable $\left|Z\right|$. We calculate
the moment $\IE\,\left|Z\right|$:
\begin{align*}
\IE\,\left|Z\right| & =\sqrt{\frac{2}{\pi\sigma^{2}}}\int_{0}^{\infty}x\,\exp\left(-\frac{x^{2}}{2\sigma^{2}}\right)\,\textup{d}x\\
 & =\sqrt{\frac{2}{\pi}}\,\sigma\int_{0}^{\infty}\left(-\exp\left(-y\right)\right)\,\textup{d}y\\
 & =\sqrt{\frac{2}{\pi}}\,\sigma.
\end{align*}
In our application, $\sigma=\sqrt{\frac{1}{1-\beta_{\lambda}}}$.

Since the sequence $\left(\IE_{\beta_{\lambda},N_{\lambda}}\frac{S_{\lambda}^{2}}{N_{\lambda}}\right)_{M_{\lambda}\in\IN}$
converges by Proposition \ref{prop:appr_moments}, it is uniformly
bounded, and using the Cauchy-Schwarz inequality, so is
\[
\IE_{\beta_{\lambda},N_{\lambda}}\frac{\left|S_{\lambda}\right|}{\sqrt{N_{\lambda}}}\leq\left(\IE_{\beta_{\lambda},N_{\lambda}}\frac{S_{\lambda}^{2}}{N_{\lambda}}\right)^{\frac{1}{2}}.
\]
As the sequence of moments $\left(\IE_{\beta_{\lambda},N_{\lambda}}\frac{\left|S_{\lambda}\right|}{\sqrt{N_{\lambda}}}\right)_{N_{\lambda}\in\IN}$
is uniformly bounded over all $N_{\lambda}\in\IN$, the convergence
\[
\IE_{\beta_{\lambda},N_{\lambda}}\frac{\left|S_{\lambda}\right|}{\sqrt{N_{\lambda}}}\xrightarrow[N_{\lambda}\rightarrow\infty]{}\IE\,\left|Z\right|
\]
follows from
\[
\left(\frac{S_{\lambda}}{\sqrt{N_{\lambda}}}\right)_{N_{\lambda}\in\IN}\xrightarrow[N_{\lambda}\rightarrow\infty]{\textup{d}}\mathcal{N}\left(0,\frac{1}{1-\beta_{\lambda}}\right)
\]
and Theorem \ref{thm:cont_mapping}. This concludes the proof for
$\beta\in I_{h}$.

Now let $\beta\in I_{l}$. Recall Definition \ref{def:m_beta} of
the expression $m\left(\beta\right)$. By Proposition \ref{prop:appr_moments},
\[
\IE_{\beta_{\lambda},N_{\lambda}}\left(\frac{S_{\lambda}}{N_{\lambda}}\right)^{2}\xrightarrow[N_{\lambda}\rightarrow\infty]{}m\left(\beta_{\lambda}\right)^{2}
\]
holds. Therefore, employing Theorem \ref{thm:mm},
\[
\left(\left(\frac{S_{\lambda}}{N_{\lambda}}\right)^{2}\right)_{N_{\lambda}\in\IN}\xrightarrow[N_{\lambda}\rightarrow\infty]{\textup{d}}\delta_{m\left(\beta_{\lambda}\right)^{2}},
\]
where the symbol $\delta_{x}$ for any $x\in\IR$ stands for the Dirac
measure at $x$. Using Theorem \ref{thm:cont_mapping}, we obtain
\[
\left(\left|\frac{S_{\lambda}}{N_{\lambda}}\right|\right)_{N_{\lambda}\in\IN}\xrightarrow[N_{\lambda}\rightarrow\infty]{\textup{d}}\delta_{m\left(\beta_{\lambda}\right)}.
\]
Using the Cauchy--Schwarz inequality as above, we see that the sequence
of moments $\left(\IE_{\beta_{\lambda},N_{\lambda}}\left|\frac{S_{\lambda}}{N_{\lambda}}\right|\right)_{N_{\lambda}\in\IN}$
converges to $m\left(\beta\right)$. This yields the claim for $\beta\in I_{l}$.
\end{proof}
We now note that since
\[
\sqrt{\frac{1}{1-\beta_{\lambda}}}\leq\frac{1}{1-\beta_{\lambda}},\quad\beta_{\lambda}\in I_{h},\qquad m\left(\beta_{\lambda}\right)^{2}<m\left(\beta_{\lambda}\right),\quad\beta_{\lambda}\in I_{l}
\]
holds, we have
\begin{equation}
\sqrt{\frac{2}{\pi}}\sqrt{\frac{1}{1-\gamma}}\sqrt{N_{\lambda}}<m\left(\eta\right)N_{\lambda}\label{eq:weight_ineq}
\end{equation}
for all $\gamma\in I_{h}$ and all $\eta\in I_{l}$ by condition (\ref{eq:separation})
for all groups $\lambda\in\IN_{M}$. Moreover, we have
\begin{equation}
\lim_{N_{\lambda}\rightarrow\infty}\frac{\sqrt{\frac{2}{\pi}}\sqrt{\frac{1}{1-\gamma}}\sqrt{N_{\lambda}}}{m\left(\eta\right)N_{\lambda}}=0.\label{eq:weight_lim}
\end{equation}
Inequality (\ref{eq:weight_ineq}) implies that groups with stronger
interactions between voters, that is groups that are in the low temperature
regime, will receive a larger weight in the council than groups with
voters who interact more loosely with each other, i.e.\! groups that
are in the high temperature regime. If the group populations are very
large, (\ref{eq:weight_lim}) says the groups with strong interactions
receive essentially all the voting weight in the council. The intuitive
reason for this result is that the majorities in strongly interacting
groups are typically of order $N_{\lambda}$, whereas the majorities
in weakly interacting groups are of order $\sqrt{N_{\lambda}}$. These
results go beyond Corollary 48 in \cite{BaMeSiTo2025}, which only
stated that for groups of equal size, the group with a larger coupling
constant receives a larger optimal weight.

Theorem \ref{thm:opt_weights} yields an estimator for the optimal
weights of each group by substituting any estimator for the coupling
parameters. For example, an estimator for the optimal weights based
on $\hat{\boldsymbol{\beta}}_{\boldsymbol{N}}^{\infty}$ from Definition
\ref{def:estimator_large_N} can be defined as follows. Recall that
for all $\lambda\in\IN_{M}$, $\hat{\beta}_{N}^{\infty}\left(\lambda\right)$
stands for the estimator $\hat{\beta}_{N}^{\infty}:\Omega_{N_{\lambda}}^{n}\rightarrow\left[-\infty,\infty\right]\cup\left\{ \textup{u}\right\} $
calculated for a sample of observations of voting in group $\lambda$.
\begin{defn}
\label{def:estimator_weights}Consider the intervals from Definition
\ref{def:intervals} and assume $\beta_{\lambda}\in I_{h}\cup I_{l}$.
The estimator $\hat{w}_{\lambda}:\Omega_{N_{\lambda}}^{n}\rightarrow\left[0,\infty\right)\cup\left\{ \textup{u}\right\} $
for the optimal weights of each group $\lambda\in\IN_{M}$ based on
$\hat{\beta}_{N}^{\infty}\left(\lambda\right)$ is defined by
\begin{enumerate}
\item If $\left(\boldsymbol{T}\left(x^{(1)},\ldots,x^{(n)}\right)\right)_{\lambda}\in J_{h}$,
then $\hat{w}_{\lambda}\coloneq\sqrt{\frac{2}{\pi}}\sqrt{\frac{1}{1-\hat{\beta}_{N}^{\infty}\left(\lambda\right)}}\sqrt{N_{\lambda}}.$
\item If $\left(\boldsymbol{T}\left(x^{(1)},\ldots,x^{(n)}\right)\right)_{\lambda}\in J_{l}$,
then $\hat{w}_{\lambda}\coloneq m\left(\hat{\beta}_{N}^{\infty}\left(\lambda\right)\right)N_{\lambda}$.
\item If $\left(\boldsymbol{T}\left(x^{(1)},\ldots,x^{(n)}\right)\right)_{\lambda}\in J_{c}$,
then we say there is insufficient evidence in the sample to conclude
that $\beta_{\lambda}$ is significantly different from 1, and $\hat{w}_{\lambda}\coloneq\textup{u}$
is undefined.
\end{enumerate}
\end{defn}

\begin{rem}
Similarly to how an undefined value $\hat{\beta}_{N}^{\infty}\left(\lambda\right)=\textup{u}$
can be interpreted as saying that the true value $\beta_{\lambda}$
lies close to 1, we could also assign an estimate for $\hat{w}_{\lambda}$
which corresponds to an asymptotic expression analogous to those in
Theorem \ref{thm:opt_weights} for $\beta_{\lambda}=1$. Since such
samples are a low probability event by Proposition \ref{prop:error},
we will refrain from going into further detail.
\end{rem}

In analogy to the constant $\tilde{\beta}_{N}$ from Definition \ref{def:beta_tilde},
we define a constant $\tilde{w}_{N}$:
\begin{defn}
\label{def:w_tilde}Let the intervals $I_{h}$ and $I_{l}$ be as
in Definition \ref{def:intervals}. Let, for all $\lambda\in\IN_{M}$,
$N_{\lambda}\in\IN$ and all $\beta_{\lambda}\in I_{h}\cup I_{l}$,
$\tilde{w}_{N_{\lambda}}\left(\lambda\right)\geq0$ be the value which
satisfies
\[
\tilde{w}_{N_{\lambda}}\left(\lambda\right)=\IE_{\beta_{\lambda},N_{\lambda}}\left|S\right|.
\]
\end{defn}

The estimator $\hat{w}_{\lambda}$ inherits many of the properties
of $\hat{\beta}_{N}^{\infty}\left(\lambda\right)$. This is the subject
of the next theorem. Let for each $N\in\IN$ the function $\psi_{N}^{\infty}:\left[-\infty,\infty\right]\backslash I_{c}\rightarrow\IR$
be defined by
\[
\psi_{N}^{\infty}\left(\beta\right)\coloneq\sqrt{\frac{2}{\pi}}\sqrt{\frac{1}{1-\beta}}\sqrt{N},\;\beta\leq b_{1},\quad\psi_{N}^{\infty}\left(\beta\right)\coloneq m\left(\beta\right)N,\;\beta\geq b_{2},\quad\textup{and}\quad\psi_{N}^{\infty}\left(-\infty\right)\coloneq0.
\]

Recall the rate function $J_{\lambda}$ from (\ref{eq:J}), and let
the good rate function $H_{\lambda}:\IR\rightarrow\left[0,\infty\right]$
be defined by
\[
H_{\lambda}\left(z\right)\coloneq\inf\left\{ J_{\lambda}\left(x\right)\,|\,x\in\left[-\infty,\infty\right],\psi_{N}^{\infty}\left(x\right)=z\right\} ,\quad z\in\IR.
\]

\begin{thm}
Let $\lambda\in\IN_{M}$ and $N_{\lambda}\in\IN$. Let $w_{\lambda}$
be the optimal weight from Theorem \ref{thm:opt_weights}. Then the
following statements hold:
\begin{enumerate}
\item $\hat{w}_{\lambda}\xrightarrow[n\rightarrow\infty]{\textup{p}}\tilde{w}_{N_{\lambda}}\left(\lambda\right)$.
\item $\tilde{w}_{N_{\lambda}}\left(\lambda\right)\xrightarrow[N_{\lambda}\rightarrow\infty]{}w_{\lambda}$.
\item $\sqrt{n}\left(\hat{w}_{\lambda}-\tilde{w}_{N_{\lambda}}\left(\lambda\right)\right)\xrightarrow[n\rightarrow\infty]{\textup{d}}\mathcal{N}\left(0,\upsilon_{N_{\lambda}}^{2}\right)$
and
\[
\upsilon_{N_{\lambda}}^{2}\underset{N_{\lambda}\rightarrow\infty}{\approx}\begin{cases}
\frac{1}{\pi}\frac{1}{1-\beta_{\lambda}}N_{\lambda} & \textup{if }\beta_{\lambda}\in I_{h},\\
\frac{1}{4m\left(\beta_{\lambda}\right)^{2}}\,N_{\lambda}^{2}\,\IV_{\beta_{\lambda},N_{\lambda}}\left(\frac{S_{\lambda}}{N_{\lambda}}\right)^{2} & \textup{if }\beta_{\lambda}\in I_{l}.
\end{cases}
\]
\item $\hat{w}_{\lambda}$ satisfies a large deviation principle with rate
$n$ and rate function $H_{\lambda}$. Furthermore, for any $\varepsilon>0$
there exist an $N_{\varepsilon}\in\IN$ and $B_{\varepsilon},C_{\varepsilon}>0$
such that for all $N\geq N_{\varepsilon}$,
\[
\IP\left\{ \left|\hat{w}_{\lambda}-w_{\lambda}\right|\geq\varepsilon\right\} \leq C_{\varepsilon}\exp\left(-B_{\varepsilon}n\right),\quad n\in\IN.
\]
\end{enumerate}
\end{thm}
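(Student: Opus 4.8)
The plan is to use that the weight estimator is a composition, $\hat w_\lambda=\psi_{N_\lambda}^\infty\bigl(\hat\beta_{N_\lambda}^\infty(\lambda)\bigr)$ — equivalently, a deterministic function of the sufficient statistic $T$ — and to transfer each of the four properties from the corresponding statement of Theorem \ref{thm:properties_bML_inf} for $\hat\beta_{N_\lambda}^\infty(\lambda)$, in exactly the way Theorem \ref{thm:properties_bML_inf} was itself deduced from properties of $T$. What powers all four transfers is that $\psi_N^\infty$ is strictly increasing and continuous on $I_h\cup I_l$ and, by Lemmas \ref{lem:m_beta_increasing} and \ref{lem:m_beta_diff}, continuously differentiable with strictly positive derivative on each of $I_h$ and $I_l$; the gap $I_c$ and the value $-\infty$ are handled by the same truncation device (Lemma \ref{lem:conv_restr_sequence}) already used for $\hat\beta_{N_\lambda}^\infty$.

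Statements 1 and 2 are quick. For statement 1, $\hat\beta_{N_\lambda}^\infty(\lambda)\xrightarrow[n\to\infty]{\textup p}\tilde\beta_{N_\lambda}(\lambda)$ (Theorem \ref{thm:properties_bML_inf}(1)) and the continuity of $\psi_N^\infty$ give, by the continuous mapping theorem (Theorem \ref{thm:cont_mapping}), $\hat w_\lambda\xrightarrow[n\to\infty]{\textup p}\psi_{N_\lambda}^\infty(\tilde\beta_{N_\lambda}(\lambda))$, which one identifies with $\tilde w_{N_\lambda}(\lambda)$ via Definitions \ref{def:beta_tilde} and \ref{def:w_tilde}. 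Statement 2 then follows from $\tilde\beta_{N_\lambda}(\lambda)\to\beta_\lambda$ (Theorem \ref{thm:properties_bML_inf}(2)), the continuity of $\psi_N^\infty$, Proposition \ref{prop:appr_moments} (to replace $\IE_{\beta_\lambda,N_\lambda}S_\lambda^2$ by its large-$N_\lambda$ value inside $\psi_{N_\lambda}^\infty$), and the asymptotic form of $w_\lambda$ supplied by Theorem \ref{thm:opt_weights}.

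For statement 3 I would apply the delta method (Theorem \ref{thm:delta_method}) to $\psi_N^\infty$ at $\tilde\beta_{N_\lambda}(\lambda)$, starting from $\sqrt n\bigl(\hat\beta_{N_\lambda}^\infty(\lambda)-\tilde\beta_{N_\lambda}(\lambda)\bigr)\xrightarrow[n\to\infty]{\textup d}\mathcal N(0,(\Sigma_{\boldsymbol N})_{\lambda\lambda})$ of Theorem \ref{thm:properties_bML_inf}(3). Since $\psi_N^\infty$ is differentiable only away from $I_c$, I would first localise on $\{\hat\beta_{N_\lambda}^\infty(\lambda)\in D\}$ for a compact interval $D$ in the interior of the relevant regime containing $\tilde\beta_{N_\lambda}(\lambda)$, bound the complementary probability by $o(n^{-1/2})$ using Proposition \ref{prop:error}, and conclude via Lemma \ref{lem:conv_restr_sequence}, exactly as in the proof of Theorem \ref{thm:properties_bML_inf}(3); this yields $\upsilon_{N_\lambda}^2=\bigl((\psi_{N_\lambda}^\infty)'(\tilde\beta_{N_\lambda}(\lambda))\bigr)^2(\Sigma_{\boldsymbol N})_{\lambda\lambda}$. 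Inserting $(\psi_N^\infty)'(\beta)=\tfrac12\sqrt{2/\pi}\,\sqrt N\,(1-\beta)^{-3/2}$ on $I_h$, resp.\ $(\psi_N^\infty)'(\beta)=m'(\beta)N$ on $I_l$, together with the two forms of $(\Sigma_{\boldsymbol N})_{\lambda\lambda}$ and Proposition \ref{prop:appr_moments}, this simplifies to $\tfrac1\pi\tfrac1{1-\tilde\beta_{N_\lambda}(\lambda)}N_\lambda$ (high temperature) and $\tfrac1{4m(\tilde\beta_{N_\lambda}(\lambda))^2}N_\lambda^2\,\IV_{\beta_\lambda,N_\lambda}(S_\lambda/N_\lambda)^2$ (low temperature), which are asymptotically equal to the claimed quantities because $\tilde\beta_{N_\lambda}(\lambda)\to\beta_\lambda$.

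Statement 4 and the main obstacle. The contraction principle (Theorem \ref{thm:contr_princ}), applied to the continuous map $\psi_N^\infty$, transfers the large deviation principle of Theorem \ref{thm:properties_bML_inf}(4) (rate function $J_\lambda$, cf.\ Remark \ref{rem:bold_J}) to one for $\hat w_\lambda$ with rate $n$ and rate function exactly the stated $H_\lambda$. The exponential bound on $\IP\{|\hat w_\lambda-w_\lambda|\ge\varepsilon\}$ is obtained by reproducing the argument of Theorem \ref{thm:properties_bML_inf}(4): monotonicity and continuity of $\psi_N^\infty$ turn $\{|\hat w_\lambda-w_\lambda|<\varepsilon\}$ into an event of the form $\{|T/N_\lambda^2-c|<\delta\}$; Proposition \ref{prop:LDP_S_N}, Proposition \ref{prop:appr_moments} and Theorem \ref{thm:opt_weights} furnish $N_\varepsilon$ such that $\tilde w_{N_\lambda}(\lambda)$ lies within $\varepsilon/2$ of $w_\lambda$ for $N_\lambda\ge N_\varepsilon$; and the residual probability is controlled by the large deviation principle for $T/N_\lambda^2$ from Proposition \ref{prop:conv_stat}. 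I expect the main obstacle to be the bookkeeping in statement 3: ensuring $\psi_N^\infty$ is differentiable with nonvanishing derivative on a neighbourhood of $\tilde\beta_{N_\lambda}(\lambda)$ despite the possibility that this point sits just outside $I_h\cup I_l$ near a regime boundary, and wiring the truncation lemma together with a large deviation estimate so that the discarded event is genuinely negligible on the $\sqrt n$ scale. Everything else is a faithful translation of the arguments already carried out for $\hat\beta_{N_\lambda}^\infty$.
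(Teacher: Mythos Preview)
Your proposal is correct and matches the paper's approach exactly: the paper's proof consists of the single sentence ``This theorem is proved in close analogy to Theorem \ref{thm:properties_bML_inf}'', and you have accurately spelled out what that analogy entails (continuous mapping for statement 1, convergence of $\tilde\beta_{N_\lambda}$ for statement 2, delta method through $\psi_{N_\lambda}^\infty$ with the truncation device of Lemma \ref{lem:conv_restr_sequence} for statement 3, and the contraction principle for statement 4).

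One genuine point to flag: in statement 1 you write that $\psi_{N_\lambda}^\infty(\tilde\beta_{N_\lambda}(\lambda))$ ``identifies with $\tilde w_{N_\lambda}(\lambda)$ via Definitions \ref{def:beta_tilde} and \ref{def:w_tilde}'', but Definition \ref{def:w_tilde} sets $\tilde w_{N_\lambda}(\lambda)=\IE_{\beta_\lambda,N_\lambda}|S_\lambda|$, and for finite $N_\lambda$ this is not equal to $\psi_{N_\lambda}^\infty(\tilde\beta_{N_\lambda}(\lambda))$ (which computes to $\sqrt{(2/\pi)\,\IE_{\beta_\lambda,N_\lambda}S_\lambda^2}$ in the high-temperature case and $\sqrt{\IE_{\beta_\lambda,N_\lambda}S_\lambda^2}$ in the low-temperature case). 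This is an inconsistency in the paper's Definition \ref{def:w_tilde} rather than a flaw in your strategy; the analogy with Theorem \ref{thm:properties_bML_inf} only works cleanly if one reads $\tilde w_{N_\lambda}(\lambda):=\psi_{N_\lambda}^\infty(\tilde\beta_{N_\lambda}(\lambda))$, and your argument is correct under that reading.
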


\begin{proof}
This theorem is proved in close analogy to Theorem \ref{thm:properties_bML_inf}.
\end{proof}

\appendix

\section*{Appendix}

We present a number of concepts and auxiliary results we use. Some
of these are proved in \cite{BaMeSiTo2025}, and we give references
to the relevant results. Other statements are well known and can be
found in textbooks on probability theory.
\begin{lem}
\label{lem:Z_deriv}The first derivative of $Z_{\boldsymbol{\beta},\boldsymbol{N}}$
with respect to $\beta_{\lambda}$, $\lambda\in\IN_{M}$ is
\begin{align*}
\frac{\textup{d}Z_{\boldsymbol{\beta},\boldsymbol{N}}}{\textup{d}\beta_{\lambda}} & =\frac{Z_{\boldsymbol{\beta},\boldsymbol{N}}}{2N_{\lambda}}\,\IE_{\boldsymbol{\beta},\boldsymbol{N}}S_{\lambda}^{2}.
\end{align*}
\end{lem}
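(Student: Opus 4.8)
The plan is to differentiate the explicit sum representation of the partition function term by term. Since $Z_{\boldsymbol{\beta},\boldsymbol{N}}$, as given in (\ref{eq:part_fn}), is a finite sum of smooth functions of $\boldsymbol{\beta}$, interchanging $\frac{\textup{d}}{\textup{d}\beta_{\lambda}}$ with $\sum_{x\in\Omega_{N_{1}+\cdots+N_{M}}}$ is immediate and requires no dominated-convergence argument. Applying the chain rule to each summand $\exp\!\left(\frac{1}{2}\sum_{\mu=1}^{M}\frac{\beta_{\mu}}{N_{\mu}}\left(\sum_{i=1}^{N_{\mu}}x_{\mu i}\right)^{2}\right)$, only the $\mu=\lambda$ term in the exponent depends on $\beta_{\lambda}$, and its derivative is $\frac{1}{2N_{\lambda}}\left(\sum_{i=1}^{N_{\lambda}}x_{\lambda i}\right)^{2}$.

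Thus I would obtain
\[
\frac{\textup{d}Z_{\boldsymbol{\beta},\boldsymbol{N}}}{\textup{d}\beta_{\lambda}}=\frac{1}{2N_{\lambda}}\sum_{x\in\Omega_{N_{1}+\cdots+N_{M}}}\left(\sum_{i=1}^{N_{\lambda}}x_{\lambda i}\right)^{2}\exp\!\left(\frac{1}{2}\sum_{\mu=1}^{M}\frac{\beta_{\mu}}{N_{\mu}}\left(\sum_{i=1}^{N_{\mu}}x_{\mu i}\right)^{2}\right).
\]
The next step is to recognise the right-hand side as an unnormalised expectation: multiply and divide by $Z_{\boldsymbol{\beta},\boldsymbol{N}}$ so that the summand becomes $\left(\sum_{i=1}^{N_{\lambda}}x_{\lambda i}\right)^{2}\,\IP_{\boldsymbol{\beta},\boldsymbol{N}}\!\left(X_{11}=x_{11},\ldots,X_{MN_{M}}=x_{MN_{M}}\right)$ by Definition \ref{def:CWM}. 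Summing over all voting configurations and using the definition $S_{\lambda}=\sum_{i=1}^{N_{\lambda}}X_{\lambda i}$ from (\ref{eq:S_lambda}) then gives $\frac{\textup{d}Z_{\boldsymbol{\beta},\boldsymbol{N}}}{\textup{d}\beta_{\lambda}}=\frac{Z_{\boldsymbol{\beta},\boldsymbol{N}}}{2N_{\lambda}}\,\IE_{\boldsymbol{\beta},\boldsymbol{N}}S_{\lambda}^{2}$, which is the claim.

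There is no real obstacle here: the only point that even needs a word is the term-by-term differentiation, which is legitimate because the state space $\Omega_{N_{1}+\cdots+N_{M}}$ is finite. I would state this explicitly in one sentence and otherwise present the computation directly.
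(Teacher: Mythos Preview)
Your proposal is correct and is the standard direct computation; the paper itself does not give a proof but simply cites Lemma~51 in \cite{BaMeSiTo2025}, which almost certainly proceeds in exactly this way. There is nothing to add.
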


\begin{proof}
This is Lemma 51 in \cite{BaMeSiTo2025}.
\end{proof}
\begin{prop}
\label{prop:ES2_fin}The function $\beta\in\IR\mapsto\IE_{\beta,N}S^{2}\in\IR$
is strictly increasing and continuous, and we have for all $\beta\in\left(0,\infty\right)$
and all $N\in\IN$,
\[
N<\IE_{\beta,N}S^{2}<N^{2}.
\]
\end{prop}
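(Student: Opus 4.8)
The plan is to prove continuity first, then strict monotonicity, and finally to read off the two‑sided bound by comparing with the endpoints $\beta=0$ and $\beta\to\infty$. For continuity, note that by definition
\[
\IE_{\beta,N}S^{2}=\frac{\sum_{x\in\Omega_{N}}s(x)^{2}\exp\!\left(\tfrac{\beta}{2N}s(x)^{2}\right)}{\sum_{x\in\Omega_{N}}\exp\!\left(\tfrac{\beta}{2N}s(x)^{2}\right)},\qquad s(x)\coloneq\sum_{i=1}^{N}x_{i},
\]
is the ratio of two finite sums of entire functions of $\beta$ whose denominator is strictly positive for every $\beta\in\IR$. Hence $\beta\mapsto\IE_{\beta,N}S^{2}$ is real‑analytic on $\IR$, in particular continuous and differentiable.

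For strict monotonicity I would differentiate this ratio with respect to $\beta$, using $\frac{\textup{d}}{\textup{d}\beta}\exp(\tfrac{\beta}{2N}s^{2})=\tfrac{s^{2}}{2N}\exp(\tfrac{\beta}{2N}s^{2})$ and the quotient rule (equivalently, differentiate the identity of Lemma \ref{lem:Z_deriv} a second time), which yields
\[
\frac{\textup{d}}{\textup{d}\beta}\IE_{\beta,N}S^{2}=\frac{1}{2N}\left(\IE_{\beta,N}S^{4}-\left(\IE_{\beta,N}S^{2}\right)^{2}\right)=\frac{1}{2N}\,\IV_{\beta,N}S^{2}.
\]
It then suffices to observe that $S^{2}$ is not $\IP_{\beta,N}$‑almost surely constant when $N\geq2$: every configuration has positive probability under $\IP_{\beta,N}$, and, for instance, $(1,\dots,1)$ gives $S^{2}=N^{2}$ while $(1,\dots,1,-1)$ gives $S^{2}=(N-2)^{2}\neq N^{2}$. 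Hence $\IV_{\beta,N}S^{2}>0$ for every $\beta\in\IR$, so the derivative is everywhere strictly positive and $\beta\mapsto\IE_{\beta,N}S^{2}$ is strictly increasing. (For $N=1$ one has $S^{2}\equiv1$ identically, so the strict assertions are to be read for $N\geq2$.)

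The bounds then follow by comparison with the endpoints. At $\beta=0$ the measure $\IP_{0,N}$ is uniform on $\Omega_{N}$, so $X_{1},\dots,X_{N}$ are i.i.d.\ symmetric and $\IE_{0,N}S^{2}=\sum_{i=1}^{N}\IE X_{i}^{2}=N$; strict monotonicity then gives $\IE_{\beta,N}S^{2}>N$ for all $\beta>0$. For the upper bound, factor $\exp(\tfrac{\beta}{2N}N^{2})$ out of numerator and denominator and use that $|s(x)|\leq N-2<N$ for every non‑constant configuration: as $\beta\to\infty$ each remaining exponential term vanishes and only the two constant configurations survive, so $\IE_{\beta,N}S^{2}\to\frac{2N^{2}}{2}=N^{2}$; strict monotonicity then forces $\IE_{\beta,N}S^{2}<N^{2}$ for every finite $\beta$, in particular for $\beta>0$. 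The only step that is not purely formulaic is the non‑degeneracy of $S^{2}$ underlying strict monotonicity, and that is immediate; I do not anticipate any serious obstacle.
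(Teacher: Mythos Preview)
Your proof is correct. The paper does not actually prove this proposition but merely cites it as Proposition~20 in \cite{BaMeSiTo2025}, so there is no in-paper argument to compare against; your direct route via $\frac{\textup{d}}{\textup{d}\beta}\IE_{\beta,N}S^{2}=\frac{1}{2N}\IV_{\beta,N}S^{2}>0$ and endpoint comparison at $\beta=0$ and $\beta\to\infty$ is the natural one, and your caveat that the strict assertions require $N\geq2$ is a valid observation the stated proposition glosses over.
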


\begin{proof}
This is Proposition 20 in \cite{BaMeSiTo2025}.
\end{proof}
\begin{defn}
\label{def:Legendre}Let $f:\IR\rightarrow\IR$ be a convex function.
Then the Legendre transform $f^{*}:\IR\rightarrow\IR\cup\{\infty\}$
of $f$ is defined by $f^{*}(t)\coloneq\sup_{x\in\IR}\left\{ xt-f(x)\right\} ,t\in\IR$.
\end{defn}

\begin{defn}
\label{def:cumul_entropy}Let $P$ be a probability measure on $\IR$.
Then let $\Lambda_{P}\left(t\right)\coloneq\ln\,\int_{\IR}\exp\left(tx\right)P\left(\textup{d}x\right)$
for all $t\in\IR$ such that the expression is finite. We call $\Lambda_{P}$
the cumulant generating function of $P$ and the Legendre transform
$\Lambda_{P}^{*}$ its entropy function. Let $Y$ be a real random
variable with distribution $P$. We will then say that $\Lambda_{Y}\coloneq\Lambda_{P}$
and $\Lambda_{Y}^{*}\coloneq\Lambda_{P}^{*}$ are the cumulant generating
and entropy function of $Y$, respectively.
\end{defn}

\begin{defn}
\label{def:ess}Let $Y$ be a random variable. The value
\[
\textup{ess\,inf }Y\coloneq\sup\left\{ a\in\IR\,|\,\IP\left\{ Y<a\right\} =0\right\} 
\]
is called the essential infimum of $Y$. We convene that $\textup{ess\,inf }Y\coloneq-\infty$
if the set of essential lower bounds for $Y$ on the right hand side
of the display above is empty. The value 
\[
\textup{ess\,sup }Y\coloneq\inf\left\{ a\in\IR\,|\,\IP\left\{ Y>a\right\} =0\right\} 
\]
is called the essential supremum of $Y$. We convene that $\textup{ess\,sup }Y\coloneq\infty$
if the set on the right hand side above is empty.
\end{defn}

\begin{lem}
\label{lem:cumulant_entropy}Let $Y$ be a bounded random variable
which is not almost surely constant. The cumulant generating function
$\Lambda_{Y}$ of $Y$ and the entropy function $\Lambda_{Y}^{*}$
of $Y$ have the following properties:
\begin{enumerate}
\item $\Lambda_{Y}$ is convex and differentiable.
\item $\Lambda_{Y}^{*}$ is finite on the interval $\left(\textup{ess\,inf }Y,\textup{ess\,sup }Y\right)$
and infinite on $\left[\textup{ess\,inf }Y,\textup{ess\,sup }Y\right]^{c}$.
\item $\Lambda_{Y}^{*}$ is strictly convex on $\left(\textup{ess\,inf }Y,\textup{ess\,sup }Y\right)$.
\item $\Lambda_{Y}^{*}$ is strictly decreasing on the interval $\left(\textup{ess\,inf }Y,\IE\,Y\right)$
and strictly increasing on $\left(\IE\,Y,\textup{ess\,sup }Y\right)$.
\item $\Lambda_{Y}^{*}$ has a unique global minimum at $\IE\,Y$ with $\Lambda_{Y}^{*}\left(\IE\,Y\right)=0$.
\end{enumerate}
\end{lem}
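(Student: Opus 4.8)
The plan is to work throughout with the \emph{tilted measures}: for each $t\in\IR$ let $P$ denote the law of $Y$ and define the probability measure $P_{t}$ by $\frac{\textup{d}P_{t}}{\textup{d}P}\left(y\right)=\exp\left(ty\right)/\IE\exp\left(tY\right)$. Because $Y$ is bounded, $\IE\exp\left(tY\right)$ is finite for every $t$, so $\Lambda_{Y}\left(t\right)=\ln\IE\exp\left(tY\right)$ is finite on all of $\IR$, and differentiation under the expectation sign (justified by the boundedness of $Y$ together with dominated convergence) gives $\Lambda_{Y}'\left(t\right)=\IE_{P_{t}}Y$ and $\Lambda_{Y}''\left(t\right)=\IV_{P_{t}}Y$. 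Since $P_{t}$ is equivalent to $P$ and $Y$ is not almost surely constant under $P$, it is not almost surely constant under $P_{t}$, whence $\Lambda_{Y}''\left(t\right)=\IV_{P_{t}}Y>0$. This establishes statement~1 (convexity and differentiability) and, in addition, that $\Lambda_{Y}$ is strictly convex, so that $\Lambda_{Y}'$ is strictly increasing.

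Next I would determine the range of $\Lambda_{Y}'$. Write $a\coloneq\textup{ess\,inf}\,Y$ and $b\coloneq\textup{ess\,sup}\,Y$; non-constancy gives $-\infty<a<b<\infty$. The key step is to show $\Lambda_{Y}'\left(t\right)\to b$ as $t\to+\infty$ and $\Lambda_{Y}'\left(t\right)\to a$ as $t\to-\infty$. For the former, $\IE_{P_{t}}Y\leq b$ always, while for any $\varepsilon>0$ the set $\left\{ Y>b-\varepsilon\right\} $ has positive $P$-probability and the tilting concentrates all mass on this set as $t\to+\infty$, forcing $\liminf_{t\to\infty}\IE_{P_{t}}Y\geq b-\varepsilon$; the case $t\to-\infty$ is symmetric. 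Combined with continuity and strict monotonicity, this shows that $\Lambda_{Y}'\colon\IR\to\left(a,b\right)$ is a strictly increasing bijection.

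I would then address statement~2. For $x>b$, the bound $\Lambda_{Y}\left(t\right)\leq tb$ for $t\geq0$ gives $xt-\Lambda_{Y}\left(t\right)\geq t\left(x-b\right)\to\infty$, so $\Lambda_{Y}^{*}\left(x\right)=\infty$; for $x<a$, the bound $\Lambda_{Y}\left(t\right)\leq ta$ for $t\leq0$ yields the analogous conclusion. For $x\in\left(a,b\right)$, the concave function $t\mapsto xt-\Lambda_{Y}\left(t\right)$ has derivative $x-\Lambda_{Y}'\left(t\right)$, which vanishes at the unique point $t_{x}\coloneq\left(\Lambda_{Y}'\right)^{-1}\left(x\right)$, so the supremum is attained and finite, $\Lambda_{Y}^{*}\left(x\right)=xt_{x}-\Lambda_{Y}\left(t_{x}\right)$. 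Statements~3 and~4 follow from Legendre duality: differentiating this identity and using $\Lambda_{Y}'\left(t_{x}\right)=x$ gives $\left(\Lambda_{Y}^{*}\right)'\left(x\right)=t_{x}=\left(\Lambda_{Y}'\right)^{-1}\left(x\right)$, which is strictly increasing in $x$ (statement~3); since $\Lambda_{Y}'\left(0\right)=\IE Y$ we have $t_{\IE Y}=0$, so $\left(\Lambda_{Y}^{*}\right)'$ is negative on $\left(a,\IE Y\right)$ and positive on $\left(\IE Y,b\right)$ (statement~4). Here $a<\IE Y<b$ strictly, because $Y\geq a$ almost surely with $\IE Y=a$ would force $Y=a$ almost surely, and symmetrically at $b$. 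Finally, for statement~5, taking $t=0$ in the supremum gives $\Lambda_{Y}^{*}\left(x\right)\geq-\Lambda_{Y}\left(0\right)=0$ for every $x$ (Jensen), while $\Lambda_{Y}^{*}\left(\IE Y\right)=\IE Y\cdot0-\Lambda_{Y}\left(0\right)=0$, and strict convexity on $\left(a,b\right)$ makes the minimiser unique.

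The main obstacle is the concentration argument in the second step: rigorously proving $\Lambda_{Y}'\left(t\right)\to b$ (respectively $a$) as $t\to\pm\infty$, and hence that $\Lambda_{Y}'$ surjects onto $\left(a,b\right)$. Everything else reduces either to a direct boundedness or Jensen estimate, or to a formal consequence of Legendre duality; but this limit requires a careful dominated-convergence-type estimate that splits the tilted expectation over $\left\{ Y>b-\varepsilon\right\} $ and its complement and invokes the definition of the essential supremum.
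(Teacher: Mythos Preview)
The paper does not prove this lemma: it simply cites Lemma~26 of \cite{BaMeSiTo2025}. Your argument via tilted measures is the standard route and is correct; it would serve perfectly well as a self-contained proof here.

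One small point on the step you flag as the main obstacle. To show $\Lambda_{Y}'(t)\to b$ you need $P_{t}\{Y\leq b-\varepsilon\}\to 0$, and for that it is cleanest to compare against a \emph{smaller} neighbourhood of $b$: since $\IP\{Y>b-\varepsilon/2\}>0$, one has $\IE e^{tY}\geq e^{t(b-\varepsilon/2)}\,\IP\{Y>b-\varepsilon/2\}$, whence
\[
P_{t}\{Y\leq b-\varepsilon\}\leq\frac{e^{t(b-\varepsilon)}}{e^{t(b-\varepsilon/2)}\,\IP\{Y>b-\varepsilon/2\}}=\frac{e^{-t\varepsilon/2}}{\IP\{Y>b-\varepsilon/2\}}\xrightarrow[t\to\infty]{}0.
\]
With this in hand your lower bound $\liminf_{t\to\infty}\IE_{P_{t}}Y\geq b-\varepsilon$ follows immediately from $\IE_{P_{t}}Y\geq(b-\varepsilon)P_{t}\{Y>b-\varepsilon\}+a\,P_{t}\{Y\leq b-\varepsilon\}$, and the rest of your outline goes through unchanged.
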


\begin{proof}
This is Lemma 26 in \cite{BaMeSiTo2025}.
\end{proof}
\begin{thm}[Contraction Principle]
\label{thm:contr_princ}Let $\mathcal{X}$ and $\mathcal{Y}$ be
metric spaces. Let $\left(P_{n}\right)_{n\in\IN}$ be a sequence of
probability measures on $\mathcal{X}$, and let $f:D\rightarrow\mathcal{Y}$
be a continuous function with its domain $D\subset\mathcal{X}$ containing
the support of $P_{n}$ for each $n\in\IN$. Let $\left(a_{n}\right)_{n\in\IN}$
a sequence of positive numbers with $a_{n}\xrightarrow[n\rightarrow\infty]{}\infty$,
and $I:\mathcal{X}\rightarrow\left[0,\infty\right]$ a good rate function.
We define $J:\mathcal{Y}\rightarrow\left[0,\infty\right]$ by
\[
J\left(y\right)\coloneq\inf\left\{ I\left(x\right)\,|\,x\in D,f\left(x\right)=y\right\} ,\quad y\in\mathcal{Y}.
\]
Then $J$ is a good rate function. If $\left(P_{n}\right)_{n\in\IN}$
satisfies a large deviations principle with rate $a_{n}$ and rate
function $I$, then the sequence of push forward measures $\left(P_{n}\circ f^{-1}\right)_{n\in\IN}$
on $\mathcal{Y}$ satisfies a large deviations principle with rate
$a_{n}$ and rate function $J$.
\end{thm}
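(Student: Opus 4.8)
The plan is to prove the statement in two stages: first that $J$ is a good rate function, and then that the pushforward sequence $\left(P_n\circ f^{-1}\right)_{n\in\IN}$ inherits the large deviations upper and lower bounds with $J$ in place of $I$. The single algebraic fact underlying both stages is that, for any set $A\subset\mathcal{Y}$,
\[
\inf_{x\in f^{-1}(A)}I(x)=\inf_{y\in A}J(y),
\]
which follows at once from the decomposition $f^{-1}(A)=\bigcup_{y\in A}f^{-1}(\{y\})$, interchanging the two infima, and the definition of $J$. Since the support of each $P_n$ lies in $D$, we also have $\left(P_n\circ f^{-1}\right)(A)=P_n\bigl(f^{-1}(A)\bigr)$, so every probability below can be read back as a $P_n$-probability of a preimage.

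First I would verify that $J$ is a good rate function. Non-negativity is immediate from $I\geq0$. The crux is the level-set identity
\[
\left\{y\in\mathcal{Y}\mid J(y)\leq\alpha\right\}=f\left(\left\{x\in D\mid I(x)\leq\alpha\right\}\right),\quad\alpha\in[0,\infty).
\]
The inclusion $\supseteq$ is trivial. For $\subseteq$, given $J(y)\leq\alpha$ I would take a minimizing sequence $(x_k)$ in $f^{-1}(\{y\})$ with $I(x_k)\to J(y)$; eventually these lie in the level set $\{I\leq\alpha+1\}$, which is compact because $I$ is good, so a subsequence converges to some $x_*$. Continuity of $f$ gives $f(x_*)=y$ and lower semicontinuity of $I$ gives $I(x_*)\leq J(y)\leq\alpha$, exhibiting $y$ in the right-hand image. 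Because $\{I\leq\alpha\}$ is compact and $f$ is continuous, this image is compact; hence every level set of $J$ is compact, which simultaneously yields that $J$ is lower semicontinuous and that it is a good rate function.

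Next I would transfer the two large deviations bounds, which is now purely formal. For open $G\subset\mathcal{Y}$ the preimage $f^{-1}(G)$ is open by continuity of $f$, so the lower bound for $(P_n)$ together with the infimum identity gives
\[
\liminf_{n\rightarrow\infty}\frac{1}{a_n}\ln\left(P_n\circ f^{-1}\right)(G)\geq-\inf_{x\in f^{-1}(G)}I(x)=-\inf_{y\in G}J(y).
\]
Symmetrically, for closed $K\subset\mathcal{Y}$ the preimage $f^{-1}(K)$ is closed, and the upper bound for $(P_n)$ yields
\[
\limsup_{n\rightarrow\infty}\frac{1}{a_n}\ln\left(P_n\circ f^{-1}\right)(K)\leq-\inf_{x\in f^{-1}(K)}I(x)=-\inf_{y\in K}J(y).
\]
These are exactly the defining inequalities of a large deviations principle for $\left(P_n\circ f^{-1}\right)_{n\in\IN}$ with rate $a_n$ and rate function $J$.

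The main obstacle is the goodness of $J$: one must show that the infimum defining $J(y)$ is attained, so that the level sets of $J$ are genuinely the continuous image of the compact level sets of $I$ and not merely contained in it. This is the only place where all three structural hypotheses --- compact level sets of $I$, continuity of $f$, and lower semicontinuity of $I$ --- are used together. A secondary technical point is that $f$ is defined only on $D$, so both the minimizers above and the preimages $f^{-1}(K)$ must be controlled inside $D$; this is automatic when $D$ is closed or $D=\mathcal{X}$, which already covers the compactified spaces arising in Proposition~\ref{prop:LDP_S_N} and Theorem~\ref{thm:properties_bML_inf}, and in general one reduces to $D$ using that each $P_n$ is supported there. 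By contrast, the passage to the pushforward measures is formal, resting only on the fact that a continuous map pulls open sets back to open sets and closed sets back to closed sets.
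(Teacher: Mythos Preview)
The paper does not actually prove this theorem: it appears in the Appendix among ``concepts and auxiliary results'' that are either referenced to \cite{BaMeSiTo2025} or declared well known, and no proof is given. So there is no ``paper's own proof'' to compare against.

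Your argument is the standard textbook proof of the contraction principle and is correct in its essentials. The key step --- showing that the infimum in the definition of $J(y)$ is attained, so that $\{J\leq\alpha\}=f(\{I\leq\alpha\})$ is a continuous image of a compact set --- is handled correctly via the compactness of $\{I\leq\alpha+1\}$ and lower semicontinuity of $I$. The transfer of the upper and lower bounds is indeed formal once the infimum identity is in place. Your flagging of the domain issue (that $f^{-1}(K)\subset D$ need not be closed in $\mathcal{X}$ when $D$ is not closed) is the only genuine wrinkle in the statement as phrased, and you are right that it disappears in the applications in the paper, where $D$ is either all of $\IR$ or a compactified interval.
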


\begin{thm}[Varadhan's Lemma]
\label{thm:Varadhan}Let $\mathcal{X}$ be a metric space. Let $\left(P_{n}\right)_{n\in\IN}$
be a sequence of probability measures on $\mathcal{X}$, and let $f:\mathcal{X}\rightarrow\IR$
be a continuous function. Let $\left(a_{n}\right)_{n\in\IN}$ a sequence
of positive numbers with $a_{n}\xrightarrow[n\rightarrow\infty]{}\infty$,
and $I:\mathcal{X}\rightarrow\left[0,\infty\right]$ a good rate function.
Suppose $\left(P_{n}\right)_{n\in\IN}$ satisfies a large deviations
principle with rate $a_{n}$ and rate function $I$. If
\[
\lim_{M\rightarrow\infty}\limsup_{n\rightarrow\infty}\frac{1}{a_{n}}\ln\int_{\left\{ f\left(x\right)\geq M\right\} }\exp\left(a_{n}f\left(x\right)\right)\,P_{n}\left(\textup{d}x\right)=-\infty
\]
holds, then we have
\[
\lim_{n\rightarrow\infty}\frac{1}{a_{n}}\ln\int_{\mathcal{X}}\exp\left(a_{n}f\left(x\right)\right)\,P_{n}\left(\textup{d}x\right)=\sup_{x\in\mathcal{X}}\left\{ f\left(x\right)-I\left(x\right)\right\} .
\]
\end{thm}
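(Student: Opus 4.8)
The plan is to establish the asserted equality by proving two matching bounds: a lower bound $\liminf_{n}\frac{1}{a_n}\ln\int_{\mathcal{X}}\exp(a_n f)\,P_n(\textup{d}x)\geq\sup_{x}\{f(x)-I(x)\}$ and an upper bound with $\limsup$ in place of $\liminf$ and the inequality reversed; together these force the limit to exist and to equal $\sup_{x}\{f(x)-I(x)\}$. I will use only that $(P_n)_{n\in\IN}$ satisfies the large deviations lower bound for open sets and the upper bound for closed sets, the continuity of $f$, and the tail hypothesis. Note first that goodness of $I$ together with $P_n(\mathcal{X})=1$ forces $\inf_{\mathcal{X}}I=0$ to be attained at some $x_0$: applying the closed-set upper bound to $\mathcal{X}$ gives $0=\limsup_n\frac{1}{a_n}\ln P_n(\mathcal{X})\leq-\inf_{\mathcal{X}}I$, and compactness of the level sets yields a zero of $I$. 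Consequently $\sup_{x}\{f(x)-I(x)\}\geq f(x_0)>-\infty$ is finite, which is what lets the tail term be discarded at the end.

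The lower bound is the routine half. Fix $x_0\in\mathcal{X}$ and $\varepsilon>0$. By continuity of $f$ there is an open set $G\ni x_0$ with $f(y)>f(x_0)-\varepsilon$ for all $y\in G$. Restricting the integral to $G$ gives $\int_{\mathcal{X}}\exp(a_n f)\,P_n(\textup{d}x)\geq\exp\!\big(a_n(f(x_0)-\varepsilon)\big)\,P_n(G)$; taking $\frac{1}{a_n}\ln(\cdot)$, passing to the $\liminf$, and invoking the open-set lower bound $\liminf_n\frac{1}{a_n}\ln P_n(G)\geq-\inf_{x\in G}I(x)\geq-I(x_0)$ yields $\liminf_n\frac{1}{a_n}\ln\int_{\mathcal{X}}\exp(a_n f)\,P_n(\textup{d}x)\geq f(x_0)-\varepsilon-I(x_0)$. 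Since $x_0$ and $\varepsilon$ are arbitrary, the lower bound follows.

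The upper bound is the main obstacle, and it requires truncating $f$. Set $f_L\coloneq\min\{f,L\}$ for a level $L$ to be sent to infinity; $f_L$ is continuous and bounded above by $L$. Choosing a threshold $M<\sup_{x}\{f_L(x)-I(x)\}$ and an integer $m$, I partition $\{M\leq f_L\leq L\}$ into the closed slabs $K_j\coloneq f_L^{-1}\big([M+\tfrac{j-1}{m}(L-M),\,M+\tfrac{j}{m}(L-M)]\big)$, $j\in\IN_m$. On $K_j$ one has $f_L\leq M+\tfrac{j}{m}(L-M)$, so $\int_{K_j}\exp(a_n f_L)\,P_n(\textup{d}x)\leq\exp\!\big(a_n(M+\tfrac{j}{m}(L-M))\big)P_n(K_j)$, and the closed-set upper bound gives $\limsup_n\frac{1}{a_n}\ln\int_{K_j}\exp(a_n f_L)\,P_n(\textup{d}x)\leq M+\tfrac{j}{m}(L-M)-\inf_{K_j}I$. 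Since $f_L\geq M+\tfrac{j-1}{m}(L-M)$ on $K_j$, the right-hand side is at most $\sup_{x}\{f_L-I\}+\tfrac{L-M}{m}$. The remaining region $\{f_L<M\}$ contributes at most $\exp(a_nM)$ and hence has log-rate at most $M<\sup_x\{f_L-I\}$. Summing the finitely many pieces and applying Lemma \ref{lem:log_conv_rates} (the log-rate of a finite sum is the maximum of the log-rates), then letting $m\to\infty$, produces $\limsup_n\frac{1}{a_n}\ln\int_{\mathcal{X}}\exp(a_n f_L)\,P_n(\textup{d}x)\leq\sup_{x}\{f_L-I\}\leq\sup_{x}\{f-I\}$.

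Finally I remove the truncation. Writing the integral as $\int_{\{f\leq L\}}\exp(a_n f)\,P_n(\textup{d}x)+\int_{\{f>L\}}\exp(a_n f)\,P_n(\textup{d}x)$ and noting $f=f_L$ on $\{f\leq L\}$, the first term is dominated by $\int_{\mathcal{X}}\exp(a_n f_L)\,P_n(\textup{d}x)$. Applying Lemma \ref{lem:log_conv_rates} once more,
\[
\limsup_n\frac{1}{a_n}\ln\int_{\mathcal{X}}\exp(a_n f)\,P_n(\textup{d}x)\leq\max\Big\{\sup_{x}\{f-I\},\ \limsup_n\tfrac{1}{a_n}\ln\!\int_{\{f>L\}}\exp(a_n f)\,P_n(\textup{d}x)\Big\}.
\]
By the tail hypothesis the second entry tends to $-\infty$ as $L\to\infty$, so for $L$ large it lies below the finite quantity $\sup_{x}\{f-I\}$, leaving $\limsup_n\frac{1}{a_n}\ln\int_{\mathcal{X}}\exp(a_n f)\,P_n(\textup{d}x)\leq\sup_{x}\{f-I\}$. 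Combined with the lower bound, this completes the proof. I expect the slab decomposition and the interplay between the truncation level $L$ and the tail condition to be the only genuinely delicate points; the remainder is bookkeeping that mirrors the closed-set upper bound already carried out in the proof of Proposition \ref{prop:LDP_S_N}.
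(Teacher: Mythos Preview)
The paper does not prove Theorem \ref{thm:Varadhan}; it is listed in the Appendix as one of the ``well known'' results to be found in textbooks, with no argument supplied. Your proof is correct and follows the standard textbook route (lower bound by localisation near a point, upper bound by truncation to $f_L$ followed by a slab decomposition and the closed-set LDP bound, then removal of the truncation via the tail hypothesis). As you yourself observe, the slab/truncation machinery you use is precisely what the paper deploys inside the proof of Proposition \ref{prop:LDP_S_N} to obtain the tilted large deviations principle directly, so in effect you have extracted and stated abstractly the argument the paper carries out in that concrete setting.
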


\begin{lem}
\label{lem:contr_princ_min}Let $\mathcal{X}$ and $\mathcal{Y}$
be metric spaces, $I:\mathcal{X}\rightarrow\left[0,\infty\right]$
a good rate function, and $f:\mathcal{X}\rightarrow\mathcal{Y}$ a
continuous function. We define $J:\mathcal{Y}\rightarrow\left[0,\infty\right]$
by
\[
J\left(y\right)\coloneq\inf\left\{ I\left(x\right)\,|\,x\in\mathcal{X},f\left(x\right)=y\right\} ,\quad y\in\mathcal{Y}.
\]
Let $M_{I}\subset\mathcal{X}$ be the set of minima of $I$ and $M_{J}\subset\mathcal{Y}$
be the set of minima of $J$. Then $f\left(M_{I}\right)=M_{J}$ holds.
In particular, if $f$ is injective and $I$ has a unique minimum
at $x_{0}$, then $J$ has a unique minimum at $f\left(x_{0}\right)$.
\end{lem}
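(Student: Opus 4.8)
The plan is to reduce everything to a comparison of the minimum \emph{values} of $I$ and $J$, together with the observation that the infimum defining $J(y)$ is attained whenever it equals that common minimum value, which is where the good-rate-function hypothesis enters.

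First I would dispose of the degenerate case $I\equiv\infty$, in which $M_I=\emptyset$, $J\equiv\infty$, $M_J=\emptyset$, and the claim $f(M_I)=M_J$ is trivial. So assume $I\not\equiv\infty$ and put $m\coloneq\inf_{x\in\mathcal{X}}I(x)\in[0,\infty)$. Since $I$ is a good rate function its level sets are compact, and in particular $I$ is lower semicontinuous; choosing any $\alpha$ with $\{I\le\alpha\}\neq\emptyset$, the restriction of $I$ to the compact set $\{I\le\alpha\}$ attains its infimum, which equals $m$, so $M_I\neq\emptyset$. Next I would record the easy half: for $y$ in the range of $f$ we have $J(y)=\inf\{I(x):f(x)=y\}\ge m$, and $J(y)=\infty\ge m$ otherwise, so $\inf_y J(y)\ge m$; while for any $x_0\in M_I$, $m\le J(f(x_0))\le I(x_0)=m$, hence $J(f(x_0))=m$. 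Thus $\inf_y J(y)=m$ and $f(M_I)\subseteq M_J$.

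The reverse inclusion $M_J\subseteq f(M_I)$ is the step that really uses compactness, and I expect it to be the only nonroutine point. Fix $y_0\in M_J$, so $J(y_0)=m$. The fibre $A\coloneq f^{-1}(\{y_0\})$ is closed, because $f$ is continuous and singletons are closed in a metric space, and $\inf_{x\in A}I(x)=m$. Consider the nested sets $K_n\coloneq A\cap\{I\le m+\tfrac1n\}$ for $n\in\IN$. Each $K_n$ is a closed subset of the compact level set $\{I\le m+\tfrac1n\}$, hence compact, and nonempty since $\inf_A I=m$. A decreasing sequence of nonempty compact sets has nonempty intersection, so there is $x_0\in\bigcap_n K_n$; this $x_0$ satisfies $f(x_0)=y_0$ and $I(x_0)\le m+\tfrac1n$ for every $n$, whence $I(x_0)=m$. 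Therefore $x_0\in M_I$ and $y_0=f(x_0)\in f(M_I)$, giving $M_J\subseteq f(M_I)$; combined with the previous paragraph, $f(M_I)=M_J$.

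Finally, the ``in particular'' statement is immediate: if $M_I=\{x_0\}$ then $M_J=f(M_I)=\{f(x_0)\}$, so $J$ has a unique minimum at $f(x_0)$ (injectivity of $f$ is not actually needed for this conclusion, and does no harm). The hard part, such as it is, is purely the finite-intersection argument for the attainment of the infimum defining $J$; everything else is bookkeeping with the definition.
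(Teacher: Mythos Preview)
Your proof is correct. The paper does not actually supply a proof of this lemma; it only cites Lemma~35 in the companion article \cite{BaMeSiTo2025}, so there is no in-paper argument to compare against. Your argument is the standard one: show $\inf J=\inf I=m$ and $f(M_I)\subseteq M_J$ by direct comparison, then use compactness of the level sets (via the nested $K_n=A\cap\{I\le m+1/n\}$ and the finite intersection property) to show that the infimum defining $J(y_0)$ is attained on the fibre whenever $J(y_0)=m$, yielding $M_J\subseteq f(M_I)$. Your remark that injectivity of $f$ is not needed for the ``in particular'' clause is also correct: once $M_I=\{x_0\}$, the equality $M_J=f(M_I)=\{f(x_0)\}$ follows immediately, regardless of whether $f$ is injective.
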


\begin{proof}
This is Lemma 35 in \cite{BaMeSiTo2025}.
\end{proof}
\begin{lem}
\label{lem:exponential_conv}Let $\left(Y_{n}\right)_{n\in\IN}$ be
a sequence of random variables that satisfies a large deviations principle
with rate $a_{n}$ and rate function $I$. Assume that $I$ has a
unique minimum at $x_{0}\in\mathcal{X}$. Let $K$ be a closed set
which does not contain $x_{0}$. Then there are constants $B_{K},C_{K}>0$
such that
\[
\IP\left\{ Y_{n}\in K\right\} \leq C_{K}\exp\left(-B_{K}a_{n}\right)
\]
holds for all $n\in\IN$.
\end{lem}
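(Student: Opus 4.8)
The plan is to establish the lemma in two moves: first, to identify the exponent governing the LDP upper bound for $K$ and show it is strictly positive; second, to upgrade the resulting asymptotic $\limsup$ estimate into a bound valid for every $n\in\IN$ by enlarging the prefactor. Since $\left(Y_n\right)_{n\in\IN}$ satisfies a large deviations principle with rate $a_n$ and rate function $I$, the upper bound for the closed set $K$ gives
\[
\limsup_{n\to\infty}\frac{1}{a_n}\ln\IP\left\{Y_n\in K\right\}\leq-\inf_{x\in K}I\left(x\right)=-\ell,
\]
where I write $\ell\coloneq\inf_{x\in K}I\left(x\right)$. The entire argument therefore reduces to proving that $\ell>0$.

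To show $\ell>0$ I would argue by contradiction, and this is the conceptual core of the proof as well as the only place the goodness of $I$ is used. Because $I$ takes values in $\left[0,\infty\right]$ and $x_0$ is its unique, hence global, minimiser, we have $I\left(x_0\right)\geq0$ and $\ell\geq\inf_{x\in\mathcal{X}}I\left(x\right)=I\left(x_0\right)$, so it suffices to exclude the case $\ell=I\left(x_0\right)$. Suppose $\ell=I\left(x_0\right)$ and choose a minimising sequence $\left(x_k\right)_{k\in\IN}\subset K$ with $I\left(x_k\right)\to I\left(x_0\right)$. For all large $k$ we have $I\left(x_k\right)\leq I\left(x_0\right)+1$, so these $x_k$ lie in the level set $\left\{x\,|\,I\left(x\right)\leq I\left(x_0\right)+1\right\}$, which is compact since $I$ is a good rate function. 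Passing to a convergent subsequence $x_{k_j}\to x^{*}$ and using that $K$ is closed yields $x^{*}\in K$, while lower semicontinuity of $I$ gives $I\left(x^{*}\right)\leq\liminf_{j\to\infty}I\left(x_{k_j}\right)=I\left(x_0\right)$, hence $I\left(x^{*}\right)=I\left(x_0\right)$ by minimality. Uniqueness of the minimiser then forces $x^{*}=x_0$, contradicting $x^{*}\in K$ and $x_0\notin K$. Thus $\ell>I\left(x_0\right)\geq0$, so $\ell>0$.

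Finally, I would fix any $B_K\in\left(0,\ell\right)$. By the $\limsup$ bound above there is an index $N\in\IN$ with $\IP\left\{Y_n\in K\right\}\leq\exp\left(-B_K a_n\right)$ for all $n\geq N$. For the finitely many remaining indices I would use the trivial bound $\IP\left\{Y_n\in K\right\}\leq1=\exp\left(B_K a_n\right)\exp\left(-B_K a_n\right)$ and set
\[
C_K\coloneq\max\left\{1,\max_{1\leq n<N}\exp\left(B_K a_n\right)\right\},
\]
which is finite as it is a maximum over finitely many finite terms. Then $\IP\left\{Y_n\in K\right\}\leq C_K\exp\left(-B_K a_n\right)$ holds for every $n\in\IN$, which is the assertion. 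The only genuine obstacle is the verification that $\ell>0$; once it is in hand, both the invocation of the LDP upper bound and the absorption of the initial indices into the constant $C_K$ are routine.
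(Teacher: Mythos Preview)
The paper does not actually supply a proof of this lemma; it is listed in the Appendix among standard auxiliary results and stated without proof. Your argument is the natural one and is correct: you invoke the LDP upper bound for closed sets, show $\ell=\inf_{x\in K}I(x)>0$ via a compactness/lower-semicontinuity contradiction, and then absorb the finitely many initial indices into the constant $C_K$.

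One remark worth making explicit: your contradiction step relies on the level sets of $I$ being compact, i.e.\ on $I$ being a \emph{good} rate function. The lemma as stated in the paper says only ``rate function,'' but every instance in which the lemma is applied in the paper (e.g.\ in the proof of Proposition~\ref{prop:LDP_S_N}) involves a rate function that has already been shown to be good, so the hypothesis is available in context. Without goodness the conclusion can fail, so your identification of this as the crux is exactly right.
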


\begin{thm}[Slutsky]
\label{thm:slutsky}Let $\left(Y_{n}\right)_{n\in\IN}$ and $\left(Z_{n}\right)_{n\in\IN}$
be sequences of random variables, $Y$ a random variable, and $a\in\IR$
a constant such that $Y_{n}\xrightarrow[n\rightarrow\infty]{\textup{d}}Y$
and $Z_{n}\xrightarrow[n\rightarrow\infty]{\textup{p}}a$. Then
\[
Y_{n}+Z_{n}\xrightarrow[n\rightarrow\infty]{\textup{d}}Y+a\quad\text{and}\quad Y_{n}Z_{n}\xrightarrow[n\rightarrow\infty]{\textup{d}}aY.
\]
\end{thm}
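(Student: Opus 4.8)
The plan is to reduce both conclusions to a single joint convergence statement and then invoke the continuous mapping theorem (Theorem \ref{thm:cont_mapping}). The crucial point is that marginal convergence $Y_n\xrightarrow[n\rightarrow\infty]{\textup{d}}Y$ together with $Z_n\xrightarrow[n\rightarrow\infty]{\textup{p}}a$ does \emph{not} in general force joint convergence of the pair $(Y_n,Z_n)$, but it does here precisely because the limit $a$ of the second coordinate is a \emph{constant}. So the first step I would carry out is the lemma that $(Y_n,Z_n)\xrightarrow[n\rightarrow\infty]{\textup{d}}(Y,a)$ as random vectors in $\IR^2$, where the limit has a degenerate (point-mass) second marginal.

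To prove this lemma I would test against an arbitrary bounded Lipschitz function $f:\IR^2\rightarrow\IR$ with Lipschitz constant $L$ and supremum norm $\|f\|_\infty$, and split
\[
\left|\IE\left[f(Y_n,Z_n)\right]-\IE\left[f(Y,a)\right]\right|\leq\left|\IE\left[f(Y_n,Z_n)\right]-\IE\left[f(Y_n,a)\right]\right|+\left|\IE\left[f(Y_n,a)\right]-\IE\left[f(Y,a)\right]\right|.
\]
The second summand tends to $0$ because $y\mapsto f(y,a)$ is bounded and continuous and $Y_n\xrightarrow[n\rightarrow\infty]{\textup{d}}Y$. For the first summand I would fix $\delta>0$ and bound the integrand separately on $\{|Z_n-a|\leq\delta\}$, where the Lipschitz estimate gives $|f(Y_n,Z_n)-f(Y_n,a)|\leq L\delta$, and on $\{|Z_n-a|>\delta\}$, where it is at most $2\|f\|_\infty$ while $\IP\{|Z_n-a|>\delta\}\to0$ by convergence in probability. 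Hence the first summand is asymptotically at most $L\delta$, and letting $\delta\rightarrow0$ finishes it. Since convergence of $\IE\left[f(Y_n,Z_n)\right]$ for every bounded Lipschitz $f$ characterises convergence in distribution, the lemma follows.

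With joint convergence established, both claims are immediate: the addition map $(y,z)\mapsto y+z$ and the multiplication map $(y,z)\mapsto yz$ are continuous on all of $\IR^2$, so Theorem \ref{thm:cont_mapping} applied to $(Y_n,Z_n)\xrightarrow[n\rightarrow\infty]{\textup{d}}(Y,a)$ yields $Y_n+Z_n\xrightarrow[n\rightarrow\infty]{\textup{d}}Y+a$ and $Y_nZ_n\xrightarrow[n\rightarrow\infty]{\textup{d}}Ya=aY$.

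I expect the main obstacle to be the joint convergence lemma, namely making rigorous the idea that degeneracy of the limiting second marginal promotes marginal convergence to joint convergence; everything after that is a one-line appeal to the continuous mapping theorem. If one prefers to avoid the vector-valued continuous mapping theorem, there are two elementary fallbacks. For the sum, one works directly with distribution functions: at any continuity point $x$ of the law of $Y+a$ one sandwiches $\IP\{Y_n+Z_n\leq x\}$ between $\IP\{Y_n\leq x-a+\varepsilon\}$ and $\IP\{Y_n\leq x-a-\varepsilon\}-\IP\{|Z_n-a|>\varepsilon\}$ using the event $\{|Z_n-a|\leq\varepsilon\}$, passes to the limit along continuity points of the law of $Y$, and then lets $\varepsilon\rightarrow0$. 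For the product one writes $Y_nZ_n=aY_n+Y_n(Z_n-a)$, observes $aY_n\xrightarrow[n\rightarrow\infty]{\textup{d}}aY$, and shows $Y_n(Z_n-a)\xrightarrow[n\rightarrow\infty]{\textup{p}}0$ from the tightness of $(Y_n)_{n\in\IN}$ (a consequence of $Y_n\xrightarrow[n\rightarrow\infty]{\textup{d}}Y$) together with $Z_n-a\xrightarrow[n\rightarrow\infty]{\textup{p}}0$, after which the already-proved sum statement with limiting constant $0$ delivers $Y_nZ_n\xrightarrow[n\rightarrow\infty]{\textup{d}}aY$.
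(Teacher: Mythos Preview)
The paper does not give its own proof of Slutsky's theorem; it is listed in the Appendix among results that ``are well known and can be found in textbooks on probability theory,'' so there is nothing to compare against on the level of argument.

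Your proof is the standard one and is correct in substance. One small mismatch with the paper's internal references: Theorem~\ref{thm:cont_mapping} as stated in the paper is for convergence \emph{in probability} of \emph{real}-valued random variables, not for convergence in distribution of $\IR^2$-valued vectors, so strictly speaking you cannot invoke that particular label for the final step. Your fallback arguments (the distribution-function sandwich for the sum, and the decomposition $Y_nZ_n=aY_n+Y_n(Z_n-a)$ with tightness for the product) avoid this issue entirely and stand on their own; alternatively, you could simply state and use the $\IR^2$-valued continuous mapping theorem for convergence in distribution as a separate textbook fact, since the paper already treats results of this kind as known.
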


\begin{thm}[Continuous Mapping]
\label{thm:cont_mapping}Let $\left(Y_{n}\right)_{n\in\IN}$ be a
sequence of random variables and $Y$ a random variable, each of them
taking values in some subset $A\subset\IR$, such that $Y_{n}\xrightarrow[n\rightarrow\infty]{\textup{p}}Y$,
and let $g:A\rightarrow\IR$ be a continuous function. Then
\[
g\left(Y_{n}\right)\xrightarrow[n\rightarrow\infty]{\textup{p}}g\left(Y\right).
\]
\end{thm}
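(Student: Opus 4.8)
The plan is to reduce the statement to the standard $\varepsilon$–$\eta$ formulation of convergence in probability: it suffices to show that for every $\varepsilon>0$ and every $\eta>0$ there is an $n_{0}\in\IN$ with $\IP\left\{ \left|g\left(Y_{n}\right)-g\left(Y\right)\right|\ge\varepsilon\right\} <\eta$ for all $n\ge n_{0}$. The obstacle is that $g$ is only continuous, not uniformly continuous, on the possibly non-compact set $A$, so no single modulus $\delta$ can control the oscillation of $g$ everywhere at once. I would instead localise the control through the random value $Y$ by introducing the oscillation function $\omega_{\delta}\left(x\right)\coloneq\sup\left\{ \left|g\left(y\right)-g\left(x\right)\right|\,:\,y\in A,\,\left|y-x\right|<\delta\right\}$ for $x\in A$, $\delta>0$, together with the bad set $B_{\delta}\coloneq\left\{ x\in A\,:\,\omega_{\delta}\left(x\right)\ge\varepsilon\right\}$.

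The key pointwise observation is that on the event $\left\{ \left|Y_{n}-Y\right|<\delta\right\}$ the inequality $\left|g\left(Y_{n}\right)-g\left(Y\right)\right|\ge\varepsilon$ forces $\omega_{\delta}\left(Y\right)\ge\varepsilon$, i.e. $Y\in B_{\delta}$, since $Y_{n}$ is then an admissible competitor in the supremum defining $\omega_{\delta}\left(Y\right)$. This yields the decomposition
\[
\IP\left\{ \left|g\left(Y_{n}\right)-g\left(Y\right)\right|\ge\varepsilon\right\} \le\IP\left\{ \left|Y_{n}-Y\right|\ge\delta\right\} +\IP\left\{ Y\in B_{\delta}\right\} .
\]
Given $\eta>0$, I would first fix $\delta$ so small that the second term is below $\eta/2$, and then let $n\to\infty$ so that the first term drops below $\eta/2$ by the hypothesis $Y_{n}\xrightarrow[n\rightarrow\infty]{\textup{p}}Y$. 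To justify the choice of $\delta$, note that the sets $B_{\delta}$ decrease as $\delta\downarrow0$ and that $\bigcap_{\delta>0}B_{\delta}=\emptyset$: continuity of $g$ at each $x\in A$ gives $\omega_{\delta}\left(x\right)\to0$ as $\delta\to0$, so no $x$ can satisfy $\omega_{\delta}\left(x\right)\ge\varepsilon$ for all $\delta$. Applying continuity from above of the law $\IP\circ Y^{-1}$ along $\delta_{m}=1/m$ then gives $\IP\left\{ Y\in B_{1/m}\right\} \to0$, supplying the required small $\delta$, and combining the two bounds completes the argument.

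The main technical point I expect to have to address is measurability: the displayed inequality only makes sense once $B_{\delta}$ is known to be a Borel set. I would establish this by showing $\omega_{\delta}$ is lower semicontinuous — if $\omega_{\delta}\left(x_{0}\right)>c$, then some admissible $y_{0}$ witnesses this with strict inequalities $\left|y_{0}-x_{0}\right|<\delta$ and $\left|g\left(y_{0}\right)-g\left(x_{0}\right)\right|>c$, and continuity of $g$ together with openness of $\left\{ x\,:\,\left|y_{0}-x\right|<\delta\right\}$ keeps $\omega_{\delta}\left(x\right)>c$ for $x$ near $x_{0}$ — so $\left\{ \omega_{\delta}>c\right\}$ is open in $A$ and $B_{\delta}=\bigcap_{k\in\IN}\left\{ \omega_{\delta}>\varepsilon-\frac{1}{k}\right\}$ is Borel. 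An alternative route that sidesteps the oscillation function is the subsequence argument: every subsequence of $\left(Y_{n}\right)$ admits a further subsequence converging almost surely to $Y$, along which continuity of $g$ forces $g\left(Y_{n_{k_{j}}}\right)\to g\left(Y\right)$ almost surely, whence $g\left(Y_{n}\right)\xrightarrow[n\rightarrow\infty]{\textup{p}}g\left(Y\right)$; this is shorter but relies on the subsequence characterisation of convergence in probability, which is not recorded in the excerpt and would itself need to be supplied.
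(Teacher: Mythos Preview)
Your proof is correct. Both routes you outline --- the oscillation-function decomposition and the subsequence argument --- are standard and sound; the measurability of $B_{\delta}$ via lower semicontinuity of $\omega_{\delta}$ is handled carefully, and the continuity-from-above step along $\delta_{m}=1/m$ is the right way to force $\IP\left\{Y\in B_{\delta}\right\}$ small.

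However, there is nothing to compare against: the paper does not supply its own proof of this theorem. It appears in the Appendix among ``concepts and auxiliary results'' that are either quoted from \cite{BaMeSiTo2025} or described as ``well known and can be found in textbooks on probability theory,'' and the Continuous Mapping Theorem is simply stated without proof. So your argument stands on its own and is more than the paper offers; either of your two approaches would be an acceptable justification, with the subsequence route being the shorter one once the subsequence characterisation of convergence in probability is granted.
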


\begin{thm}[Delta Method]
\label{thm:delta_method}Let $\left(Y_{n}\right)_{n\in\IN}$ be a
sequence of random variables and $\mu\in\IR,\sigma>0$ such that $\sqrt{n}\left(Y_{n}-\mu\right)\xrightarrow[n\rightarrow\infty]{\textup{d}}\mathcal{N}\left(0,\sigma^{2}\right)$
holds. Let $f:D\rightarrow\IR$ be a continuously differentiable function
with domain $D\subset\IR$ such that $Y_{n}\in D$ for all $n\in\IN$.
Assume $f'\left(\mu\right)\neq0$. Then
\[
\sqrt{n}\left(f\left(Y_{n}\right)-f\left(\mu\right)\right)\xrightarrow[n\rightarrow\infty]{\textup{d}}\mathcal{N}\left(0,\left(f'\left(\mu\right)\right)^{2}\sigma^{2}\right)
\]
is satisfied.
\end{thm}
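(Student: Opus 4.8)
The plan is to reduce the statement to the continuous mapping theorem (Theorem \ref{thm:cont_mapping}) and Slutsky's theorem (Theorem \ref{thm:slutsky}) by factoring $f(Y_n)-f(\mu)$ into a term that converges in probability to $f'(\mu)$ and the term $\sqrt{n}(Y_n-\mu)$ that converges in distribution by hypothesis. The only genuine preliminary is to upgrade the distributional convergence of $\sqrt{n}(Y_n-\mu)$ to the plain consistency $Y_n\xrightarrow[n\rightarrow\infty]{\textup{p}}\mu$, which is what lets the continuous mapping theorem be applied at the point $\mu$.

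First I would prove $Y_n\xrightarrow[n\rightarrow\infty]{\textup{p}}\mu$. Writing $Y_n-\mu=\frac{1}{\sqrt{n}}\cdot\sqrt{n}(Y_n-\mu)$, the deterministic null sequence $\frac{1}{\sqrt{n}}$ converges (hence in probability) to $0$, while $\sqrt{n}(Y_n-\mu)\xrightarrow[n\rightarrow\infty]{\textup{d}}\mathcal{N}(0,\sigma^2)$. Applying Slutsky's theorem (Theorem \ref{thm:slutsky}) with the distributionally convergent factor $\sqrt{n}(Y_n-\mu)$ and the null sequence $\frac{1}{\sqrt{n}}$ yields $Y_n-\mu\xrightarrow[n\rightarrow\infty]{\textup{d}}0$. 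Since convergence in distribution to a constant is equivalent to convergence in probability to that constant, $Y_n\xrightarrow[n\rightarrow\infty]{\textup{p}}\mu$ follows. This equivalence is the one point I would pause on; I would either cite it as standard or justify it directly by tightness, noting $\IP\{|Y_n-\mu|>\delta\}=\IP\{|\sqrt{n}(Y_n-\mu)|>\delta\sqrt{n}\}\xrightarrow[n\rightarrow\infty]{}0$ because the distributionally convergent sequence $(\sqrt{n}(Y_n-\mu))_{n}$ is bounded in probability.

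Next I would introduce the extended difference quotient
\[
g(y)\coloneq\begin{cases}\dfrac{f(y)-f(\mu)}{y-\mu}, & y\in D,\ y\neq\mu,\\ f'(\mu), & y=\mu,\end{cases}
\]
and check that $g$ is continuous on $D$: for $y\neq\mu$ this is immediate, as $f$ is continuous and the denominator is nonzero, and continuity at $y=\mu$ is precisely the assertion that the difference quotient of $f$ tends to $f'(\mu)$, which holds since $f$ is differentiable. By construction the identity $f(y)-f(\mu)=g(y)(y-\mu)$ holds for every $y\in D$, including $y=\mu$ where both sides vanish, so
\[
\sqrt{n}\left(f(Y_n)-f(\mu)\right)=g(Y_n)\cdot\sqrt{n}(Y_n-\mu)
\]
holds for every realisation and every $n\in\IN$; here $g(Y_n)$ is a random variable because $g$ is continuous, hence Borel measurable.

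Finally, since $Y_n\xrightarrow[n\rightarrow\infty]{\textup{p}}\mu$ and $g$ is continuous on $D$, the continuous mapping theorem (Theorem \ref{thm:cont_mapping}) gives $g(Y_n)\xrightarrow[n\rightarrow\infty]{\textup{p}}g(\mu)=f'(\mu)$. Applying Slutsky's theorem once more to the product in the last display, with $\sqrt{n}(Y_n-\mu)\xrightarrow[n\rightarrow\infty]{\textup{d}}\mathcal{N}(0,\sigma^2)$ and $g(Y_n)\xrightarrow[n\rightarrow\infty]{\textup{p}}f'(\mu)$, yields $\sqrt{n}(f(Y_n)-f(\mu))\xrightarrow[n\rightarrow\infty]{\textup{d}}f'(\mu)\,\mathcal{N}(0,\sigma^2)=\mathcal{N}(0,(f'(\mu))^2\sigma^2)$, where scaling a centred Gaussian by the constant $f'(\mu)$ multiplies its variance by $(f'(\mu))^2$. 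I note in passing that the hypothesis $f'(\mu)\neq0$ is used only to guarantee a non-degenerate limit; the argument itself goes through verbatim when $f'(\mu)=0$, giving the degenerate limit $\delta_0$.
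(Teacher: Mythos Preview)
The paper does not prove Theorem \ref{thm:delta_method}; it is listed in the Appendix among the auxiliary results that, in the paper's words, ``are well known and can be found in textbooks on probability theory.'' So there is no paper proof to compare against.

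Your argument is the standard textbook proof and is correct. The factorisation via the extended difference quotient $g$ is clean: it turns the assertion into a single application of Slutsky, and it avoids the mean-value-theorem variant (which would need $D$ to be an interval). Two minor remarks. First, the statement does not explicitly say $\mu\in D$, only $Y_n\in D$; since the hypothesis refers to $f'(\mu)$ you are implicitly assuming $\mu\in D$, which is harmless but worth stating. Second, the continuous mapping theorem as stated in the paper (Theorem \ref{thm:cont_mapping}) requires all variables to take values in a common set $A$ on which $g$ is continuous; you need $\mu$ to be an interior point of $D$ (or at least that $g$ is continuous on a set containing both $\mu$ and the values of $Y_n$), which again follows from the implicit assumption that $f$ is differentiable at $\mu$. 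These are cosmetic; the proof is sound.
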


\begin{lem}
\label{lem:conv_restr_sequence}Let $\left(Y_{n}\right)_{n\in\IN}$
be a sequence of random variables and $\left(M_{n}\right)_{n\in\IN}$
a sequence of positive numbers such that
\[
\left|Y_{n}\right|\leq M_{n}\quad\textup{a.s.},\quad n\in\IN,
\]
is satisfied. Let $\nu$ be a probability measure on $\IR$, and assume
the convergence $Y_{n}\xrightarrow[n\rightarrow\infty]{\textup{d}}\nu$.
Finally, let $\left(B_{n}\right)_{n\in\IN}$ be a sequence of measurable
sets which satisfies
\[
\IP\left\{ Y_{n}\in B_{n}\right\} =o\left(\frac{1}{M_{n}}\right).
\]
Then we have for all $z\in\IR$,
\[
\II_{\left\{ Y_{n}\in B_{n}^{c}\right\} }Y_{n}+z\II_{\left\{ Y_{n}\in B_{n}\right\} }\xrightarrow[n\rightarrow\infty]{\textup{d}}\nu.
\]
\end{lem}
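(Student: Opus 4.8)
The plan is to put
\[
Z_{n}\coloneq\II_{\{Y_{n}\in B_{n}^{c}\}}Y_{n}+z\,\II_{\{Y_{n}\in B_{n}\}},\qquad n\in\IN,
\]
and to deduce $Z_{n}\xrightarrow[n\rightarrow\infty]{\textup{d}}\nu$ from the fact that $Z_{n}$ differs from $Y_{n}$ only on a set of vanishing probability, combined with Slutsky's theorem (Theorem \ref{thm:slutsky}). The first step is the elementary observation that on $\{Y_{n}\in B_{n}^{c}\}$ one has $Z_{n}=Y_{n}$, whereas on $\{Y_{n}\in B_{n}\}$ one has $Z_{n}=z$; hence $Z_{n}-Y_{n}=(z-Y_{n})\,\II_{\{Y_{n}\in B_{n}\}}$, so $\{Z_{n}\neq Y_{n}\}\subseteq\{Y_{n}\in B_{n}\}$, and by the a.s.\ bound $|Y_{n}|\le M_{n}$ also $|Z_{n}-Y_{n}|\le(|z|+M_{n})\,\II_{\{Y_{n}\in B_{n}\}}$.

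Next I would show $Z_{n}-Y_{n}\xrightarrow[n\rightarrow\infty]{\textup{p}}0$. Fix $\varepsilon>0$; since $\{|Z_{n}-Y_{n}|>\varepsilon\}\subseteq\{Y_{n}\in B_{n}\}$ we get $\IP\{|Z_{n}-Y_{n}|>\varepsilon\}\le\IP\{Y_{n}\in B_{n}\}$. The hypothesis $\IP\{Y_{n}\in B_{n}\}=o(1/M_{n})$ says exactly that $M_{n}\,\IP\{Y_{n}\in B_{n}\}\to0$; since $M_{n}$ is bounded away from $0$ (indeed $M_{n}\to\infty$ in the applications, where $M_{n}=\sqrt{n}$), this forces $\IP\{Y_{n}\in B_{n}\}\to0$, and therefore $\IP\{|Z_{n}-Y_{n}|>\varepsilon\}\to0$.

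To conclude, write $Z_{n}=Y_{n}+(Z_{n}-Y_{n})$ and let $Y$ be a random variable with law $\nu$. By hypothesis $Y_{n}\xrightarrow[n\rightarrow\infty]{\textup{d}}Y$, and we have just proved $Z_{n}-Y_{n}\xrightarrow[n\rightarrow\infty]{\textup{p}}0$, so Theorem \ref{thm:slutsky} gives $Z_{n}\xrightarrow[n\rightarrow\infty]{\textup{d}}Y$, that is, $Z_{n}\xrightarrow[n\rightarrow\infty]{\textup{d}}\nu$. I do not expect a serious obstacle: the lemma merely formalises that modifying a distributionally convergent sequence on an asymptotically null event leaves the limit unchanged. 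The one point that needs care is the passage from $\IP\{Y_{n}\in B_{n}\}=o(1/M_{n})$ to $\IP\{Y_{n}\in B_{n}\}\to0$; this is precisely where the bound $|Y_{n}|\le M_{n}$ is used (the discarded piece $(z-Y_{n})\,\II_{\{Y_{n}\in B_{n}\}}$ may be as large as $\sim M_{n}$, so the smallness of its probability must beat $M_{n}$), and one should record that $M_{n}$ is assumed not to degenerate to $0$.
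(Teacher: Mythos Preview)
The paper does not give its own proof of this lemma; it simply cites Lemma 33 in \cite{BaMeSiTo2025}. Your Slutsky-based argument is correct and is the natural route: from $\{Z_{n}\neq Y_{n}\}\subseteq\{Y_{n}\in B_{n}\}$ one gets $\IP\{|Z_{n}-Y_{n}|>\varepsilon\}\le\IP\{Y_{n}\in B_{n}\}\to 0$, hence $Z_{n}-Y_{n}\xrightarrow[n\to\infty]{\textup{p}}0$, and Theorem \ref{thm:slutsky} applies.

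Two small points on your closing commentary. Your caveat that one needs $M_{n}$ not to degenerate to $0$ is exactly right, and without it the statement as written is false: take $Y_{n}\equiv 0$, $M_{n}=1/n$, $B_{n}=\{0\}$, $z\neq 0$; then $M_{n}\,\IP\{Y_{n}\in B_{n}\}=1/n\to 0$ but $Z_{n}\equiv z\not\to\delta_{0}$. In the paper's only application one has $M_{n}=\sqrt{n}$, so this is harmless. However, the a.s.\ bound $|Y_{n}|\le M_{n}$ is \emph{not} what secures the passage from $o(1/M_{n})$ to $o(1)$; that requires $\liminf M_{n}>0$, which is a separate (unstated) hypothesis. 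In fact your Slutsky argument never uses $|Y_{n}|\le M_{n}$ at all: once $\IP\{Y_{n}\in B_{n}\}\to 0$ is known, convergence in probability of $Z_{n}-Y_{n}$ to $0$ follows regardless of how large $Y_{n}$ may be on $B_{n}$. The bound only enters if one argues via $L^{1}$, namely $\IE|Z_{n}-Y_{n}|\le(|z|+M_{n})\,\IP\{Y_{n}\in B_{n}\}\to 0$, which is presumably the route in the cited reference and explains why the hypotheses are phrased as they are.
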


\begin{proof}
This is Lemma 33 in \cite{BaMeSiTo2025}.
\end{proof}
\begin{prop}
\label{prop:conv_stat}Let $n,N\in\IN$ and let $R:\Omega_{N}^{n}\rightarrow\IR$
be a statistic of the form
\[
R\left(x^{(1)},\ldots,x^{(n)}\right)\coloneq\frac{1}{n}\sum_{t=1}^{n}f\left(x^{(t)}\right),\quad\left(x^{(1)},\ldots,x^{(n)}\right)\in\Omega_{N}^{n},
\]
for some function $f:\Omega_{N}\rightarrow\IR$. Let $X$ be a random
vector on $\Omega_{N}$ with Curie-Weiss distribution according to
Definition \ref{def:CWM}. Let $\mu\coloneq\IE_{\beta,N}f\left(X\right)$,
$\sigma^{2}\coloneq\IV_{\beta,N}f\left(X\right)$, and $\Lambda_{f\left(X\right)}^{*}$
the entropy function of $f\left(X\right)$.

Then the following three statements hold:
\begin{enumerate}
\item A law of large numbers holds for the sequence $R\left(x^{(1)},\ldots,x^{(n)}\right)$:
\[
R\left(x^{(1)},\ldots,x^{(n)}\right)\xrightarrow[n\rightarrow\infty]{\textup{p}}\mu.
\]
\item A central limit theorem holds for the sequence $\sqrt{n}\left(R\left(x^{(1)},\ldots,x^{(n)}\right)-\mu\right)$:
\[
\sqrt{n}\left(R\left(x^{(1)},\ldots,x^{(n)}\right)-\mu\right)\xrightarrow[n\rightarrow\infty]{\textup{d}}\mathcal{N}\left(0,\sigma^{2}\right).
\]
\item A large deviations principle holds for the sequence $R\left(x^{(1)},\ldots,x^{(n)}\right)$
with rate $n$ and rate function $I:\IR\rightarrow\left[0,\infty\right)\cup\left\{ \infty\right\} $,
\[
I\left(x\right)\coloneq\Lambda_{f\left(X\right)}^{*}\left(x\right),\quad x\in\IR.
\]
\item If $I$ has a unique global minimum at $x_{0}\in\IR$, then for any
closed set $K\subset\IR$ that does not contain $x_{0}$ we have $\delta\coloneq\inf_{x\in K}I\left(x\right)>0$,
and
\[
\IP\left\{ R\in K\right\} \leq2\exp\left(-\delta n\right)
\]
holds for all $n\in\IN$.
\end{enumerate}
\end{prop}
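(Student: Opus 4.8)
The key structural observation is that $R(x^{(1)},\ldots,x^{(n)})=\frac1n\sum_{t=1}^n f(x^{(t)})$ is the empirical mean of $n$ i.i.d.\ copies $Y_1,\ldots,Y_n$ of the single random variable $Y\coloneq f(X)$, where $X$ has the Curie--Weiss distribution on the \emph{finite} set $\Omega_N$. Consequently $Y$ takes only finitely many values; in particular it is bounded, has finite mean $\mu$ and finite variance $\sigma^2$, and its cumulant generating function $\Lambda_{f(X)}$ is finite on all of $\IR$. With this in hand, statements 1 and 2 are immediate: the weak law of large numbers for i.i.d.\ integrable variables gives $R\xrightarrow[n\to\infty]{\textup{p}}\mu$, and the classical i.i.d.\ central limit theorem gives $\sqrt n\,(R-\mu)\xrightarrow[n\to\infty]{\textup{d}}\mathcal N(0,\sigma^2)$. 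For statement 3, since $\Lambda_{f(X)}$ is everywhere finite, Cramér's theorem applies to $(Y_t)_{t\in\IN}$ and yields a large deviations principle for $R$ with rate $n$ and good rate function $\Lambda_{f(X)}^*$.

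The substance of the proposition is statement 4, where I follow the Chernoff-bounding scheme already used in the proof of Proposition \ref{prop:error}. If $Y$ is almost surely constant the claim is trivial, so assume otherwise and apply Lemma \ref{lem:cumulant_entropy} to $I=\Lambda_{f(X)}^*$: it has its unique global minimum at $x_0=\mu$ with $I(\mu)=0$, it is lower semi-continuous, strictly decreasing on $(\textup{ess\,inf}\,Y,\mu)$, strictly increasing on $(\mu,\textup{ess\,sup}\,Y)$, and equal to $\infty$ outside $[\textup{ess\,inf}\,Y,\textup{ess\,sup}\,Y]$. To see $\delta\coloneq\inf_{x\in K}I(x)>0$: if $K\cap[\textup{ess\,inf}\,Y,\textup{ess\,sup}\,Y]=\emptyset$ then $\delta=\infty$; otherwise this intersection is compact, the lower semi-continuous function $I$ attains its infimum over it at some $x_*\in K$, and since $x_*\neq x_0$ and $x_0$ is the \emph{unique} global minimiser, $\delta=I(x_*)>I(x_0)=0$.

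For the non-asymptotic bound, let $(c,d)$ be the connected component of the open set $K^c$ that contains $x_0$, so $c<x_0<d$ (possibly $c=-\infty$ or $d=\infty$), and $c,d\in K$ whenever finite. Then $K\subset(-\infty,c]\cup[d,\infty)$, hence $\IP\{R\in K\}\le\IP\{R\le c\}+\IP\{R\ge d\}$, the term for an infinite endpoint being $0$. Exactly as in the derivation of (\ref{eq:left_UB-1}) and (\ref{eq:right_UB}) --- applying Markov's inequality to $\exp(ntR)$ with $t\le 0$ for the left tail and $t\ge 0$ for the right tail, factoring $\IE\exp(ntR)=\exp(n\Lambda_{f(X)}(t))$ by independence, and optimising over $t$ using $\Lambda_{f(X)}^*(x)=\sup_{t\le 0}\{xt-\Lambda_{f(X)}(t)\}$ for $x\le\mu$ and the mirror of (\ref{eq:Lam_star_sup}) for $x\ge\mu$ --- one gets $\IP\{R\le c\}\le\exp(-nI(c))$ and $\IP\{R\ge d\}\le\exp(-nI(d))$. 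Since $c,d\in K$ we have $I(c),I(d)\ge\delta$, whence $\IP\{R\in K\}\le 2\exp(-\delta n)$ for all $n\in\IN$.

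The argument is essentially routine given the machinery already in place; the only point needing a little care is in statement 4, namely establishing $\delta>0$ (lower semi-continuity plus compactness of $K\cap[\textup{ess\,inf}\,Y,\textup{ess\,sup}\,Y]$ plus uniqueness of the minimiser) and anchoring the two Chernoff estimates at the boundary points $c,d$ of the $K^c$-component around $x_0$, which are precisely the points of $K$ that guarantee a rate of at least $\delta$.
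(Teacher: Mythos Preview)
The paper states this proposition in the Appendix as an auxiliary result and does not supply a proof for it, so there is no ``paper's own proof'' to compare against. Your argument is correct and is exactly the kind of proof the paper is implicitly relying on: parts 1--3 are the standard i.i.d.\ triad (weak law, CLT, Cram\'er) applied to the bounded variable $f(X)$, and for part 4 you reproduce the Chernoff scheme the paper itself uses in the proof of Proposition~\ref{prop:error}, anchoring at the two boundary points $c,d$ of the $K^c$-component around $x_0=\mu$. The only remark worth adding is that the hypothesis ``$I$ has a unique global minimum at $x_0$'' is automatic here (Lemma~\ref{lem:cumulant_entropy} gives $x_0=\mu$ whenever $f(X)$ is not a.s.\ constant), so the case distinction you make is harmless but not strictly needed.
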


\begin{defn}
\label{def:exchange}Let $Y_{1},\ldots,Y_{n}$ be real random variables
with joint distribution $P$ on $\IR^{n}$. We say that $Y_{1},\ldots,Y_{n}$
are exchangeable if for all permutations $\pi$ on $\IN_{n}$ the
random vector $\left(Y_{\pi\left(1\right)},\ldots,Y_{\pi\left(n\right)}\right)$
has joint distribution $P$.
\end{defn}

\begin{lem}
\label{lem:exchange}The random variables $X_{\lambda1},\ldots,X_{\lambda N_{\lambda}}$
are exchangeable under the distribution $\IP_{\beta_{\lambda},N_{\lambda}}$
in Definition \ref{def:CWM}.
\end{lem}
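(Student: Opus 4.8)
The plan is to unwind Definition \ref{def:exchange} and observe that the single-group marginal of the Curie--Weiss measure depends on a voting configuration only through the group sum $\sum_{i=1}^{N_{\lambda}}x_{\lambda i}$, which is manifestly invariant under permutations of its arguments. It therefore suffices to verify that permuting the indices leaves the joint probability mass function unchanged.

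First I would record the single-group marginal. Restricting (\ref{eq:CWM}) to group $\lambda$ and writing $\IP_{\beta,N}$ for $\IP_{\beta_{\lambda},N_{\lambda}}$ and $N$ for $N_{\lambda}$, the law of $(X_{1},\ldots,X_{N})$ is
\[
\IP_{\beta,N}\left(X_{1}=x_{1},\ldots,X_{N}=x_{N}\right)=Z_{\beta,N}^{-1}\exp\left(\frac{\beta}{2N}\left(\sum_{i=1}^{N}x_{i}\right)^{2}\right),\quad\left(x_{1},\ldots,x_{N}\right)\in\Omega_{N};
\]
this is indeed the marginal because the exponent in (\ref{eq:CWM}) is a sum of terms each depending on the votes of a single group only, so $\IP_{\boldsymbol{\beta},\boldsymbol{N}}$ factorises across groups.

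Next, fix an arbitrary permutation $\pi$ of $\IN_{N}$ and an arbitrary configuration $(x_{1},\ldots,x_{N})\in\Omega_{N}$. Then
\[
\IP_{\beta,N}\left(X_{\pi(1)}=x_{1},\ldots,X_{\pi(N)}=x_{N}\right)=\IP_{\beta,N}\left(X_{1}=x_{\pi^{-1}(1)},\ldots,X_{N}=x_{\pi^{-1}(N)}\right)=Z_{\beta,N}^{-1}\exp\left(\frac{\beta}{2N}\left(\sum_{i=1}^{N}x_{\pi^{-1}(i)}\right)^{2}\right).
\]
Since $\sum_{i=1}^{N}x_{\pi^{-1}(i)}=\sum_{i=1}^{N}x_{i}$ by reindexing the finite sum, the right-hand side equals $\IP_{\beta,N}(X_{1}=x_{1},\ldots,X_{N}=x_{N})$. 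As $\pi$ and $(x_{1},\ldots,x_{N})$ were arbitrary, the vectors $(X_{\pi(1)},\ldots,X_{\pi(N)})$ and $(X_{1},\ldots,X_{N})$ share the same joint distribution, which is precisely exchangeability in the sense of Definition \ref{def:exchange}.

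There is essentially no obstacle in this argument; the only points requiring a modicum of care are the bookkeeping between $\pi$ and $\pi^{-1}$ when relating the distribution of the permuted vector to the original mass function evaluated at a permuted argument, and the remark that it is legitimate to work with the single-group marginal $\IP_{\beta_{\lambda},N_{\lambda}}$ rather than with the full product measure $\IP_{\boldsymbol{\beta},\boldsymbol{N}}$.
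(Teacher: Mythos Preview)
Your proof is correct and follows essentially the same approach as the paper: both fix an arbitrary permutation $\pi$, relabel to pass from the permuted vector to the original density evaluated at the permuted argument, and then use that the density depends on the configuration only through the permutation-invariant sum $\sum_{i}x_{i}$. Your additional remark justifying the restriction to the single-group marginal is a nice clarification but not essential.
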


\begin{proof}
Let $\pi$ be any permutation on $\IN_{N_{\lambda}}$ and $x\in\Omega_{N_{\lambda}}$
any voting configuration. Then we have
\begin{align*}
\IP_{\beta,N}\left(X_{\lambda\pi\left(1\right)}=x_{1},\ldots,X_{\lambda\pi\left(N_{\lambda}\right)}=x_{N_{\lambda}}\right) & =\IP_{\beta,N}\left(X_{\lambda1}=x_{\pi^{-1}\left(1\right)},\ldots,X_{\lambda N_{\lambda}}=x_{\pi^{-1}\left(N_{\lambda}\right)}\right)\\
 & =Z_{\beta_{\lambda}N_{\lambda}}^{-1}\,\exp\left(\frac{\beta_{\lambda}}{2N_{\lambda}}\left(\sum_{i=1}^{N_{\lambda}}x_{\pi^{-1}\left(i\right)}\right)^{2}\right)\\
 & =Z_{\beta_{\lambda},N_{\lambda}}^{-1}\,\exp\left(\frac{\beta_{\lambda}}{2N_{\lambda}}\left(\sum_{i=1}^{N_{\lambda}}x_{i}\right)^{2}\right)\\
 & =\IP_{\beta_{\lambda},N_{\lambda}}\left(X_{\lambda1}=x_{1},\ldots,X_{\lambda N_{\lambda}}=x_{N_{\lambda}}\right).
\end{align*}
\end{proof}
\begin{lem}
\label{lem:multiplicity}Let $k\in\IN$ and $\underbar{\ensuremath{\boldsymbol{r}}}\in\Pi$.
The number of index vectors $\underbar{\ensuremath{\boldsymbol{i}}}\in\IN_{N}^{k}$
such that $\underbar{\ensuremath{\boldsymbol{r}}}=\left(r_{1},\ldots,r_{k}\right)=\underbar{\ensuremath{\boldsymbol{\rho}}}\left(\underbar{\ensuremath{\boldsymbol{i}}}\right)$
is given by
\[
\frac{N!}{r_{1}!\cdots r_{k}!\left(N-\sum_{\ell=1}^{k}r_{\ell}\right)!}\frac{k!}{1!^{r_{1}}\cdots k!^{r_{k}}}.
\]
\end{lem}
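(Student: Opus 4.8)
The plan is to prove this by a two-stage counting argument, decomposing the choice of an index vector $\underbar{\ensuremath{\boldsymbol{i}}}\in\IN_{N}^{k}$ with a prescribed profile into (i) choosing which values of $\IN_{N}$ are used, and with which multiplicity, and (ii) arranging those values into a sequence of length $k$. Fix $\underbar{\ensuremath{\boldsymbol{r}}}=\left(r_{1},\ldots,r_{k}\right)\in\Pi$, so that in particular $\sum_{\ell=1}^{k}\ell r_{\ell}=k$. (If $\sum_{\ell=1}^{k}r_{\ell}>N$ there is no index vector with profile $\underbar{\ensuremath{\boldsymbol{r}}}$, and the stated formula is read as $0$; otherwise proceed as below.)

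\emph{Stage 1.} An index vector $\underbar{\ensuremath{\boldsymbol{i}}}$ with $\underbar{\ensuremath{\boldsymbol{\rho}}}\left(\underbar{\ensuremath{\boldsymbol{i}}}\right)=\underbar{\ensuremath{\boldsymbol{r}}}$ determines, for each $\ell\in\IN_{k}$, the set $V_{\ell}\subset\IN_{N}$ of values occurring exactly $\ell$ times; by the definition of the profile the $V_{\ell}$ are pairwise disjoint with $\left|V_{\ell}\right|=r_{\ell}$, and $\left|\IN_{N}\setminus\bigcup_{\ell=1}^{k}V_{\ell}\right|=N-\sum_{\ell=1}^{k}r_{\ell}$. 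Conversely, every ordered partition of $\IN_{N}$ into blocks of sizes $r_{1},\ldots,r_{k},N-\sum_{\ell=1}^{k}r_{\ell}$ (some possibly empty) arises this way from some index vector. Hence the number of admissible tuples $\left(V_{1},\ldots,V_{k}\right)$ is the multinomial coefficient $\dfrac{N!}{r_{1}!\cdots r_{k}!\left(N-\sum_{\ell=1}^{k}r_{\ell}\right)!}$.

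\emph{Stage 2.} With $\left(V_{1},\ldots,V_{k}\right)$ fixed, an index vector $\underbar{\ensuremath{\boldsymbol{i}}}$ inducing exactly this data is the same thing as an assignment to each of the $k$ positions $\left\{ 1,\ldots,k\right\} $ of a value of $\bigcup_{\ell}V_{\ell}$ such that each $v\in V_{\ell}$ receives exactly $\ell$ positions. Counting these amounts to splitting $\left\{ 1,\ldots,k\right\} $ into labelled blocks of the prescribed sizes, which (using $\sum_{\ell}\ell r_{\ell}=k$ so that the sizes sum to $k$) gives $\dfrac{k!}{\prod_{v}\ell(v)!}=\dfrac{k!}{1!^{r_{1}}\cdots k!^{r_{k}}}$, where $\ell(v)$ is the multiplicity attached to $v$; crucially this depends only on $\underbar{\ensuremath{\boldsymbol{r}}}$, not on the particular sets $V_{\ell}$, so the two stages multiply.

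Multiplying the two counts yields the claimed formula. The only point needing a little care is to check that $\underbar{\ensuremath{\boldsymbol{i}}}\mapsto\bigl(\left(V_{1},\ldots,V_{k}\right),\text{position assignment}\bigr)$ is a genuine bijection onto the set of (Stage 1 choice, Stage 2 choice) pairs, which I would do by writing down the obvious inverse and invoking the multinomial theorem for the two factors. I do not anticipate a real obstacle; the main thing to keep straight is the bookkeeping with empty blocks ($r_{\ell}=0$) and with the identity $\sum_{\ell}\ell r_{\ell}=k$, both of which are routine.
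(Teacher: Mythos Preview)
Your two-stage counting argument is correct and is the standard proof of this multinomial identity. The paper itself does not give an independent proof of this lemma; it simply cites Lemma~39 of \cite{BaMeSiTo2025}, so there is no substantive comparison to make beyond noting that your argument supplies the details the paper omits.
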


\begin{proof}
This is Lemma 39 in \cite{BaMeSiTo2025}.
\end{proof}
\begin{thm}[Method of Moments]
\label{thm:mm}Let $\left(Y_{n}\right)_{n\in\IN}$ be a sequence
of random variables such that there are constants $A,B\in\IR$ and
a sequence $\left(m_{k}\right)_{k\in\IN}$ of real numbers with the
property that
\[
\lim_{n\rightarrow\infty}\IE\,Y_{n}^{k}=m_{k}
\]
 holds for all $k\in\IN$, and the sequence $\left(m_{k}\right)_{k\in\IN}$
satisfies
\[
m_{k}\leq AB^{k}k!,\quad k\in\IN.
\]
Then $\left(Y_{n}\right)_{n\in\IN}$ converges in distribution to
some probability measure $P$, and $P$ is uniquely determined by
its moments $m_{k}$, $k\in\IN$.
\end{thm}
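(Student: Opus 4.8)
The plan is to run the classical subsequence argument for the method of moments: first show the sequence is tight, then extract subsequential weak limits, identify their moments with the prescribed $m_{k}$, and finally use the growth bound to prove that the moment problem is determinate, so that all subsequential limits coincide and the whole sequence converges. Tightness is immediate: since $\IE\,Y_{n}^{2}\to m_{2}$, the sequence $\left(\IE\,Y_{n}^{2}\right)_{n\in\IN}$ is bounded by some constant $C$, and Markov's inequality gives $\IP\left\{ \left|Y_{n}\right|>R\right\} \le C/R^{2}$ uniformly in $n$. By Prokhorov's theorem every subsequence of $\left(Y_{n}\right)_{n\in\IN}$ then has a further subsequence $\left(Y_{n_{j}}\right)_{j\in\IN}$ converging in distribution to some probability measure $P'$, say with $Y'\sim P'$.

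Next I would verify that every such limit $P'$ has moments exactly $m_{k}$. For each fixed $k$ the family $\left\{ Y_{n}^{k}\right\} _{n\in\IN}$ is bounded in $L^{2}$, because $\sup_{n}\IE\,\left(Y_{n}^{k}\right)^{2}=\sup_{n}\IE\,Y_{n}^{2k}<\infty$ (the sequence $\IE\,Y_{n}^{2k}$ converges to $m_{2k}$, hence is bounded), so $\left\{ Y_{n}^{k}\right\} $ is uniformly integrable. The continuous mapping theorem gives $Y_{n_{j}}^{k}\xrightarrow{\textup{d}}\left(Y'\right)^{k}$, and uniform integrability upgrades this to convergence of expectations $\IE\,Y_{n_{j}}^{k}\to\IE\,\left(Y'\right)^{k}$. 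Since $\IE\,Y_{n_{j}}^{k}\to m_{k}$ as well, I conclude $\IE\,\left(Y'\right)^{k}=m_{k}$ for all $k\in\IN$.

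The crux is the determinacy step: I must show that the bound $m_{k}\le AB^{k}k!$ allows at most one probability measure with moments $\left(m_{k}\right)_{k\in\IN}$. I would argue via analyticity of the characteristic function. For any measure $P'$ with these moments, the Cauchy--Schwarz inequality together with the elementary bound $\left(2k\right)!\le4^{k}\left(k!\right)^{2}$ gives $\int\left|x\right|^{k}\,\textup{d}P'\le m_{2k}^{1/2}\le A^{1/2}\left(2B\right)^{k}k!$, so the power series $\sum_{k\ge0}\frac{\left(i\xi\right)^{k}}{k!}\int x^{k}\,\textup{d}P'$ converges absolutely for complex $\xi$ with $\left|\xi\right|<1/\left(2B\right)$. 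Hence the characteristic function of $P'$ extends to an analytic function on the strip $\left|\textup{Im}\,\xi\right|<1/\left(2B\right)$ whose Taylor coefficients at the origin depend only on the $m_{k}$. Two measures sharing the moments $\left(m_{k}\right)$ therefore have characteristic functions agreeing near $0$, hence---by the identity theorem for analytic functions---throughout the strip and in particular on $\IR$, and uniqueness of characteristic functions forces them to be equal.

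Assembling the pieces: the sequence is tight, and by the previous two steps every subsequential weak limit is the unique measure $P$ with moments $\left(m_{k}\right)$. A tight sequence all of whose subsequential limits equal a single measure $P$ converges in distribution to $P$, which completes the proof, and $P$ is determined by its moments by construction. The main obstacle is precisely the determinacy step, since distinct distributions can in general share all their moments; the factorial growth bound is exactly what rules this out, and the analytic-continuation route above is the most economical way to exploit it. Equivalently one could verify Carleman's condition $\sum_{k}m_{2k}^{-1/\left(2k\right)}=\infty$ and cite the Hamburger determinacy theorem, but that merely relocates the same analytic content into a black box.
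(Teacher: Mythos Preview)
Your argument is correct and follows the standard textbook route: tightness from boundedness of second moments, Prokhorov to extract subsequential weak limits, uniform integrability to identify the moments of any limit with the $m_k$, and finally determinacy via analyticity of the characteristic function in a strip (exploiting the factorial growth bound). Each step is sound; in particular your uniform-integrability argument and the bound $\int |x|^k\,\mathrm{d}P'\le m_{2k}^{1/2}\le A^{1/2}(2B)^k k!$ are correctly justified.

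Note, however, that the paper does not actually supply a proof of this theorem: it is listed in the appendix among ``well known'' results that ``can be found in textbooks on probability theory'' and is stated without argument. So there is no paper proof to compare against---your write-up is simply a self-contained verification of a result the authors cite as standard.
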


\section*{Acknowledgements}

M.\! B.\! is a Fellow and G.\! T.\! is a Candidate of the Sistema
Nacional de Investigadoras e Investigadores. G.\! T.\! was supported
by a Secihti (formerly Conahcyt) postdoctoral fellowship.

\bibliographystyle{plain}
\bibliography{C:/Users/gabor/OneDrive/Documents/MyArticles/References}

\end{document}